\documentclass[11pt]{amsart}
\usepackage[a4paper,margin=2.7cm]{geometry}
\usepackage[T1]{fontenc}
\usepackage{lmodern}
\usepackage{amsmath,amssymb,amsthm,mathtools,mathrsfs}
\usepackage[expansion=false]{microtype}
\usepackage{booktabs,array}
\usepackage{graphicx}
\usepackage{xcolor}
\usepackage{float}
\usepackage[hypertexnames=false,hidelinks]{hyperref}
\hypersetup{pdftitle={Noncompactness for the constant Q2N-curvature problem},
pdfauthor={},bookmarksdepth=2}

\allowdisplaybreaks
\numberwithin{equation}{section}
\makeatletter
\renewcommand{\l@subsection}{\@tocline{2}{0pt}{3pc}{3pc}{}}
\makeatother

\newtheorem{theorem}{Theorem}[section]
\newtheorem{maintheorem}{Theorem}

\newtheorem{proposition}[theorem]{Proposition}
\newtheorem{lemma}[theorem]{Lemma}
\newtheorem{corollary}[theorem]{Corollary}
\theoremstyle{definition}

\theoremstyle{remark}
\newtheorem{remark}[theorem]{Remark}
\newtheorem{question}[theorem]{Question}

\newcommand{\R}{\mathbb R}
\newcommand{\Sph}{\mathbb S}
\newcommand{\tr}{\operatorname{tr}}
\newcommand{\Ric}{\operatorname{Ric}}
\newcommand{\Vol}{\operatorname{Vol}}
\newcommand{\Id}{\mathrm{Id}}
\newcommand{\cE}{\mathcal E}
\newcommand{\cA}{\mathcal A}
\newcommand{\cL}{\mathscr L}
\newcommand{\eps}{\varepsilon}

\title[Noncompactness for the constant $Q_{2N}$-curvature problem]{Noncompactness for the constant \\ $Q_{2N}$-curvature problem}

\author{Liuwei Gong}
\address[Liuwei Gong]{Department of Mathematics, Chinese University of Hong Kong, Shatin, NT, Hong Kong}
\email{lwgong@math.cuhk.edu.hk}

\author{Seunghyeok Kim}
\address[Seunghyeok Kim]{Department of Mathematics and Research Institute for Natural Sciences, College of Natural Sciences, Hanyang University, 222 Wangsimni-ro Seongdong-gu, Seoul 04763, Republic of Korea}
\email{shkim0401@hanyang.ac.kr}
\email{shkim0401@gmail.com}

\author{Juncheng Wei}
\address[Juncheng Wei]{Department of Mathematics, Chinese University of Hong Kong, Shatin, NT, Hong Kong}
\email{wei@math.cuhk.edu.hk}

\date{November 4, 2026}
\subjclass[2020]{Primary 53C18; Secondary 35B44, 35J91, 58E11, 58J05}
\keywords{$Q_{2N}$-curvature, GJMS operators, noncompactness, second variation of total $Q_{2N}$-curvature, ambient metric}

\begin{document}
\begin{abstract}
For every integer $N\ge4$, we construct a fixed smooth, non-locally-conformally-flat metric on the $n$-dimensional unit sphere $\Sph^n$ for which the constant $Q_{2N}$-curvature equation admits an $L^\infty$-unbounded sequence of positive solutions.
The construction applies for $n\ge2N+20$ when $N=4,5$, $n\ge2N+19$ when $6\le N\le8$, $n\ge2N+18$ when $9\le N\le17$, and $n\ge2N+17$ when $N\ge18$.
For each $L \in \mathbb{N} \cup \{0\}$, the metric can be chosen arbitrarily close to the round metric in the $C^L$ norm.
It can also be chosen to have a coercive GJMS operator, a strictly positive Green function, and positive Green function mass wherever the Weyl tensor and its covariant derivatives through order $\lfloor(n-2N-4)/2\rfloor$ vanish.
Together with the known cases $N=1,2,3$, this establishes noncompactness at every order and proves the sharpness of the companion compactness ranges under the same operator and mass hypotheses.

The construction relies on our formula for the fixed-volume Hessian of total $Q_{2N}$-curvature in \emph{transverse-traceless} (TT) directions at closed Einstein metrics satisfying $\operatorname{Ric}=(n-1)g$.
For every $N \in \mathbb{N}$ and $n>2N$, this Hessian is represented by a degree-$N$ polynomial in the Lichnerowicz Laplacian, extending Matsumoto's formula for $n=2N$ \cite{Matsumoto}.

For the metric perturbations generated by a fixed algebraic Weyl tensor, the quadratic reduced energy with fixed bubble center is a positive multiple of this Hessian restricted to a four-dimensional TT space. 
Differentiation with respect to the bubble scale then yields a finite matrix whose exact sign analysis establishes the required indefiniteness in the stated dimension ranges.
After continuation to real $n$, this matrix changes from negative definite to indefinite at $n_N=2N+a_*+c_*N^{-1}+O(N^{-2})$ as $N\to\infty$,
with $a_*\approx16.201871$ and $c_*\approx13.512880$, explaining the eventual bound $n\ge2N+17$.
\end{abstract}
\maketitle
\tableofcontents

\section{Introduction}
We study loss of compactness for higher-order constant curvature equations in a fixed conformal class.
We construct a single perturbation of the round metric that admits positive solutions with unbounded maxima.

Let $N \in \mathbb{N}$ and let $(M^n,g)$ be a smooth closed Riemannian manifold with $n>2N$.
We use $\Delta_g\coloneqq\operatorname{div}_g\nabla$ and let $P_{2N,g}$ denote the GJMS operator with leading term $(-\Delta_g)^N$ introduced in \cite{GJMS}, with the convention
\[
P_{2N,g}(1)=\frac{n-2N}{2}Q_{2N,g}.
\]
We study positive solutions of
\begin{equation}\label{eq:introPDE}
P_{2N,g}u=c_{n,N}u^{\frac{n+2N}{n-2N}}\quad\text{on }M.
\end{equation}
Here $c_{n,N}\coloneqq\Gamma(n/2+N)/\Gamma(n/2-N)>0$.
By conformal covariance, the metric $u^{4/(n-2N)}g$ then has constant $Q_{2N}$-curvature $2c_{n,N}/(n-2N)$, the value on the unit round sphere.

\medskip
The fourth-order conformally covariant operator was introduced by Paneitz \cite{Paneitz}; Branson \cite{Branson} developed the associated curvature invariants and higher-order operators.
Graham--Jenne--Mason--Sparling \cite{GJMS} constructed the conformally invariant powers of the Laplacian from the ambient metric of Fefferman--Graham \cite{FG}.
Graham--Zworski \cite{GrahamZworski} obtained the operators from scattering theory on asymptotically hyperbolic manifolds.
Juhl \cite{Juhl} derived explicit formulas for the GJMS operators and $Q_{2N}$-curvatures in terms of compositions of natural second-order operators and volume coefficients.
Fefferman--Graham \cite{FGJuhl} proved these formulas directly from the ambient construction.

For $N=1$, equation \eqref{eq:introPDE} is the Yamabe equation.
The existence problem, introduced by Yamabe \cite{Yamabe}, was resolved through the work of Trudinger \cite{Trudinger}, Aubin \cite{Aubin}, and Schoen \cite{SchoenExistence}.
Compactness concerns all positive solutions in a fixed conformal class, rather than only a minimizing solution.
Schoen initiated this question and proved compactness in the locally conformally flat case, excluding manifolds conformally diffeomorphic to the round sphere \cite{SchoenCompactness}.
Khuri--Marques--Schoen \cite{KhuriMarquesSchoen} established compactness for $3\le n\le24$, with the same exclusion, assuming the Riemannian positive mass theorem needed in their proof; Brendle--Wang \cite{BrendleWang} have since supplied a proof of the positive mass theorem in arbitrary dimension.
Brendle \cite{BrendleBlowup} constructed noncompact examples for $n\ge52$, and Brendle--Marques \cite{BrendleMarques} extended the construction to $25\le n\le51$.

For $N=2$ and $n\ge5$, positivity of solutions requires additional arguments because a fourth-order operator need not satisfy a maximum principle.
Qing--Raske \cite{QingRaske,QingRaskeExistence} obtained existence and compactness on locally conformally flat manifolds with positive Yamabe invariant and Poincar\'e exponent less than $(n-4)/2$, with the round conformal class excluded for compactness.
Gursky--Malchiodi \cite{GurskyMalchiodi} established a strong maximum principle and constructed a constant-$Q_4$ metric by a nonlocal flow when $R_g\ge0$, $Q_{4,g}\ge0$, and $Q_{4,g}\not\equiv0$.
Hang--Yang \cite{HangYang} replaced the scalar-curvature assumption by positivity of the Yamabe invariant, using a variational approach.
For $n\ge6$, Gursky--Hang--Lin \cite[Corollaries 1.1--1.2]{GurskyHangLin} obtained existence under positivity of the Yamabe invariant and the Paneitz operator.

For fourth-order compactness, Hebey--Robert \cite{HebeyRobert} also treated locally conformally flat manifolds under their strong positivity condition.
Li--Xiong \cite[Theorem 1.1]{LiXiong} proved compactness for $5\le n\le9$, outside the round conformal class, assuming invertibility of the Paneitz operator, positivity of its Green function, and the corresponding positive-mass property.
They also treated locally conformally flat manifolds of positive Yamabe type and, for $n\ge8$, manifolds with nowhere-vanishing Weyl tensor, under the same invertibility and Green-function assumptions.
Gong--Kim--Wei \cite[Theorem 1.1]{GongKimWei} extended the dimension range to $5\le n\le24$, outside the round conformal class, under those operator and mass hypotheses, or under the geometric assumptions $Y(M,[g])>0$, $Q_{4,g}\ge0$, and $Q_{4,g}\not\equiv0$; here $Y(M,[g])$ is the Yamabe invariant.
Wei--Zhao \cite{WeiZhao} constructed noncompact examples for every $n\ge25$.

For $N=3$, Chen--Hou \cite[Theorem 1.1]{ChenHou} proved existence for prescribed positive $Q_6$ on closed positive Einstein manifolds with $n\ge10$, under a nonvanishing Weyl condition and vanishing conditions on derivatives of the prescribed function at a maximum point.
Gong--Kim--Wei \cite[Theorems 1.3--1.4]{GongKimWei} proved compactness for $7\le n\le26$, outside the round conformal class, assuming invertibility, positivity of the Green function, and the corresponding positive-mass property, and constructed noncompact examples for every $n\ge27$.

For arbitrary $N \in \mathbb{N}$ with $n>2N$, Qing--Raske \cite[Theorems 1.1--1.2]{QingRaskeExistence} obtained existence and compactness in the locally conformally flat case with positive Yamabe invariant and Poincar\'e exponent less than $(n-2N)/2$, again excluding the round conformal class for compactness.
Mazumdar--V\'etois \cite[Theorem 1.2]{MazumdarVetois} proved existence of a constant-$Q_{2N}$ metric under positivity of the higher-order Yamabe invariant and of the Green function, assuming in addition positive mass at some point if $2N+1\le n\le2N+3$ or the manifold is locally conformally flat.
More recently, Mazumdar--Ndiaye \cite[Theorem 1.1]{MazumdarNdiaye} obtained existence in these latter settings by the barycenter method, assuming positivity of the higher-order Yamabe invariant and of the Green function, without an assumption on the sign of the mass.
For compactness, Mazumdar--Premoselli \cite[Theorem 1.1]{MazumdarPremoselli} assume invertibility and positivity of the Green function and treat locally conformally flat manifolds with positive mass, as well as the full range $2N+1\le n\le2N+5$ under the appropriate mass hypothesis.
In this range, positive mass is required at every point for $n\le2N+3$ and at every zero of the Weyl tensor for $2N+4\le n\le2N+5$.
They also prove compactness for $n\ge2N+4$ when the Weyl tensor is nowhere zero.

At order four, Gong--Lee--Wei \cite[Sections 3--4]{GLW} use the second variation of localized total $Q_4$-curvature in their proof of convergence of the Gursky--Malchiodi flow.
Their estimates for corrected bubbles use the decomposition of the metric variation at the round sphere into a TT tensor and a Lie derivative.

\medskip
Our main contribution is a method that treats the second variation of total $Q_{2N}$-curvature in dimensions $n>2N$ and the noncompactness construction for arbitrary $N$ within a single framework.
The ambient construction of Fefferman--Graham \cite{FGJuhl} allows us to compute the second variation in TT directions via the ordinary differential equation \eqref{eq:TTambient} and its Taylor recurrence, with $N$ treated as a parameter.
This avoids deriving the complete curvature formulas for the extended obstruction tensors and volume coefficients at each order before differentiating them.
On the selected family of symmetric two-tensors whose components are polynomials in Euclidean coordinates, polynomial ambient extensions express the entries of the energy matrix $E$ in \eqref{eq:Ematrix} as finite sums; the identity \eqref{eq:telescoping} writes each summand as a difference of consecutive terms, so that the sum reduces to its endpoints.
This keeps the size of the algebraic systems independent of $N$.
The rational formulas of Theorem \ref{thm:closedentry} yield exact matrix signs over infinite ranges of $(N,n)$.

We use the mass condition from the companion compactness theorem \cite[Theorem 1.1]{CompactnessMass}.
For a smooth metric $g$ on $\Sph^n$ with $n>2N+3$, we define
\begin{equation}\label{eq:mass-locus}
\mathcal Z(g):=\left\{\xi\in\Sph^n:
\nabla_g^jW_g(\xi)=0\text{ for }0\le j\le
\left\lfloor\frac{n-2N-4}{2}\right\rfloor\right\}.
\end{equation}
Here $W_g$ is the Weyl curvature tensor.
If $P_{2N,g}$ is invertible and $\xi\in\mathcal Z(g)$, the \emph{Green function mass} $m_g(\xi)$ is the constant remaining in the expansion of the Green function in a fixed smooth family of conformal normal coordinates after subtraction of the leading radial singularity and all nonradial terms through homogeneity zero.
The mass condition is
\begin{equation}\label{eq:mass-condition}
m_g(\xi)>0\quad\text{for every }\xi\in\mathcal Z(g).
\tag{M}
\end{equation}

Our first main result combines the new construction for $N\ge4$ with the known cases $N=1,2,3$ due, respectively, to Brendle and Brendle--Marques, Wei--Zhao, and Gong--Kim--Wei, whose metric-perturbation framework we adapt \cite{BrendleBlowup,BrendleMarques,WeiZhao,GongKimWei}.

\begin{maintheorem}\label{thm:main}
Let $N\ge1$ and $n>2N$ be integers satisfying
\begin{equation}\label{eq:dimensionrange}
n\ge
\begin{cases}
2N+23 &\text{for } N=1,\\
2N+21 &\text{for } N=2,3,\\
2N+20 &\text{for } N=4,5,\\
2N+19 &\text{for } N=6,7,8,\\
2N+18 &\text{for } 9\le N\le17,\\
2N+17 &\text{for } N\ge18.
\end{cases}
\end{equation}
There exist a smooth, non-locally-conformally-flat metric $g$ on $\Sph^n$ and a sequence $\{u_j\}_{j \in \mathbb{N}} \subset C^\infty(\Sph^n)$ of positive solutions of \eqref{eq:introPDE} with $\max_{\Sph^n}u_j\to\infty$ as $j\to\infty$.
For every fixed $L \in \mathbb{N} \cup \{0\}$ and every $\epsilon>0$, the metric can be chosen to satisfy $\|g-g_c\|_{C^L(g_c)}<\epsilon$, where $g_c$ is the unit round metric on $\Sph^n$ and the $C^L(g_c)$-norm is defined in \eqref{eq:Sobolev-norms}.
It can also be chosen so that $P_{2N,g}$ is coercive, its Green function is strictly positive off the diagonal, and the mass condition \eqref{eq:mass-condition} holds.
\end{maintheorem}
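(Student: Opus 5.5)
For $N=1,2,3$ the statement is the combination of the results of Brendle, Brendle--Marques, Wei--Zhao, and Gong--Kim--Wei cited above, so the new content is $N\ge4$. There I follow the Brendle / Brendle--Marques perturbative gluing scheme, adapted to order $2N$.

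\emph{Set-up.} Fix an algebraic Weyl tensor $W$ on $\R^n$ and consider metrics $g=\exp(h)$ on $\R^n$, pulled back to $\Sph^n$ by stereographic projection. The perturbation $h$ is trace free, divergence free and compactly supported, and is built from finitely many dilated and translated copies
\[
h(x)=\sum_{k}\mu\,\lambda_k^{\,2d}\,f(\lambda_k^{-1}|x-x_k|)\,H(x-x_k),
\qquad H_{ik}(x)=W_{ijkl}x^jx^l ,
\]
with the copies centered at widely separated points $x_k$ at geometrically shrinking scales $\lambda_k\to0$. The exponent $2d$ is chosen so that the $C^L$ norm of the perturbation stays small. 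I then run a Lyapunov--Schmidt reduction around the standard bubbles $u_{\xi,\epsilon}$ of $P_{2N}$ on $\R^n$. The linearized operator at $u_{\xi,\epsilon}$ is nondegenerate modulo its $(n+1)$-dimensional kernel, and for $n>2N$ this nondegeneracy is known from the classification of solutions of $(-\Delta)^N u=c_{n,N}u^{(n+2N)/(n-2N)}$ and its linearization. This reduces \eqref{eq:introPDE} near the $k$-th copy to finding a critical point of a reduced energy $\mathcal F(\xi,\epsilon)$ on a compact region of bubble parameters.

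\emph{Expansion of the reduced energy.} I expand $\mathcal F$ in $\mu$. Its quadratic part, with the bubble center fixed, is a positive multiple of the fixed-volume TT Hessian of total $Q_{2N}$-curvature at the round metric, which the abstract formula represents as a degree-$N$ polynomial in the Lichnerowicz Laplacian. The perturbation built from $W$ pulls back to a four-dimensional TT space indexed by the radial profile. So the quadratic energy, as a function of the scale $\epsilon$ relative to $\lambda_k$, becomes a quadratic form in the coefficients of $f$, given by the matrix $E$ in \eqref{eq:Ematrix}, whose entries are rational functions of $(n,N)$ (Theorem~\ref{thm:closedentry}). Choosing $f$ appropriately produces a strict local minimum of the scale function $\epsilon\mapsto\mathcal F(0,\epsilon)$ provided the relevant matrix is indefinite, or more precisely not negative semidefinite. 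Rotational and symmetry considerations place the center at $\xi=0$ as a nondegenerate critical point. The higher-order terms in $\mu$ and the reduction error are $o(\mu^2)$ uniformly, so a stable critical point survives for each $k$. This gives solutions $u_k$ concentrating at $x_k$ with $\max u_k\sim\lambda_k^{-(n-2N)/2}\to\infty$.

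\emph{The main obstacle.} The hardest step is the exact sign analysis of this matrix over the infinite range of $(N,n)$ in \eqref{eq:dimensionrange}. I plan to clear denominators and exhibit an explicit test vector with positive energy. Its value is a polynomial in $n$ and $N$, and I will show it is positive by substituting $n=2N+a+t$ with $t\ge0$ and checking that all coefficients are positive for large $N$, together with a finite check for the small $N$ in each listed case. The reduction must also be justified for a $2N$-th order operator, which requires a priori weighted estimates for the linearized operator. I expect these to follow from Green function bounds for $(-\Delta)^N$, with smallness of $h$ in $C^L$.

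\emph{Remaining properties.} Because the metric is $C^L$-close to $g_c$, $P_{2N,g}$ is coercive and its Green function is positive. Non-local-conformal-flatness holds because $W\neq0$. For the mass condition, I arrange that $\mathcal Z(g)$ lies in the region where $h=0$, that is, the round region away from the bumps. There, conformal normal coordinates are locally flat, and the mass is positive by a positive mass argument for the GJMS Green function at flat points. This is justified by the perturbative closeness to the round sphere, whose mass is zero, together with the sign of the correction, which I would compute from $h$ as a positive quadratic term.
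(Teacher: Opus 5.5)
Your argument for the mass condition \eqref{eq:mass-condition} has a genuine gap. It cannot be localized to the round region. $\mathcal Z(g)$ also contains the boundaries of the bump supports, the accumulation point of the bumps, and any points of the cutoff annuli where the Weyl tensor and its first $\lfloor(n-2N-4)/2\rfloor$ derivatives vanish. More basically, $m_g(\xi)$ is a global invariant of the Green function, so flatness near $\xi$ says nothing about its sign.

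A positive mass theorem for $P_{2N}$ at flat points is not available for $N\ge3$, which includes the whole new range $N\ge4$. For $N=1,2$ the paper uses \cite[Corollary 5.12]{CompactnessMass}, which rests on the classical positive mass and positive energy theorems. Note also that the cited constructions for $N\le3$ do not themselves give \eqref{eq:mass-condition}, so ``combining'' them does not cover that clause.

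Your fallback, zero round mass plus a positive quadratic correction, is the paper's idea. However, none of what makes it a proof is present:
(i) You need a finite part $\mathfrak m(g,\xi)$ defined smoothly at every pole, equal to $m_g$ on $\mathcal Z(g)$, with $\mathfrak m(g_c,\cdot)=0$ and zero first variation at $g_c$ (Lemma~\ref{lem:CN-finite-part}).
(ii) You need a reason the second variation is positive. The paper writes the mass through Green's identity on the inverted asymptotically flat model. It uses the flat Hessian $D^2\mathcal Q|_\delta[K,K]=-\hat c_{N,n}\|\Pi_{\mathrm{TT}}K\|_{\mathcal H}^2+(n-2N)\|\omega\|_{\mathcal H}^2$, where $\hat c_{N,n}>0$ comes from the leading coefficient of $\mathscr P_N^{(n)}$. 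The conformal part then cancels exactly, and nonvanishing follows from $DW_{g_c}[H]\neq0$.
(iii) You need uniformity over the moving set $\mathcal Z(g_t)$.

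Most importantly, a bound $m\ge ct^2$ with $O(t^3)$ errors requires a \emph{fixed} ray $g_c\exp(tg_c^{-1}H)$. The paper first fixes the whole infinite configuration: the $x_j,\rho_j,\lambda_j$ and the relative amplitudes. It then multiplies this configuration by a common amplitude $t$. It checks that \eqref{eq:outerchoice} only acquires a factor $t^{-2}$, so the blow-up persists for each fixed small $t$. Only then is $t$ chosen. If you tune amplitudes to the gluing instead, there is no single quadratic form whose positivity you can invoke.

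The critical-point step also has a gap. Your scale argument is fine: if $a^{\mathsf T}Sa>0$ with $\tau\neq0$, then $\partial_\eps F(0,\eps)\sim4E_{00}\tau^2\eps^3<0$ near $0$ forces a strict local minimum in $\eps$. But symmetry of $W$ only gives $\nabla_\xi F(0,\eps)=0$ and $D_\xi^2F(0,1)=(a^{\mathsf T}Ta)\Id_n$. It neither makes $a^{\mathsf T}Ta\neq0$ nor fixes its sign.

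The paper secures $a^{\mathsf T}Ta\neq0$ on the zero set of $a^{\mathsf T}Sa$ in several steps. It computes $T_{00}=0$ and $T_{01}=2(n+4)E_{00}/n\neq0$, using gauge nullity of affine tensors and $\Delta H_1=2(n+4)H_0$. It rules out proportionality of $T$ to $S$ and applies the null-cone lemma, adding a second Weyl tensor when $\operatorname{rank}S=2$. Because $a^{\mathsf T}Ta$ may be negative, the critical point can be a saddle. Uniform $o(\mu^2)$ errors then do not suffice; you need the $C^1$ expansion \eqref{eq:fixedreducedexpansion} and a degree argument.

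Lastly, one explicit test vector cannot certify the whole range. By Lemma~\ref{lem:common-tail-obstruction}, no nonzero $a$ satisfies $a^{\mathsf T}S_N(n)a>0$ at both $(N,n)=(4,28)$ and $(18,53)$. Near the sharp endpoints the margin above the crossing $n_N$ is also very small. The paper instead optimizes $\tau$ through the discriminant $\mathfrak d_{\mathbf c}$ and uses two different tails.
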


Section \ref{sec:positive-mass} specifies the normalization of the Green function mass and proves the mass assertion in Theorem \ref{thm:main}.
The proof computes the second variation of this finite part at the round metric and establishes positivity uniformly over the points in \eqref{eq:mass-locus}, which may move with the perturbation.

\begin{remark}\label{rem:compactness-in-preparation}
For every new case $N\ge4$, our construction uses a radial polynomial of degree $M=3$ in $|x|^2$, giving a polynomial metric perturbation of degree $2M+2=8$.
The integrability condition $n>2N+4M+4$ therefore becomes $n>2N+16$, whereas degree $M=4$ would require $n>2N+20$.
Thus the choice $M\ge4$ suggested immediately before \cite[Conjecture 1.6]{GongKimWei} is not necessary for the higher-order construction.
Theorem \ref{thm:cross} further places the determinant crossing between $2N+16$ and $2N+17$ for all sufficiently large $N\in\mathbb N$, explaining the eventual transition between compactness and noncompactness.
The companion compactness theorem \cite[Theorem 1.1]{CompactnessMass}, together with the mass assertion in Theorem \ref{thm:main}, proves that the dimension ranges in \eqref{eq:dimensionrange} are sharp under the same operator and mass hypotheses.
\end{remark}

The second main result identifies the quadratic energy underlying the construction.
Throughout this paper, metric variations are symmetric covariant two-tensors.
At a reference metric $g$, a TT tensor $h$ satisfies $\tr_g h=0$ and $\nabla_g^i h_{ij}=0$.
For the fixed order $N$, the total $Q_{2N}$-curvature is
\[
\mathcal Q(g)\coloneqq\int_M Q_{2N,g}\,dv_g.
\]
We use $g_E$ for an Einstein reference metric satisfying $\Ric_{g_E}=(n-1)g_E$.
For tensors $h,k$ that are trace-free with respect to $g_E$, the fixed-volume second variation is the mixed derivative along $g_E\exp(s_1g_E^{-1}h+s_2g_E^{-1}k)$ at $s_1=s_2=0$; this path preserves the volume form pointwise.
We set $\cA\coloneqq-(\Delta_L+2(n-1))$, with the Lichnerowicz Laplacian convention specified in \eqref{eq:Aconvention}; on the unit round sphere, $\cA=\nabla^*\nabla+2$ on TT tensors.
Here $\nabla^*$ denotes the formal $L^2$ adjoint of the covariant derivative $\nabla$.
In Theorem \ref{thm:ambientHessian}, we pair tensors using the $L^2(g_E)$ inner product.

\begin{maintheorem}\label{thm:ambientHessian}
Let $N\ge1$, $n>2N$, and let $(M^n,g_E)$ be a smooth closed Einstein manifold with $\Ric_{g_E}=(n-1)g_E$.
For smooth TT tensors $h,k$, the pointwise first variation $D_g(P_{2N,g}1)|_{g_E}[h]$ vanishes, and
\begin{equation}\label{eq:TTpolynomial}
D^2\mathcal Q|_{g_E}(h,k)
=\left\langle h,\mathscr P_N^{(n)}(-\cA)k\right\rangle_{L^2(g_E)}.
\end{equation}
Here $\mathscr P_N^{(n)}(b)$ is a polynomial of degree $N$, with coefficients rational in $n$, determined by the finite recurrences \eqref{eq:qrecurrence}, \eqref{eq:thetarecurrence} and the coefficient formula \eqref{eq:WronskianHessian}.
In particular, if $-\cA h=bh$, then $D^2\mathcal Q|_{g_E}(h,h)=\mathscr P_N^{(n)}(b)\|h\|_{L^2(g_E)}^2$.
The bilinear form \eqref{eq:TTpolynomial} extends continuously to TT tensors in the Sobolev space $H^N(M;\operatorname{Sym}^2T^*M)$.
\end{maintheorem}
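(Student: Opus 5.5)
The plan is to compute $P_{2N,g}1$ via the Fefferman--Graham ambient metric and Poincaré--Einstein description, and to linearize around $g_E$ in TT directions. For an Einstein metric with $\Ric_{g_E}=(n-1)g_E$ the Poincaré--Einstein filling is explicit: $g_+=r^{-2}\bigl(dr^2+(1-r^2/4)^2g_E\bigr)$. The ambient metric is then a warped product in which the scalar $Q_{2N}$ is a fixed constant. This makes the pointwise first variation $D_g(P_{2N,g}1)|_{g_E}[h]$ computable by naturality. It must be a natural linear scalar in $h$, built from $g_E$, its curvature, and $h$. Every such scalar contracts $h$ into $\tr h$, $\nabla^i\nabla^jh_{ij}$, or curvature contractions $\langle\Ric,h\rangle=(n-1)\tr h$. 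There remain higher terms like $\langle\nabla^k\Ric,\nabla^k h\rangle$ (which vanish by parallel Ricci) and $W$-contractions with $\nabla\nabla h$, which integrate to divergence and trace terms. Near an Einstein metric the expansion of $P_{2N}1$ is governed by the ODE in $r$, so the linearization reduces to traces and divergences of the $g_r$-variation. The cleanest way is to show that the linearized Poincaré--Einstein filling in a TT direction stays TT for each $r$, so the linearized volume expansion coefficients $v^{(2j)}$ vanish. That gives the first claim.

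For the Hessian, I would write the fixed-volume path $g_s=g_E\exp(s g_E^{-1}h)$ and solve the linearized Einstein equation for the filling. For a TT $h$ the perturbation $g_r=(1-r^2/4)^2 g_E + s\,\phi(r)h+O(s^2)$ reduces, after using $\cA$ on TT tensors, to an ODE in $r$. Taking $h$ to be an eigentensor with $-\cA h=bh$, this is a hypergeometric-type ODE (the paper's \eqref{eq:TTambient}) with parameter $b$ and regularity at $r=0$. Its even Taylor coefficients $\phi_{2j}$ satisfy a recurrence in $j$ whose coefficients are polynomial in $b$ and rational in $n$; this is \eqref{eq:qrecurrence}. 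For $n>2N$ no log obstruction arises before order $2N$, so the recurrence runs to $j=N$ and each $\phi_{2j}$ is a polynomial of degree $j$ in $b$.

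Next I would express $\mathcal Q$ through the volume renormalization. By Graham--Zworski/Juhl, $\int Q_{2N}$ equals a constant times the coefficient $\int v^{(2N)}$ of the renormalized volume expansion, or for $n>2N$ the corresponding holographic formula $Q_{2N}=\sum$ of $v^{(2j)}$ composed with the Juhl operators. Second-order variation of $v^{(2N)}=\det(g_E^{-1}g_r)^{1/2}$ in $s$ produces $-\tfrac14\tr(\psi^2)$-type quadratic terms plus $\tr$ of the second-order correction. The latter integrates away by the divergence/trace structure of the second-order Einstein equation, leaving a bilinear expression in the $\phi_{2j}$. The quadratic form in the Taylor coefficients collapses to a Wronskian-type boundary expression by the ODE's self-adjoint structure; the paper records the resulting coefficient as \eqref{eq:WronskianHessian}, with auxiliary $\theta$-recurrence \eqref{eq:thetarecurrence}. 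The result is $\mathscr P_N^{(n)}(b)\|h\|^2$ on each eigenspace, and polarization plus the spectral decomposition of $\cA$ on TT tensors (self-adjoint elliptic) gives \eqref{eq:TTpolynomial}. Since the degree is $N$, $|\langle h,\mathscr P(-\cA)k\rangle|\lesssim\|h\|_{H^N}\|k\|_{H^N}$, which gives continuous extension. The closed TT subspace of $H^N$ is the closure of smooth TT tensors because projection onto TT commutes with elliptic regularity.

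The main obstacle is the middle step. I must justify that the second variation of the $Q$-curvature integral, rather than just the volume coefficient, is captured by the TT ambient ODE alone. The non-TT parts of the second-order filling correction, and the nonlocal contributions from the Juhl operators acting on first-order variations, must be shown to drop out after integration; the latter rely on the pointwise vanishing of the first variation. I would first try to reduce entirely to the identity $\int_M Q_{2N}\,dv=c\int_M v^{(2N)}\,dv$, valid on closed manifolds up to a divergence. I would then verify degree exactly $N$ by checking that the leading coefficient in $b$ is nonzero for $n>2N$.
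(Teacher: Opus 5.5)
Your set-up of the first-order filling is right and matches the paper: a TT direction propagates as $a^2\psi_bh$, with $\psi_b$ solving \eqref{eq:TTambient}, and the recurrence \eqref{eq:qrecurrence} follows.

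\textbf{The gap: passing from this data to $D^2\mathcal Q$.}
\begin{itemize}
\item The identity you propose to reduce to, $\int_MQ_{2N}\,dv=c\int_Mv^{(2N)}\,dv$, is the critical-dimension holographic identity. It fails for $n>2N$. Already for $N=2$, \eqref{eq:Q-volume-low-transform} gives $\mathscr P_2^{(n)}=4\mathscr R_2^{(n)}+\frac{n(n-4)}2\mathscr R_1^{(n)}$. This is not a multiple of the volume Hessian $\mathscr R_2^{(n)}$ for $n>4$.
\item Going through Juhl's formulas does not make the extra terms drop out either. Second variations of the building-block operators applied to the constants $v_j(g_E)$ survive integration and feed in lower-order Hessians. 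The answer for $n>2N$ is a genuine combination $\sum_j\beta_{N,j}(n)\mathscr R_j^{(n)}$.
\item The trace of the second-order metric correction does not ``integrate away.'' Its integral is exactly what $\Theta_{h,k}$ records. It is fixed by the normal equation \eqref{eq:FGnormal} as $(a^2\Theta')'=-a^2(\psi_b')^2$, with initial slope $\Theta'(0)=b/(2(n-1))$ coming from the Hessian of total scalar curvature.
\end{itemize}
So the claim that a quadratic form in the $\phi_{2j}$ ``collapses to a Wronskian by the ODE's self-adjoint structure'' assumes the result. The Wronskian behind \eqref{eq:WronskianHessian} is not built from the metric coefficients at all.

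\textbf{The missing idea: work with the scalar problem that defines $Q_{2N}$, not with volume coefficients.}
\begin{enumerate}
\item Represent $Q_{2N}$ as the obstruction coefficient \eqref{eq:obstructionnormalization} of the formal harmonic extension $\mathsf t^\nu f$ of $1$.
\item Differentiate the extension equation \eqref{eq:ambientlaplacian} twice and integrate over $M$. The first-order terms vanish because $\dot v=0$ and $\dot f=0$. The term $a^{-2}\Delta_{g_E}\ddot f$ integrates to zero. The only source is the integrated second variation of $v'/v$, namely $\Theta'/2$. For the unknown $u=\int_M\ddot f\,dv_{g_E}$ this gives
$L_\nu u+\Theta'(\frac\nu2f_N-\rho f_N')=\frac{\nu}{2^{N-1}((N-1)!)^2}\,D^2\mathcal Q\,\rho^{N-1}+O(\rho^N)$.
\item Apply the Lagrange identity \eqref{eq:ambient-integration-by-parts} for $L_\nu$ with weight $\rho^{-N}a^n$. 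Together with $u(0)=0$ and $L_\nu f_N=O(\rho^{N-1})$, it gives $[\rho^{N-1}](a^nf_NL_\nu u)=0$.
\item Multiply the equation in step 2 by $a^nf_N$ and take the $\rho^{N-1}$ coefficient. This eliminates $u$ and produces the weight $\frac\nu2f_N^2-\rho f_Nf_N'$ in \eqref{eq:WronskianHessian}. The relevant Wronskian is that of $f_N$ and $u$.
\end{enumerate}

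\textbf{The first-variation claim also needs this extension.} As written, your argument does not give pointwise vanishing:
\begin{itemize}
\item Integration by parts says nothing pointwise.
\item Natural linear scalars such as $W_{iabc}W_j{}^{abc}h^{ij}$ need not vanish for TT $h$ on a non-round Einstein manifold.
\item Vanishing of the linearized $v^{(2j)}$ alone does not control $P_{2N}1$ when $n>2N$.
\end{itemize}
What works is this: $\dot v=0$, and the varied spatial Laplacian annihilates the spatially constant $f_N$. Uniqueness of the Taylor recursion then forces $\dot f=0$, so the obstruction coefficient has zero first variation.
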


\noindent Along the fixed-volume path $g_E\exp(s_1g_E^{-1}h+s_2g_E^{-1}k)$, the second variation of the normalized functional $\Vol(g)^{-(n-2N)/n}\mathcal Q(g)$ is $\Vol(g_E)^{-(n-2N)/n}$ times \eqref{eq:TTpolynomial}.
Thus the fixed-volume Hessian of $\mathcal Q$ and the Hessian of the normalized functional differ by this explicit positive factor.

\begin{remark}\label{rem:higher-Einstein-linearization}
Theorem \ref{thm:ambientHessian} also determines the linearization in TT directions of the higher Einstein tensors introduced by Gover--{\O}rsted \cite[Section 4.3]{GoverOrsted} as variational gradients of total $Q_{2N}$-curvature.
For $n>2N$, we set $\mathsf E_{2N}\coloneqq(n-2N)^{-1}\operatorname{grad}_{L^2}\mathcal Q$, where $\operatorname{grad}_{L^2}\mathcal Q$ denotes the $L^2$-gradient with respect to the metric.
Gover--\O rsted showed that the trace-free part of $\mathsf E_{2N}$ vanishes at Einstein metrics, which means that $\mathsf E_{2N}(g_E)=Q_{2N,g_E}\, g_E/(2n)$.
For every smooth TT tensor $h$, we obtain
\[
D\mathsf E_{2N}|_{g_E}[h]
=\frac{1}{n-2N}\mathscr P_N^{(n)}(-\cA)h
+\frac{Q_{2N,g_E}}{2n}h.
\]
Indeed, the linearized trace and divergence identities, together with $D Q_{2N}|_{g_E}[h]=0$, show that subtracting the last term gives a TT tensor.
Differentiating the variational definition along the fixed-volume path identifies its pairing with any TT tensor $k$ with $(n-2N)^{-1}D^2\mathcal Q|_{g_E}(h,k)$, so \eqref{eq:TTpolynomial} gives the formula.
Equivalently, the linearization of the trace-free part of $\mathsf E_{2N}$ is $(n-2N)^{-1}\mathscr P_N^{(n)}(-\cA)$ on TT tensors.
The first four Hessian polynomials $\mathscr P_N^{(n)}$ are displayed in \eqref{eq:lowP}.
\end{remark}

\begin{remark}\label{rem:obstruction-comparison}
Appendix \ref{app:obstruction-comparison} compares the linearizations of the obstruction tensor and the extended obstruction tensors in TT directions.
It explains the product factorization when $n=2N$ and the distinction from the formulas when $n>2N$.
Lin--Yuan \cite[Proposition 3.4]{LinYuanII} computed the linearization of the Bach tensor in TT directions at an Einstein metric; for $n=4$, the Bach tensor is the ambient obstruction tensor.
Explicit linearizations of the first three extended obstruction tensors appear in \eqref{eq:extended-first}--\eqref{eq:extended-third}.
\end{remark}

\begin{remark}\label{rem:volume-analogue}
Appendix \ref{app:volume-hessians} proves the analogue of Theorem \ref{thm:ambientHessian} for the total renormalized volume coefficients $\mathcal V_N(g)\coloneqq\int_Mv_N(g)\,dv_g$.
It computes the fixed-volume Hessian of $\mathcal V_N$ in TT directions on closed positive Einstein manifolds, proves its strict negativity on the round sphere, and relates it to the Hessian of $\mathcal Q$.
The first three examples are given in \eqref{eq:volume-low-one}--\eqref{eq:volume-low-three}.
\end{remark}

Matsumoto \cite[Theorem 0.2]{Matsumoto} expressed the second variation of total $Q_{2N}$-curvature when $n=2N$ on conformally Einstein manifolds as a product of shifted Lichnerowicz Laplacians.
His tensor GJMS operators also admit product formulas on weighted TT tensors when $n>2N$ \cite[Theorem 4.2]{Matsumoto}; those formulas do not identify the fixed-volume Hessian of total scalar $Q_{2N}$-curvature when $n>2N$.
Theorem \ref{thm:ambientHessian} computes this Hessian on every smooth closed Einstein manifold $(M^n,g_E)$ satisfying $\Ric_{g_E}=(n-1)g_E$.
Proposition \ref{prop:Matsumoto-comparison} records Matsumoto's Hessian formula for $n=2N$ in our conventions; see \eqref{eq:Matsumoto-critical}.
Guillarmou--Moroianu--Schlenker \cite[Section 7]{GuillarmouMoroianuSchlenker} also reduce linearized Einstein equations to hypergeometric ordinary differential equations in their study of the renormalized-volume Hessian at Fuchsian--Einstein fillings.

\medskip
For the noncompactness construction, we restrict Theorem \ref{thm:ambientHessian} to a finite-dimensional subspace of spherical TT tensors generated by algebraic Weyl tensors.
The third main result describes the change in definiteness of the scale derivative of this restricted quadratic form.
Let $W$ be a nonzero algebraic Weyl tensor on $\R^n$, with its defining identities recalled in \eqref{eq:Weyl-symmetries}.
We consider the polynomial perturbations
\[
H_r(W)_{ij}(x)\coloneqq |x|^{2r}W_{ikjl}x^kx^l,\qquad 0\le r\le3,\qquad
h_a\coloneqq\tau H_0(W)+\sum_{j=1}^3a_jH_j(W).
\]
Here $a\coloneqq(\tau,a_1,a_2,a_3)^{\mathsf T}$, and we set $a_0\coloneqq\tau$ in indexed sums.
The tensors $H_0,H_1,H_2,H_3$ have degrees $2,4,6,8$.
The function $F_{h_a}(\xi,\eps)$ is the quadratic coefficient obtained after solving for the linear scalar correction, multiplied by $8/(n-2N)$; its precise definition is \eqref{eq:modelF}.
The parameters $\xi\in\R^n$ and $\eps>0$ denote the center and scale, respectively, of the standard bubble $U_{\xi,\eps}$ defined in \eqref{eq:roundbubble}.
After normalizing the algebraic Weyl tensor $W$ by \eqref{eq:Wnorm}, we define the symmetric $4\times4$ matrix $S_N(n)$ by
\begin{equation}\label{eq:intro-dilation-matrix}
\left.\partial_\eps F_{h_a}(0,\eps)\right|_{\eps=1}
=a^{\mathsf T}S_N(n)a.
\end{equation}
Thus $S_N(n)$ is obtained by differentiating the restricted Hessian in the bubble scale; its entries are specified in \eqref{eq:Ematrix}.
The entries of $S_N(n)$ are well defined for $n>2N+16$ and admit explicit formulas for real $n$ in that range.
Changing the normalization of $W$ multiplies the matrix by a positive scalar and does not change its numbers of positive and negative eigenvalues or its determinant zeros.

If $S_N(n)$ is negative definite, then $\partial_\eps F_{h_a}(0,\eps)|_{\eps=1}<0$ for every nonzero $a$, so no nonzero choice of $a$ makes $\partial_\eps F_{h_a}(0,\eps)$ vanish at $\eps=1$.
When the matrix becomes indefinite, such choices exist.
With suitable Weyl tensors, Proposition \ref{prop:seed} constructs a polynomial perturbation for which the critical point is nondegenerate in all translation and scale variables.
The following theorem locates this change for large $N\in\mathbb N$.

\begin{maintheorem}\label{thm:cross}
There are constants $a_*\in(16,17)$ and $c_*>0$ and a fixed neighborhood of $a_*$ such that, for all sufficiently large $N\in\mathbb N$, the function $n\mapsto\det S_N(n)$ has exactly one zero $n_N$ with $n_N-2N$ in that neighborhood.
Moreover,
\begin{equation}\label{eq:crossstatement}
n_N=2N+a_*+\frac{c_*}{N}+O(N^{-2})\quad\text{as }N\to\infty,
\end{equation}
where $a_*\approx16.2018714780$ and $c_*\approx13.5128796221$.
The constant $a_*$ is the unique root in the open interval $(16,17)$ of the polynomial $G_0$ in Appendix \ref{app:crosspoly}, and $c_*=-G_1(a_*)/G_0'(a_*)$ for the polynomial $G_1$ in that appendix.
When $n-2N$ lies in that neighborhood, $S_N(n)$ is negative definite for $n<n_N$ and has exactly one positive eigenvalue for $n>n_N$.
\end{maintheorem}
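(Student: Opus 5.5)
We would prove Theorem \ref{thm:cross} by an asymptotic analysis of the closed-form entries of $S_N(n)$ supplied by Theorem \ref{thm:closedentry}, in the shifted variable $t\coloneqq n-2N$, with $N\to\infty$ and $t$ kept in a compact interval $J\subset(16,\infty)$ around $(16,17)$.

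\textbf{Step 1: expansion of the entries.} The entries of $S_N(2N+t)$ are finite sums (after the telescoping identity \eqref{eq:telescoping}) of rational expressions in $N$ and $t$, possibly times a common Gamma-ratio prefactor. First we factor out from every entry the same positive quantity $\Lambda_N(t)$; it comes from the bubble integrals and the coefficient $c_{n,N}$, and it depends only on $(N,t)$, not on $(i,j)$. After removing $\Lambda_N(t)$, and possibly rescaling row and column $j$ by a positive power $N^{\alpha_j}$ to balance the degrees $2,4,6,8$, we get a matrix $\widetilde S_N(t)$ congruent to $S_N(n)$. Its entries should be rational functions of $t$ and $1/N$, analytic in $1/N$ near $0$ uniformly for $t\in J$. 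Expanding gives $\widetilde S_N(t)=S^{(0)}(t)+N^{-1}S^{(1)}(t)+O(N^{-2})$ in $C^1(J)$. Hence
\[
\det\widetilde S_N(t)=\kappa(t)\bigl(G_0(t)+N^{-1}G_1(t)+O(N^{-2})\bigr),
\]
where $\kappa>0$ on $J$ and $G_0,G_1$ are the polynomials of Appendix \ref{app:crosspoly}. Sylvester's law of inertia makes the eigenvalue signs and the determinant zeros of $S_N(n)$ and $\widetilde S_N(t)$ agree.

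\textbf{Step 2: the simple root and the implicit function theorem.} We verify by exact rational arithmetic (Sturm sequences or interval sign changes) that $G_0$ has exactly one root $a_*$ in a neighborhood $U\supset[16,17]$-type interval, and that this root is simple, $G_0'(a_*)\neq0$. Apply the implicit function theorem to $\Phi(t,\delta)=G_0(t)+\delta G_1(t)+\delta^2R(t,\delta)$ with $\delta=1/N$, where $R$ is bounded in $C^1$. On a small neighborhood of $a_*$, $\partial_t\Phi$ is bounded away from $0$ for small $\delta$, which gives uniqueness there. Away from $a_*$, $|G_0|$ is bounded below, which rules out other zeros. The expansion of the root is $t_N=a_*-\delta G_1(a_*)/G_0'(a_*)+O(\delta^2)$, which is \eqref{eq:crossstatement}, with the numerics of $a_*$ and $c_*$ read off from $G_0,G_1$. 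The sign of $c_*$ is checked by evaluating at the algebraic number $a_*$ through a resultant or interval arithmetic.

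\textbf{Step 3: inertia.} For the sign structure we show that the limit matrix $S^{(0)}(t)$ has three eigenvalues bounded away from $0$ and negative on the whole neighborhood. Concretely, we exhibit a $3\times3$ principal submatrix, for instance the one obtained by deleting the $\tau$ row and column, or a suitable Schur complement, that is negative definite via leading minors on $J$; this holds up to $O(1/N)$ perturbations. Cauchy interlacing then shows that at least three eigenvalues of $S_N$ are negative. So the inertia is determined by the sign of the determinant: $\det<0$ means negative definite, and $\det>0$ means exactly one positive eigenvalue. The direction of the crossing follows from the sign of $G_0'(a_*)$, which must be such that the determinant has sign $(-1)^4=+1$, i.e. is negative definite, for $t<t_N$. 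Equivalently, we check that $S^{(0)}(16)$ is negative definite, and track that the single eigenvalue crossing zero crosses transversally.

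\textbf{Main obstacle.} The hard part is Step 1: obtaining a uniform expansion in $1/N$ of the entries of Theorem \ref{thm:closedentry}. These entries involve the degree-$N$ polynomial $\mathscr P_N^{(n)}$ evaluated on the spectral decomposition of $H_r(W)$, which enters through endpoint terms of telescoped sums. We would first write those endpoint terms as ratios of Gamma functions in $N$ and $t$, extract the common leading Gamma ratio, and apply Stirling-type expansions of the ratios with $t$-shifts. The residual factors are then rational in $(t,N)$. Cancellation of the leading orders among entries must be tracked carefully, since the determinant may vanish to high order in $N$ before $G_0$ appears; the row/column rescalings are chosen precisely to avoid this.
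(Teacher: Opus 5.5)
Your plan is essentially the paper's proof. The paper divides by the common positive factor $C_{N,n}$ and substitutes $n=2N+x$ into the rational entries of Theorem \ref{thm:closedentry}. It expands $\det S_N(n)/C_{N,n}^4=N^8(\mathcal F_0+\mathcal F_1/N+O(N^{-2}))$ with $\mathcal F_0=2^{32}G_0/\mathcal D$, $\mathcal F_1=2^{32}G_1/\mathcal D$, and $\mathcal D>0$ for $x>16$. It then locates $a_*$ by Sturm sequences, applies the implicit function theorem in $\hbar=1/N$, and reads off the inertia from a definite $3\times3$ block via Sylvester's criterion. The block it uses is the one indexed by $H_0,H_1,H_2$, i.e.\ the first three leading principal minors of $M_0$.

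Three details need correcting.
\begin{enumerate}
\item Your inertia sentence has the signs reversed. With three negative eigenvalues, a $4\times4$ symmetric matrix is negative definite iff $\det S_N(n)>0$, and has exactly one positive eigenvalue iff $\det S_N(n)<0$. Your next sentence uses the correct convention.
\item The direction of the crossing is not something that ``must be'' so; it has to be verified. The paper does this in Lemma \ref{lem:crossing-signs} by showing $G_0'<0$ on a rational isolating interval for $a_*$, and notes $\mathcal D(a_*)>0$, so the normalized determinant decreases through zero. Your alternative of checking $S^{(0)}(16)$ is not available. $\mathcal D$ contains the factor $(n-2N-16)$ and $M_{0,33}$ has a pole at $n-2N=16$, the integrability boundary. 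The expansion in $1/N$ is therefore uniform only on compact subsets of $(16,\infty)$, and your interval $J$ must be a neighborhood of $a_*$ inside $(16,17)$, not an interval around $[16,17]$.
\item The obstacle you anticipate does not arise. After division by $C_{N,n}$, every $e_{ij}$ is exactly a rational function of $(n,N^2)$ by \eqref{eq:closed-entry}, so no Stirling expansion is needed. All entries of $-S_N(n)/C_{N,n}$ are of the same order $N^2$, so no row or column rescalings by powers of $N$ are needed. Finally, $\det M_0=\mathcal F_0$ is not identically zero, so there is no higher-order cancellation to track.
\end{enumerate}
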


\begin{figure}[htbp]
\centering
\includegraphics[width=0.56\textwidth]{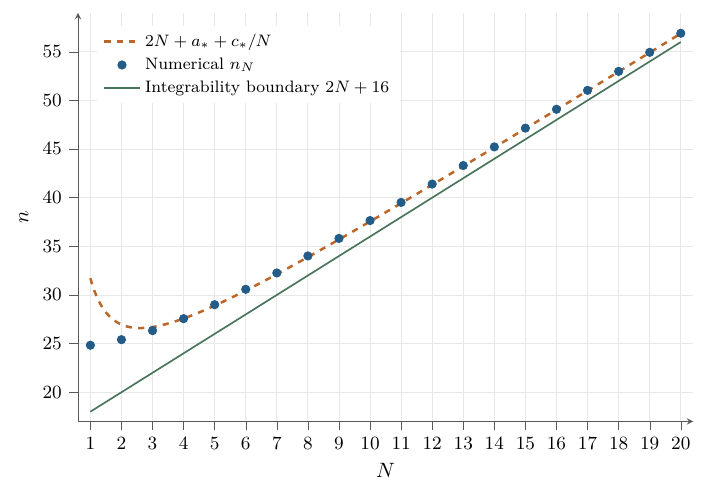}
\caption{The points labeled ``Numerical $n_N$'' are real determinant crossings for the family spanned by $H_0,H_1,H_2,H_3$, compared with \eqref{eq:crossstatement} and the integrability boundary $n=2N+16$.
For $4\le N\le20$, the first integer above each crossing is the sharp bound in Theorem \ref{thm:main}.
The points for $N=1,2,3$ also show the determinant crossings for the cubic family.
For $N=2$, this crossing is approximately $25.406$; this does not conflict with the Wei--Zhao construction at $n=25$, which uses a quartic radial polynomial and therefore lies outside the cubic family considered here.}
\label{fig:dimension-crossing}
\end{figure}

The constants $a_*$ and $c_*$, and the determinant polynomials in Appendix \ref{app:crosspoly} that determine them, are independent of the coefficients $\tau,a_1,a_2,a_3$ chosen for the blow-up profile.
They are determined by the Hessian of $\mathcal Q$ and its scale variation on the four-dimensional family of metric tensors spanned by $H_0,H_1,H_2,H_3$.
Thus they encode the variational structure of $Q_{2N}$ on this family; choosing the fixed coefficient tails in \eqref{eq:fixed-tails} below does not enter their definition.

\begin{question}\label{ques:integrability-gap}
The polynomial symmetric two-tensors of degree at most eight satisfy the integrability condition $n>2N+16$, whereas Theorem \ref{thm:cross} gives $n_N-(2N+16)\to a_*-16\approx0.2018714780$ as $N\to\infty$.
What geometric mechanism explains this small positive limiting gap between the determinant crossing and the integrability boundary?
\end{question}

Since $16<a_*<17$, the first integer dimension above the local transition is $2N+17$ for all sufficiently large $N\in\mathbb N$, which explains the eventual dimension range in Theorem \ref{thm:main}.
Theorem \ref{thm:algebraic} establishes this bound for every $N\ge18$, as well as the stated bounds for $4\le N\le17$, using the polynomial inequalities in Proposition \ref{prop:tail-certificates}.
Figure \ref{fig:dimension-crossing} compares the numerically computed crossings for $1\le N\le20$ with the two-term approximation in \eqref{eq:crossstatement}.
The continuation to real $n$ concerns only the matrix formulas, not a curvature equation in noninteger dimension.

For the construction with $N\ge4$, the polynomial metric perturbations have degree at most eight, and all the stated dimensions satisfy $n>2N+16$.
The inequality $n>2N+16$ ensures convergence of the compactly supported approximations in the tensor Sobolev space $H^N$.
Specifically, let $\chi_R(x)\coloneqq\chi(|x|/R)$, where $\chi$ is a fixed smooth cutoff equal to one on $[0,1]$ and zero on $[2,\infty)$.
For each polynomial profile $H$, let $\bar H\coloneqq\Omega^2H$, where $\Omega$ is defined in \eqref{eq:roundbubble}.
The compactified tensors $\Omega^2\chi_RH$ converge to $\bar H$ in $H^N(\Sph^n;\operatorname{Sym}^2T^*\Sph^n)$ as $R\to\infty$, with the Sobolev norm defined by $g_c$.
The corresponding source terms $f_{\chi_RH,\xi,\eps}$, defined in \eqref{eq:linear-metric-source}, converge to $f_{H,\xi,\eps}$ in $\mathcal H^*=(\dot H^N(\R^n))^*$ as $R\to\infty$; see Lemmas \ref{lem:cutoff-convergence} and \ref{lem:source-limits}.

We call the three nonconstant coefficients $(a_1,a_2,a_3)$ of the radial cubic polynomial the \emph{coefficient tail}.
The following two choices suffice to prove that $S_N(n)$ is indefinite throughout the dimension ranges in Theorem \ref{thm:main}:
\begin{equation}\label{eq:fixed-tails}
(a_1,a_2,a_3)=
\begin{cases}
(100,-20,1) &\text{for } 4\le N\le17,\\
(150,-29,1) &\text{for } N\ge18.
\end{cases}
\end{equation}
Each tail is independent of both $n$ and $N$ within its stated range.
Once indefiniteness is established, Proposition \ref{prop:seed} selects coefficients for which the reduced energy has a nondegenerate critical point in the bubble center and scale.

\medskip
\noindent\textbf{Novelty of the paper.}
The first difficulty is to compute the Hessian of total $Q_{2N}$-curvature in TT directions for arbitrary $N \in \mathbb{N}$ and $n>2N$, despite the complexity of the underlying nonlinear invariants.
Graham's construction \cite[Theorem 1.2]{GrahamExtended} and Juhl's formulas \cite[Theorem 1.2]{Juhl} provide a systematic route from the Schouten tensor and extended obstruction tensors, through the ambient metric and renormalized volume coefficients, to $Q_{2N}$.
Following this route by direct curvature expansion requires solving the nonlinear ambient equations and then differentiating the resulting formulas.
Graham describes the construction of the volume coefficients as an ``indirect, highly nonlinear, inductive algorithm'' \cite[p. 1958]{GrahamExtended}, a difficulty also emphasized by Guo--Han--Li \cite[pp. 258--259]{GuoHanLi}.
This complexity is already visible in the second extended obstruction tensor $\boldsymbol\Omega^{(2)}$ and the fourth volume coefficient $v_4$, whose curvature expressions involve derivatives and multiple nonlinear contractions, although $v_4$ has a compact expression in terms of the extended obstruction tensors \cite[Section 2]{GrahamExtended}.

To avoid these full curvature expansions, we use Fefferman--Graham's ambient proof of Juhl's formulas \cite{FGJuhl}, which expresses the GJMS operators and $Q_{2N}$-curvature through the ambient Laplacian acting on functions and the relative volume density.
Proposition \ref{prop:holographic-Q} restates the formula \cite[(2.7)]{FGJuhl} in our notation and sign conventions, with $v^{1/2}$ corresponding to their function $w$.
Starting from the conjugation identity \cite[(2.4)]{FGJuhl}, Lemma \ref{lem:scalar-ambient-obstruction} constructs the homogeneous ambient harmonic extension of the constant function $1$ on $M$ to the required Taylor order.
Formula \eqref{eq:obstructionnormalization} then recovers $Q_{2N}$ from the coefficient of $\rho^{N-1}$ in the remaining error in the harmonic equation.

Matsumoto's ambient linearization and Hessian formula for $n=2N$ \cite{Matsumoto} provide the variational background.
For $n>2N$, we linearize the ambient Ricci-flat equations at the Einstein metric $g_E$ before taking the required derivatives in the ambient normal variable $\rho$, introduced in Subsection \ref{subsec:ambient-preliminaries}.
Proposition \ref{prop:TTambient} shows that the linearized equations preserve TT tensors and reduce to the scalar ordinary differential equation \eqref{eq:TTambient} on each Lichnerowicz eigenspace.
The Taylor recurrence \eqref{eq:qrecurrence} then computes the required coefficients of the first variation of $g_\rho$ for arbitrary $N$.

The second variation of $g_\rho$ is needed only through the integrated mixed variation of $\log\det g_\rho$.
The bilinear version of \eqref{eq:ThetaODE} determines the integrated mixed variation of $\log\det g_\rho$ from the first variations of $g_\rho$.
After differentiating \eqref{eq:obstructionnormalization} in TT directions, we use the integration-by-parts identity \eqref{eq:ambient-integration-by-parts} to eliminate the unknown mixed variation of the harmonic extension.
This yields the Hessian formula \eqref{eq:WronskianHessian} without computing the full second variation of the ambient metric.
Equation \eqref{eq:ThetaODE} also determines the Hessians of total renormalized volume coefficients in TT directions, as shown in Appendix \ref{app:volume-hessians}.
For order six, \cite[Remark 10.14]{GongKimWei} simplifies the Pohozaev quadratic form by pairing the second metric variation of the GJMS operator with the bubble and its dilation field; our formula for $D^2\mathcal Q|_{g_E}$ applies to every order with $n>2N$.

\medskip
The second difficulty is to keep the algebra bounded as $N$ grows.
Although the Taylor recurrence \eqref{eq:qrecurrence} requires only finitely many steps at each order, the number of steps depends on $N$.
For $H_0,H_1,H_2,H_3$, we instead derive ambient extensions of degree at most eight in the variable $R$ of \eqref{eq:ambient-R}.
The telescoping identity \eqref{eq:telescoping} evaluates the remaining order-dependent sum through the linear system \eqref{eq:gaussian-system}, whose size is at most eight and whose determinant has an explicit product formula.
Hypergeometric telescoping is classical \cite{Gosper}, and related summation identities have been used for GJMS operators and $Q_{2N}$-curvatures \cite{JuhlKrattenthaler}.
Here the polynomial extensions and the uniform degree bound yield the entries of the energy matrix $E$ as rational functions of $n$ and $N^2$.

These rational formulas make it possible to prove signs over infinite parameter ranges using a single quadratic discriminant.
For $N\ge18$, the discriminant \eqref{eq:tail-discriminant}, after multiplication by a positive denominator and shifts of the variables, is a polynomial with positive coefficients throughout $n\ge2N+17$.
For $4\le N\le17$, the same argument with the other coefficient tail uses one polynomial in the shifted dimension per order; Proposition \ref{prop:tail-certificates} gives the exact coefficient checks.
An expansion of the normalized determinant also quantifies the local large-$N$ crossing in Theorem \ref{thm:cross}.
The crossing describes the change in definiteness within the family spanned by $H_0,H_1,H_2,H_3$.

For the polynomial metric perturbations $h$ generated by algebraic Weyl tensors, Proposition \ref{prop:energybridge} and the normalization in \eqref{eq:modelF} identify $F_h(0,\eps)$ with $D^2\mathcal Q|_{g_c}(\Omega^2h(\eps\,\cdot),\Omega^2h(\eps\,\cdot))$.
Lemma \ref{lem:weyl-TT} shows that the basis tensors $\Omega^2H_0,\ldots,\Omega^2H_3$ belong to $H^N(\Sph^n;\operatorname{Sym}^2T^*\Sph^n)$, defined with respect to $g_c$, and that radial cutoffs preserve the TT condition.
When differentiating $F_h(\xi,\eps)$ with respect to the bubble center $\xi$, the compactifications of the translated perturbations need not be TT with respect to $g_c$.
We therefore retain the term $-\langle f_H,L_U^{-1}f_{\widetilde H}\rangle/2$ in $\cE(H,\widetilde H)$, defined in \eqref{eq:correctedE}, for the relevant metric directions $H,\widetilde H$.
Using the indefiniteness of $S_N(n)$, the symmetries of the chosen Weyl tensor, and Lemma \ref{lem:nullcone}, Proposition \ref{prop:seed} selects coefficients for which $(0,1)$ is a nondegenerate critical point of $F_h$.
The argument ensures that the Hessian in the bubble center and scale is invertible but does not determine its sign, so the critical point may be a saddle.
The $C^1$ expansion \eqref{eq:fixedreducedexpansion} gives uniform convergence of the reduced-energy gradients, and the nonzero local degree of $\nabla F_h$ then preserves a critical point even when it is a saddle; see Theorem \ref{thm:fixed-metric-transfer}.
The projected reduction and fixed-metric gluing adapt the framework of \cite{BrendleBlowup,BrendleMarques,WeiZhao,GongKimWei}.

\medskip
\noindent\textbf{Organization of the paper.}
Section \ref{sec:preliminaries} fixes the conventions, recalls the ambient construction of the GJMS operators and $Q_{2N}$-curvature, and introduces the variational formulation used in the construction.
Section \ref{sec:ambient} proves Theorem \ref{thm:ambientHessian} and compares the resulting Hessian formula with that of Matsumoto \cite{Matsumoto} for $n=2N$.
Section \ref{sec:reduction} constructs an inverse for the projected linearized equation and derives the reduced energy, including the scalar correction.
It also justifies approximation of the polynomial metric perturbations by compactly supported tensors.
Section \ref{sec:weylmatrix} restricts the fixed-volume Hessian of $\mathcal Q$ at $g_c$ to the compactified metric perturbations generated by algebraic Weyl tensors and selects a nondegenerate critical point of the reduced energy.
Section \ref{sec:boundedambient} derives exact formulas for the associated matrix entries and proves the indefiniteness of the matrix governing the scale derivative of the reduced energy.
Section \ref{sec:analysis} combines localized metric perturbations into a single smooth metric on $\Sph^n$.
Section \ref{sec:applications} then completes the noncompactness proof and establishes the asymptotic dimension transition, proving Theorems \ref{thm:main} and \ref{thm:cross}, respectively.
Appendices \ref{app:cauchy}--\ref{app:certificates} record the rational summands, determinant coefficients, and exact-arithmetic verification used in the matrix analysis.
Appendix \ref{app:obstruction-comparison} compares the linearizations of the obstruction tensor and extended obstruction tensors in TT directions and derives the factorization of the Hessian operator for $n=2N$.
Appendix \ref{app:volume-hessians} uses \eqref{eq:ThetaODE} to compute the Hessians of total renormalized volume coefficients in TT directions and compares them with the Hessian of $\mathcal Q$.

\medskip
\noindent\textbf{Acknowledgement.}
L. Gong thanks Mingxiang Li, Yanyan Li and Wei Yuan for helpful discussions.
S. Kim thanks Haixia Chen for stimulating discussion.
L. Gong and J. Wei gratefully acknowledge the support of the Research Grants Council of Hong Kong (RGC) for the project titled \emph{On Fujita equation in the critical or supercritical regime}.
S. Kim acknowledges support from the Basic Science Research Program through the National Research Foundation of Korea (NRF), funded by the Ministry of Science and ICT (RS2025-00558417), as well as from the Samsung Science and Technology Foundation (Project Number SSTF-BA2601-01).

\medskip
\noindent\textbf{AI assistance.}
The authors formulated the method of relating compactness and noncompactness problems to the second variation of total $Q_{2N}$-curvature in TT directions and analyzing it through ambient metric theory independently of AI assistance in September 2025, following completion of \cite{GongKimWei}.
OpenAI's Codex was subsequently used to assist with algebraic calculations, ambient recurrences and polynomial coefficient choices, develop exact-arithmetic verification programs, refine exposition, notation and proof details, and carry out additional proof and consistency checks.
The finite computations used in the paper are specified in Appendix \ref{app:certificates}.
The authors are responsible for the final text and mathematical claims.

\section{Preliminaries}\label{sec:preliminaries}
We collect the conventions, ambient equations, and variational framework used to compute the fixed-volume Hessian of $\mathcal Q$ in TT directions and to construct the noncompactness examples.

Spatial tensor indices $i,j,k,l,m$ range from $1$ to $n$; repeated spatial indices are summed.
An index labeling a polynomial tensor is summed only when a summation sign is displayed.
We write $u_+\coloneqq\max\{u,0\}$ and set $B_R(x)\coloneqq\{y\in\R^n:|y-x|<R\}$.
The symbols $R_g$ and $\Ric_g$ denote scalar and Ricci curvature, respectively.
All tensor contractions on $\R^n$ use the Euclidean metric $\delta$ unless another metric is specified.

A polynomial symmetric two-tensor $H=(H_{ij})$ on $\R^n$ means a covariant two-tensor with $H_{ij}=H_{ji}$ whose components are real polynomials in the standard Euclidean coordinates.
For an integer $d\ge0$, the condition $\deg H\le d$ means that every component has total degree at most $d$; homogeneity of degree $d$ also refers to the components.
We use such tensors as directions of metric variation.

For a symmetric two-tensor $h$, the components of $g\exp(g^{-1}h)$ are $g_{ik}(\exp(g^{-1}h))^k{}_j$.
Here $g^{-1}h$ is an endomorphism and $\exp$ denotes its matrix exponential.
In Euclidean coordinates, $\exp(h)$ abbreviates $\delta\exp(\delta^{-1}h)$.
For tensors $h,k$ that are trace-free with respect to $g_E$, fixed-volume second variations mean mixed derivatives along
\begin{equation}\label{eq:fixed-volume-path}
g_E\exp(s_1g_E^{-1}h+s_2g_E^{-1}k)
\quad\text{at }s_1=s_2=0.
\end{equation}
On an Einstein metric $g_E$ with $\Ric_{g_E}=(n-1)g_E$, our Lichnerowicz Laplacian convention is
\begin{equation}\label{eq:Aconvention}
(\Delta_Lh)_{ij}=\nabla^k\nabla_k h_{ij}
+2R_{ikjl}h^{kl}-2(n-1)h_{ij},\qquad
\cA\coloneqq-(\Delta_L+2(n-1)).
\end{equation}
An \emph{algebraic Weyl tensor} is a real four-tensor $W=(W_{ijkl})$ satisfying
\begin{equation}\label{eq:Weyl-symmetries}
W_{ijkl}=-W_{jikl}=-W_{ijlk}=W_{klij},\qquad
W_{ijkl}+W_{iklj}+W_{iljk}=0,\qquad
\sum_{i=1}^n W_{ijil}=0.
\end{equation}
It need not be the curvature tensor of a previously chosen metric.

For a scalar function or covariant tensor $T$ on $(M,g)$, we use
\begin{equation}\label{eq:basic-norms}
\|T\|_{L^r(M,g)}\coloneqq\left(\int_M|T|_g^r\,dv_g\right)^{\frac{1}{r}},\qquad
\|T\|_{L^\infty(M,g)}\coloneqq\mathop{\mathrm{ess\,sup}}_M|T|_g
\quad(1\le r<\infty).
\end{equation}
For an integer $m\ge0$, we define the Sobolev and differentiability norms of $T$ by
\begin{equation}\label{eq:Sobolev-norms}
\|T\|_{W^{m,r}(M,g)}^r
\coloneqq\sum_{j=0}^m\int_M|\nabla_g^jT|_g^r\,dv_g,\qquad 1\le r<\infty,\qquad
\|T\|_{C^m(g)}\coloneqq\sum_{j=0}^m\sup_M|\nabla_g^jT|_g.
\end{equation}
We write $H^m\coloneqq W^{m,2}$.
We define the scalar Sobolev space $H^m(\Sph^n)$ using the round metric $g_c$.
For symmetric covariant two-tensors we write $H^m(\Sph^n;\operatorname{Sym}^2T^*\Sph^n)$; it is the completion of smooth sections of $\operatorname{Sym}^2T^*\Sph^n$ in \eqref{eq:Sobolev-norms}, with $g=g_c$.
Equivalently, its elements and their weak covariant derivatives through order $m$ belong to $L^2(\Sph^n,g_c)$.
For either tensor or scalar norms, $H^m(g_c)$ abbreviates this same derivative norm.
Throughout the paper, when no metric is specified in a Sobolev or differentiability norm on $\Sph^n$, we use $g_c$; any fixed smooth metric defines equivalent norms.
On $\R^n$, the measure is $dx$ and $\partial^jT$ is the full array of order-$j$ Euclidean derivatives.

\medskip
Throughout the paper, including the appendices, $(\alpha)_r$ denotes the Pochhammer symbol, or rising factorial:
\[
(\alpha)_0\coloneqq1,\qquad
(\alpha)_r\coloneqq\prod_{j=0}^{r-1}(\alpha+j)
\quad(r\ge1).
\]
Every empty product equals $1$.
In particular, $\prod_{j=1}^r(x-2j)=2^r(x/2-r)_r$.
The Gauss hypergeometric function ${}_2F_1$ and the generalized hypergeometric function ${}_3F_2$ are defined by \cite[Sections 15.2 and 16.2]{DLMF}
\[
\begin{aligned}
{}_2F_1(\alpha_1,\alpha_2;\beta_1;z)
&=\sum_{r=0}^{\infty}
\frac{(\alpha_1)_r(\alpha_2)_r}{(\beta_1)_r}\frac{z^r}{r!},\\
{}_3F_2\!\left(\begin{matrix}\alpha_1,\alpha_2,\alpha_3\\\beta_1,\beta_2\end{matrix};z\right)
&=\sum_{r=0}^{\infty}
\frac{(\alpha_1)_r(\alpha_2)_r(\alpha_3)_r}
{(\beta_1)_r(\beta_2)_r}\frac{z^r}{r!}.
\end{aligned}
\]
For lower parameters outside $\{0,-1,-2,\ldots\}$, these series converge for $|z|<1$.
When a lower parameter is a nonpositive integer, we use only the explicitly indicated truncation, ending before a denominator vanishes.
In particular, modulo $z^N$ with a lower parameter $1-N$, only terms with $0\le r<N$ are retained; no value of the infinite series at that parameter is required.
The notation $[z^r]f$ denotes the coefficient of $z^r$ in a Taylor or Laurent expansion of $f$; $B(a,b)$ denotes the beta function.

\subsection{Ambient equations and the GJMS construction}
\label{subsec:ambient-preliminaries}
For a smooth positive function $u$, we set $\hat g\coloneqq u^{4/(n-2N)}g$.
The conformal covariance of the GJMS operator is
\begin{equation}\label{eq:covariance}
P_{2N,\hat g}(\varphi)
=u^{-\frac{n+2N}{n-2N}}P_{2N,g}(u\varphi).
\end{equation}
For a fixed Riemannian representative $g$ on $M^n$, the ambient metric $\tilde{g}$ is defined near $\{\rho=0\}$ in the $(n+2)$-dimensional space $(0,\infty)_{\mathsf t}\times M\times\R_\rho$.
It has Lorentzian signature and normal form \cite[Theorem 2.9]{FG}
\begin{equation}\label{eq:ambientnormal}
\tilde{g}\coloneqq2\rho\,d{\mathsf t}^2+2\mathsf t\,d\mathsf t\,d\rho
+{\mathsf t}^2 g_\rho,\qquad g_0=g.
\end{equation}
Here $\mathsf t>0$ and $\rho$ is near zero.
At $\rho=0$, the slice with $\mathsf t$ fixed has metric ${\mathsf t}^2g$.
Thus $\mathsf t$ parametrizes conformal rescalings, while $\rho$ is transverse to the hypersurface $\rho=0$.
The Taylor coefficients of $g_\rho$ are obtained by imposing $\Ric(\tilde{g})=0$ successively at $\rho=0$.
For even $n$, the expansion can be obstructed at order $n/2$, but the coefficients needed for $P_{2N,g}$ are uniquely determined when $2N<n$.
We use only these finite Taylor expansions, not an exact Ricci-flat metric away from $\rho=0$.

Let $\widetilde\Delta$ denote the Laplacian of the ambient metric $\tilde{g}$, acting on functions.
For a smooth function $\varphi$ on $M$, we choose a smooth extension $f(x,\rho)$ with $f(x,0)=\varphi(x)$.
The ambient formula for the GJMS operator is
\begin{equation}\label{eq:ambient-gjms}
P_{2N,g}\varphi
=\left.(-\widetilde\Delta)^N
\bigl({\mathsf t}^{N-\frac{n}{2}}f(x,\rho)\bigr)\right|_{\mathsf t=1,\,\rho=0}.
\end{equation}
The restriction is independent of $f$; its homogeneity $N-n/2$ determines the conformal weights in \eqref{eq:covariance} \cite{GJMS}.
For $\varphi=1$, \eqref{eq:ambient-gjms} determines $Q_{2N,g}$ through $P_{2N,g}(1)=(n-2N)Q_{2N,g}/2$.
We differentiate the Ricci-flat equations and this scalar construction at the Einstein reference metric to compute the second variation of total $Q_{2N}$-curvature without expanding its full curvature formula.

Juhl's explicit formulas \cite[Theorems 1.1--1.2]{Juhl} express the GJMS operators and $Q_{2N}$-curvatures through compositions of natural second-order operators and coefficients of the volume expansion.
Fefferman--Graham \cite[Sections 1--2]{FGJuhl} derive those identities from \eqref{eq:ambient-gjms}, after conjugating the ambient Laplacian by the square root of the relative volume density.
We recall their ordered holographic formula \cite[(2.7)]{FGJuhl}.
We write $g_0=g$ and set
\[
v=v(g,\rho)\coloneqq\left(\frac{\det g_\rho}{\det g_0}\right)^{\frac{1}{2}},
\qquad
v^{\frac{1}{2}}=\left(\frac{\det g_\rho}{\det g_0}\right)^{\frac{1}{4}}.
\]
Thus $v(g,0)=1$.
We write $v^{1/2}$ for the quantity denoted by $w$ in \cite{FGJuhl}.
In \eqref{eq:holographic-operators}, the differential $d$ and divergence act only in the variables on $M$; we compute the divergence with respect to $g_0$.
We define the scalar potential $U_\rho$ and, for $k\in\R$, the second-order operators
\begin{equation}\label{eq:holographic-operators}
\begin{aligned}
U_\rho&\coloneqq v^{-\frac{1}{2}}
\left(-2\rho\partial_\rho^2+(n-2)\partial_\rho
+\operatorname{div}_{g_0}(g_\rho^{-1}d)\right)v^{\frac{1}{2}},\\
\mathcal R_k&\coloneqq
-2\rho\partial_\rho^2+2k\partial_\rho
+\operatorname{div}_{g_0}(g_\rho^{-1}d)-U_\rho.
\end{aligned}
\end{equation}

\begin{proposition}\label{prop:holographic-Q}
For $n>2N$, the ambient family in \eqref{eq:ambientnormal} determines $Q_{2N,g}$ by
\begin{equation}\label{eq:holographic-Q}
(-1)^NQ_{2N,g}
=-2\left.
\left[\prod_{j=1}^{N-1}\mathcal R_{-N+2j-1}\right]
\partial_\rho v^{\frac{1}{2}}\right|_{\rho=0}.
\end{equation}
The ordered product is
\[
\mathcal R_{1-N}
\mathcal R_{3-N}\cdots
\mathcal R_{N-3},
\]
with the rightmost factor acting first; the factors need not commute.
For $N=1$, the product is the identity.
Only the finite ambient Taylor coefficients required at this order enter the formula.
\end{proposition}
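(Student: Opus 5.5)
The plan is to derive \eqref{eq:holographic-Q} from the ambient formula \eqref{eq:ambient-gjms} with $\varphi=1$, following the conjugation argument of Fefferman--Graham \cite[Sections 1--2]{FGJuhl}, and keeping careful track of our sign conventions.

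\emph{Step 1: the ambient Laplacian on homogeneous functions.} In the normal form \eqref{eq:ambientnormal}, for a function $\mathsf t^{w}f(x,\rho)$ a direct computation of $\widetilde\Delta$ from the inverse metric gives
\[
\widetilde\Delta(\mathsf t^{w}f)=\mathsf t^{w-2}\Bigl(2\rho\partial_\rho^2f+(2w+n-2)\partial_\rho f-\tfrac{\rho}{2}\ldots\Bigr),
\]
up to lower-order terms. Including the volume term $\tfrac12\partial_\rho\log\det g_\rho$, the exact operator is
\[
-2\rho\partial_\rho^2+(n-2+2w-\ldots)\partial_\rho+\operatorname{div}_{g_0}(g_\rho^{-1}d)+\text{(terms from }\partial_\rho\log v)
\]
acting in $(x,\rho)$, with overall sign fixed by $-\widetilde\Delta$.

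\emph{Step 2: conjugation.} Conjugating by $v^{1/2}$, the first-order term in $\partial_\rho\log v$ is absorbed, which is the identity \cite[(2.4)]{FGJuhl}. On weight $w$ functions, $v^{1/2}(-\widetilde\Delta)v^{-1/2}$ then becomes, up to the factor $\mathsf t^{-2}$, the operator $\mathcal R_k+U_\rho$ shifted appropriately. Here $k$ is determined by $w$, and $U_\rho$ is exactly the potential defined in \eqref{eq:holographic-operators}. I expect $k=w+n/2-1$, so that weight $N-n/2-2(j-1)$ corresponds to the indices $N-2j+1$.

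\emph{Step 3: apply to the constant function.} I would take the extension $f=v^{-1/2}$, which equals $1$ at $\rho=0$ because $v(g,0)=1$. Then $(-\widetilde\Delta)^N$ applied to $\mathsf t^{N-n/2}v^{-1/2}$ becomes $v^{-1/2}$ times an ordered product of $N$ conjugated operators applied to $1$. The first factor, at weight $N-n/2$, annihilates constants except through $U_\rho$. Next I would show that $U_\rho=v^{-1/2}\mathcal L v^{1/2}$ satisfies $U_\rho\cdot1$ with the $\rho$-derivative structure. This lets me rewrite the rightmost factor applied to $1$ as $-U_\rho+(n-2N)\ldots$, and then rewrite $U_\rho$ itself, through its definition, as a multiple of $\partial_\rho v^{1/2}$ modulo terms killed by the remaining product.

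This factoring-out of $(n-2N)/2$ is the step I expect to be the main obstacle. The key point is that $P_{2N}(1)=\tfrac{n-2N}{2}Q_{2N}$ forces a factor $n-2N$ to emerge. The natural way to obtain it is to use the harmonic-extension trick: compare the product applied to $1$ with the same product applied to the extension $\mathsf t^{N-n/2}\cdot(\text{correction})$, expand in $n-2N$, and read off the coefficient, in the manner of \cite[(2.7)]{FGJuhl}. Since \eqref{eq:holographic-Q} is an identity among polynomials in the Taylor coefficients of $g_\rho$ whose coefficients are rational in $n$, and since both sides are natural, it suffices to establish it for generic $n$. I would verify the constant $-2$ and the sign $(-1)^N$ by checking $N=1$, where $Q_2=R/(2(n-1))$ and $\partial_\rho v^{1/2}|_0=-\tfrac14\tr P$.

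Finally, I would note that only the Taylor coefficients of $g_\rho$ of order at most $N$ enter. This holds because each factor lowers the $\rho$-order by one, and the evaluation is at $\rho=0$; for $n>2N$ these coefficients are unobstructed.
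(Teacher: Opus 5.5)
You have the right framework. Conjugation gives exactly $v^{1/2}\mathsf t^{2-w}\widetilde\Delta(\mathsf t^{w}v^{-1/2}F)=\mathcal R_{w+n/2-1}F$, and your index rule $k=w+n/2-1$ is correct. Note that $\mathcal R_k$ already contains $-U_\rho$, so the operator is $\mathcal R_k$, not ``$\mathcal R_k+U_\rho$''. Iterating over the weights $\nu,\nu-2,\dots,\nu-2N+2$ gives $P_{2N,g}\varphi=(-1)^N\mathcal R_{1-N}\cdots\mathcal R_{N-3}\mathcal R_{N-1}(v^{1/2}f)|_{\rho=0}$.

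The gap is exactly the step you flag as the main obstacle. With $f=v^{-1/2}$ the first factor gives $\mathcal R_{N-1}1=-U_\rho$, which is not a multiple of $\partial_\rho v^{1/2}$. Nothing in the proposal shows that the discrepancy is annihilated by the remaining factors at $\rho=0$. ``Expand in $n-2N$ and read off the coefficient'' and the reduction to generic $n$ are not arguments. They are also unnecessary: for $n>2N$ the curvature is defined by $P_{2N,g}(1)=\frac{n-2N}{2}Q_{2N,g}$, so you only need to exhibit $n-2N$ as an exact factor.

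Your $N=1$ check is also wrong. Since $g_0'=2\mathsf P_g$, you get $\partial_\rho v^{1/2}|_{\rho=0}=\frac14\tr(g^{-1}g_0')=\frac12\tr\mathsf P_g=R_g/(4(n-1))$, not $-\frac14\tr\mathsf P_g$. With your value, \eqref{eq:holographic-Q} would give $Q_2=-R_g/(4(n-1))$ instead of $R_g/(2(n-1))$. The Step 1 operator is also off; the correct one is $\mathsf t^{2-w}\widetilde\Delta(\mathsf t^wf)=-2\rho f''+(2w+n-2-2\rho v'/v)f'+\Delta_{g_\rho}f+w(v'/v)f$. In any case, an $N=1$ check cannot fix an $N$-dependent sign; that has to come out of the derivation.

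The missing idea is to use the extension independence in \eqref{eq:ambient-gjms} with $f\equiv1$ instead of $v^{-1/2}$; your Step 3 already relies on this independence. On a constant, the operator above reduces to its zeroth-order term $\nu v'/v$, so
\[
\mathcal R_{N-1}v^{\frac12}=\nu\,v^{-\frac12}v'=2\nu\,\partial_\rho v^{\frac12}=-(n-2N)\,\partial_\rho v^{\frac12}.
\]
Hence $P_{2N,g}(1)=-(-1)^N(n-2N)\,\mathcal R_{1-N}\cdots\mathcal R_{N-3}\partial_\rho v^{1/2}|_{\rho=0}$, and dividing by $(n-2N)/2$ gives \eqref{eq:holographic-Q}. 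The sign $(-1)^N$ comes from $(-\widetilde\Delta)^N$, and the constant $-2$ comes from $2\nu=-(n-2N)$.

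If you prefer to keep $f=v^{-1/2}$, the lemma you need is $\mathcal R_k(\rho u)=\rho\,\mathcal R_{k-2}u+2ku$. Pushing $\rho$ to the left through $\mathcal R_{1-N}\cdots\mathcal R_{N-1}$ leaves $\rho(\cdots)$ plus $\bigl(\sum_j2k_j\bigr)\mathcal R_{1-N}\cdots\mathcal R_{N-3}u$. The weights $1-N,3-N,\dots,N-1$ sum to zero, so the full product kills $1-v^{1/2}=O(\rho)$ at $\rho=0$. This lets you replace $\mathcal R_{N-1}1$ by $\mathcal R_{N-1}v^{1/2}$; it is extension independence in conjugated form.

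The paper does not reprove the proposition; it quotes \cite[(2.7)]{FGJuhl}. Both identities above do appear in its proof of Lemma \ref{lem:scalar-ambient-obstruction}.
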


Section \ref{sec:ambient} uses these operators to construct the homogeneous ambient harmonic extension of the constant function $1$ on $M$.
The coefficient identified in \eqref{eq:obstructionnormalization} determines $Q_{2N}$; differentiating it twice yields the Hessian of total $Q_{2N}$-curvature without expanding the full nonlinear curvature invariants.

For the ambient metric \eqref{eq:ambientnormal}, the Ricci-flat equations below determine the Taylor coefficients of $g_\rho$ at $\rho=0$.
Primes denote $\rho$-derivatives, and all contractions and covariant derivatives in this display are taken with $g_\rho$:
\begin{align}
0={}&\rho g''_{ij}-\rho g^{kl}g'_{ik}g'_{jl}
+\frac\rho2 g^{kl}g'_{kl}g'_{ij}
-\left(\frac n2-1\right)g'_{ij}
-\frac12g^{kl}g'_{kl}g_{ij}+\Ric(g_\rho)_{ij},\label{eq:FGevolution}\\
0={}&\frac12g^{kl}(\nabla_k g'_{il}-\nabla_i g'_{kl}),\label{eq:FGmixed}\\
0={}&-\frac12\tr(g_\rho^{-1}g''_\rho)
+\frac14\tr\bigl((g_\rho^{-1}g'_\rho)^2\bigr).
\label{eq:FGnormal}
\end{align}
These are the normal-form equations of Fefferman--Graham \cite[(3.17)]{FG}.
Evaluating \eqref{eq:FGevolution} at zero yields
\[
g'_0=2\mathsf P_g,\qquad
\mathsf P_g\coloneqq\frac1{n-2}\left(\Ric_g-\frac{R_g}{2(n-1)}g\right).
\]
Here $\mathsf P_g$ is the \emph{Schouten tensor}.

\begin{lemma}\label{lem:Einstein-ambient}
If $\Ric(g_E)=(n-1)g_E$, the family
\begin{equation}\label{eq:roundambient}
g_\rho^0=a^2g_E,\qquad a\coloneqq1+\frac{\rho}{2},
\end{equation}
solves the normal-form Ricci equations with initial metric $g_E$.
\end{lemma}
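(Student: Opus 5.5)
The plan is to substitute $g_\rho=a^2g_E$ with $a=1+\rho/2$ into each of \eqref{eq:FGevolution}, \eqref{eq:FGmixed}, \eqref{eq:FGnormal} and check that they hold identically in $\rho$. We will use the elementary derivatives
\[
g'_\rho=a\,g_E,\qquad g''_\rho=\tfrac12 g_E,\qquad g_\rho^{-1}g'_\rho=a^{-1}\Id,\qquad g^{kl}g'_{kl}=n a^{-1}.
\]
We also use the fact that the Ricci tensor is invariant under constant rescaling, so $\Ric(g_\rho)=\Ric(g_E)=(n-1)g_E$.

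\emph{Mixed equation.} Since $g'_\rho=a\,g_E=a^{-1}g_\rho$ and $a$ is independent of $x$, the metric $g_\rho$ is parallel for its own Levi-Civita connection. Hence $\nabla g'_\rho=0$, and \eqref{eq:FGmixed} holds trivially.

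\emph{Normal equation.} We compute
\[
\tr(g_\rho^{-1}g''_\rho)=\tfrac12\,n\,a^{-2},\qquad \tr\bigl((g_\rho^{-1}g'_\rho)^2\bigr)=n a^{-2}.
\]
Therefore $-\tfrac14 n a^{-2}+\tfrac14 n a^{-2}=0$, and \eqref{eq:FGnormal} holds.

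\emph{Evolution equation.} We collect the coefficient of $g_E$ term by term:
\[
\rho g''_{ij}\to \tfrac{\rho}{2},\qquad
-\rho g^{kl}g'_{ik}g'_{jl}\to -\rho a^{-2}a^2=-\rho,\qquad
\tfrac\rho2 g^{kl}g'_{kl}g'_{ij}\to \tfrac{\rho}{2}\,n a^{-1}a=\tfrac{n\rho}{2}.
\]
The remaining terms give
\[
-\bigl(\tfrac n2-1\bigr)g'_{ij}\to -\bigl(\tfrac n2-1\bigr)a,\qquad
-\tfrac12 g^{kl}g'_{kl}g_{ij}\to -\tfrac12\,n a^{-1}a^2=-\tfrac n2 a,\qquad
\Ric\to n-1.
\]
The total coefficient is
\[
\rho\Bigl(\tfrac12-1+\tfrac n2\Bigr)-(n-1)a+(n-1)=(n-1)\Bigl(\tfrac\rho2-a+1\Bigr).
\]
This vanishes exactly because $a=1+\rho/2$, so \eqref{eq:FGevolution} holds.

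Finally, $g^0_0=g_E$. Since the Taylor coefficients at orders below $n/2$ are uniquely determined by these equations, this family is the ambient expansion used throughout the paper. There is no genuine obstacle here; the only care needed is bookkeeping of the index contractions with $g_\rho^{-1}=a^{-2}g_E^{-1}$ in the quadratic terms.
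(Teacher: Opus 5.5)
Your proposal is correct and is the same approach as the paper, which cites this as the Einstein ambient metric following Fefferman--Graham's equation (3.17) and verifies it by direct substitution into the three normal-form equations. Your term-by-term bookkeeping checks out: the evolution equation collapses to $(n-1)(\rho/2-a+1)=0$, the normal equation cancels as $-\tfrac n4a^{-2}+\tfrac n4a^{-2}$, and the mixed equation vanishes because $g'_\rho=a^{-1}g_\rho$ is parallel.
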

\begin{proof}
This is the Einstein ambient metric given immediately after \cite[(3.17)]{FG}, with Schouten constant $1/2$; it follows by substitution in \eqref{eq:FGevolution}--\eqref{eq:FGnormal}.
\end{proof}

\subsection{Energy and bubbles}
On a smooth closed manifold $M$, we set
\begin{equation}\label{eq:geometricJ}
\mathcal J(g,u)\coloneqq\frac12\int_M uP_{2N,g}u\,dv_g
-\frac{(n-2N)c_{n,N}}{2n}\int_M(u_+)^{\frac{2n}{n-2N}}\,dv_g.
\end{equation}
\begin{lemma}\label{lem:energy-invariance}
The functional $\mathcal J$ is invariant under simultaneous pullback of the metric and scalar function by a diffeomorphism, and under
\[
(g,u)\mapsto(e^{2\omega}g,e^{-\frac{(n-2N)\omega}{2}}u).
\]
\end{lemma}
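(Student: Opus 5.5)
Both invariances follow from the structure of the two terms in $\mathcal J$. The diffeomorphism case comes from naturality, and the conformal case comes from the covariance law \eqref{eq:covariance} together with the transformation of the volume form. I treat the two terms of \eqref{eq:geometricJ} separately in each case.

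For diffeomorphism invariance, let $\phi:M\to M$ be a diffeomorphism. The GJMS operator is natural: it is built from the ambient metric of Fefferman--Graham, which is determined by $g$ in a diffeomorphism-equivariant way. Hence
\[
P_{2N,\phi^*g}(u\circ\phi)=(P_{2N,g}u)\circ\phi .
\]
Also $dv_{\phi^*g}=\phi^*dv_g$. Both integrands of $\mathcal J(\phi^*g,u\circ\phi)$ are therefore pullbacks of the corresponding integrands for $(g,u)$, since $(u\circ\phi)_+=u_+\circ\phi$. Change of variables then gives $\mathcal J(\phi^*g,\phi^*u)=\mathcal J(g,u)$.

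For the conformal invariance, write $e^{2\omega}=w^{4/(n-2N)}$ with $w\coloneqq e^{(n-2N)\omega/2}>0$. Apply \eqref{eq:covariance} with $\hat g=w^{4/(n-2N)}g$ and $\varphi=w^{-1}u$:
\[
P_{2N,\hat g}(w^{-1}u)=w^{-\frac{n+2N}{n-2N}}P_{2N,g}u .
\]
Since $dv_{\hat g}=e^{n\omega}dv_g=w^{2n/(n-2N)}dv_g$, the quadratic term transforms as
\[
\int_M w^{-1}u\,w^{-\frac{n+2N}{n-2N}}P_{2N,g}u\,w^{\frac{2n}{n-2N}}\,dv_g .
\]
The total exponent of $w$ is $-1-\frac{n+2N}{n-2N}+\frac{2n}{n-2N}=0$, so this equals $\int_M uP_{2N,g}u\,dv_g$. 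For the nonlinear term, positivity of $w$ gives $(w^{-1}u)_+=w^{-1}u_+$. Hence
\[
(w^{-1}u)_+^{\frac{2n}{n-2N}}\,w^{\frac{2n}{n-2N}}=u_+^{\frac{2n}{n-2N}},
\]
and this term is also unchanged.

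The only point needing care is that \eqref{eq:covariance} holds for arbitrary smooth $\omega$, so that no sign or smallness condition on $\omega$ is required. This is the standard conformal covariance of the GJMS operators from \cite{GJMS}, valid for $n>2N$.

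For non-smooth $u$ in the energy space, both sides of the identities are continuous in $H^N$. Therefore it suffices to verify them for smooth $u$, which is what was done above, and the general case follows by density.
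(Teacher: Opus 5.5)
Your proof is correct and follows the same route as the paper. Diffeomorphism invariance comes from naturality plus change of variables, and for the conformal change the covariance law \eqref{eq:covariance} cancels the volume weight $e^{n\omega}$ in the quadratic term, while $(u_+)^{2n/(n-2N)}$ picks up exactly the inverse weight $e^{-n\omega}$; your exponent bookkeeping with $w=e^{(n-2N)\omega/2}$ simply spells out what the paper's two-line proof leaves implicit.
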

\begin{proof}
Naturality and change of variables give diffeomorphism invariance.
For conformal invariance, \eqref{eq:covariance} cancels the volume weight in the quadratic term, while the nonlinear term transforms with the inverse volume weight $e^{-n\omega}$.
\end{proof}
For a smooth compactly supported trace-free tensor $h$, the volume form of $\exp(h)$ is exactly $dx$.
On $\R^n$, the corresponding functional is
\begin{equation}\label{eq:EuclideanJ}
J_h(u)\coloneqq\frac12\int_{\R^n}uP_{2N,\exp(h)}u\,dx
-\frac{(n-2N)c_{n,N}}{2n}\int_{\R^n}(u_+)^{\frac{2n}{n-2N}}\,dx.
\end{equation}

\medskip
For a tensor $h$ as in \eqref{eq:EuclideanJ}, the Euler--Lagrange operator is
\begin{equation}\label{eq:weak-residual}
\mathscr E_h(u)\coloneqq DJ_h(u)
=P_{2N,\exp(h)}u-c_{n,N}(u_+)^{\frac{n+2N}{n-2N}}.
\end{equation}
Thus the full weak equation is $\langle\mathscr E_h(u),\phi\rangle=0$ for every $\phi$ in the energy space $\mathcal H$ of \eqref{eq:homogeneous-norm}.
We fix a smooth cutoff $\chi:[0,\infty)\to[0,1]$ equal to one on $[0,1]$ and zero on $[2,\infty)$, and write $\chi_R(x)\coloneqq\chi(|x|/R)$.

We use stereographic projection from the north pole $(0,\ldots,0,1)\in\Sph^n\subset\R^{n+1}$.
In the resulting $x$-coordinates on the sphere minus the north pole,
\begin{equation}\label{eq:roundbubble}
g_c=\Omega^2\delta,\qquad \Omega(x)\coloneqq\frac{2}{1+|x|^2},\qquad
U_{\xi,\eps}(x)\coloneqq
\left(\frac{2\eps}{\eps^2+|x-\xi|^2}\right)^{\frac{n-2N}{2}}.
\end{equation}
Here $\delta$ is the Euclidean metric.
We set $U\coloneqq U_{0,1}$; then
\begin{equation}\label{eq:bubble-equation}
(-\Delta)^NU_{\xi,\eps}=c_{n,N}U_{\xi,\eps}^{\frac{n+2N}{n-2N}}
\quad\text{in }\R^n.
\end{equation}

Let $\mathcal H\coloneqq\dot H^N(\R^n)$ be the completion of $C_c^\infty(\R^n)$ for the norm
\begin{equation}\label{eq:homogeneous-norm}
\|u\|_{\mathcal H}^2
\coloneqq\int_{\R^n}|\partial^Nu|^2\,dx
=\int_{\R^n}u(-\Delta)^Nu\,dx,\qquad u\in C_c^\infty(\R^n).
\end{equation}
The Sobolev inequality identifies the completion with a subspace of $L^{2n/(n-2N)}(\R^n)$; the last pairing in \eqref{eq:homogeneous-norm} is extended by continuity.
Its dual is denoted by $\mathcal H^*$.
Brackets between an element of $\mathcal H^*$ and an element of $\mathcal H$ denote the dual pairing; tensor brackets use the $L^2$ pairing of the specified reference metric.
On the round sphere, for symmetric covariant two-tensors $H,\widetilde H$, this pairing is
\[
\langle H,\widetilde H\rangle_{L^2(g_c)}
\coloneqq\int_{\Sph^n}\langle H,\widetilde H\rangle_{g_c}\,dv_{g_c}.
\]
For $u,v\in C_c^\infty(\R^n)$, we define $\langle u,v\rangle_{\mathcal H}\coloneqq\int_{\R^n}u(-\Delta)^Nv\,dx$ and extend this inner product by continuity to $\mathcal H$.
For Banach spaces $X,Y$, our dual and operator norms are
\[
\|\ell\|_{X^*}\coloneqq\sup_{\|v\|_X\le1}|\ell(v)|,\qquad
\|A\|_{X\to Y}\coloneqq\sup_{\|v\|_X\le1}\|Av\|_Y.
\]
In particular, $H^{-m}(\Sph^n)$ denotes the dual of the scalar $H^m(\Sph^n)$ under the distributional extension of the $L^2$ pairing.
The tensor space $H^{-m}(\Sph^n;\operatorname{Sym}^2T^*\Sph^n)$ is defined analogously using the tensor $L^2(g_c)$ pairing.
For a relatively compact open parameter set $\mathcal U\subset\R^n\times(0,\infty)$, an integer $r\ge0$, and a $C^r$ map $F$ with values in $X$ defined near $\overline{\mathcal U}$, we use
\begin{equation}\label{eq:parameter-norm}
\|F\|_{C^r(\overline{\mathcal U};X)}
\coloneqq\sum_{|\beta|\le r}\sup_{(\xi,\eps)\in\overline{\mathcal U}}
\|\partial_{\xi,\eps}^\beta F(\xi,\eps)\|_X.
\end{equation}
Here $\partial_{\xi,\eps}^\beta=\partial_{\xi_1}^{\beta_1}\cdots\partial_{\xi_n}^{\beta_n}\partial_\eps^{\beta_{n+1}}$ for a multi-index $\beta$ in the $n+1$ parameters.
The set $\mathcal U$ is specified in each application; $C^r(\overline{\mathcal U})$ means $X=\R$, and $o_{C^r(\overline{\mathcal U};X)}(1)$ means convergence to zero in this norm.
The Sobolev inequality implies
\begin{equation}\label{eq:dual-bound}
\|f\|_{\mathcal H^*}\le C\|f\|_{L^{\frac{2n}{n+2N}}(\R^n)}.
\end{equation}
The translation and dilation Jacobi fields are
\[
Z_{\xi,\eps,i}\coloneqq\partial_{\xi_i}U_{\xi,\eps}\quad(1\le i\le n),
\qquad Z_{\xi,\eps,n+1}\coloneqq\partial_\eps U_{\xi,\eps}.
\]
We set
\[
K_{\xi,\eps}\coloneqq\operatorname{span}\{Z_{\xi,\eps,\alpha}:1\le\alpha\le n+1\},
\qquad
L_{U_{\xi,\eps}}\coloneqq(-\Delta)^N-\frac{n+2N}{n-2N}c_{n,N}U_{\xi,\eps}^{\frac{4N}{n-2N}}.
\]
Proposition \ref{prop:projected-inverse} shows that they span the full kernel of $L_{U_{\xi,\eps}}$.
We write $K\coloneqq K_{0,1}$ and $L_U\coloneqq L_{U_{0,1}}$.
The space $K_{\xi,\eps}^\perp$ consists of the functions in $\mathcal H$ orthogonal to every $Z_{\xi,\eps,\alpha}$, $\alpha\in\{1,\ldots,n+1\}$, in the $\mathcal H$ inner product.
Since $L_{U_{\xi,\eps}}Z_{\xi,\eps,\alpha}=0$, this is equivalent to
\begin{equation}\label{eq:weighted-orthogonality}
\int_{\R^n}U_{\xi,\eps}^{\frac{4N}{n-2N}}Z_{\xi,\eps,\alpha}v\,dx=0
\qquad(1\le\alpha\le n+1).
\end{equation}
The Gram matrix of the functions $Z_{\xi,\eps,1},\ldots,Z_{\xi,\eps,n+1}$ in $\mathcal H$ is
\begin{equation}\label{eq:Gram-matrix}
G_{\xi,\eps}\coloneqq\bigl(\langle Z_{\xi,\eps,\alpha},Z_{\xi,\eps,\beta}\rangle_{\mathcal H}\bigr)_{\alpha,\beta=1}^{n+1}.
\end{equation}
\begin{lemma}\label{lem:Jacobi-identifications}
The matrix $G_{\xi,\eps}$ is positive definite.
The orthogonal projection onto $K_{\xi,\eps}^\perp$ is
\begin{equation}\label{eq:Jacobi-projection}
\Pi_{\xi,\eps}v\coloneqq v-\sum_{\alpha,\beta=1}^{n+1}Z_{\xi,\eps,\alpha}(G_{\xi,\eps}^{-1})_{\alpha\beta}
\langle Z_{\xi,\eps,\beta},v\rangle_{\mathcal H}.
\end{equation}
Both $G_{\xi,\eps}$ and $\Pi_{\xi,\eps}$ depend smoothly on $(\xi,\eps)$.
For $(\xi,\eps)$ near any fixed $(\xi_0,\eps_0)\in\mathbb R^n\times(0,\infty)$, the restriction $\Pi_{\xi,\eps}|_{K_{\xi_0,\eps_0}^\perp}:K_{\xi_0,\eps_0}^\perp\to K_{\xi,\eps}^\perp$ is an isomorphism.
\end{lemma}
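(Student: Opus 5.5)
The plan is to reduce every assertion to two facts: the linear independence of the Jacobi fields in $\mathcal H$, and smooth dependence of these fields on the parameters. Standard Hilbert-space arguments then give the rest.

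\emph{Positive definiteness of $G_{\xi,\eps}$.} For $c\in\R^{n+1}$ we have $c^{\mathsf T}G_{\xi,\eps}c=\|\sum_\alpha c_\alpha Z_{\xi,\eps,\alpha}\|_{\mathcal H}^2\ge0$, with equality only if $\sum_\alpha c_\alpha Z_{\xi,\eps,\alpha}=0$. So it suffices to show linear independence.

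\begin{itemize}
\item Write $U_{\xi,\eps}=(2\eps)^{(n-2N)/2}D^{-(n-2N)/2}$ with $D=\eps^2+|x-\xi|^2$.
\item Compute
\[
Z_{\xi,\eps,i}=(n-2N)(x_i-\xi_i)D^{-1}U_{\xi,\eps},
\qquad
Z_{\xi,\eps,n+1}=\tfrac{n-2N}{2}\,\eps^{-1}(|x-\xi|^2-\eps^2)D^{-1}U_{\xi,\eps}.
\]
\item Dividing a vanishing combination by the positive factor $(n-2N)D^{-1}U_{\xi,\eps}$ leaves a polynomial $\sum_i c_i(x_i-\xi_i)+\tfrac{c_{n+1}}{2\eps}(|x-\xi|^2-\eps^2)$ vanishing identically.
\item The quadratic coefficient forces $c_{n+1}=0$, and the linear coefficients then force $c_i=0$.
\end{itemize}

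We must also check $Z_{\xi,\eps,\alpha}\in\mathcal H$. These functions are smooth and decay like $|x|^{-(n-2N)}$, with each derivative gaining one power. Hence $\partial^NZ\in L^2$ since $n>2N$, and approximation by cutoffs $\chi_RZ$ converges in $\dot H^N$.

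\emph{The projection formula.} Let $P v$ denote the sum in \eqref{eq:Jacobi-projection}, so $\Pi_{\xi,\eps}v=v-Pv$. Then $Pv\in K_{\xi,\eps}$. Pairing with $Z_\gamma$ gives
\[
\langle Z_\gamma,Pv\rangle_{\mathcal H}=\sum_{\alpha,\beta}G_{\gamma\alpha}(G^{-1})_{\alpha\beta}\langle Z_\beta,v\rangle_{\mathcal H}=\langle Z_\gamma,v\rangle_{\mathcal H}.
\]
Hence $v-Pv\perp K_{\xi,\eps}$, so $Pv$ is the orthogonal projection onto the finite-dimensional space $K_{\xi,\eps}$, and $\Pi_{\xi,\eps}$ is the projection onto $K_{\xi,\eps}^\perp$.

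\emph{Smoothness.} The main point is that the maps $(\xi,\eps)\mapsto Z_{\xi,\eps,\alpha}\in\mathcal H$ are smooth. Parameter derivatives of $U_{\xi,\eps}$ are again explicit rational functions times powers of $D$. Locally uniformly on compact parameter sets with $\eps$ bounded away from zero, they obey the same decay bounds $|\partial_x^k\partial_{\xi,\eps}^\beta U|\le C(1+|x|)^{-(n-2N)-k}$. Difference quotients therefore converge in $\dot H^N$ by dominated convergence applied to $\partial^N$ of the quotients. Consequently:
\begin{itemize}
\item the entries of $G_{\xi,\eps}$ are smooth;
\item $G_{\xi,\eps}^{-1}$ is smooth, by Cramer's rule;
\item $\Pi_{\xi,\eps}$ is smooth in operator norm, since it is a finite sum of rank-one operators $v\mapsto Z_\alpha\langle Z_\beta,v\rangle_{\mathcal H}$ with smooth coefficients.
\end{itemize}
This uniform-decay bookkeeping is the only technical step.

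\emph{Isomorphism.} By smoothness, $\|\Pi_{\xi,\eps}-\Pi_{\xi_0,\eps_0}\|\to0$ as $(\xi,\eps)\to(\xi_0,\eps_0)$.
\begin{itemize}
\item \emph{Injectivity.} For $v\in K_{\xi_0,\eps_0}^\perp$ we have $\Pi_{\xi_0,\eps_0}v=v$, so $\|\Pi_{\xi,\eps}v-v\|\le\delta\|v\|$ with $\delta<1/2$ near $(\xi_0,\eps_0)$. Hence $\|\Pi_{\xi,\eps}v\|\ge\tfrac12\|v\|$, so the restriction is injective with closed range.
\item \emph{Surjectivity.} Applying the same bound with the roles of the two parameter points reversed, the map $\Pi_{\xi_0,\eps_0}|_{K_{\xi,\eps}^\perp}\colon K_{\xi,\eps}^\perp\to K_{\xi_0,\eps_0}^\perp$ is likewise injective with closed range. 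The composition
\[
\Pi_{\xi,\eps}\circ\Pi_{\xi_0,\eps_0}|_{K_{\xi,\eps}^\perp}
\]
is within $2\delta$ of the identity on $K_{\xi,\eps}^\perp$, hence invertible, and so $\Pi_{\xi,\eps}|_{K_{\xi_0,\eps_0}^\perp}$ is onto.
\end{itemize}
Together these show the restriction is an isomorphism.
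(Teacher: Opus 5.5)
Your proof is correct and follows the same route as the paper's: positive definiteness via linear independence of the Jacobi fields, the Gram-matrix formula for the projection, smoothness read off from the explicit bubble \eqref{eq:roundbubble}, and invertibility of the compositions with $\Pi_{\xi_0,\eps_0}$ as small operator-norm perturbations of the identity. The only variations are that you prove independence by reducing to a polynomial identity at general $(\xi,\eps)$, whereas the paper uses oddness of the translation fields and radial symmetry of the dilation field at $(0,1)$ and then transports by translation and dilation, and that you spell out the decay and dominated-convergence details the paper leaves implicit.
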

\begin{proof}
At $(\xi,\eps)=(0,1)$ the translation fields are independent odd functions, while the dilation field is nonzero and radial.
Translation and dilation preserve this independence, so $G_{\xi,\eps}$ is positive definite.
The Gram-matrix formula gives \eqref{eq:Jacobi-projection}, and \eqref{eq:roundbubble} implies smooth dependence on $(\xi,\eps)$.
Near $(\xi_0,\eps_0)$, the compositions with $\Pi_{\xi_0,\eps_0}$ are small operator-norm perturbations of the identity, and hence are invertible.
This proves the last assertion.
\end{proof}
These maps identify $K_{\xi,\eps}^\perp$ with $K_{\xi_0,\eps_0}^\perp$.
Parameter derivatives of operators on $K_{\xi,\eps}^\perp$ are taken after these identifications; operator and dual norms use the inherited $\mathcal H$ norm.

\begin{proposition}\label{prop:jointcritical}
The pair $(g_c,1)$ is stationary for $\mathcal J$ with respect to both the metric and the scalar variable.
The Euclidean pair $(\delta,U_{\xi,\eps})$ has the same property for compactly supported metric variations.
\end{proposition}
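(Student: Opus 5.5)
Stationarity in the scalar variable and stationarity in the metric are handled separately, and the Euclidean statement is then transferred from the sphere by Lemma \ref{lem:energy-invariance}.

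\emph{Scalar variable.} The derivative of $\mathcal J(g_c,\cdot)$ at $u=1$ in a direction $\phi$ is
\[
\int_{\Sph^n}\bigl(P_{2N,g_c}1-c_{n,N}\bigr)\phi\,dv_{g_c}.
\]
On the unit round sphere $Q_{2N,g_c}=2c_{n,N}/(n-2N)$, so $P_{2N,g_c}1=\frac{n-2N}{2}Q_{2N,g_c}=c_{n,N}$. Hence this derivative vanishes.

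\emph{Metric variable.} Here we use conformal invariance to reduce the general metric variation to trace-free ones. Let $g_s$ be a smooth family with $g_0=g_c$ and $\dot g_0=h$. Write $h=\frac{\tr h}{n}g_c+\mathring h$, and note that the trace part is the derivative of a conformal family $e^{2\omega_s}g_c$. Lemma \ref{lem:energy-invariance} shows that $\mathcal J(e^{2\omega}g,e^{-(n-2N)\omega/2}u)=\mathcal J(g,u)$. Differentiating this identity at $(g_c,1)$ expresses the conformal metric variation as minus a scalar variation, and that scalar variation vanishes by the first step. It therefore suffices to take $h$ trace-free. The fixed-volume path $g_c\exp(sg_c^{-1}h)$ has the same first derivative, and along it $dv_g=dv_{g_c}$ pointwise, so the nonlinear term of $\mathcal J$ does not vary. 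What remains is
\[
\tfrac12\,\tfrac{d}{ds}\int P_{2N,g_s}1\,dv_{g_s}=\tfrac{n-2N}{4}\,D\mathcal Q|_{g_c}[h].
\]
We show that $D\mathcal Q$ vanishes at an Einstein metric in trace-free directions. Gover--{\O}rsted showed that the trace-free part of $\operatorname{grad}_{L^2}\mathcal Q$ vanishes at Einstein metrics, as recalled in Remark \ref{rem:higher-Einstein-linearization}. As an independent check, diffeomorphism invariance of $\mathcal Q$ kills Lie-derivative directions. Decompose $\mathring h$ into a TT tensor plus the trace-free part of $\cL_Xg_c$. The TT part has vanishing pointwise first variation by Theorem \ref{thm:ambientHessian}. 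The trace-free part of $\cL_Xg_c$ differs from $\cL_Xg_c$ by a conformal direction $\frac{2}{n}(\operatorname{div}X)g_c$. Under the fixed-volume normalization this conformal direction contributes $\int(\operatorname{div}X)\,Q_{2N,g_c}\,dv_{g_c}=0$, because $Q_{2N,g_c}$ is constant.

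\emph{Euclidean pair.} Inverse stereographic projection identifies $g_c=\Omega^2\delta$, and conformal invariance then gives $\mathcal J(\Omega^2\delta,\Omega^{-(n-2N)/2}u)=J_0(u)$. The function $\Omega^{(n-2N)/2}=U_{0,1}$ corresponds to $u=1$ on the sphere. Translations and dilations of $\R^n$ act on $\Sph^n$ as conformal transformations, so $U_{\xi,\eps}$ likewise corresponds to a conformal factor $\psi^{(n-2N)/2}$ with $\psi^*g_c$ round. Compactly supported metric variations of $\delta$ pull back to smooth variations of $g_c$ supported away from the north pole. Stationarity at $(\delta,U_{\xi,\eps})$ then follows from the spherical case, applied at the isometric round metric and combined with diffeomorphism and conformal invariance.

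\emph{Main obstacle.} The delicate point is the treatment of the trace part and the volume term. One must check that the conformal direction combines with the nonlinear term exactly as Lemma \ref{lem:energy-invariance} predicts, so that no stray multiple of $\int\tr h$ survives. This works because the scalar equation already holds at $u=1$.
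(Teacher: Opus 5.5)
Your proposal is correct, but it handles the metric variation on the sphere differently from the paper. The paper never splits $h$. It observes that the first metric derivative of $\mathcal J$ at $(g_c,1)$ is invariant under the round isometry group, so its $L^2$ gradient is an invariant symmetric two-tensor and hence a constant multiple of $g_c$. The derivative is therefore $c\int_{\Sph^n}\tr_{g_c}h\,dv_{g_c}$, and $c=0$ is read off from the explicit homothetic path $\mathcal J(e^{2r}g_c,1)=c_{n,N}\Vol(g_c)\bigl(\tfrac12e^{(n-2N)r}-\tfrac{n-2N}{2n}e^{nr}\bigr)$.

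You instead treat the trace part by differentiating the conformal invariance of Lemma \ref{lem:energy-invariance}, which is the homothetic computation done for an arbitrary conformal factor. You treat the trace-free part by reducing to $D\mathcal Q|_{g_c}[\mathring h]$ and invoking Gover--{\O}rsted. Alternatively you use the York decomposition together with the first-variation statement of Theorem \ref{thm:ambientHessian}, diffeomorphism invariance, and constancy of $Q_{2N,g_c}$. The conformal contribution there equals $\int(\operatorname{div}X)\,Q_{2N,g_c}\,dv_{g_c}$ up to a positive factor. All of these steps are valid, and the forward reference is not circular, since the proof of Theorem \ref{thm:ambientHessian} does not use this proposition.

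The paper's route is more economical: on the round sphere the symmetry argument kills every trace-free direction in one line, with no external theorem, elliptic decomposition, or ambient computation. Your route buys generality. It never uses the isometry group, so read at any closed Einstein metric with $\Ric_{g_E}=(n-1)g_E$, where $P_{2N,g_E}1=c_{n,N}$ still holds, it shows that $(g_E,1)$ is stationary as well.

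Your transfer to the Euclidean pair is the same argument as the paper's, written out in more detail. It uses stereographic projection, the M\"obius identification $U_{\xi,\eps}=\psi^{(n-2N)/2}$ with $\psi^2\delta$ round, and diffeomorphism and conformal invariance.
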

\begin{proof}
The derivative of \eqref{eq:geometricJ} in $u$ vanishes at $(g_c,1)$ because $P_{2N,g_c}1=c_{n,N}$.
The derivative in the metric is invariant under round isometries and hence is a constant multiple of $\int_{\Sph^n}\tr_{g_c}h\,dv_{g_c}$.
On the homothetic path it is the derivative at zero of
\[
\mathcal J(e^{2r}g_c,1)
=c_{n,N}\Vol(g_c)\left(\frac12e^{(n-2N)r}-\frac{n-2N}{2n}e^{nr}\right),
\]
whose derivative at $r=0$ vanishes.
Conformal covariance \eqref{eq:covariance} and translation--dilation invariance imply the same statement at $(\delta,U_{\xi,\eps})$ for compactly supported metric variations.
\end{proof}

\section{Second variation of total \texorpdfstring{$Q_{2N}$}{Q(2N)}-curvature}\label{sec:ambient}

We prove Theorem \ref{thm:ambientHessian} at a closed Einstein metric $(M^n,g_E)$ with $\Ric_{g_E}=(n-1)g_E$.
All tensor operations and integrals in this section use $g_E$.
We first fix an $L^2$-normalized TT eigentensor $h$, so that $-\cA h=bh$ and $\|h\|_{L^2}=1$.
The first ambient variation and the averaged determinant variation are then governed by the scalar ordinary differential equations \eqref{eq:TTambient} and \eqref{eq:ThetaODE}.
We use the scalar harmonic extension to evaluate the ordered holographic formula of Proposition \ref{prop:holographic-Q}.

\subsection{Ambient variations and the scalar extension}
\label{subsec:ambient-linearization}
We prescribe $g_0(s)=g_E\exp(sg_E^{-1}h)$ and denote its formal ambient family by $g_\rho(s)$.
Only the finite Taylor coefficients determined by the normal-form Ricci equations \eqref{eq:FGevolution}--\eqref{eq:FGnormal} are used.
At $s=0$, we have $g_\rho(0)=a^2g_E$, where $a=1+\rho/2$.
A dot denotes a metric variation $\partial_s|_{s=0}$ and a prime denotes $\partial_\rho$.
We define
\begin{equation}\label{eq:ambient-first-variation}
h_\rho\coloneqq a^{-2}\dot g_\rho.
\end{equation}
Proposition \ref{prop:TTambient} gives $h_\rho=\psi_b(\rho)h$.
Accordingly, the second-order expansion is
\[
g_\rho(s)=a^2g_E+s a^2\psi_b(\rho)h
+\frac{s^2}{2}a^2g_EK_h(\rho,x)+O(s^3).
\]
Here $K_h$ is a $g_E$-self-adjoint endomorphism with $K_h(0,x)=(g_E^{-1}h(x))^2$.
For $\rho\ne0$, no proportionality between these endomorphisms is assumed.

\begin{proposition}\label{prop:TTambient}
The normalized first variation is $h_\rho=\psi_bh$, where the required Taylor coefficients of $\psi_b$ satisfy
\begin{equation}\label{eq:TTambient}
2\rho a^2\psi_b''+
\bigl(n\rho a-(n-2)a^2\bigr)\psi_b'-b\psi_b=0,
\qquad \psi_b(0)=1.
\end{equation}
In particular, $\psi_b'(0)=-b/(n-2)$, and $\psi_{b,m}\coloneqq\psi_b^{(m)}(0)$ satisfies \eqref{eq:qrecurrence}.
\end{proposition}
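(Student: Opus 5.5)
The plan is to substitute the ansatz $\dot g_\rho=a^2\psi(\rho)h$ into the linearization of the normal-form equations \eqref{eq:FGevolution}--\eqref{eq:FGnormal} at the Einstein ambient family $g_\rho^0=a^2g_E$ of Lemma \ref{lem:Einstein-ambient}. The aim is to check that the equations are satisfied exactly when $\psi$ solves \eqref{eq:TTambient}. We then invoke uniqueness of the Fefferman--Graham Taylor coefficients for orders below $n/2$ to conclude $\dot g_\rho=a^2\psi_bh$ to the required order.

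\emph{Background quantities.} At $s=0$ we have
\[
g=a^2g_E,\qquad g'=ag_E,\qquad g''=\tfrac12g_E,\qquad g^{-1}=a^{-2}g_E^{-1},\qquad g^{kl}g'_{kl}=na^{-1}.
\]
Along the ansatz,
\[
\dot g=a^2\psi h,\qquad \dot g'=(a\psi+a^2\psi')h,\qquad \dot g''=\bigl(\tfrac12\psi+2a\psi'+a^2\psi''\bigr)h.
\]
Since $\tr_{g_E}h=0$, every linearized trace vanishes. This includes $\delta(g^{kl}g'_{kl})$, and the variation of $\tr(g^{-1}g'')$ and of $\tr((g^{-1}g')^2)$, because each background factor is a multiple of $g_E$. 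Hence the linearization of \eqref{eq:FGnormal} holds identically.

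The mixed equation \eqref{eq:FGmixed} linearizes to a combination of $\nabla^i_{g_E}h_{ij}$ and $\nabla_j\tr_{g_E}h$. The $\rho$-dependent factors $\psi,\psi'$ are constant in $x$, and the Christoffel variations pair with $g'\propto g_E$ only through traces. So \eqref{eq:FGmixed} holds by the TT condition.

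\emph{Evolution equation.} Since $\Ric$ is scale invariant, the linearized Ricci term is $\psi\,D\Ric_{g_E}[h]=-\tfrac12\psi\Delta_Lh$ for TT $h$ in the convention \eqref{eq:Aconvention}. Using $\Delta_Lh=(b-2(n-1))h$, this equals $-\tfrac12(b-2(n-1))\psi h$. The quadratic term $g^{kl}g'_{ik}g'_{jl}$ linearizes to $(\psi+2a\psi')h$. The remaining terms of \eqref{eq:FGevolution} give:
\begin{itemize}
\item $\rho\dot g''$;
\item $\tfrac{n\rho}{2a}\dot g'$;
\item $-(\tfrac n2-1)\dot g'$;
\item $-\tfrac{n}{2a}a^2\psi h$.
\end{itemize}
Collecting coefficients, $\psi''$ carries $\rho a^2$ and $\psi'$ carries $\tfrac12(n\rho a-(n-2)a^2)$. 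For $\psi$, using $a-1=\rho/2$, the coefficient collapses to $\tfrac{(n-1)\rho}{2}-(n-1)a+(n-1)-\tfrac b2=-\tfrac b2$. Multiplying by $2$ gives \eqref{eq:TTambient}. The initial condition $\psi_b(0)=1$ comes from $\dot g_0=h$.

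\emph{Recurrence and uniqueness.} Setting $\rho=0$ in \eqref{eq:TTambient} gives $-(n-2)\psi_b'(0)-b=0$, which is the stated value of $\psi_b'(0)$. Differentiating \eqref{eq:TTambient} $m$ times at $\rho=0$ yields \eqref{eq:qrecurrence}, in which $\psi_{b,m+1}$ has coefficient $2m-(n-2)$. This coefficient is nonzero for the orders $m+1\le N<n/2$ that are needed. Thus the recurrence determines these coefficients uniquely, consistently with the Fefferman--Graham uniqueness, and the linearized ambient family equals $a^2\psi_bh$ through that order.

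The main obstacle is the bookkeeping in the quadratic and trace terms, notably the sign and normalization of $D\Ric$ relative to \eqref{eq:Aconvention}. An error there would shift the zeroth-order coefficient away from the clean value $-b/2$.
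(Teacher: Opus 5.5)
Your proposal is correct and follows the paper's proof. You substitute $\dot g_\rho=a^2\psi h$ into the linearized normal-form equations, show that the linearized tangential equation \eqref{eq:FGevolution} is one half of \eqref{eq:TTambient} times $h$, note that both constraint equations vanish by the TT condition, and identify the result with the true first variation by Fefferman--Graham uniqueness of the jets. Your coefficient bookkeeping checks out, including $D\Ric_{g_E}[h]=-\tfrac12\Delta_Lh$, the collapse of the zeroth-order coefficient to $-b/2$, and the pivot $-(n-2m-2)$ in the recurrence. One point the paper states explicitly and you leave implicit: the finite jet construction depends smoothly on the initial metric, which is why the true first variation satisfies the linearized equations before uniqueness is applied.
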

\begin{proof}
Insert $\dot g_\rho=a^2\psi_bh$ into the linearizations of \eqref{eq:FGevolution}--\eqref{eq:FGnormal}.
Since $h$ is TT and $\Delta_Lh=(b-2(n-1))h$, the Ricci variation is $D\Ric_{g_E}[h]=-\Delta_Lh/2$.
The variation of the tangential equation \eqref{eq:FGevolution} is one half of the left-hand side of \eqref{eq:TTambient}, multiplied by $h$.
The variations of the mixed and normal constraints \eqref{eq:FGmixed}--\eqref{eq:FGnormal} vanish because $h$ is divergence-free and trace-free.
Uniqueness of the required normal-form jets \cite[Theorem 2.9]{FG} therefore identifies this scalar solution with the first ambient variation.
In particular,
\begin{equation}\label{eq:raised-index-variation}
(g_\rho^{-1}g_\rho')\big|_{s=0}=a^{-1}\Id,
\qquad
\left.\partial_s(g_\rho^{-1}g_\rho')\right|_{s=0}
=\psi_b'g_E^{-1}h.
\end{equation}
Evaluating \eqref{eq:TTambient} at zero and then differentiating gives
\begin{equation}\label{eq:qrecurrence}
\begin{split}
\psi_{b,0}&=1,\qquad \psi_{b,1}=-\frac b{n-2},\\
\psi_{b,m+1}&=
\frac{2m^2-b}{n-2m-2}\psi_{b,m}
+\frac{m(m-1)(2m+n-2)}{4(n-2m-2)}\psi_{b,m-1}.
\end{split}
\end{equation}
The last line is needed for $1\le m\le N-2$; its denominators are nonzero for $n>2N$.
The finite ambient construction depends smoothly on the initial metric, so these Taylor calculations commute with metric variation.
\end{proof}

\medskip
The second ambient variation is needed only through its averaged trace.
For the $L^2$-normalized TT eigentensor $h$ with $-\cA h=bh$, we define
\begin{equation}\label{eq:ThetaDefinition}
\Theta_b(\rho)\coloneqq
\int_M\left.\partial_s^2
\log\frac{\det g_\rho(s)}{a^{2n}\det g_E}\right|_{s=0}\,dv_{g_E}.
\end{equation}

\begin{lemma}\label{lem:ThetaODE}
One has $\Theta_b=\int_M\tr K_h\,dv_{g_E}-\psi_b^2$ and
\begin{equation}\label{eq:ThetaODE}
(a^2\Theta_b')'=-a^2(\psi_b')^2,
\qquad \Theta_b(0)=0,
\qquad \Theta_b'(0)=\frac b{2(n-1)}.
\end{equation}
Thus $\Theta_b$ depends only on $b$ and $n$ to the required Taylor order.
\end{lemma}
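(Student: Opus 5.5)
The plan is to derive everything from the normal constraint \eqref{eq:FGnormal}, rewritten as an equation for $\log\det g_\rho$, and then differentiate twice in $s$ at $s=0$.

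\textbf{Identification of $\Theta_b$.} Set $A\coloneqq g_E^{-1}h$. By \eqref{eq:roundambient} and \eqref{eq:ambient-first-variation},
\[
a^{-2}g_E^{-1}g_\rho(s)=\Id+s\psi_bA+\tfrac{s^2}{2}K_h+O(s^3).
\]
Taking $\log\det=\tr\log$ gives
\[
\partial_s^2\big|_{s=0}\log\frac{\det g_\rho(s)}{a^{2n}\det g_E}=\tr K_h-\psi_b^2\tr(A^2).
\]
Since $\int_M\tr(A^2)\,dv_{g_E}=\|h\|_{L^2}^2=1$, integration gives $\Theta_b=\int_M\tr K_h\,dv_{g_E}-\psi_b^2$.

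\textbf{Initial value $\Theta_b(0)=0$.} At $\rho=0$ we have $g_0(s)=g_E\exp(sA)$. Because $\tr A=0$, its determinant equals $\det g_E$ for every $s$. Hence the logarithm vanishes identically in $s$, and $\Theta_b(0)=0$.

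\textbf{The ODE.} Write $B\coloneqq g_\rho^{-1}g_\rho'$ and $L\coloneqq\log\det g_\rho$. Using $\tr(g^{-1}g'')=(\tr B)'+\tr(B^2)$ and $\tr B=L'$, equation \eqref{eq:FGnormal} becomes
\[
L''=-\tfrac12\tr(B^2).
\]
Apply $\partial_s^2$ at $s=0$ and use \eqref{eq:raised-index-variation}, namely $B=a^{-1}\Id$ and $\dot B=\psi_b'A$. This gives
\[
\partial_s^2L''=-\tr(\dot B^2)-\tr(B\ddot B)=-(\psi_b')^2\tr(A^2)-a^{-1}\partial_s^2L'.
\]
The reference term $\log(a^{2n}\det g_E)$ does not depend on $s$, so integrating over $M$ yields $\Theta_b''=-(\psi_b')^2-a^{-1}\Theta_b'$. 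Multiplying by $a^2$ and using $a'=1/2$, so that $(a^2)'=a$, gives $(a^2\Theta_b')'=-a^2(\psi_b')^2$. Only the finitely many Taylor coefficients fixed by the normal form are involved, and metric variation commutes with taking them, as in Proposition \ref{prop:TTambient}.

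\textbf{Initial slope $\Theta_b'(0)$.} At $\rho=0$ we have $g_0'=2\mathsf P_{g_0}$, so $\tr B|_{\rho=0}=2\tr_g\mathsf P_g=R_g/(n-1)$. Since the path has the fixed volume form $dv_{g_E}$,
\[
\Theta_b'(0)=\frac1{n-1}\,\frac{d^2}{ds^2}\Big|_{s=0}\int_M R_{g_E\exp(sA)}\,dv_{g_E}.
\]
This is the second variation of the total scalar curvature along the exponential path. I expect this step to be the main obstacle, because the bookkeeping is delicate.

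I would compute it by splitting the second derivative into two pieces:
\begin{itemize}
\item the standard Hessian of the Einstein--Hilbert functional at an Einstein metric in a TT direction (Besse, Chapter 4), which is the linear-path contribution $-\frac12\langle\nabla^*\nabla h-2\mathring Rh,h\rangle$ plus terms proportional to the Einstein constant;
\item the acceleration term of the exponential path, $\ddot g=g_EA^2$, paired with the first variation $\frac R2g-\Ric=\frac{(n-1)(n-2)}2g_E$.
\end{itemize}
The Einstein-constant contributions and the $\frac{(n-1)(n-2)}2\|h\|^2$ term should cancel. What remains should be $-\frac12\langle\cA h,h\rangle=b/2$, using $\cA=\nabla^*\nabla-2\mathring R$ from \eqref{eq:Aconvention} with $\Ric=(n-1)g_E$. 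That would give $\Theta_b'(0)=b/(2(n-1))$. The constants must be checked carefully at this point.

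Once the initial data are fixed, $\Theta_b$ is determined by $\psi_b$ through the ODE. By \eqref{eq:qrecurrence}, $\psi_b$ depends only on $b$ and $n$, so $\Theta_b$ does as well, to the required Taylor order.
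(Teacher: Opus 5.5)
Your proposal is correct and follows essentially the same route as the paper: you rewrite \eqref{eq:FGnormal} as $(\log\det g_\rho)''=-\frac12\tr\bigl((g_\rho^{-1}g_\rho')^2\bigr)$, differentiate twice in $s$ using \eqref{eq:raised-index-variation}, and integrate over $M$, and you obtain $\Theta_b'(0)$ from $\tr(g^{-1}g_0')=R_g/(n-1)$ together with the fixed-volume Hessian of total scalar curvature. The constants you flagged as uncertain do check out: along the linear path the Einstein-constant part of the Hessian is $-\frac{(n-1)(n-2)}{2}\|h\|_{L^2}^2$, the acceleration term $\frac{(n-1)(n-2)}{2}\langle g_E,h\circ h\rangle_{L^2}$ cancels it exactly, and what remains is $-\frac12\langle\cA h,h\rangle_{L^2}=b/2$.
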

\begin{proof}
The second derivative of $\log\det g_\rho$ is $\tr K_h-\psi_b^2|h|^2$, proving the first assertion and the initial value $\Theta_b(0)=0$.
Write the normal Ricci equation \eqref{eq:FGnormal} as
\[
(\log\det g_\rho)''+
\frac12\tr\bigl((g_\rho^{-1}g_\rho')^2\bigr)=0.
\]
Differentiate twice in $s$, use \eqref{eq:raised-index-variation}, and integrate.
Since $\|h\|_{L^2}=1$, the resulting equation is $\Theta_b''+a^{-1}\Theta_b'+(\psi_b')^2=0$, which is \eqref{eq:ThetaODE}.
At $\rho=0$, $\tr(g^{-1}g'_0)=R_g/(n-1)$.
Along the volume-preserving exponential path, the fixed-volume Hessian of total scalar curvature is $\langle h,(\Delta_L+2(n-1))h\rangle/2=b/2$, giving the stated initial derivative.
This standard Hessian identity also follows directly by differentiating $-\Ric+(R/2)g$ and using $D\Ric_{g_E}[h]=-\Delta_Lh/2$.

For later computations, differentiating \eqref{eq:ThetaODE} gives, with $\theta_m\coloneqq\Theta_b^{(m)}(0)$,
\begin{equation}\label{eq:thetarecurrence}
\begin{split}
\theta_0&=0,\qquad \theta_1=\frac b{2(n-1)},\\
\theta_{m+2}&=-\sum_{j=0}^m\binom mj
\frac{(-1)^j j!}{2^j}\theta_{m-j+1}
-\sum_{j=0}^m\binom mj\psi_{b,j+1}\psi_{b,m-j+1}.
\end{split}
\end{equation}
This uses only Leibniz's rule and $(a^{-1})^{(j)}(0)=(-1)^j j!/2^j$.
\end{proof}

\medskip
We next construct the scalar harmonic extension used to recover $Q_{2N}$.
We set $\nu\coloneqq N-n/2$.
The constant function $1$ on $M$ lifts to the homogeneous function $\mathsf t^{\nu}$ on the ambient hypersurface $\rho=0$.
Its \emph{formal harmonic extension} has the form $\mathsf t^{\nu} f(s,x,\rho)$, where
\[
f(s,x,0)=1,
\qquad
\widetilde\Delta\bigl(\mathsf t^{\nu} f\bigr)=O(\rho^{N-1}).
\]
The harmonicity condition for the ambient extension determines the Taylor coefficients of $f$ through degree $N-1$; the next coefficient of the harmonic equation is the obstruction measured by $Q_{2N}$.
No exact harmonic extension away from $\rho=0$ is required.
We denote by $f_N$ the Taylor polynomial through degree $N-1$ of the scalar factor at the Einstein reference metric.
Lemma \ref{lem:scalar-ambient-obstruction} determines $f_N$.

\begin{lemma}\label{lem:scalar-ambient-obstruction}
The harmonic-extension condition is equivalent, through the required order, to
\begin{equation}\label{eq:ambientlaplacian}
\mathcal R_{N-1}(v^{\frac{1}{2}}f)
=v^{\frac{1}{2}}\Bigl[-2\rho f''+ \left(2N-2-2\rho\frac{v'}v\right)f'+\Delta_{g_\rho}f+\nu\frac{v'}v f\Bigr]
=O(\rho^{N-1}).
\end{equation}
At the Einstein metric $g_E$, $f_N$ is independent of $x$ and satisfies
\begin{equation}\label{eq:Lw}
L_{\nu} f_N=O(\rho^{N-1}),\qquad f_N(0)=1,\qquad
L_{\nu}\coloneqq-2\rho\partial_\rho^2+
\left(2N-2-\frac{n\rho}{a}\right)\partial_\rho+
\frac{\nu n}{2a}.
\end{equation}
This polynomial is
\begin{equation}\label{eq:scalarhypergeom}
f_N(\rho)=\sum_{r=0}^{N-1}
\frac{(\frac{n}{2}-N)_r(\frac{n}{2})_r}{(1-N)_r\,r!}(-\frac{\rho}{2})^r.
\end{equation}
The curvature is recovered from this extension by
\begin{equation}\label{eq:obstructionnormalization}
Q_{2N,g}
=\frac{2^{N-1}((N-1)!)^2}{\nu}
[\rho^{N-1}]\mathcal R_{N-1}(v^{\frac{1}{2}}f).
\end{equation}
\end{lemma}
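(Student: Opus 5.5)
The plan is to compute the ambient Laplacian of $\mathsf t^\nu f$ directly in the normal form \eqref{eq:ambientnormal} and compare it with the conjugated operator in \eqref{eq:holographic-operators}.

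\textbf{Reduction to \eqref{eq:ambientlaplacian}.} For $\tilde g=2\rho\,d\mathsf t^2+2\mathsf t\,d\mathsf t\,d\rho+\mathsf t^2g_\rho$, the inverse metric in the $(\mathsf t,\rho)$ block has $\tilde g^{\mathsf t\mathsf t}=0$, $\tilde g^{\mathsf t\rho}=\mathsf t^{-1}$ and $\tilde g^{\rho\rho}=-2\rho\mathsf t^{-2}$. The volume density is $\mathsf t^{n+1}\sqrt{\det g_\rho}$, which is proportional to $\mathsf t^{n+1}v$. Applying $\widetilde\Delta$ to a homogeneous function $\mathsf t^\nu f(x,\rho)$ then gives
\[
\mathsf t^{\nu-2}\bigl[2\rho f''+(n+2\nu-2)f'+\rho\tfrac{v'}{v}\cdot 2f'\cdot(\ldots)+\nu\tfrac{v'}{v}f+\Delta_{g_\rho}f\bigr]
\]
up to sign and normalization. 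Since $n+2\nu-2=2N-2$, after multiplying by $-1$ this yields exactly the bracket in \eqref{eq:ambientlaplacian}. This is the Fefferman--Graham identity \cite[(2.4)]{FGJuhl}.

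\textbf{Conjugation identity.} Next I would check that $v^{1/2}\cdot[\text{bracket}]=\mathcal R_{N-1}(v^{1/2}f)$. Expanding $\mathcal R_{N-1}(v^{1/2}f)$, the terms with no derivative on $f$ cancel against $U_\rho$ by its definition. The cross terms $-4\rho(v^{1/2})'f'$ and $2(N-1)$-type terms then combine into $-2\rho v'/v$ and $\nu v'/v$. The $\nu$ term appears because $(n-2)\partial_\rho$ inside $U_\rho$ differs from $2(N-1)\partial_\rho$ by $-2\nu$. Tangential cross terms vanish once I use that $\operatorname{div}_{g_0}(g_\rho^{-1}d)$ conjugated by $v^{1/2}$ relates to $\Delta_{g_\rho}$ via $\det g_\rho/\det g_0=v^2$. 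This is bookkeeping, and I would verify it term by term.

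\textbf{Einstein specialization.} At $g_E$ we have $g_\rho=a^2g_E$ and $v=a^n$, so $v'/v=n/(2a)$. Any $x$-independent $f$ is annihilated by $\Delta_{g_\rho}$. By uniqueness of the Taylor coefficients, which holds because the coefficient of $f^{(r+1)}$ in the recursion is $\propto(2N-2-2r)\neq0$ for $r\le N-2$, the solution $f_N$ is $x$-independent. It then satisfies \eqref{eq:Lw} after dividing by $v^{1/2}$ and multiplying by $a$ where convenient. To get \eqref{eq:scalarhypergeom}, I would multiply $L_\nu f=0$ by $a=1+\rho/2$ and set $z=-\rho/2$. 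This turns the equation into the hypergeometric equation
\[
z(1-z)f''+[c-(a'+b'+1)z]f'-a'b'f=0
\]
with $c=1-N$, $a'=n/2-N$ and $b'=n/2$; I would check the parameters by matching coefficients. The truncated series through $r=N-1$ is then the unique polynomial solution modulo $\rho^{N-1}$.

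\textbf{Recovering $Q_{2N}$; the main obstacle.} This is the normalization \eqref{eq:obstructionnormalization}. I would compare \eqref{eq:ambient-gjms} at $\varphi=1$ with the obstruction coefficient. Write $\widetilde\Delta(\mathsf t^\nu f)=\mathsf t^{\nu-2}\rho^{N-1}\eta+O(\rho^N)$. Each application of $\widetilde\Delta$ to $\mathsf t^{\nu-2k}\rho^{m}$ lowers the $\rho$-power by one and contributes the factor $-2m\cdot(\ldots)$, up to the homogeneity. Iterating $N-1$ more times gives the factor $\prod_{m=1}^{N-1}(-2m)(\text{weight-dependent }m)$, which should produce $2^{N-1}((N-1)!)^2$ together with the sign. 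This yields
\[
P_{2N}1=(-1)^N 2^{N-1}((N-1)!)^2\,[\rho^{N-1}](\cdot).
\]
Dividing by $(n-2N)/2=-\nu$ gives \eqref{eq:obstructionnormalization}. Cross-checking against Proposition \ref{prop:holographic-Q}, or against $N=1$ where $Q_2=R/(2(n-1))$, would fix the signs.
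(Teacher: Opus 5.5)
Your argument for \eqref{eq:ambientlaplacian}, the Einstein reduction, and \eqref{eq:scalarhypergeom} is essentially the paper's. The paper quotes \cite[(2.4)]{FGJuhl} where you recompute $\widetilde\Delta$ in normal form, and your hypergeometric parameters in $z=-\rho/2$ are correct.

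For \eqref{eq:obstructionnormalization} you take a different route. The paper substitutes $\mathcal R_{N-1}v^{1/2}=2\nu\partial_\rho v^{1/2}$ into the quoted holographic formula \eqref{eq:holographic-Q}. It then uses $\mathcal R_k(\rho u)=\rho\mathcal R_{k-2}u+2ku$, with weights summing to zero, to replace $v^{1/2}$ by $v^{1/2}f$. You instead apply $(-\widetilde\Delta)^N$ from \eqref{eq:ambient-gjms} to the harmonic extension and invoke independence of the extension. Since $\widetilde\Delta(\mathsf t^wF)=\mathsf t^{w-2}v^{-1/2}\mathcal R_{w+n/2-1}(v^{1/2}F)$, your iteration is the paper's product $\mathcal R_{1-N}\cdots\mathcal R_{N-3}$ seen on the ambient side. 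Your version needs only \eqref{eq:ambient-gjms}, while the paper's is consistent with Proposition \ref{prop:holographic-Q} by construction.

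However, the constant you write down is wrong, and this sign drives everything downstream. Two corrections are needed.

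First, no factor $-1$ is needed. With $\tilde g^{\rho\rho}=-2\rho\mathsf t^{-2}$ the second-order term is $-2\rho f''$, and for any weight $w$ one has exactly
\[
\widetilde\Delta(\mathsf t^wF)=\mathsf t^{w-2}\Bigl[-2\rho F''+(n+2w-2)F'-2\rho\frac{v'}{v}F'+w\frac{v'}{v}F+\Delta_{g_\rho}F\Bigr],
\]
so $\mathsf t^{2-\nu}\widetilde\Delta(\mathsf t^\nu f)$ is the bracket itself.

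Second, this formula fixes the weight-dependent factor you left open. On $\mathsf t^{\nu-2k}\rho^m\phi$ the leading term has coefficient $m(n+2w-2m)=2m(N-2k-m)$. At the $k$-th subsequent application $m=N-k$, so the factor is $-2k(N-k)$. The product over $1\le k\le N-1$ is $(-2)^{N-1}((N-1)!)^2$. The $O(\rho^m)$ pieces and the $O(\rho^N)$ tail still vanish at $\rho=0$, because each application lowers the $\rho$-order by at most one. Using $v^{\pm1/2}=1+O(\rho)$, this gives
\[
P_{2N,g}1=(-1)^N(-2)^{N-1}((N-1)!)^2\eta=-2^{N-1}((N-1)!)^2\eta,\qquad \eta=[\rho^{N-1}]\mathcal R_{N-1}(v^{\frac12}f),
\]
and dividing by $-\nu$ gives \eqref{eq:obstructionnormalization}.

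Your displayed factor $(-1)^N2^{N-1}((N-1)!)^2$ gives the opposite sign of $Q_{2N}$ for every even $N$. For example, it yields $-n(n^2-4)/8$ for $Q_4$ on the round sphere. The $N=1$ check you propose cannot detect a discrepancy of the form $(-1)^{N+1}$. You need an $N=2$ check, or the comparison with Proposition \ref{prop:holographic-Q}.
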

\begin{proof}
The conjugation identity \cite[(2.4)]{FGJuhl} gives
\[
\mathcal R_{N-1}(v^{\frac{1}{2}}f)
=v^{\frac{1}{2}}\mathsf t^{2-\nu}
\widetilde\Delta(\mathsf t^{\nu} f).
\]
Expanding the already defined operators \eqref{eq:holographic-operators} gives \eqref{eq:ambientlaplacian}.
The coefficient $[\rho^r]f$ enters the coefficient of $\rho^{r-1}$ in the left-hand side of \eqref{eq:ambientlaplacian} with multiplier $2r(N-r)$.
This multiplier is nonzero for $1\le r<N$, proving uniqueness of the required extension coefficients.
A degree-$N$ term in $f$ cannot change the obstruction coefficient, because its multiplier $2N(N-N)$ is zero.

At the Einstein reference, $v=a^n$ and the recursion starting from $1$ gives spatially constant coefficients.
Substitution of $v'/v=n/(2a)$ gives \eqref{eq:Lw}.
Multiplying by $a$ and comparing consecutive coefficients gives
\[
\frac{[\rho^{r+1}]f_N}{[\rho^r]f_N}
=-\frac{(r+\frac{n}{2}-N)(r+\frac{n}{2})}{2(r+1)(r+1-N)}
\qquad(0\le r\le N-2).
\]
Starting from $f_N(0)=1$, this yields \eqref{eq:scalarhypergeom}, using the Pochhammer convention fixed in Section \ref{sec:preliminaries}.
Thus the polynomial is the computed Taylor jet of the geometric extension.

For the normalization, the definitions in Subsection \ref{subsec:ambient-preliminaries} give
\[
\mathcal R_k(\rho u)=\rho\mathcal R_{k-2}u+2ku,
\qquad
\mathcal R_{N-1}v^{\frac{1}{2}}=2\nu\,\partial_\rho v^{\frac{1}{2}}.
\]
The first identity implies that the full ordered product $\mathcal R_{1-N}\cdots\mathcal R_{N-1}$ maps a multiple of $\rho$ to a multiple of $\rho$.
Indeed, commuting $\rho$ to the left cancels the constant terms because the weights $1-N,3-N,\ldots,N-1$ sum to zero.
Since $v^{1/2}(f-1)=O(\rho)$, the restrictions of this product at $\rho=0$ agree for $v^{1/2}f$ and $v^{1/2}$.
By \eqref{eq:ambientlaplacian}, the first applied factor $\mathcal R_{N-1}$ produces a remainder starting at degree $N-1$.
The remaining factors evaluate that leading term with multiplier $(-2)^{N-1}((N-1)!)^2$.
The second identity and the quoted holographic formula \eqref{eq:holographic-Q} now give \eqref{eq:obstructionnormalization}.
\end{proof}

\subsection{The Hessian of \texorpdfstring{$\mathcal Q$}{Q} and comparison at \texorpdfstring{$n=2N$}{n=2N}}
\label{subsec:critical-comparison}
We now differentiate the scalar harmonic extension $\mathsf t^\nu f(s,x,\rho)$ characterized by \eqref{eq:ambientlaplacian}.
The determinant equation \eqref{eq:ThetaODE} supplies the metric-dependent source, and the integration-by-parts identity \eqref{eq:ambient-integration-by-parts} removes the unknown second variation of the extension.
For smooth TT tensors $h,k$, let $\Theta_{h,k}$ denote the integrated mixed logarithmic determinant variation along $g_E\exp(s_1g_E^{-1}h+s_2g_E^{-1}k)$, as in \eqref{eq:ThetaDefinition}.
The resulting coefficient formula is
\begin{equation}\label{eq:WronskianHessian}
D^2\mathcal Q|_{g_E}(h,k)=
\frac{2^{N-1}((N-1)!)^2}{\nu}
[\rho^{N-1}]\,a^n\Theta'_{h,k}
\left(\frac{\nu}{2} f_N^2-\rho f_Nf_N'\right).
\end{equation}
For an eigentensor, this uses $\Theta_b$ through order $N$ and $\psi_b$ through order $N-1$; for $N=1$, only $\Theta_b'(0)$ is needed.

\begin{proof}[Proof of Theorem \ref{thm:ambientHessian}]
First take a TT eigentensor $h$ with $-\cA h=bh$ and $\|h\|_{L^2}=1$.
The first variation of $v$ vanishes pointwise, and a variation of the spatial Laplacian annihilates the spatially constant function $f_N$.
Differentiating \eqref{eq:ambientlaplacian} and using the uniqueness of its Taylor recursion gives $\dot f=0$ through degree $N-1$.
Its obstruction also has zero first variation, proving the first-variation assertion for eigentensors.

We define $u(\rho)\coloneqq\int_M\ddot f\,dv_{g_E}$, so $u(0)=0$.
Differentiate \eqref{eq:ambientlaplacian} twice and integrate.
Since $v'/v=(\log\det g_\rho)'/2$, the integrated second variation of $v'/v$ is $\Theta_b'/2$.
The first variations of $f$ vanish, so the only surviving spatial term is $a^{-2}\Delta_{g_E}\ddot f$, whose integral is zero.
Using the normalization \eqref{eq:obstructionnormalization}, we obtain
\[
L_{\nu}u+\Theta_b'\left(\frac{\nu}{2} f_N-\rho f_N'\right)
=\frac{\nu\,D^2\mathcal Q|_{g_E}(h,h)}
{2^{N-1}((N-1)!)^2}\rho^{N-1}+O(\rho^N).
\]
Here multiplication by $v^{1/2}=1+O(\rho)$ does not change the obstruction coefficient, and its second variation vanishes at $\rho=0$.

The operator in \eqref{eq:Lw} satisfies the Lagrange identity
\begin{equation}\label{eq:ambient-integration-by-parts}
\rho^{-N}a^n(f_NL_{\nu}u-uL_{\nu}f_N)
=\partial_\rho\!\left[-2\rho^{1-N}a^n
(f_Nu'-f_N'u)\right].
\end{equation}
The expression $f_Nu'-f_N'u$ is the ordinary Wronskian.
The derivative of a Laurent series has zero coefficient of $\rho^{-1}$.
Since $u(0)=0$ and $L_{\nu}f_N=O(\rho^{N-1})$, this gives
\[
[\rho^{N-1}](a^nf_NL_{\nu}u)
=[\rho^{N-1}](a^nuL_{\nu}f_N)=0.
\]
Multiplying the scalar variation equation by $a^nf_N=1+O(\rho)$ and taking its coefficient of degree $N-1$ proves \eqref{eq:WronskianHessian} for $h=k$.

The recurrences \eqref{eq:qrecurrence} and \eqref{eq:thetarecurrence} make $\psi_{b,m}$ and $\theta_m$ polynomials in the real parameter $b$ of degree at most $m$, with coefficients rational in $n$.
Thus \eqref{eq:WronskianHessian} defines a polynomial $\mathscr P_N^{(n)}(b)$ of degree at most $N$.
Only $\theta_N$ can contribute $b^N$, and its multiplier in \eqref{eq:WronskianHessian} is $2^{N-2}(N-1)!$.
For $N\ge2$, the resulting coefficient of $b^N$ is
\begin{equation}\label{eq:TT-leading-coefficient}
\frac{(-1)^{N+1}(N-1)!}{4}
\sum_{r=1}^{N-1}\binom{N-2}{r-1}
\frac{1}
{(\frac{n}{2}-r)_r\,(\frac{n}{2}-N+r)_{N-r}}.
\end{equation}
Indeed, the leading coefficient of $\psi_{b,r}$ is $(-1)^r/[2^r(n/2-r)_r]$, and the quadratic sum in \eqref{eq:thetarecurrence} supplies the leading coefficient of $\theta_N$.
All displayed Pochhammer factors are positive for $n>2N$, so the degree is exactly $N$.
For $N=1$, \eqref{eq:WronskianHessian} directly gives $\mathscr P_1^{(n)}(b)=b/[4(n-1)]$.

For TT eigentensors $h,k$ with eigenvalues $b,c$, take mixed derivatives of the ambient and scalar harmonic-extension equations in the metric directions $h,k$.
The determinant equation becomes
\[
(a^2\Theta_{h,k}')'=-a^2\psi_b'\psi_c'\langle h,k\rangle,
\qquad \Theta_{h,k}(0)=0,
\qquad \Theta_{h,k}'(0)=\frac{c\langle h,k\rangle}{2(n-1)}.
\]
If $b\ne c$, self-adjointness gives $\langle h,k\rangle=0$, hence $\Theta_{h,k}=0$.
If $b=c$, uniqueness gives $\Theta_{h,k}=\Theta_b\langle h,k\rangle$.
Applying the same coefficient extraction to the mixed second derivative of $f$ proves \eqref{eq:WronskianHessian} and \eqref{eq:TTpolynomial} on pairs of eigentensors and then on finite spectral sums.

On a closed Einstein manifold, $\Delta_L$ is self-adjoint and elliptic and preserves the TT subspace.
The spectral truncations of smooth TT tensors converge in $C^\infty$.
The required ambient jets and curvature variations depend smoothly on finitely many metric derivatives, so the identities pass to arbitrary smooth TT tensors.
In particular, the bilinear form of \eqref{eq:ThetaODE} is
\[
(a^2\Theta_{h,k}')'=-a^2\langle h_\rho',k_\rho'\rangle,
\qquad
\Theta_{h,k}'(0)=-\frac{\langle h,\cA k\rangle}{2(n-1)},
\]
where $h_\rho,k_\rho$ are their normalized first ambient variations.
Finally, a degree-$N$ polynomial in $\cA$ defines, after integration by parts, a bilinear form bounded by $C\|h\|_{H^N}\|k\|_{H^N}$.
Density of finite sums of TT eigentensors in the weak TT subspace of $H^N$ gives the asserted continuous extension.
\end{proof}

Substitution of the finite recurrences gives the first four polynomials, each in its range $n>2N$:

\begin{equation}\label{eq:lowP}
\begin{aligned}
\mathscr P_1^{(n)}(b)&=\frac b{4(n-1)},\\
\mathscr P_2^{(n)}(b)&=-\frac{b^2}{(n-2)^2}
+\frac{n^2-4}{8(n-1)}b,\\
\mathscr P_3^{(n)}(b)&=\frac{8b^2(b-2)}{(n-2)^2(n-4)}
-\frac{n+2}{n-2}b^2
+\frac{3(n^2-4)(n^2-16)}{64(n-1)}b,\\
\mathscr P_4^{(n)}(b)&=-\frac{96(n-5)b^2(b-2)^2}{(n-2)^2(n-4)^2(n-6)}
+\frac{12(n^2-2n-12)b^2(b-2)}{(n-2)^2(n-6)}\\
&\quad-\frac{(n+2)(5n^4-40n^3-36n^2+768n-960)}{8(n-2)^2(n-6)}b^2\\
&\quad+\frac{(n^2-4)(n^2-16)(n^2-36)}{64(n-1)}b.
\end{aligned}
\end{equation}

\medskip
When $n=2N\ge4$, $Q_{2N}$ is defined by the ambient construction at this order, rather than by dividing $P_{2N,g}(1)$ by $(n-2N)/2$.
\begin{proposition}\label{prop:Matsumoto-comparison}
If $n=2N\ge4$, then for smooth TT tensors $h,k$ at a closed Einstein metric $g_E$ satisfying $\Ric_{g_E}=(n-1)g_E$,
\begin{equation}\label{eq:Matsumoto-critical}
D^2\mathcal Q|_{g_E}(h,k)
=-\frac14\left\langle h,
\prod_{m=0}^{N-1}\bigl(\cA+2m(2N-2m-1)\bigr)k
\right\rangle_{L^2(g_E)}.
\end{equation}
Equivalently, the Hessian operator is obtained by evaluating the following polynomial at $b=-\cA$:
\begin{equation}\label{eq:critical-polynomial}
\frac{(-1)^{N+1}}4
\prod_{j=0}^{N-1}\bigl[b-j(2N-j-1)\bigr].
\end{equation}
\end{proposition}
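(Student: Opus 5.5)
The plan is to obtain \eqref{eq:Matsumoto-critical} as the limit $n\to2N$ of Theorem \ref{thm:ambientHessian}, and then to identify the limiting polynomial through its roots and its leading coefficient.

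\textbf{Step 1: continuity in $n$.} At $n=2N$ the normalization \eqref{eq:obstructionnormalization} cannot be used directly, since $\nu=0$. Instead I would work with the holographic formula \eqref{eq:holographic-Q}. By Fefferman--Graham it remains valid in the critical dimension, and there it defines $Q_{2N}$ through the ambient jets that are still determined.

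Every ingredient is built from finite Taylor jets of $g_\rho$, $v$ and the operators $\mathcal R_k$. These jets are obtained from \eqref{eq:FGevolution}--\eqref{eq:FGnormal} by recursions whose denominators, such as $n-2m-2$ for $m\le N-2$ in \eqref{eq:qrecurrence}, do not vanish at $n=2N$. The only possibly undetermined piece is the trace part at order $N$, and it is fixed by \eqref{eq:FGnormal}. Consequently the TT Hessian at $g_E$ is a rational function of $n$ that is regular at $n=2N$.

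This gives
\[
D^2\mathcal Q|_{g_E}(h,k)=\lim_{n\to2N}\langle h,\mathscr P_N^{(n)}(-\cA)k\rangle
\]
for TT eigentensors, and therefore for all smooth TT tensors, exactly as at the end of the proof of Theorem \ref{thm:ambientHessian}.

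\textbf{Step 2: the limit of \eqref{eq:WronskianHessian}.} The apparent pole $1/\nu$ is harmless. Since $(n/2-N)_r=(-\nu)_r=-\nu(r-1)!+O(\nu^2)$ for $r\ge1$, formula \eqref{eq:scalarhypergeom} gives $f_N=1+\nu g+O(\nu^2)$, where
\[
g(\rho)=-\sum_{r=1}^{N-1}\frac{(r-1)!\,(N)_r}{(1-N)_r\,r!}\Bigl(-\frac\rho2\Bigr)^r .
\]
Hence $\tfrac{\nu}{2}f_N^2-\rho f_Nf_N'=\nu\bigl(\tfrac12-\rho g'\bigr)+O(\nu^2)$. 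The limiting formula is therefore
\[
\mathscr P_N^{(2N)}(b)=2^{N-1}((N-1)!)^2\,[\rho^{N-1}]\,a^{2N}\Theta_b'\Bigl(\tfrac12-\rho g'\Bigr),
\]
with $\Theta_b$ determined at $n=2N$ by \eqref{eq:ThetaODE} and \eqref{eq:TTambient}.

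\textbf{Step 3: leading coefficient.} Evaluate \eqref{eq:TT-leading-coefficient} at $n=2N$. The sum becomes
\[
\sum_{r}\binom{N-2}{r-1}\frac{1}{(N-r)_r\,(r)_{N-r}}=\frac{1}{(N-1)!}\sum_r\binom{N-2}{r-1}\frac{1}{(N-1)!}.
\]
Using $\sum_r\binom{N-2}{r-1}=2^{N-2}$, I expect this to reduce after simplification to the required $(-1)^{N+1}/4$. If it does not, the discrepancy indicates a normalization error to be traced. For $N=1$ the statement is excluded, and $N\ge2$ is consistent with \eqref{eq:lowP} at $n=4$: $\mathscr P_2^{(4)}(b)=-\tfrac14b(b-2)$, which matches \eqref{eq:critical-polynomial}.

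\textbf{Step 4: the roots $b_j=j(2N-j-1)$, $0\le j\le N-1$.} This is the main obstacle. After the substitution $z=-\rho/2$, the equation \eqref{eq:TTambient} at $n=2N$ becomes a Gauss hypergeometric equation. Its exponents at $\rho=0$ are $0$ and $N$, and the parameters $\alpha,\beta$ satisfy $\alpha\beta\propto b$ and $\alpha+\beta$ fixed.

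I expect that $b=b_j$ is exactly the condition that one parameter equals $-j$. In that case $\psi_b$ is a polynomial of degree $j<N$ in $\rho$, times a power of $a$. With $\psi_b$ explicit, the relation $(a^2\Theta_b')'=-a^2(\psi_b')^2$ makes $a^2\Theta_b'$ an explicit polynomial. The coefficient in Step 2 can then be rewritten as a total $\rho$-derivative, using the adjoint of $L_0$ applied to $\tfrac12-\rho g'$ and the Lagrange identity \eqref{eq:ambient-integration-by-parts} at $\nu=0$, and should vanish.

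Two fallbacks are available. First, one can check this vanishing for small $N$ via \eqref{eq:lowP}. Second, one can observe that $b_j$ are precisely the values for which the tensor GJMS factors of Matsumoto \cite[Theorem 4.2]{Matsumoto} vanish, and argue that the linearized obstruction tensor, which is the gradient of $\mathcal Q$ at $n=2N$, then has a trivial TT kernel factor; Appendix \ref{app:obstruction-comparison} is intended to support this.

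\textbf{Conclusion.} A degree-$N$ polynomial with the $N$ distinct roots $b_0,\dots,b_{N-1}$ and the leading coefficient from Step 3 equals \eqref{eq:critical-polynomial}. Substituting $b=-\cA$ gives \eqref{eq:Matsumoto-critical}, because $-\cA-b_j=-\bigl(\cA+j(2N-j-1)\bigr)$ and $(-1)^{N+1}(-1)^N=-1$.
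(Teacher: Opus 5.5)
Steps 1--3 only fix the normalization: the critical Hessian is the regular value $\mathscr P_N^{(2N)}$, and its top coefficient is $(-1)^{N+1}/4$. The real content of \eqref{eq:critical-polynomial} is the location of the roots $b_j=j(2N-j-1)$, and Step 4 gives no argument for it, only ``I expect'' and ``should vanish''. This is a genuine gap, because the vanishing is a nontrivial identity.

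To see what must be proved, write Step 1 as $\mathscr P_N^{(2N)}=2^{N-1}N!(N-1)!\,\mathscr R_N^{(2N)}$. This follows from Theorem \ref{prop:volume-TT-hessian} and Proposition \ref{prop:Q-volume-transform}, using $\beta_{N,j}(2N)=0$ for $j<N$. After substituting $\sigma=\rho/(\rho+2)$, you would need
\[
[\sigma^N]\bigl((1-\sigma)^{-N-1}\Theta(\sigma)\bigr)=0,\qquad
\Theta''=-(\Psi')^2,\quad \Theta(0)=0,\quad \Theta'(0)=\frac{b_j}{2N-1},
\]
with primes denoting $\sigma$-derivatives and $\Psi={}_2F_1(-j,j+1-2N;1-N;\sigma)$.

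Why your suggested tools do not close this gap:
\begin{itemize}
\item The outcome depends on the slope $\Theta'(0)$, which comes from the Hessian of total scalar curvature, not from the ODE for $\psi_b$.
\item The Lagrange identity \eqref{eq:ambient-integration-by-parts} involves only $L_0$ and the scalar extension and never sees $b$. By itself it cannot single out the values $b_j$, and you do not say what $b_j$-specific identity would.
\item Fallback (i), checking small $N$, covers only finitely many cases.
\item Fallback (ii) imports the known mechanism. At $n=2N$, Matsumoto's (0-4) gives $D^2\mathcal Q|_{g_E}(h,k)=(-1)^N(N-1)\langle h,D\mathcal O^{(2N)}_{g_E}[k]\rangle$; that is, the obstruction tensor is the gradient of total $Q$. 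Also, $D\mathcal O^{(2N)}[h]$ is proportional to the numerator $\prod_j[b-j(2N-j-1)]$ of the resonant coefficient \eqref{eq:extended-factorial-coefficients}. Once you have this, \eqref{eq:critical-obstruction-linearization} already gives the full formula with its constant, as in Appendix \ref{app:obstruction-comparison}, so Steps 1--3 become unnecessary.
\end{itemize}
The paper's proof is exactly this import. It cites Matsumoto's Theorem 0.2, matches the Schouten and Lichnerowicz conventions, and reindexes.

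There are also three smaller problems.
\begin{itemize}
\item \emph{Step 3 arithmetic.} Your displayed identity is false: in fact $(N-r)_r(r)_{N-r}=((N-1)!)^2/((r-1)!(N-r-1)!)$. Each summand then equals $(N-2)!/((N-1)!)^2$, the sum is $1/(N-1)!$, and the top coefficient is exactly $(-1)^{N+1}/4$. Your version would give $(-1)^{N+1}2^{N-2}/(4(N-1)!)$, which is wrong for $N\ge4$.
\item \emph{Step 1 justification.} On a fixed $2N$-manifold the Hessian is not a function of $n$. What you need is that the critical $Q_{2N}$ is the dimensional continuation. This can come from validity of \eqref{eq:holographic-Q} at $n=2N$, or from the Graham--Juhl identity $\mathcal Q=2^{N-1}N!(N-1)!\,\mathcal V_N$. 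Together with the one-variable reduction, which is rational in $n$, this justifies taking the limit.
\item \emph{Final matching.} Passing from $\prod_j(\cA+j(2N-j-1))$ to the product over $m$ in \eqref{eq:Matsumoto-critical} requires the reindexing $j=\min\{2m,2N-1-2m\}$, which you omit.
\end{itemize}
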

\begin{proof}
Formula \eqref{eq:Matsumoto-critical} is \cite[Theorem 0.2]{Matsumoto}, with Schouten constant $1/2$ and Lichnerowicz Laplacian $-\Delta_L$.
The reindexing $j=\min\{2m,2N-1-2m\}$ permutes $\{0,\ldots,N-1\}$ and identifies $2m(2N-2m-1)=j(2N-j-1)$.
Substituting $b=-\cA$ gives \eqref{eq:critical-polynomial}.
\end{proof}
This comparison fixes the normalization at $n=2N$.
The Hessian for $n>2N$ in Theorem \ref{thm:ambientHessian} is computed independently by the finite coefficient extraction \eqref{eq:WronskianHessian}.
Appendix \ref{app:obstruction-comparison} derives this product from the residue of the extended obstruction tensor and compares the linearizations for $n>2N$.
The determinant variation also determines the Hessians of total renormalized volume coefficients in TT directions; Appendix \ref{app:volume-hessians} extracts them directly from $a^n\Theta_{h,k}$ and relates them to \eqref{eq:WronskianHessian}.

\section{Lyapunov--Schmidt reduction and quadratic energy}\label{sec:reduction}
We construct the projected inverse and solve the nonlinear equation on the orthogonal complement of the translation and dilation fields.
We then compute the quadratic reduced energy for polynomial perturbations.
Critical points of the reduced energy solve the full equation.
In Sections \ref{sec:reduction}--\ref{sec:applications}, the reference metric on the sphere is $g_c$; an omitted base metric in $D^2\mathcal Q$ means $g_c$.

\subsection{The projected equation and estimates}\label{sec:analyticinverse}
\label{subsec:metric-perturbations}\label{subsec:projected-equation}
\begin{proposition}\label{prop:projected-inverse}
The kernel of $L_{U_{\xi,\eps}}$ on $\mathcal H$ is exactly $K_{\xi,\eps}$.
Its restriction from $K_{\xi,\eps}^\perp$ to $(K_{\xi,\eps}^\perp)^*$ is an isomorphism.
In particular,
\begin{equation}\label{eq:inverseestimate}
\|\phi\|_{\mathcal H}\le C\|L_{U_{\xi,\eps}}\phi\|_{(K_{\xi,\eps}^\perp)^*},
\qquad \phi\in K_{\xi,\eps}^\perp,
\end{equation}
uniformly on compact bubble-parameter sets.
After the identifications of Lemma \ref{lem:Jacobi-identifications}, the inverse depends smoothly on $(\xi,\eps)$.
\end{proposition}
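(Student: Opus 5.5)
By translation and dilation invariance of $(-\Delta)^N$ and of the nonlinearity, it suffices to treat $(\xi,\eps)=(0,1)$. The general case then follows by the isometry $\phi\mapsto \eps^{-(n-2N)/2}\phi((\cdot-\xi)/\eps)$ of $\mathcal H$, which conjugates $L_U$ to $L_{U_{\xi,\eps}}$ and maps $K$ onto $K_{\xi,\eps}$.

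For the kernel statement I would pull back to the sphere by stereographic projection. Conformal covariance \eqref{eq:covariance} with $g_c=\Omega^2\delta$, $U=\Omega^{(n-2N)/2}$, identifies $L_U\phi=0$ on $\mathcal H$ with
\[
P_{2N,g_c}w-\frac{n+2N}{n-2N}c_{n,N}w=0,\qquad w=\Omega^{-(n-2N)/2}\phi\in H^N(\Sph^n).
\]
The Beckner/Branson product formula gives
\[
P_{2N,g_c}=\prod_{j=1}^N\bigl(-\Delta_{g_c}+(\tfrac n2+j-1)(\tfrac n2-j)\bigr),
\]
whose value on degree-$k$ spherical harmonics is $\Gamma(n/2+k+N)/\Gamma(n/2+k-N)$. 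This is strictly increasing in $k\ge0$, and equals $c_{n,N}$ at $k=0$ and $\frac{n+2N}{n-2N}c_{n,N}$ at $k=1$. Hence the kernel is exactly the first spherical harmonics, an $(n+1)$-dimensional space. Transplanting back, these are the translation and dilation fields $Z_\alpha$, which span $K$. This is the nondegeneracy statement for higher-order bubbles.

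For the isomorphism I would write $L_U=I-\mathcal K$ on $\mathcal H$ via the Riesz identification. Here $\mathcal K\phi=(-\Delta)^{-N}\bigl(\tfrac{n+2N}{n-2N}c_{n,N}U^{4N/(n-2N)}\phi\bigr)$ is compact, because $U^{4N/(n-2N)}\in L^{n/(2N)}$ decays like $|x|^{-4N}$. Multiplication by it maps $\mathcal H\subset L^{2n/(n-2N)}$ compactly into $\mathcal H^*$, which follows by truncating to balls and using Rellich plus smallness of the tail. The operator $L_U$ is self-adjoint, and its kernel is $K$, so it maps $K^\perp$ into the annihilator of $K$. The Fredholm alternative then gives injectivity and surjectivity on $K^\perp$, and the open mapping theorem gives \eqref{eq:inverseestimate}.

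For uniformity and smoothness, the constant is independent of $(\xi,\eps)$ by the invariance above. Smoothness follows because, after the identifications of Lemma \ref{lem:Jacobi-identifications}, the conjugated operators $\Pi_{\xi,\eps}L_{U_{\xi,\eps}}\Pi_{\xi,\eps}|_{K_{\xi_0,\eps_0}^\perp}$ depend smoothly in operator norm. The map $(\xi,\eps)\mapsto U_{\xi,\eps}^{4N/(n-2N)}$ is smooth into $L^{n/(2N)}$, and inversion is smooth on invertible operators.

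The main obstacle is the spectral identification on $\Sph^n$. One must carefully check that the pullback of $\mathcal H$ is exactly $H^N(\Sph^n)$ and that the eigenvalue comparison is strict for $k\ge2$. Strictness holds because the ratio $\Gamma(n/2+k+N)/\Gamma(n/2+k-N)$ is a product of $2N$ factors, each increasing in $k$ and positive for $n>2N$.
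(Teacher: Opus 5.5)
Your proof is correct. It matches the paper on the kernel identification: the conformal pullback $w=\phi/U$, the product formula for $P_{2N,g_c}$, and strict monotonicity of $\Lambda_k=\Gamma(k+\frac n2+N)/\Gamma(k+\frac n2-N)$ with $\Lambda_1=\frac{n+2N}{n-2N}c_{n,N}$. It also matches on the reduction to $(\xi,\eps)=(0,1)$ by the scaling isometry, and on smoothness via $U_{\xi,\eps}^{4N/(n-2N)}$ being smooth into $L^{n/(2N)}$.

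The one genuine difference is how you get invertibility on $K^\perp$. You obtain it abstractly from compactness of $\mathcal K$ (Rellich plus tail smallness) and the Fredholm alternative. The paper instead reads it off the spectral decomposition you have already computed. Relative to the energy inner product $\int_{\Sph^n}vP_{2N,g_c}v\,dv_{g_c}$, the operator is diagonal on spherical harmonics with eigenvalues $1-\Lambda_1/\Lambda_\ell$. These equal $-4N/(n-2N)$ at $\ell=0$ and are at least $1-\Lambda_1/\Lambda_2=4N/(n+2N+2)$ for $\ell\ge2$. This gives \eqref{eq:inverseestimate} with an explicit constant and makes the compactness step unnecessary.

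Your Fredholm route would still work for potentials whose spectrum is not explicit. Here, though, once you have the monotone eigenvalues, the direct spectral bound is shorter and quantitative.
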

\begin{proof}
Gover's factorization \cite[Theorem 1.2]{Gover}, with our Laplacian sign convention, gives
\begin{equation}\label{eq:roundfactorization}
P_{2N,g_c}=\prod_{r=1}^{N}
\left[-\Delta_{g_c}+(\frac{n}{2}-r)(\frac{n}{2}+r-1)\right].
\end{equation}
Since the eigenvalue of $-\Delta_{g_c}$ on spherical harmonics of degree $\ell$ is $\ell(\ell+n-1)$, the corresponding eigenvalue of $P_{2N,g_c}$ is
\[
\Lambda_\ell\coloneqq\prod_{r=1}^N
(\ell+\frac{n}{2}-r)(\ell+\frac{n}{2}+r-1)
=\frac{\Gamma(\ell+\frac{n}{2}+N)}{\Gamma(\ell+\frac{n}{2}-N)}.
\]
By conformal covariance, $u\mapsto u/U$ identifies $\mathcal H$ isometrically with the scalar space $H^N(\Sph^n)$ equipped with the energy norm $\bigl(\int_{\Sph^n}vP_{2N,g_c}v\,dv_{g_c}\bigr)^{1/2}$.
By \eqref{eq:roundfactorization}, this norm is equivalent to the derivative norm \eqref{eq:Sobolev-norms}.
Relative to the energy inner product, the linearized operator has eigenvalues
\[
1-\frac{(n+2N)c_{n,N}}{(n-2N)\Lambda_\ell}.
\]
Since $\Lambda_1=(n+2N)c_{n,N}/(n-2N)$, the degree-one harmonics form the kernel.
Multiplication by $U$ identifies them with the $n$ translation fields and the dilation field.
The degree-zero eigenvalue is $-4N/(n-2N)<0$, while
\[
1-\frac{(n+2N)c_{n,N}}{(n-2N)\Lambda_\ell}
\ge1-\frac{\Lambda_1}{\Lambda_2}
=\frac{4N}{n+2N+2}>0\qquad(\ell\ge2).
\]
The spectral resolution therefore proves bounded invertibility on the orthogonal complement, including the negative constant mode.
The change of variables $x=\xi+\eps y$, with $u(x)=\eps^{-(n-2N)/2}v(y)$, preserves the $\mathcal H$ norm and proves the assertion at every $(\xi,\eps)\in\mathbb R^n\times(0,\infty)$.

The formula \eqref{eq:roundbubble} shows that $U_{\xi,\eps}^{4N/(n-2N)}$ and its parameter derivatives depend smoothly on $(\xi,\eps)$ in $L^{n/(2N)}(\R^n)$.
H\"older's inequality and Lemma \ref{lem:Jacobi-identifications} imply operator-norm smoothness after identification with $K_{\xi_0,\eps_0}^\perp$.
Differentiating the identity between the operator and its inverse on this fixed space proves smoothness and uniform bounds for the inverse and its parameter derivatives on compact sets.
\end{proof}
A dual functional is restricted to $K_{\xi,\eps}^\perp$ whenever it is an argument of $L_{U_{\xi,\eps}}^{-1}$.

We next estimate the metric perturbation and solve the projected equation.
For a smooth compactly supported trace-free tensor $h$, we set
\begin{equation}\label{eq:metricnorm}
\kappa(h)\coloneqq\|h\|_{L^\infty(\R^n)}+
\sum_{r=1}^{2N}\|\partial^rh\|_{L^{\frac{n}{r}}(\R^n)}.
\end{equation}
\begin{lemma}\label{lem:metric-perturbation}
The norm $\kappa$ is invariant under translations and coordinate dilations.
For every smooth compactly supported trace-free tensor $h$ with $\kappa(h)$ sufficiently small,
\begin{equation}\label{eq:operatorperturbation}
\|P_{2N,\exp(h)}-(-\Delta)^N\|_{\mathcal H\to\mathcal H^*}
\le C\kappa(h).
\end{equation}
The corresponding multilinear estimates hold for the metric derivatives of this operator.
\end{lemma}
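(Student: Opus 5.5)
Scaling invariance comes first, then the operator estimate. Under $x\mapsto\lambda x$ with $h_\lambda(x)=h(\lambda x)$, the sup norm is unchanged, and $\|\partial^r h_\lambda\|_{L^{n/r}}=\lambda^r\lambda^{-r}\|\partial^rh\|_{L^{n/r}}=\|\partial^rh\|_{L^{n/r}}$, since $\partial^r$ contributes $\lambda^r$ and the change of variables contributes $\lambda^{-n\cdot r/n}$. Translation invariance is immediate.

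For \eqref{eq:operatorperturbation}, I will write $P_{2N,\exp(h)}$ in Euclidean coordinates as a natural differential operator of order $2N$. Since $h$ is trace-free, $dv_{\exp(h)}=dx$, so the operator can be put in divergence form. Then $P_{2N,\exp(h)}-(-\Delta)^N=\sum_{|\alpha|,|\beta|\le N}\partial^\alpha\bigl(A_{\alpha\beta}(h,\partial h,\dots)\partial^\beta\bigr)$. This follows from the self-adjointness of GJMS operators and from naturality: the operator is a universal polynomial in $g^{-1}$, $g$ and the derivatives of $g$. Counting homogeneity under dilations, each coefficient $A_{\alpha\beta}$ with $|\alpha|+|\beta|=2N-k$ is a sum of products $F(h)\,\partial^{r_1}h\cdots\partial^{r_m}h$ with $\sum r_i=k$. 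Here $F$ is smooth near $0$, and $F(0)=0$ when $k=0$. Writing $g=\exp(h)$, $g^{-1}=\exp(-h)$ keeps everything analytic in $h$.

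The pairing $\langle(P-(-\Delta)^N)u,w\rangle$ is then estimated term by term. A term with no derivatives on $h$ is bounded by $\sup|F(h)|\,\|\partial^Nu\|_{L^2}\|\partial^Nw\|_{L^2}\le C\|h\|_{L^\infty}\|u\|_{\mathcal H}\|w\|_{\mathcal H}$, using $F(h)=O(|h|)$. For a term $\int\partial^{r_1}h\cdots\partial^{r_m}h\,\partial^{\beta}u\,\partial^{\alpha}w$ with $|\alpha|+|\beta|=2N-k$, I first integrate by parts so that $|\alpha|,|\beta|\le N$. Hölder's inequality is then applied with exponents $n/r_i$ on the $h$ factors, and with $2n/(n-2(N-|\beta|))$ and $2n/(n-2(N-|\alpha|))$ on the function factors. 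The reciprocals sum to $k/n+(n-2N+|\beta|)/(2n)+(n-2N+|\alpha|)/(2n)=1$, matching the homogeneity count. Sobolev embedding $\dot H^N\hookrightarrow\dot W^{j,2n/(n-2(N-j))}$ bounds the function factors by $\|u\|_{\mathcal H}\|w\|_{\mathcal H}$. Products of several $h$-factors are bounded by $\kappa(h)^m\le\kappa(h)$ when $\kappa(h)\le1$.

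The one delicate point is that a factor $\partial^rh$ with $r>2N$ could appear, which $\kappa$ does not control. This is excluded because the total derivative count on the coefficients is at most $2N$, and integration by parts only moves derivatives among $u$, $w$ and the coefficients. I will distribute derivatives so that at most $N$ fall on each of $u$ and $w$, and the coefficient derivatives never exceed $2N$. The multilinear estimates for metric derivatives follow identically: differentiating in $h$ replaces one factor $\partial^rh$ by $\partial^r k$, or $F$ by $F'$, and the same Hölder exponents give $C\prod\kappa(k_i)$.
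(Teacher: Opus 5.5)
Your proposal is correct and takes essentially the same approach as the paper: write the operator in bilinear form with at most $N$ derivatives on each test function, use dilation homogeneity to see that the coefficient carries exactly $2N-|\alpha|-|\beta|$ derivatives of $h$, and conclude with H\"older (the coefficient in $L^{n/r}$, or $L^\infty$ when $r=0$) together with the Sobolev embeddings $\|\partial^a u\|_{L^{2n/(n-2N+2a)}}\le C\|u\|_{\mathcal H}$. One small slip: the reciprocal of $2n/(n-2(N-|\beta|))$ is $(n-2N+2|\beta|)/(2n)$, not $(n-2N+|\beta|)/(2n)$; with this correction the exponents do sum to one.
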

\begin{proof}
The invariance follows by a change of variables.
We write the formally self-adjoint GJMS operator in bilinear form.
Each term is a coefficient multiplying derivatives of orders $a,b\le N$ of $u,v$, respectively, with total order $r=2N-a-b$ of the metric derivatives.
This follows from the local natural construction and homogeneity under coordinate dilation.
The coefficient is a finite sum of products of derivatives of $h$ of total order $r$, times smooth bounded functions of $h$.
H\"older's inequality bounds the coefficient in $L^{n/r}(\R^n)$; the case $r=0$ is estimated in $L^\infty(\R^n)$.
The Sobolev inequalities
\[
\|\partial^a u\|_{L^{\frac{2n}{n-2N+2a}}(\R^n)}\le C\|u\|_{\mathcal H}
\qquad(0\le a\le N)
\]
prove \eqref{eq:operatorperturbation}.
Integration by parts transfers any excess derivatives on one test factor to the coefficient or the other factor.
Products containing additional metric factors satisfy the corresponding multilinear estimates.
\end{proof}

The reduction follows \cite[Section 2]{BrendleBlowup}, \cite[Section 2]{BrendleMarques}, \cite[Sections 5--8]{WeiZhao}, and \cite[Subsections 11.2--11.4]{GongKimWei}.
We fix a sufficiently small open parameter set $\mathcal U\Subset\R^n\times(0,\infty)$, so that the identifications in \eqref{eq:Jacobi-projection} are defined on a neighborhood of $\overline{\mathcal U}$.
In this subsection, we take parameter norms over $\overline{\mathcal U}$.
The constants in this subsection may depend on $n,N,\mathcal U$, but not on the metric perturbation $h$.
We set $\gamma\coloneqq\min\{1,4N/(n-2N)\}$.

For the nonlinear Taylor remainders, we write $p\coloneqq(n+2N)/(n-2N)$ and $q\coloneqq2n/(n-2N)=p+1$.

\begin{lemma}\label{lem:nonlinear-remainders}
Set
\[
\begin{aligned}
\mathcal N_p(U,w)&\coloneqq(U+w)_+^p-U^p-pU^{p-1}w,\\
\mathcal N_q(U,w)&\coloneqq(U+w)_+^q-U^q-qU^{q-1}w
-\frac12q(q-1)U^{q-2}w^2.
\end{aligned}
\]
The map $u\mapsto(u_+)^p$ is $C^1$ from $\mathcal H$ to $\mathcal H^*$.
Uniformly on $\overline{\mathcal U}$, for $\|w\|_{\mathcal H}\le1$,
\begin{equation}\label{eq:nonlinear-remainder-bound}
\|\mathcal N_p(U_{\xi,\eps},w)\|_{\mathcal H^*}\le C\|w\|_{\mathcal H}^{1+\gamma},
\qquad
\|D_w\mathcal N_p(U_{\xi,\eps},w)\|_{\mathcal H\to\mathcal H^*}
\le C\|w\|_{\mathcal H}^{\gamma}.
\end{equation}
For $U>0$,
\begin{equation}\label{eq:nonlinear-U-derivative}
|U\partial_U\mathcal N_p(U,w)|\le
\begin{cases}
C|w|^p &\text{for }1<p<2,\\
C\bigl(U^{p-2}|w|^2+|w|^p\bigr) &\text{for }p\ge2.
\end{cases}
\end{equation}
If $\|w_{\xi,\eps}\|_{C^1(\overline{\mathcal U};\mathcal H)}\le a\le1$, then
\begin{equation}\label{eq:scalar-energy-remainder}
\left\|\int_{\R^n}\mathcal N_q(U_{\xi,\eps},w_{\xi,\eps})\,dx\right\|_{C^1(\overline{\mathcal U})}
\le Ca^{2+\gamma}.
\end{equation}
\end{lemma}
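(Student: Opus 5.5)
The plan is to derive all four assertions from pointwise inequalities for the scalar functions $s\mapsto s_+^p$ and $s\mapsto s_+^q$, and then to convert them into dual-norm bounds using H\"older's inequality together with the Sobolev embedding $\mathcal H\hookrightarrow L^{q}(\R^n)$. The dual estimate \eqref{eq:dual-bound} is the bridge to $\mathcal H^*$. Here $q=2n/(n-2N)$ and the dual exponent is $q/p=2n/(n+2N)$.

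\textbf{Pointwise bounds.} Since $s\mapsto s_+^p$ is $C^1$ with derivative $p s_+^{p-1}$, which is H\"older continuous of exponent $\min\{1,p-1\}$, Taylor's formula gives, for $U>0$,
\[
|\mathcal N_p(U,w)|\le C\bigl(U^{p-2}|w|^2\mathbf 1_{\{p\ge2\}}+|w|^{p}\bigr)\quad\text{and}\quad |\mathcal N_p(U,w)|\le C U^{p-1-\gamma'}|w|^{1+\gamma'},
\]
where $\gamma'=\min\{1,p-1\}$. Note that $p-1=4N/(n-2N)$, so $\gamma'=\gamma$. The second form is the one to use: H\"older with exponents giving $U^{p-1-\gamma}\in L^{q/(p-1-\gamma)}$ and $|w|^{1+\gamma}\in L^{q/(1+\gamma)}$ yields
\[
\|\mathcal N_p\|_{L^{q/p}}\le C\|U\|_{L^q}^{p-1-\gamma}\|w\|_{L^q}^{1+\gamma}.
\]
Combined with \eqref{eq:dual-bound}, this gives the first half of \eqref{eq:nonlinear-remainder-bound}. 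The derivative $D_w\mathcal N_p=p\bigl((U+w)_+^{p-1}-U^{p-1}\bigr)$ is bounded pointwise by $C\bigl(U^{p-1-\gamma}|w|^\gamma\bigr)$ for $p\ge2$, and by $C|w|^{p-1}$ for $p<2$. Estimating the bilinear pairing with two test functions in $L^q$ gives the second half. The same H\"older bound, applied to differences, shows that $u\mapsto u_+^{p-1}$ is continuous from $L^q$ to $L^{q/(p-1)}=L^{n/(2N)}$. Hence $u\mapsto u_+^p$ is $C^1$ from $\mathcal H$ to $\mathcal H^*$, with derivative $p u_+^{p-1}$ acting as a multiplier.

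\textbf{The estimate \eqref{eq:nonlinear-U-derivative}.} Compute
\[
U\partial_U\mathcal N_p=pU\bigl[(U+w)_+^{p-1}-U^{p-1}-(p-1)U^{p-2}w\bigr].
\]
This requires a case split. On $\{|w|\ge U/2\}$, each term is bounded by $C|w|^p$, since $U\le 2|w|$. On $\{|w|<U/2\}$, we have $U+w>U/2>0$. Taylor's formula with second-order remainder then gives $CU\cdot U^{p-3}w^2=CU^{p-2}w^2$ when the remainder is controlled at scale $U$. For $1<p<2$, the factor $U^{p-2}w^2$ is at most $C|w|^p$ on this set, because $|w|<U/2$ implies $U^{p-2}|w|^{2-p}\le C$.

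\textbf{The energy remainder \eqref{eq:scalar-energy-remainder}.} This is the main bookkeeping step. The $C^0$ part follows from the pointwise bound $|\mathcal N_q|\le C\bigl(U^{q-2-\gamma}|w|^{2+\gamma}+|w|^q\bigr)$, which holds since $q-2=p-1$. Integrating with H\"older gives $Ca^{2+\gamma}$.

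For the parameter derivatives, differentiate under the integral by the chain rule:
\[
\partial\int\mathcal N_q=\int \partial_U\mathcal N_q\,\partial U+\int\partial_w\mathcal N_q\,\partial w.
\]
The second term is handled by $\partial_w\mathcal N_q=q\,\mathcal N_p(U,w)$ together with \eqref{eq:nonlinear-remainder-bound}. This yields $\|\mathcal N_p\|_{\mathcal H^*}\|\partial w\|_{\mathcal H}\le Ca^{1+\gamma}\cdot a$.

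For the first term, write $\partial U=U\cdot(\partial U/U)$. The ratio $\partial_{\xi,\eps}U_{\xi,\eps}/U_{\xi,\eps}$ is bounded uniformly on $\overline{\mathcal U}$ by the explicit formula \eqref{eq:roundbubble}. It therefore suffices to estimate $U\partial_U\mathcal N_q$, which satisfies the analogue of \eqref{eq:nonlinear-U-derivative} with $p$ replaced by $q$. The $q$-analogue with its cubic Taylor remainder gives $|U\partial_U\mathcal N_q|\le C\bigl(U^{q-2-\gamma}|w|^{2+\gamma}+|w|^q\bigr)$, which integrates to $Ca^{2+\gamma}$ by H\"older.

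I expect the main obstacle to be verifying this exponent-$\gamma$ interpolation uniformly in both regimes $p<2$ and $p\ge2$ without losing powers of $a$. In particular, care is needed on the region where $U+w\le0$, where the positive part truncates the expansion.
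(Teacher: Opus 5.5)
Your route is the paper's. You split at $|w|=U/2$, use Taylor's formula on the inner region and crude power bounds on the outer one, and then apply H\"older, Sobolev and \eqref{eq:dual-bound}. For the parameter derivative you use $|\partial_{\xi,\eps}U_{\xi,\eps}|\le CU_{\xi,\eps}$ together with $\partial_w\mathcal N_q=q\,\mathcal N_p$. Your bounds for $U\partial_U\mathcal N_p$, for $\mathcal N_q$, and for $U\partial_U\mathcal N_q$ are all correct.

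There is one false step. The ``second form'' $|\mathcal N_p(U,w)|\le CU^{p-1-\gamma}|w|^{1+\gamma}$ fails when $p>2$, i.e.\ when $n<6N$. So does your bound $|D_w\mathcal N_p|\le CU^{p-1-\gamma}|w|^{\gamma}$. This regime occurs in the paper's ranges, e.g.\ $N=18$, $n=53$. There $\gamma=1$, and for $U=1$, $w=M\to\infty$ one has $\mathcal N_p\sim M^p\gg M^2$ and $|D_w\mathcal N_p|\sim M^{p-1}\gg M$. The problem is that $s\mapsto ps_+^{p-1}$ is only locally Lipschitz for $p>2$, so Taylor's formula yields $U^{p-2}w^2$ only on $|w|\lesssim U$ (for $1<p\le2$ your second form is fine).

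The fix is to use your first, correct, form $C(U^{p-2}|w|^2+|w|^p)$, and $C(U^{p-2}|w|+|w|^{p-1})$ for the derivative. After H\"older, the extra terms contribute $\|w\|_{L^q}^{p}$ and $\|w\|_{L^q}^{p-1}$. These are absorbed into $C\|w\|_{\mathcal H}^{2}$ and $C\|w\|_{\mathcal H}$ only because of the hypothesis $\|w\|_{\mathcal H}\le1$. That is why the hypothesis appears in the statement, and your argument as written never uses it.
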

\begin{proof}
Taylor's formula on $|w|\le U/2$ and the power bounds on its complement give the pointwise remainders.
H\"older's inequality, Sobolev embedding, and \eqref{eq:dual-bound} yield \eqref{eq:nonlinear-remainder-bound}; the same split proves \eqref{eq:nonlinear-U-derivative}.
For $2<q<3$ the scalar remainder satisfies
\[
|\mathcal N_q|+|U\partial_U\mathcal N_q|\le C|w|^q,\qquad
|\partial_w\mathcal N_q|\le C|w|^{q-1}.
\]
For $q\ge3$, the respective bounds are $C(U^{q-3}|w|^3+|w|^q)$ and $C(U^{q-3}|w|^2+|w|^{q-1})$.
Since $|\partial_{\xi,\eps}U_{\xi,\eps}|\le CU_{\xi,\eps}$, H\"older's inequality proves \eqref{eq:scalar-energy-remainder}, including its parameter derivative.
\end{proof}

\begin{proposition}\label{prop:projected-family}
There are constants $\kappa_0,C>0$ such that, for every smooth compactly supported trace-free $h$ with $\kappa(h)\le\kappa_0$, there is a locally unique $C^1$ family $w_{h,\xi,\eps}\in K_{\xi,\eps}^\perp$ satisfying
\begin{equation}\label{eq:projectedequation}
\langle\mathscr E_h(U_{\xi,\eps}+w_{h,\xi,\eps}),\phi\rangle=0
\qquad\text{for every }\phi\in K_{\xi,\eps}^\perp.
\end{equation}
Moreover,
\begin{equation}\label{eq:projected-family-bound}
\|w_{h,\xi,\eps}\|_{C^1(\overline{\mathcal U};\mathcal H)}
\le C\|\mathscr E_h(U_{\xi,\eps})\|_{C^1(\overline{\mathcal U};\mathcal H^*)}
\le C\kappa(h).
\end{equation}
The projected linearization at $U_{\xi,\eps}+w_{h,\xi,\eps}$ has a uniformly bounded inverse.
The reduced energy is the $C^1$ function $(\xi,\eps)\mapsto J_h(U_{\xi,\eps}+w_{h,\xi,\eps})$.
A parameter $(\xi,\eps)$ is a critical point of this function if and only if $U_{\xi,\eps}+w_{h,\xi,\eps}$ solves the full weak equation $\mathscr E_h(U_{\xi,\eps}+w_{h,\xi,\eps})=0$.
\end{proposition}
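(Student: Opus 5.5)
We will obtain $w_{h,\xi,\eps}$ by a contraction (equivalently, implicit function) argument on $K_{\xi,\eps}^\perp$, parametrized over $\overline{\mathcal U}$. Write $u=U_{\xi,\eps}+w$ and expand
\[
\mathscr E_h(U_{\xi,\eps}+w)=\mathscr E_h(U_{\xi,\eps})+L_{U_{\xi,\eps}}w+\bigl(P_{2N,\exp(h)}-(-\Delta)^N\bigr)w-c_{n,N}\mathcal N_p(U_{\xi,\eps},w),
\]
using \eqref{eq:bubble-equation}, so that $\mathscr E_0(U_{\xi,\eps})=0$. Restricting to $K_{\xi,\eps}^\perp$ and applying the inverse from Proposition \ref{prop:projected-inverse}, \eqref{eq:projectedequation} becomes the fixed-point equation
\[
w=T_{\xi,\eps}(w)\coloneqq-L_{U_{\xi,\eps}}^{-1}\Bigl[\mathscr E_h(U_{\xi,\eps})+\bigl(P_{2N,\exp(h)}-(-\Delta)^N\bigr)w-c_{n,N}\mathcal N_p(U_{\xi,\eps},w)\Bigr].
\]
The source satisfies $\|\mathscr E_h(U_{\xi,\eps})\|_{\mathcal H^*}=\|(P_{2N,\exp(h)}-(-\Delta)^N)U_{\xi,\eps}\|_{\mathcal H^*}\le C\kappa(h)$ by Lemma \ref{lem:metric-perturbation}. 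The linear perturbation term has operator norm $\le C\kappa(h)$, and the nonlinear term is controlled by \eqref{eq:nonlinear-remainder-bound}. On the ball $\|w\|_{\mathcal H}\le 2C\kappa(h)$, $T_{\xi,\eps}$ is therefore a self-map and a contraction with constant $\le C(\kappa_0+\kappa_0^\gamma)<1$ once $\kappa_0$ is small. This gives existence, local uniqueness, and the $C^0$ part of \eqref{eq:projected-family-bound}. The same bounds show that the projected linearization $\Pi(L_{U}+(P_{2N,\exp(h)}-(-\Delta)^N)-c_{n,N}D_w\mathcal N_p)$ is a small perturbation of $\Pi L_U$, hence uniformly invertible.

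\emph{Main obstacle: the $C^1$ dependence on $(\xi,\eps)$.} The spaces $K_{\xi,\eps}^\perp$ move, so I would transport everything to $K_{\xi_0,\eps_0}^\perp$ via the isomorphisms of Lemma \ref{lem:Jacobi-identifications}. I would then apply the implicit function theorem to
\[
\Phi(\xi,\eps,\tilde w)\coloneqq\Pi_{\xi,\eps}^*\,\mathscr E_h\bigl(U_{\xi,\eps}+\Pi_{\xi,\eps}\tilde w\bigr).
\]
This map is $C^1$ in $\tilde w$ because $u\mapsto(u_+)^p$ is $C^1$ from $\mathcal H$ to $\mathcal H^*$ (Lemma \ref{lem:nonlinear-remainders}). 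It is $C^1$ in the parameters because $\partial_{\xi,\eps}U_{\xi,\eps}$ is bounded by $CU_{\xi,\eps}$ and the projections are smooth.

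Differentiating the fixed-point identity in $(\xi,\eps)$ gives $\partial w$ as the inverse projected linearization applied to three pieces: $\partial_{\xi,\eps}\mathscr E_h(U_{\xi,\eps})$, terms from differentiating the projections (of size $\|w\|_{\mathcal H}$), and $D_U\mathcal N_p\cdot\partial U$. The last is bounded through \eqref{eq:nonlinear-U-derivative} by $C\|w\|^{\min(p,2)}$, which is of order $o(\kappa)$. This yields $\|w\|_{C^1}\le C\|\mathscr E_h(U_{\xi,\eps})\|_{C^1(\overline{\mathcal U};\mathcal H^*)}$. The parameter derivative of the source is again $(P_{2N,\exp(h)}-(-\Delta)^N)\partial_{\xi,\eps}U_{\xi,\eps}$, which is $\le C\kappa(h)$ by Lemma \ref{lem:metric-perturbation}, since $\partial U\in\mathcal H$ uniformly on $\overline{\mathcal U}$. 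Note that $h$ is fixed; only the bubble moves.

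For the reduction, \eqref{eq:projectedequation} gives $\mathscr E_h(U_{\xi,\eps}+w)=\sum_\beta\lambda_\beta\langle Z_{\xi,\eps,\beta},\cdot\rangle_{\mathcal H}$ for multipliers $\lambda_\beta(\xi,\eps)$. Hence
\[
\partial_\alpha J_h=\langle\mathscr E_h(u),Z_{\xi,\eps,\alpha}+\partial_\alpha w\rangle.
\]
Differentiating $\langle w,Z_\beta\rangle_{\mathcal H}=0$ gives $\langle\partial_\alpha w,Z_\beta\rangle_{\mathcal H}=-\langle w,\partial_\alpha Z_\beta\rangle_{\mathcal H}$. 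Therefore
\[
\partial_\alpha J_h=\sum_\beta\lambda_\beta\bigl(G_{\alpha\beta}-\langle w,\partial_\alpha Z_\beta\rangle_{\mathcal H}\bigr).
\]
The matrix here is a $C\kappa_0$-perturbation of the positive definite Gram matrix $G_{\xi,\eps}$, so it is invertible. Thus $\nabla J_h=0$ if and only if all $\lambda_\beta=0$, which is equivalent to $\mathscr E_h(u)=0$. The reduced energy is $C^1$ as the composition of the $C^1$ functional $J_h$ with the $C^1$ family $U_{\xi,\eps}+w_{h,\xi,\eps}$.
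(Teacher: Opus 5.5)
Your proposal is correct and follows essentially the same route as the paper: the fixed-point reformulation through $L_{U_{\xi,\eps}}^{-1}$ on $K_{\xi,\eps}^\perp$, the contraction via Lemmas \ref{lem:metric-perturbation} and \ref{lem:nonlinear-remainders}, the implicit function theorem after the identifications of Lemma \ref{lem:Jacobi-identifications}, differentiation of the projected equation for the quantitative $C^1$ bound, and the multiplier-matrix argument as a perturbation of the Gram matrix $G_{\xi,\eps}$. The one term you leave implicit when differentiating in the parameters is $(\partial_{\xi,\eps}L_{U_{\xi,\eps}})w$; like the projection derivatives, it is $O(\|w\|_{\mathcal H})$, so it does not change the conclusion.
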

\begin{proof}
In this proof only, write $z=(\xi,\eps)$ and abbreviate the corresponding parameter subscripts and arguments by $z$; write $z_0=(\xi_0,\eps_0)$ when a reference parameter is needed.
Using \eqref{eq:weak-residual}, the projected equation \eqref{eq:projectedequation} is equivalent to
\[
w=-L_{U_z}^{-1}\bigl\{\mathscr E_h(U_z)
+(P_{2N,\exp(h)}-(-\Delta)^N)w-c_{n,N}\mathcal N_p(U_z,w)\bigr\}.
\]
By Lemma \ref{lem:nonlinear-remainders}, its Lipschitz constant on a ball of radius $a$ is bounded by $C(\kappa(h)+a^\gamma)$.
Proposition \ref{prop:projected-inverse} and \eqref{eq:operatorperturbation} therefore establish a contraction on a ball of radius $C\sup_z\|\mathscr E_h(U_z)\|_{\mathcal H^*}$ for small $\kappa_0$.
The bound $C(\kappa(h)+a^\gamma)<1$ also proves local uniqueness and invertibility of the projected linearization.
The implicit-function theorem, after the identifications of Lemma \ref{lem:Jacobi-identifications}, proves $C^1$ dependence on $z$.

For the quantitative derivative estimate, we use Lemma \ref{lem:nonlinear-remainders} and $|\partial_zU_z|\le CU_z$.
We use \eqref{eq:Jacobi-projection} to express the projected equation on $K_{z_0}^\perp$, and differentiate in $z$.
The coefficient of the differentiated unknown is the uniformly invertible projected linearization.
Derivatives of the projections and of $L_{U_z}$ contribute $O(\|w\|_{\mathcal H})$; \eqref{eq:nonlinear-U-derivative} contributes $O(\|w\|_{\mathcal H}^{1+\gamma})$.
Together with the derivative of $\mathscr E_h(U_z)$, these prove \eqref{eq:projected-family-bound}.
The argument uses only first derivatives of the power map on $\mathcal H$, including when $n>6N$.

Since $\mathscr E_h(U_z+w_{h,z})$ annihilates $K_z^\perp$, it has the form
\[
\langle\mathscr E_h(U_z+w_{h,z}),\phi\rangle
=\sum_{\alpha=1}^{n+1}b_\alpha\langle Z_{z,\alpha},\phi\rangle_{\mathcal H}
\qquad(\phi\in\mathcal H).
\]
Differentiating $\langle Z_{z,\alpha},w_{h,z}\rangle_{\mathcal H}=0$, we obtain
\begin{equation}\label{eq:multiplier-matrix}
\partial_{z_\beta}J_h(U_z+w_{h,z})
=\sum_{\alpha=1}^{n+1}b_\alpha\left[(G_z)_{\alpha\beta}
-\langle\partial_{z_\beta}Z_{z,\alpha},w_{h,z}\rangle_{\mathcal H}\right].
\end{equation}
By \eqref{eq:projected-family-bound}, the matrix in \eqref{eq:multiplier-matrix} is $G_z+O(\kappa(h))$.
It is invertible for small $\kappa_0$, since $G_z$ is uniformly positive definite on $\overline{\mathcal U}$.
Thus a critical point has $b_\alpha=0$ for every $\alpha\in\{1,\ldots,n+1\}$.
The converse follows directly by differentiation of the energy.
\end{proof}

\subsection{The quadratic reduced energy}
\label{sec:polynomial-reduction}\label{sec:energybridge}
\label{subsec:local-energy}\label{subsec:TT-energy}
We compute the quadratic coefficient of the reduced energy from Proposition \ref{prop:projected-family} for a general trace-free polynomial perturbation.
The linear scalar correction is retained before restricting to the Weyl tensors \eqref{eq:Weylchain}, as in the energy expansions of \cite[Section 3]{BrendleBlowup}, \cite[Section 3]{BrendleMarques}, \cite[Section 8]{WeiZhao}, and \cite[Subsection 11.4]{GongKimWei}.

For a symmetric polynomial two-tensor $H$ on $\R^n$, we write $\bar H\coloneqq\Omega^2H$ in the stereographic chart \eqref{eq:roundbubble}.
We impose the following integrability condition:
\begin{equation}\label{eq:admissibility}
\deg H\le d,\qquad n>2N+2d.
\end{equation}
The tensor $\bar H$ need not be smooth at the north pole.
Lemma \ref{lem:cutoff-convergence} proves that the smooth tensors $\Omega^2\chi_RH$ converge to $\bar H$ in $H^N(\Sph^n;\operatorname{Sym}^2T^*\Sph^n)$.

\begin{lemma}\label{lem:cutoff-convergence}
Under \eqref{eq:admissibility}, $\bar H\in H^N(\Sph^n;\operatorname{Sym}^2T^*\Sph^n)$ and
\begin{equation}\label{eq:formtail}
\|\bar H-\Omega^2\chi_RH\|_{H^N(g_c)}\le CR^{d-\frac{n-2N}{2}},\qquad R\ge1.
\end{equation}
The same estimate holds after any fixed number of derivatives of $H(\xi+\eps\,\cdot)$, uniformly on compact parameter sets.
\end{lemma}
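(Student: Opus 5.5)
The plan is to work in the stereographic chart and reduce the estimate to weighted integrals over $\R^n$. For a tensor $T$ on $\Sph^n\setminus\{\text{north pole}\}$ written as $T=\Omega^2 S$ with $S$ a Euclidean symmetric two-tensor, the conformal relation $g_c=\Omega^2\delta$ gives $|T|_{g_c}=|S|_\delta$ pointwise and $dv_{g_c}=\Omega^n dx$. The Christoffel symbols of $g_c$ are first derivatives of $\log\Omega$, and $|\partial^k\log\Omega|\le C(1+|x|)^{-k}$ for $k\ge1$. Expanding $\nabla_{g_c}^j(\Omega^2S)$ in Euclidean derivatives and measuring with $g_c$ (each extra covariant slot contributes a factor $\Omega^{-1}$) should give the pointwise bound
\[
|\nabla_{g_c}^j(\Omega^2S)|_{g_c}\le C\sum_{k=0}^{j}(1+|x|)^{j}(1+|x|)^{-(j-k)}|\partial^kS|
= C\sum_{k=0}^j(1+|x|)^{k}|\partial^kS|,
\]
where $\Omega^{-1}\sim(1+|x|)^2/2$ is used. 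Hence
\[
\|\Omega^2S\|_{H^N(g_c)}^2\le C\sum_{k=0}^N\int_{\R^n}(1+|x|)^{2k-2n}|\partial^kS|^2\,dx .
\]

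I would apply this with $S=(1-\chi_R)H$, which vanishes on $B_R$. Since $\deg H\le d$, and $|\partial^m(1-\chi_R)|\le CR^{-m}\le C|x|^{-m}$ is supported where $|x|\ge R$, we get $|\partial^kS|\le C|x|^{d-k}$ on $|x|\ge R$. The $k$-th integral is then bounded by $C\int_{|x|\ge R}|x|^{2k-2n+2d-2k}\,dx=C\int_R^\infty r^{2d-n-1}\,dr$, which is finite because $n>2d$ and equals $CR^{2d-n}$. This gives a decay rate $R^{d-n/2}$, which is even better than the required $R^{d-(n-2N)/2}$ (for $R\ge1$ the stated bound follows).

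The same weighted bound with $S=H$ on all of $\R^n$ shows $\bar H$ lies in the weighted space. To conclude that $\bar H\in H^N$ in the completion sense, and that the derivatives are weak derivatives across the north pole, I would argue as follows. First, the smooth tensors $\Omega^2\chi_RH$ are Cauchy in $H^N(g_c)$ by the tail estimate, so they converge in $H^N$ to a limit. Second, they converge to $\bar H$ in $L^2$. The limit is therefore $\bar H$. This step, checking that no distributional contribution concentrates at the pole, is the main point needing care, and the Cauchy argument handles it without any capacity discussion. The sharper stated exponent with $N$ leaves room for this.

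For translated and dilated profiles $H(\xi+\eps\,\cdot)$ and their parameter derivatives, each is again a polynomial tensor of degree at most $d$. Its coefficients depend polynomially on $(\xi,\eps)$ and are bounded on compact parameter sets, so the constants in the estimate are uniform there.
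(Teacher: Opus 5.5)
Your strategy is sound: weighted integrals in the stereographic chart, then a Cauchy argument identifying the $H^N$ limit of $\Omega^2\chi_RH$ with $\bar H$. However, the pointwise bound at its core is off by a factor $(1+|x|)^j$, and with it you end up proving a false statement. For a covariant $(j+2)$-tensor $T$ one has $|T|_{g_c}=\Omega^{-(j+2)}|T|_\delta$. The Christoffel expansion gives $|\nabla^j_{g_c}(\Omega^2S)|_\delta\le C\Omega^2\sum_{k\le j}(1+|x|)^{k-j}|\partial^kS|$. Since $\Omega^{-1}=(1+|x|^2)/2$, the factor $\Omega^{-j}$ is comparable to $(1+|x|)^{2j}$, not $(1+|x|)^j$, so the correct bound is
\[
|\nabla^j_{g_c}(\Omega^2S)|_{g_c}\le C\sum_{k=0}^j(1+|x|)^{j+k}|\partial^kS|.
\]
You can check this in the inverted chart $y=x/|x|^2$. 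There $g_c$ is smooth near the pole, and the top-degree part of $\bar H$ is homogeneous of degree $-d$ in $y$. Its $j$-th derivatives are therefore homogeneous of degree $-d-j$, i.e.\ of size $|x|^{d+j}$, not $|x|^d$.

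In particular, your conclusion that $\bar H\in H^N$ as soon as $n>2d$, with tail $O(R^{d-n/2})$, is wrong. For $d\ge1$ and $2d<n\le2d+2N$, the $N$-th derivative of a nonzero function homogeneous of degree $-d$ is not square integrable near $y=0$. Your remark that the $N$ in the stated exponent ``leaves room'' is the symptom. The exponent in \eqref{eq:formtail} is sharp, and the hypothesis $n>2N+2d$ is exactly what the estimate needs.

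With the corrected weight, the tail computation becomes
\[
\int_{|x|\ge R}|x|^{2j+2d-2n}\,dx=C\int_R^\infty r^{2j+2d-n-1}\,dr=CR^{2j+2d-n},\qquad 0\le j\le N.
\]
This is finite precisely when $n>2d+2j$. The binding case $j=N$ is \eqref{eq:admissibility}, and the square root gives exactly $R^{d-(n-2N)/2}$ for $R\ge1$.

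After this correction, the rest of your argument goes through unchanged:
\begin{enumerate}
\item the same bound for $S=(\chi_{R'}-\chi_R)H$ makes $\Omega^2\chi_RH$ Cauchy in $H^N$;
\item $L^2$ convergence identifies the limit with $\bar H$;
\item polynomial dependence of $H(\xi+\eps\,\cdot)$ on the parameters gives uniformity.
\end{enumerate}
The repaired proof is then the paper's argument, written in the stereographic rather than the inverted chart. The paper uses $|\nabla^r\bar H|_{g_c}=O(|y|^{-d-r})$ and integrates $t^{n-1-2d-2r}$ over $t\le C/R$. Your Cauchy step is a useful explicit justification that the completion contains $\bar H$, with nothing concentrating at the pole.
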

\begin{proof}
The inverted coordinates $y=x/|x|^2$ extend across the north pole, where $y=0$.
In these coordinates, $|\nabla^r\bar H|_{g_c}=O(|y|^{-d-r})$.
The squared $L^2$ norm on $|y|\le C/R$ is bounded by
\[
C\int_0^{\frac{C}{R}}t^{n-1-2d-2r}\,dt
\le C R^{2d+2r-n}\qquad(0\le r\le N).
\]
The terms in $\nabla^r[(1-\chi_R)\bar H]$ with derivatives on $\chi_R$ satisfy the same $L^2$ bound.
Taking the square root and using $r\le N$ proves \eqref{eq:formtail}.
Parameter differentiation does not increase the polynomial degree.
\end{proof}

\begin{lemma}\label{lem:quadratic-continuity}
At $(g_c,1)$, the second metric derivative and the mixed metric--scalar derivative of $\mathcal J$ are continuous on the corresponding $H^N$ spaces.
The quadratic form obtained by eliminating the scalar variable on $K^\perp$ has the same continuity.
In particular, Lemma \ref{lem:cutoff-convergence} defines these pairings on polynomial tensors satisfying \eqref{eq:admissibility}.
\end{lemma}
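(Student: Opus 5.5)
The plan is to write each second derivative of $\mathcal J$ at $(g_c,1)$ as a finite sum of integrals of natural bilinear expressions and estimate each by $H^N$ norms. By naturality and homogeneity of the GJMS construction, $P_{2N,g}$ in bilinear form is a sum of terms
\[
c(g)\ast\nabla_g^{\alpha}u\ast\nabla_g^{\beta}v,\qquad \alpha,\beta\le N,
\]
where the coefficient $c(g)$ is a universal polynomial in $g^{-1}$ and curvature derivatives of total order $2N-\alpha-\beta$. Differentiating in the metric at the smooth point $g_c$ gives explicit coefficient variations:
\begin{itemize}
\item[(i)] the mixed metric--scalar derivative $D_gD_u\mathcal J|_{(g_c,1)}[h,\phi]$ is $\int_{\Sph^n}\phi\,D_g(P_{2N,g}1)[h]$ plus volume-form terms;
\item[(ii)] the second metric derivative $D_g^2\mathcal J[h,k]$ is an integral of terms linear in $\nabla^{i}h$ and $\nabla^{j}k$ with smooth coefficients.
\end{itemize}
The first step is an integration by parts distributing the derivatives so that every term has at most $N$ derivatives on each of $h$, $k$, $\phi$. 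This is possible because the total number of derivatives is $2N$. For a term $\nabla^i h\ast\nabla^j k$ with $i+j\le 2N$ and $i>N$, move $i-N$ derivatives onto $k$ and the smooth coefficient. Each term is then bounded by $C\|h\|_{H^N}\|k\|_{H^N}$ on the compact sphere. This is the same mechanism as the last step in the proof of Theorem \ref{thm:ambientHessian}, but for general (not TT) directions. For the mixed term, the linearization $h\mapsto D_g(P_{2N,g}1)[h]$ is a differential operator of order $2N$ with smooth coefficients. Pairing with $\phi\in H^N(\Sph^n)$ and integrating by parts gives the bound $C\|h\|_{H^N}\|\phi\|_{H^N}$, so $h\mapsto D_gD_u\mathcal J[h,\cdot]$ is continuous $H^N\to H^{-N}(\Sph^n)$.

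Next I would eliminate the scalar variable. Via the conformal identification $u\mapsto u/U$ from the proof of Proposition \ref{prop:projected-inverse}, the scalar Hessian $D_u^2\mathcal J|_{(g_c,1)}=P_{2N,g_c}-\frac{n+2N}{n-2N}c_{n,N}$ is boundedly invertible on the complement of degree-one harmonics (corresponding to $K^\perp$), from $H^N$ to its dual. Therefore the minimizing/critical scalar correction $\phi_h=-(D_u^2\mathcal J)^{-1}\Pi D_gD_u\mathcal J[h,\cdot]$ depends continuously on $h\in H^N$. The reduced quadratic form
\[
D_g^2\mathcal J[h,k]-\langle D_gD_u\mathcal J[h,\cdot],(D_u^2\mathcal J)^{-1}D_gD_u\mathcal J[k,\cdot]\rangle
\]
is a composition of bounded bilinear maps, hence continuous on $H^N\times H^N$.

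Finally, Lemma \ref{lem:cutoff-convergence} shows that $\Omega^2\chi_RH$ is Cauchy in $H^N(\Sph^n;\operatorname{Sym}^2T^*\Sph^n)$ with limit $\bar H$ whenever \eqref{eq:admissibility} holds. The pairings are therefore defined on polynomial tensors as limits independent of the cutoff.

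The main obstacle is the integration-by-parts bookkeeping in the second metric derivative, in particular making sure no term forces more than $N$ derivatives onto a single factor. I would handle this with a counting argument: the coefficients of the second variation have total order $2N$, so every term can be rebalanced to at most $N$ derivatives per factor. Smoothness of $g_c$ absorbs all derivatives landing on coefficients.
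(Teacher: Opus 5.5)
Your proposal is correct and follows essentially the same route as the paper. Both second variations have total differential order at most $2N$, and integration by parts places at most $N$ derivatives on each factor. The bounded projected inverse of Proposition~\ref{prop:projected-inverse}, which you realize on the sphere through $u\mapsto u/U$, then carries this continuity over to the form obtained by eliminating the scalar variable, and Lemma~\ref{lem:cutoff-convergence} extends the pairings to admissible polynomial tensors; your version simply supplies the bookkeeping that the paper compresses into three sentences.
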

\begin{proof}
Both second variations have total differential order at most $2N$.
Integration by parts places at most $N$ derivatives on each factor, giving continuity on the stated $H^N$ spaces.
The bounded projected inverse from Proposition \ref{prop:projected-inverse} preserves this continuity when the scalar variable is eliminated.
\end{proof}

For a trace-free metric direction $H$, we define
\begin{equation}\label{eq:linear-metric-source}
f_{H,\xi,\eps}\coloneqq D_gP_{2N,\delta}[H]U_{\xi,\eps},
\qquad f_H\coloneqq f_{H,0,1}.
\end{equation}
The derivative in \eqref{eq:linear-metric-source} is taken along $\exp(\eta H)$ at $\eta=0$.
By \eqref{eq:weak-residual} and the bubble equation \eqref{eq:bubble-equation}, $\eta f_{H,\xi,\eps}$ is the first-order term in $\mathscr E_{\eta H}(U_{\xi,\eps})$.

\begin{lemma}\label{lem:source-limits}
Let $H$ be trace-free, with $\deg H\le d$ and $n>2N+2d$, and put $H_R\coloneqq\chi_RH$.
Then $f_{H,\xi,\eps}\in\mathcal H^*$,
\begin{equation}\label{eq:source-tail}
\|f_{H_R,\xi,\eps}-f_{H,\xi,\eps}\|_{C^2(\overline{\mathcal U};\mathcal H^*)}\le CR^{d-\frac{n-2N}{2}},
\qquad
\langle f_{H,\xi,\eps},Z\rangle=0\quad(Z\in K_{\xi,\eps}).
\end{equation}
The estimate in \eqref{eq:source-tail} holds on any fixed compact parameter set $\overline{\mathcal U}\subset\R^n\times(0,\infty)$, with $C$ allowed to depend on that set.
\end{lemma}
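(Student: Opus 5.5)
The plan is to estimate $f_{H,\xi,\eps}$ directly on $\R^n$ by duality, using the bilinear structure of the linearized GJMS operator from the proof of Lemma \ref{lem:metric-perturbation}. Then I would obtain the orthogonality from the metric stationarity in Proposition \ref{prop:jointcritical}.

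\emph{Bilinear form and decay bounds.} For $\phi\in C_c^\infty(\R^n)$, write $\langle f_{H,\xi,\eps},\phi\rangle=\partial_\eta|_{\eta=0}\int_{\R^n}\phi\,P_{2N,\exp(\eta H)}U_{\xi,\eps}\,dx$. By naturality and dilation homogeneity, as in Lemma \ref{lem:metric-perturbation}, this is a finite sum of integrals
\[
\int c\,\partial^rH\,\partial^aU_{\xi,\eps}\,\partial^b\phi\,dx,\qquad a,b\le N,\quad r=2N-a-b,
\]
with constant coefficients $c$, after integrating by parts to balance derivatives. Since $H$ is polynomial of degree at most $d$, we have $|\partial^rH|\le C(1+|x|)^{d-r}$. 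From \eqref{eq:roundbubble}, $|\partial^aU_{\xi,\eps}|\le C(1+|x|)^{-(n-2N)-a}$, uniformly on $\overline{\mathcal U}$, together with all parameter derivatives up to order two. Hence each coefficient is bounded by $C(1+|x|)^{-(n-b-d)}$.

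\emph{Duality estimate.} By Sobolev, $\|\partial^b\phi\|_{L^{2n/(n-2N+2b)}}\le C\|\phi\|_{\mathcal H}$. So it suffices to put the coefficient in the dual exponent $s=2n/(n+2N-2b)$. The condition $(n-b-d)s>n$ reduces exactly to $n>2N+2d$, which is \eqref{eq:admissibility}. This proves $f_{H,\xi,\eps}\in\mathcal H^*$, with $C^2$ parameter dependence.

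\emph{Tail estimate.} Write $f_{H_R}-f_H=f_{(\chi_R-1)H}$. Its coefficients are supported in $|x|\ge R$. Terms where derivatives fall on $\chi_R$ gain $R^{-j}$ but live on $R\le|x|\le2R$, so they obey the same pointwise bound. Therefore
\[
\Bigl(\int_{|x|\ge R}(1+|x|)^{-(n-b-d)s}dx\Bigr)^{1/s}\le CR^{\frac ns-(n-b-d)}=CR^{d-\frac{n-2N}{2}},
\]
independently of $b$. This is the claimed bound, including two parameter derivatives.

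\emph{Orthogonality.} First take the compactly supported tensor $H_R$. By Proposition \ref{prop:jointcritical}, $\partial_\eta J_{\eta H_R}(U_z)|_{\eta=0}=0$ for every $z=(\xi,\eps)$. The nonlinear term of \eqref{eq:EuclideanJ} does not depend on $\eta$, because the volume form of $\exp(\eta H_R)$ is $dx$. Hence $\langle f_{H_R,z},U_z\rangle=0$ for all $z$.

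The operator $D_gP_{2N,\delta}[H_R]$ is formally self-adjoint, since it is the derivative of a family of $dx$-self-adjoint operators. Differentiating in $z$ then gives $2\langle f_{H_R,z},\partial_zU_z\rangle=0$, that is, orthogonality to every $Z_{z,\alpha}$. Letting $R\to\infty$ and using the tail estimate with $Z\in\mathcal H$ gives the orthogonality for $H$.

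The main obstacle is the bookkeeping of the first step. One must ensure that every term of the linearized operator can be integrated by parts into the balanced form with $a,b\le N$ without boundary contributions. For noncompactly supported $H$, I would justify this by first working with $\phi\in C_c^\infty$ and then extending by density.
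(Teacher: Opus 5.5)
Your proposal is correct and follows essentially the same route as the paper. Both arguments play polynomial growth of $\partial^rH$ against bubble decay to get a weighted bound in the dual Lebesgue space, with the tail integral producing $R^{d-(n-2N)/2}$. Both obtain orthogonality from the stationarity in Proposition \ref{prop:jointcritical}, differentiation in the bubble parameters using self-adjointness, and the limit $R\to\infty$ in $\mathcal H^*$.

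The one difference is that the paper works only with your case $b=0$. It bounds the function $f_{H_R,\xi,\eps}$ and its parameter derivatives pointwise by $C(1+|x|)^{d-n}$ and then applies \eqref{eq:dual-bound}. So the integration-by-parts balancing you flag as the main obstacle is not actually needed.
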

\begin{proof}
Each term in the first metric variation has total differential order $2N$.
A term with $r$ derivatives on $H_R$ and $2N-r$ derivatives on $U_{\xi,\eps}$ is bounded by $(1+|x|)^{d-r}(1+|x|)^{-n+r}$.
Consequently
\begin{equation}\label{eq:firstforcingdecay}
\sum_{|\beta|\le2}|\partial_{\xi,\eps}^\beta f_{H_R,\xi,\eps}(x)|
\le C(1+|x|)^{d-n}.
\end{equation}
The product-rule terms in $f_{H_R,\xi,\eps}$ with derivatives on $\chi_R$ satisfy \eqref{eq:firstforcingdecay} on $R\le|x|\le2R$.
Since
\[
\|(1+|x|)^{d-n}\|_{L^{\frac{2n}{n+2N}}(\R^n\setminus B_R(0))}
\le C R^{d-\frac{n-2N}{2}},
\]
\eqref{eq:firstforcingdecay} and \eqref{eq:dual-bound} imply $f_{H,\xi,\eps}\in\mathcal H^*$ and the norm estimate in \eqref{eq:source-tail}.

For compactly supported $H_R$, Proposition \ref{prop:jointcritical} implies $\left.\partial_\eta J_{\eta H_R}(U_{\xi,\eps})\right|_{\eta=0}=0$.
Differentiating with respect to each bubble parameter and using self-adjointness yields $\langle f_{H_R,\xi,\eps},Z_{\xi,\eps,\alpha}\rangle=0$.
The limit in $\mathcal H^*$ proves the identity for $H$.
\end{proof}

We now eliminate the scalar correction to define the quadratic reduced energy.
For compactly supported trace-free tensors $H,\widetilde H$, we define
\begin{equation}\label{eq:correctedE}
\cE(H,\widetilde H)\coloneqq\frac14\int_{\R^n} U\,D_g^2P_{2N,\delta}[H,\widetilde H]U\,dx
-\frac12\langle f_H,L_U^{-1}f_{\widetilde H}\rangle.
\end{equation}
The mixed derivative is taken in logarithmic coordinates $g\coloneqq\exp(\eta_1H+\eta_2\widetilde H)$.
For general metric variations, $\cE$ is defined by stationary elimination of the scalar variable in the quadratic Taylor expansion of \eqref{eq:geometricJ} at $(g_c,1)$, retaining all volume-measure derivatives, and then by the conformal invariance in Lemma \ref{lem:energy-invariance}.
The scalar correction is determined on $K^\perp$ by Proposition \ref{prop:projected-inverse}.
This extension is needed in Lemma \ref{lem:gauge}, since $\mathcal L_{\chi_RX}\delta$ need not be trace-free.
Lemma \ref{lem:quadratic-continuity} extends the pairings to polynomial tensors satisfying \eqref{eq:admissibility}.

For a trace-free polynomial $h$ satisfying \eqref{eq:admissibility}, we define
\begin{equation}\label{eq:modelF}
F_h(\xi,\eps)\coloneqq\frac{8}{n-2N}\,
\cE\bigl(h(\xi+\eps\,\cdot),h(\xi+\eps\,\cdot)\bigr).
\end{equation}

\begin{lemma}\label{lem:quadratic-coefficient}
For a compactly supported trace-free tensor $H_R$, stationary elimination of $w\in K^\perp$ in the quadratic Taylor expansion of $J_{\eta H_R}(U+w)$ gives the coefficient $\cE(H_R,H_R)$ of $\eta^2$.
For polynomial $h$ satisfying \eqref{eq:admissibility}, the corresponding coefficient at $U_{\xi,\eps}$, obtained by replacing $h$ with $\chi_Rh$ and taking $R\to\infty$ as in Lemmas \ref{lem:cutoff-convergence} and \ref{lem:source-limits}, is $(n-2N)F_h(\xi,\eps)/8$, and $F_h$ is smooth.
\end{lemma}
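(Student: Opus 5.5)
The plan is to expand $J_{\eta H_R}(U+w)$ to second order in $(\eta,w)$ and then minimize out $w$ on $K^\perp$. Since $H_R$ is compactly supported and trace-free, the volume form of $\exp(\eta H_R)$ is exactly $dx$. Expanding $P_{2N,\exp(\eta H_R)}=(-\Delta)^N+\eta D_gP_{2N,\delta}[H_R]+\frac{\eta^2}{2}D_g^2P_{2N,\delta}[H_R,H_R]+O(\eta^3)$ in logarithmic coordinates, with the operator bound from Lemma \ref{lem:metric-perturbation}, gives
\[
J_{\eta H_R}(U+w)=J_0(U)+\eta\langle f_{H_R},U\rangle/2+\eta\langle f_{H_R},w\rangle+\tfrac12\langle L_Uw,w\rangle+\tfrac{\eta^2}{4}\int U\,D_g^2P_{2N,\delta}[H_R,H_R]U\,dx+\cdots,
\]
where the $w$-linear term of $J_0$ vanishes by \eqref{eq:bubble-equation}. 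Here $\langle f_{H_R},U\rangle=0$ because $D_gP$ is formally self-adjoint and $U$ is stationary (Proposition \ref{prop:jointcritical}, equivalently the metric-derivative identity in the proof of Lemma \ref{lem:source-limits}). The cubic remainders are controlled by Lemma \ref{lem:nonlinear-remainders} and \eqref{eq:operatorperturbation}.

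The stationarity condition in $w\in K^\perp$ reads $L_Uw=-\eta f_{H_R}$ on $K^\perp$. Since $f_{H_R}$ annihilates $K$ by \eqref{eq:source-tail}, Proposition \ref{prop:projected-inverse} gives $w=-\eta L_U^{-1}f_{H_R}$. Substituting back, the cross term and the $L_U$ term combine to $-\frac{\eta^2}{2}\langle f_{H_R},L_U^{-1}f_{H_R}\rangle$. The $\eta^2$ coefficient is therefore exactly $\cE(H_R,H_R)$ as in \eqref{eq:correctedE}. This is consistent with the projected solution of Proposition \ref{prop:projected-family}, since $w_{\eta H_R}=-\eta L_U^{-1}f_{H_R}+O(\eta^2)$.

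For polynomial $h$, I would first apply the above at $U_{\xi,\eps}$. Translation and dilation invariance of $J$ and of $\kappa$ (Lemma \ref{lem:metric-perturbation}) convert the expansion at $U_{\xi,\eps}$ with tensor $\chi_Rh$ into the expansion at $U$ with tensor $(\chi_Rh)(\xi+\eps\,\cdot)$. The change of variables $x=\xi+\eps y$ preserves $dx$-integrals of the conformally weighted quantities, so the coefficient becomes $\cE\bigl((\chi_Rh)(\xi+\eps\cdot),(\chi_Rh)(\xi+\eps\cdot)\bigr)$. Next I would let $R\to\infty$. The first term of $\cE$ converges by Lemma \ref{lem:quadratic-continuity} together with the $H^N$ convergence of Lemma \ref{lem:cutoff-convergence}, using conformal covariance to pass to $\Omega^2$-compactified tensors on $\Sph^n$. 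The second term converges because $f$ converges in $\mathcal H^*$ by \eqref{eq:source-tail} and $L_U^{-1}$ is bounded. The limit is $\cE(h(\xi+\eps\cdot),h(\xi+\eps\cdot))=(n-2N)F_h/8$ by \eqref{eq:modelF}.

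For smoothness of $F_h$, the sources $f_{h,\xi,\eps}$ are smooth in parameters in $\mathcal H^*$. This follows from the $C^2$ estimate in \eqref{eq:source-tail}, which extends to any order since parameter derivatives keep polynomial degree $\le d$. The inverse $L_{U_{\xi,\eps}}^{-1}$ is smooth by Proposition \ref{prop:projected-inverse}. The first term is a polynomial in $(\xi,\eps)$ pairing with smoothly varying $U_{\xi,\eps}$, and its convergence is dominated by the same decay as in \eqref{eq:firstforcingdecay}.

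The main obstacle I expect is justifying the interchange of the $R\to\infty$ limit with the Taylor coefficient extraction, and the uniformity in parameters. The decisive input is the uniform tail bound $R^{d-(n-2N)/2}\to0$ under \eqref{eq:admissibility}, applied to both the quadratic operator term and the source.
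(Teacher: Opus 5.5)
Your proposal is correct and follows essentially the same route as the paper: a second-order Taylor expansion in which the linear terms vanish by Proposition \ref{prop:jointcritical}, elimination $w=-\eta L_U^{-1}f_{H_R}$ on $K^\perp$, and then translation--dilation invariance together with the cutoff limits of Lemmas \ref{lem:cutoff-convergence} and \ref{lem:source-limits}. The only difference is the smoothness step: the paper writes $h(\xi+\eps\,\cdot)$ as a finite sum of polynomials in $(\xi,\eps)$ times fixed tensors and uses bilinearity of $\cE$, which shows that $F_h$ is in fact a polynomial, whereas you argue through smooth parameter dependence of $f_{h,\xi,\eps}$ and of the projected inverse, which is valid but more laborious.
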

\begin{proof}
Taylor expansion at $(\eta,w)=(0,0)$ yields
\begin{equation}\label{eq:metric-scalar-expansion}
\begin{split}
J_{\eta H_R}(U+w)={}&J_0(U)
+\frac{\eta^2}{4}\int_{\R^n} U\,D_g^2P_{2N,\delta}[H_R,H_R]U\,dx
+\eta\langle f_{H_R},w\rangle\\
&+\frac12\langle L_Uw,w\rangle
+o\bigl((|\eta|+\|w\|_{\mathcal H})^2\bigr).
\end{split}
\end{equation}
The linear terms vanish by Proposition \ref{prop:jointcritical}.
The stationary scalar correction on $K^\perp$ is
\begin{equation}\label{eq:linear-scalar-correction}
w=-\eta L_U^{-1}f_{H_R}.
\end{equation}
Substitution and polarization give \eqref{eq:correctedE}; only invertibility of $L_U$ on $K^\perp$ is needed.
Translation and dilation invariance, followed by the cutoff limits in Lemmas \ref{lem:cutoff-convergence} and \ref{lem:source-limits}, identify the coefficient at $U_{\xi,\eps}$ with $(n-2N)F_h(\xi,\eps)/8$.
Its smoothness follows from polynomial dependence of $h(\xi+\eps\,\cdot)$ and bilinearity of $\cE$.
\end{proof}

\begin{lemma}\label{lem:gauge}
For a compactly supported smooth vector field $X$ and a metric direction $H$ with $\Omega^2H\in H^N(\Sph^n;\operatorname{Sym}^2T^*\Sph^n)$,
\begin{equation}\label{eq:gaugenull}
\cE(\mathcal L_X\delta,H)=0.
\end{equation}
Identity \eqref{eq:gaugenull} also holds when $X$ is polynomial of degree at most $d+1$ and $n>2N+2d$.
In particular, constant and affine trace-free symmetric tensors are null directions of $\cE$.
\end{lemma}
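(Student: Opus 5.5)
The identity \eqref{eq:gaugenull} expresses diffeomorphism invariance of the quadratic reduced energy. The plan is to first prove it for compactly supported $X$ and $H$, then pass to the general cases by the cutoff limits of Lemmas \ref{lem:cutoff-convergence} and \ref{lem:source-limits}.

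\textbf{Step 1: compactly supported $X$ and $H$.} Let $\Phi_t$ be the flow of $X$. By Lemma \ref{lem:energy-invariance},
\[
J(\Phi_t^*g,\,u\circ\Phi_t)=J(g,u)
\]
for the Euclidean functional, extended to metrics that are not volume preserving by including the volume form. Apply this with $g=\exp(\eta H)$ and $u=U+w$. By definition, $\cE$ is the value at the stationary point in $w\in K^\perp$ of the quadratic Taylor expansion at $(\delta,U)$, with the scalar variable eliminated through $L_U^{-1}$ on $K^\perp$.

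Differentiate the invariance identity once in $t$ and once in $\eta$ at $t=\eta=0$. This shows that the mixed second variation of the joint functional in the direction
\[
(\mathcal L_X\delta,\ X\cdot\nabla U)\quad\text{paired with}\quad (H,\ w_H)
\]
vanishes. The tangency relation reads
\[
D^2\mathcal J\bigl[(\mathcal L_X\delta,\,XU),(H,w)\bigr]=-D\mathcal J|_{(\delta,U)}[\ldots].
\]
Its right side is zero by Proposition \ref{prop:jointcritical}, because the first variation vanishes at $(\delta,U)$ for compactly supported directions. So the joint Hessian annihilates $(\mathcal L_X\delta,XU)$ against every $(H,w)$.

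\textbf{Step 2: from the joint Hessian to $\cE$.} The eliminated form is
\[
\cE(A,B)=Q\bigl((A,w_A),(B,w_B)\bigr),\qquad w_A=-L_U^{-1}f_A,
\]
where $Q$ is the joint quadratic form. Stationarity gives $Q((A,w_A),(0,\phi))=0$ for $\phi\in K^\perp$. Write $XU=\phi+k$ with $\phi\in K^\perp$ and $k\in K$. The pieces in $K$ pair to zero against everything:
\begin{itemize}
\item $Q((0,k),(B,w))=\langle f_B,k\rangle+\langle L_Uk,w\rangle=0$, using $\langle f_B,k\rangle=0$ from \eqref{eq:source-tail} and $L_Uk=0$.
\end{itemize}
Then $w_{\mathcal L_X\delta}$ and $\phi$ differ by an element that pairs trivially, so
\[
\cE(\mathcal L_X\delta,H)=Q\bigl((\mathcal L_X\delta,XU),(H,w_H)\bigr)=0.
\]
Since $\mathcal L_X\delta$ is not trace-free, this step must use the extended definition of $\cE$ stated after \eqref{eq:correctedE}, including the volume-measure derivatives. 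This is why that extension was introduced.

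\textbf{Step 3: general $H$ and polynomial $X$.} For general $H$ with $\Omega^2H\in H^N$, approximate by compactly supported $\chi_RH$ and use the continuity of Lemma \ref{lem:quadratic-continuity}.

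For polynomial $X$ of degree at most $d+1$, replace $X$ by $\chi_RX$. Then $\mathcal L_{\chi_RX}\delta$ equals $\chi_R\mathcal L_X\delta$ plus terms of the form $\nabla\chi_R\otimes X$. Both are of size $O(|x|^{d})$ with the same derivative scaling, supported on the relevant region. The argument of Lemma \ref{lem:cutoff-convergence}, applied to tensors of degree $d$, shows that after compactification by $\Omega^2$ these converge in $H^N$ to $\Omega^2\mathcal L_X\delta$ when $n>2N+2d$. The error is controlled by the tail bound $R^{d-(n-2N)/2}$.

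\textbf{Step 4: the final clause.} A constant trace-free tensor is $\mathcal L_X\delta$ for the linear field $X=Ax/2$ with $A$ symmetric. An affine trace-free tensor arises as $\mathcal L_X\delta$ for a suitable quadratic $X$, which one solves for explicitly. These have $d=0,1$, which is admissible.

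The main obstacle is Step 2. One has to verify carefully that the eliminated form agrees with the joint form on the gauge direction: the $K$-component of $XU$ and the non-trace-free volume terms must not introduce corrections. This rests on the orthogonality $\langle f_B,Z\rangle=0$ together with the extended definition of $\cE$.
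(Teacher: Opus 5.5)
Your proposal is correct and follows essentially the same route as the paper: diffeomorphism invariance plus the stationarity of Proposition~\ref{prop:jointcritical} makes the joint Hessian annihilate $(\mathcal L_X\delta,XU)$. You then eliminate the scalar variable, disposing of the $K$-component of $XU$ via $L_Uk=0$ and orthogonality of the mixed variation to $K$. Polynomial $X$ is handled by the cutoff $\chi_RX$ together with \eqref{eq:formtail} and Lemma~\ref{lem:quadratic-continuity}, and affine tensors come from an explicit quadratic field; the paper's is $X_i=\tfrac12C_{ij}x^j+\tfrac14(A_{ijk}+A_{ikj}-A_{jki})x^jx^k$. One small point: for a general, possibly non-trace-free direction $H$, justify $\langle f_H,k\rangle=0$ directly by differentiating the stationarity in the bubble parameters, as the paper does, rather than by citing \eqref{eq:source-tail}, which is stated only for trace-free polynomial $H$.
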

\begin{proof}
For a compactly supported vector field, Lemma \ref{lem:energy-invariance} and Proposition \ref{prop:jointcritical} show that the Hessian of $\mathcal J$ at $(\delta,U)$ annihilates the direction $(\mathcal L_X\delta,XU)$.
Eliminating the scalar variable proves \eqref{eq:gaugenull}.
A component of $XU$ in $K$ does not change this conclusion: $L_U$ annihilates it, and the mixed metric--scalar variation annihilates $K$ by differentiating the stationarity in Proposition \ref{prop:jointcritical}.

For a polynomial vector field $X$ of degree at most $d+1$, we first replace $X$ by $\chi_RX$.
Terms in $\mathcal L_{\chi_RX}\delta$ containing derivatives of $\chi_R$ have the same polynomial growth bounds as $\mathcal L_X\delta$.
Hence \eqref{eq:formtail} implies $\Omega^2\mathcal L_{\chi_RX}\delta\to\Omega^2\mathcal L_X\delta$ in $H^N(\Sph^n;\operatorname{Sym}^2T^*\Sph^n)$ when $n>2N+2d$.
Lemma \ref{lem:quadratic-continuity} then proves the identity for $X$.

Let $C_{ij}$ and $A_{ijk}$ be arbitrary real constants with $C_{ij}=C_{ji}$ and $A_{ijk}=A_{jik}$; no other symmetry is assumed.
For the symmetric affine tensor $H_{ij}(x)\coloneqq C_{ij}+A_{ijk}x^k$, we define the polynomial vector field
\[
X_i\coloneqq\frac12C_{ij}x^j+
\frac14(A_{ijk}+A_{ikj}-A_{jki})x^jx^k.
\]
Direct differentiation verifies $\partial_iX_j+\partial_jX_i=H_{ij}$.
This proves the assertion for constant and affine tensors.
\end{proof}

\medskip
We next compare this quadratic form with the reduced energy of the projected solution.
We fix a trace-free polynomial tensor $h$ of degree at most $d$, with $n>2N+2d$, and use the parameter set $\mathcal U$ from Subsection \ref{subsec:projected-equation}.
In the following comparison of reduced energies, constants may depend on $N,n,h,\chi,\mathcal U$, but not on $\mu,\lambda,\rho$.

For $\mu>0$ and $0<\lambda<\rho$, we set
\begin{equation}\label{eq:onebump}
k_{\mu,\lambda,\rho}(x)\coloneqq\mu\lambda^d
\chi(\frac{|x|}{\rho})h(\frac{x}{\lambda}).
\end{equation}
We set
\begin{equation}\label{eq:localscales}
\eta\coloneqq\mu\lambda^d,\qquad R\coloneqq\frac{\rho}{\lambda},\qquad h_R\coloneqq\chi_Rh,
\end{equation}
so that the rescaled metric is $\exp(\eta h_R)$.

\begin{lemma}\label{lem:local-metric-estimates}
For each fixed $r\in\mathbb N\cup\{0\}$,
\begin{equation}\label{eq:jetbound}
\|\partial^rk_{\mu,\lambda,\rho}\|_{L^\infty(\R^n)}
\le C_r\mu\rho^{d-r},\qquad
\kappa(k_{\mu,\lambda,\rho})\le C\mu\rho^d.
\end{equation}
For $\eta R^d$ sufficiently small,
\begin{equation}\label{eq:local-residual-estimate}
\|\mathscr E_{\eta h_R}(U_{\xi,\eps})-\eta f_{h,\xi,\eps}\|_{C^1(\overline{\mathcal U};\mathcal H^*)}
\le C\eta\bigl(R^{d-\frac{n-2N}{2}}+\eta R^d\bigr).
\end{equation}
The constants are independent of $\mu,\lambda,\rho$.
\end{lemma}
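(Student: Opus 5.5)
The plan is to prove \eqref{eq:jetbound} by direct scaling and then obtain \eqref{eq:local-residual-estimate} from a Taylor expansion in the metric variable around $\eta=0$.

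\emph{Jet bounds.} Differentiating \eqref{eq:onebump} gives
\[
\partial^r k_{\mu,\lambda,\rho}=\mu\lambda^d\sum_{s\le r}\binom rs\,\rho^{-(r-s)}(\partial^{r-s}\chi)(|x|/\rho)\,\lambda^{-s}(\partial^sh)(x/\lambda).
\]
On the support $|x|\le2\rho$ the polynomial bound $|\partial^sh(y)|\le C(1+|y|)^{d-s}$ holds, which for $s\le d$ gives $\lambda^{-s}|\partial^sh(x/\lambda)|\le C\lambda^{-d}(\lambda+|x|)^{d-s}\le C\lambda^{-d}\rho^{d-s}$, using $\lambda<\rho$. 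For $s>d$ the term vanishes. Summing yields $\|\partial^rk\|_{L^\infty}\le C_r\mu\rho^{d-r}$. For $\kappa$, integrate the bound $|\partial^rk|\le C\mu\rho^{d-r}$ over $B_{2\rho}$ in $L^{n/r}$; the factor $\rho^{r}$ from the volume cancels $\rho^{-r}$, so each term is $\le C\mu\rho^d$, and the $L^\infty$ term is $\le C\mu\rho^d$ as well.

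\emph{Residual estimate.} By translation and dilation invariance (Lemma \ref{lem:metric-perturbation} and the scaling of $\mathcal H$), it suffices to work in the rescaled variables, where the metric is $\exp(\eta h_R)$. Note that $\kappa(\eta h_R)=\kappa(k_{\mu,\lambda,\rho})\le C\eta R^d$ by dilation invariance, since $\mu\rho^d=\eta R^d$. Write
\[
\mathscr E_{\eta h_R}(U_{\xi,\eps})=\bigl(P_{2N,\exp(\eta h_R)}-(-\Delta)^N\bigr)U_{\xi,\eps},
\]
using \eqref{eq:bubble-equation}, and Taylor expand in $\eta$:
\[
\mathscr E_{\eta h_R}(U_{\xi,\eps})=\eta f_{h_R,\xi,\eps}+\int_0^1(1-t)\,\partial_t^2\bigl[P_{2N,\exp(t\eta h_R)}\bigr]U_{\xi,\eps}\,dt.
\]
The first term differs from $\eta f_{h,\xi,\eps}$ by $C\eta R^{d-(n-2N)/2}$ in $C^1(\overline{\mathcal U};\mathcal H^*)$ by Lemma \ref{lem:source-limits}. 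The remainder is a second metric derivative, so the multilinear version of \eqref{eq:operatorperturbation} in Lemma \ref{lem:metric-perturbation} bounds it by $C\kappa(\eta h_R)^2\|U_{\xi,\eps}\|_{\mathcal H}\le C\eta^2R^{2d}$.

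\emph{Main obstacle.} This crude bound is too weak: we need $\eta\cdot\eta R^d$, not $\eta^2R^{2d}$. The fix is to bound the quadratic term more carefully, pairing against $\phi\in\mathcal H$. Each term has coefficients that are products of two or more factors of derivatives of $\eta h_R$ of total order $r$, times $2N-r$ derivatives distributed on $U$ and $\phi$. We estimate one factor by the $\kappa$-type norm, giving $\eta R^d$, and put the other factor against the decaying bubble. This second factor is bounded pointwise by $\eta(1+|x|)^{d-s}$, while $\partial^{2N-r}U$ (after integration by parts, at most $N$ derivatives on $\phi$) decays like $(1+|x|)^{-(n-2N)-(\cdot)}$. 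The weighted $L^{2n/(n+2N)}$ norm is then finite uniformly in $R$ precisely because $n>2N+2d$, as in the proof of Lemma \ref{lem:source-limits}. This gives the bound $C\eta\cdot\eta R^d$. The smooth bounded functions of $\eta h_R$ appearing in the coefficients are harmless since $\eta R^d$ is small.

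\emph{Parameter derivatives.} The $C^1$ dependence in $(\xi,\eps)$ follows identically, since $|\partial_{\xi,\eps}^\beta U_{\xi,\eps}|\le CU_{\xi,\eps}$ with the same decay on $\overline{\mathcal U}$.
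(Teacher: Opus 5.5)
Your proposal is correct and follows the paper's proof: the jet bounds come from differentiating the cutoff product and from scale invariance of $\kappa$, and the residual is handled by Lemma \ref{lem:source-limits} for $\eta(f_{h_R,\xi,\eps}-f_{h,\xi,\eps})$ plus a Taylor remainder in which one metric factor keeps its polynomial growth against the bubble decay, every additional factor contributes $C\eta R^d$, and integrability follows exactly from $n>2N+2d$. The only cosmetic difference is that the paper bounds the extra factors pointwise, via $|\partial^a(\eta h_R)|\le C\eta R^d(1+|x|)^{-a}$ on $|x|\le 2R$, which gives the majorant $C\eta^2R^d(1+|x|)^{d-n}$ directly in $L^{2n/(n+2N)}$; if you instead use the $L^{n/a}$ part of $\kappa$ for a derivative factor, H\"older moves the remaining factor into a different Lebesgue space rather than $L^{2n/(n+2N)}$, but the integrability threshold is again precisely $n>2N+2d$ (and no integration by parts onto $\phi$ is needed, since the residual is a genuine function).
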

\begin{proof}
Every homogeneous component of $h$ has degree at most $d$.
Differentiating the cutoff product proves \eqref{eq:jetbound}; scale invariance also gives $\kappa(\eta h_R)\le C\eta R^d=C\mu\rho^d$.
Estimate \eqref{eq:source-tail} bounds $\eta(f_{h_R,\xi,\eps}-f_{h,\xi,\eps})$ in $C^1(\overline{\mathcal U};\mathcal H^*)$.
For the remaining terms, the derivative bounds $|\partial^rh_R(x)|\le C_r(1+|x|)^{d-r}$ on $|x|\le2R$ and the total differential order $2N$ give
\begin{equation}\label{eq:quadratic-residual-bound}
\sum_{|\beta|\le1}\left|
\partial_{\xi,\eps}^\beta\bigl[\mathscr E_{\eta h_R}(U_{\xi,\eps})-\eta f_{h_R,\xi,\eps}\bigr](x)
\right|
\le C\eta^2R^d(1+|x|)^{d-n}.
\end{equation}
Indeed, we retain one metric factor together with the bubble derivatives as in \eqref{eq:firstforcingdecay}; every additional metric factor is bounded by $C\eta R^d$, with its derivatives supplying the corresponding inverse powers of $1+|x|$.
Taylor's formula in the metric variable gives \eqref{eq:quadratic-residual-bound} uniformly when $\eta R^d$ is small.
Since $(1+|x|)^{d-n}\in L^{2n/(n+2N)}$ under \eqref{eq:admissibility}, \eqref{eq:dual-bound} bounds this remainder by $C\eta^2R^d$ in $C^1(\overline{\mathcal U};\mathcal H^*)$ and proves \eqref{eq:local-residual-estimate}.
\end{proof}

We write $w_{\xi,\eps}\coloneqq w_{\eta h_R,\xi,\eps}$.

\begin{proposition}\label{prop:localreduction}
For $R\ge1$ and $\eta>0$ with $\eta R^d$ sufficiently small,
\begin{equation}\label{eq:correctionbound}
\|w_{\xi,\eps}\|_{C^1(\overline{\mathcal U};\mathcal H)}\le C\eta
\end{equation}
and
\begin{equation}\label{eq:correctionlimit}
\|\eta^{-1}w_{\xi,\eps}+L_{U_{\xi,\eps}}^{-1}f_{h,\xi,\eps}\|_{C^1(\overline{\mathcal U};\mathcal H)}
\le C\bigl(R^{d-\frac{n-2N}{2}}+\eta R^d+\eta^\gamma\bigr).
\end{equation}
The corresponding energy satisfies
\begin{equation}\label{eq:localreducedexpansion}
\left\|\eta^{-2}\bigl[J_{\eta h_R}(U_{\xi,\eps}+w_{\xi,\eps})-J_0(U)\bigr]
-\frac{n-2N}{8}F_h(\xi,\eps)\right\|_{C^1(\overline{\mathcal U})}
\le C\bigl(R^{d-\frac{n-2N}{2}}+\eta R^d+\eta^\gamma\bigr).
\end{equation}
All constants are independent of $\eta,R$.
\end{proposition}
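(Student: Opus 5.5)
We start from Proposition \ref{prop:projected-family} with the metric perturbation $\eta h_R$. By \eqref{eq:jetbound} and scale invariance, $\kappa(\eta h_R)\le C\eta R^d$, so for $\eta R^d$ small the projected family $w_{\xi,\eps}$ exists. The first inequality in \eqref{eq:projected-family-bound}, combined with \eqref{eq:local-residual-estimate} and the bound $\|f_{h,\xi,\eps}\|_{C^1(\overline{\mathcal U};\mathcal H^*)}\le C$ from Lemma \ref{lem:source-limits}, then gives \eqref{eq:correctionbound}.

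For \eqref{eq:correctionlimit}, set $v\coloneqq w_{\xi,\eps}+\eta L_{U_{\xi,\eps}}^{-1}f_{h,\xi,\eps}$. By Lemma \ref{lem:source-limits}, $f_{h,\xi,\eps}$ annihilates $K_{\xi,\eps}$, so $L^{-1}_{U_{\xi,\eps}}f_{h,\xi,\eps}\in K^\perp_{\xi,\eps}$ and hence $v\in K^\perp_{\xi,\eps}$. Subtracting $\eta f_{h,\xi,\eps}$ from the fixed-point identity in the proof of Proposition \ref{prop:projected-family} yields
\[
L_{U_{\xi,\eps}}v=-\Pi^*\bigl\{[\mathscr E_{\eta h_R}(U_{\xi,\eps})-\eta f_{h,\xi,\eps}]+(P_{2N,\exp(\eta h_R)}-(-\Delta)^N)w-c_{n,N}\mathcal N_p(U_{\xi,\eps},w)\bigr\}.
\]
The three bracketed terms are bounded in $\mathcal H^*$ as follows:
\begin{itemize}
\item the first by $C\eta(R^{d-(n-2N)/2}+\eta R^d)$, using \eqref{eq:local-residual-estimate};
\item the second by $C\eta R^d\cdot\eta$, using \eqref{eq:operatorperturbation} and \eqref{eq:correctionbound};
\item the third by $C\eta^{1+\gamma}$, using \eqref{eq:nonlinear-remainder-bound}.
\end{itemize}
Proposition \ref{prop:projected-inverse} then gives the $C^0$ bound. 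For the parameter derivative we differentiate this identity in $(\xi,\eps)$ after the identifications of Lemma \ref{lem:Jacobi-identifications}. The derivative of $\mathcal N_p$ in $(\xi,\eps)$ is handled by \eqref{eq:nonlinear-U-derivative} together with the $D_w$ bound in \eqref{eq:nonlinear-remainder-bound}, and it again produces $O(\eta^{1+\gamma})$. The smooth dependence of $L_{U}^{-1}$ and of the projections contributes terms of order $\|v\|$, which are absorbed.

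For \eqref{eq:localreducedexpansion}, we expand $J_{\eta h_R}(U_{\xi,\eps}+w)$ as in \eqref{eq:metric-scalar-expansion}, but now exactly up to remainders that are controlled. Write
\[
J_{\eta h_R}(U+w)=J_{\eta h_R}(U)+\langle\mathscr E_{\eta h_R}(U),w\rangle+\tfrac12\langle L_Uw,w\rangle+\tfrac12\langle(P_{2N,\exp(\eta h_R)}-(-\Delta)^N)w,w\rangle-\tfrac{(n-2N)c_{n,N}}{2n}\int\mathcal N_q .
\]
The term $J_{\eta h_R}(U_{\xi,\eps})-J_0(U)$ is a pure metric expansion. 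Its linear part vanishes by Proposition \ref{prop:jointcritical}, and its quadratic part is $\frac{\eta^2}{4}\int U D^2_gP[h_R,h_R]U$. The cubic metric remainder is $O(\eta^3R^{2d}(\ldots))$, and it is bounded by $C\eta^2\cdot\eta R^d$ in the same way as \eqref{eq:quadratic-residual-bound}. Replacing $h_R$ by $h$ costs $R^{d-(n-2N)/2}$, by Lemmas \ref{lem:cutoff-convergence} and \ref{lem:quadratic-continuity}. Substituting $w=-\eta L^{-1}f+v$ into the linear and quadratic scalar terms reproduces $-\frac{\eta^2}{2}\langle f,L^{-1}f\rangle$ with error $\eta\|v\|$. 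Finally, \eqref{eq:scalar-energy-remainder} bounds $\int\mathcal N_q$ by $C\eta^{2+\gamma}$ in $C^1$. Dividing by $\eta^2$ and invoking Lemma \ref{lem:quadratic-coefficient}, together with translation/dilation invariance, identifies the main term as $\frac{n-2N}{8}F_h(\xi,\eps)$.

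We expect the main obstacle to be the $C^1$ (in the parameters) control of the energy remainders, and in particular the cubic metric remainder and its parameter derivative, uniformly in $R$. The approach we would try first is to write each remainder through Taylor's formula with integral remainder in $\eta$. Each derivative in $\xi$ or $\eps$ lands on $U_{\xi,\eps}$, and since $|\partial U|\le CU$ the pointwise decay $(1+|x|)^{d-n}$ used in Lemma \ref{lem:local-metric-estimates} is preserved.
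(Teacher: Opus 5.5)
Your proposal is correct and follows the paper's proof essentially step for step: the correction bound from Proposition \ref{prop:projected-family} and \eqref{eq:local-residual-estimate}, the subtracted fixed-point equation for $v=w+\eta L^{-1}f$ with the same three error terms, and the energy expansion with vanishing first metric variation, cubic metric terms of size $\eta^3R^d$, and \eqref{eq:scalar-energy-remainder} for the scalar remainder. The one variation is how you pass from $h_R$ to $h$ in the quadratic metric term: you use Lemmas \ref{lem:cutoff-convergence} and \ref{lem:quadratic-continuity}, which give rate $R^{d-\frac{n-2N}{2}}$, while the paper uses the radial majorant \eqref{eq:quadraticmajorant}, which gives $R^{2d+2N-n}$; your rate suffices. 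You should also state explicitly that the pairings of $\mathscr E_{\eta h_R}(U_{\xi,\eps})-\eta f_{h,\xi,\eps}$ and of $(P_{2N,\exp(\eta h_R)}-(-\Delta)^N)w_{\xi,\eps}$ with $w_{\xi,\eps}$ are $O(\eta^2(R^{d-\frac{n-2N}{2}}+\eta R^d))$ in $C^1(\overline{\mathcal U})$, by \eqref{eq:local-residual-estimate}, \eqref{eq:operatorperturbation}, and \eqref{eq:correctionbound}.
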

\begin{proof}
In this proof only, write $z=(\xi,\eps)$ and abbreviate the corresponding parameter subscripts and arguments by $z$.
Equation \eqref{eq:local-residual-estimate} implies $\|\mathscr E_{\eta h_R}(U_z)\|_{C^1(\overline{\mathcal U};\mathcal H^*)}\le C\eta$.
Proposition \ref{prop:projected-family} proves \eqref{eq:correctionbound}.
We divide the projected equation by $\eta$ and subtract the equation for $-L_{U_z}^{-1}f_{h,z}$.
The source error is bounded by \eqref{eq:local-residual-estimate}, the operator error by $C\eta R^d\|w_z/\eta\|_{\mathcal H}$, and the nonlinear error by $C\eta^\gamma$.
The differentiated estimates \eqref{eq:nonlinear-remainder-bound}--\eqref{eq:nonlinear-U-derivative}, together with the smooth projected inverse, imply the same bounds after one parameter derivative.
This proves \eqref{eq:correctionlimit}.

\medskip
The first metric variation of the energy vanishes by Proposition \ref{prop:jointcritical}.
After integration over the angular variable in $\Sph^{n-1}$, the absolute value of each quadratic metric term, and of its first parameter derivatives, is bounded by
\begin{equation}\label{eq:quadraticmajorant}
C\eta^2 r^{2d+2N-n-1},\qquad r\ge1.
\end{equation}
Indeed the two metric factors, two bubble factors, $2N$ derivatives, and volume measure produce the exponent $2d-2(n-2N)-2N+n-1=2d+2N-n-1$.
Integration over $r\in(R,\infty)$ therefore bounds the change in the quadratic metric term caused by replacing $h$ with $h_R$ by $C\eta^2R^{2d+2N-n}$.
Each additional metric factor is bounded by $C\eta R^d$, so terms of order at least three in the metric perturbation contribute $O(\eta^3R^d)$.
The pairing of $\mathscr E_{\eta h_R}(U_z)-\eta f_{h_R,z}$ with $w_z$, and the pairing of $(P_{2N,\exp(\eta h_R)}-(-\Delta)^N)w_z$ with $w_z$, have the same bound by \eqref{eq:quadratic-residual-bound}, \eqref{eq:operatorperturbation}, and \eqref{eq:correctionbound}.

Lemma \ref{lem:nonlinear-remainders} and \eqref{eq:correctionbound} bound the scalar energy remainder by $C\eta^{2+\gamma}$ in $C^1(\overline{\mathcal U})$.

At quadratic order the mixed and scalar terms are $\eta\langle f_{h_R,z},w_z\rangle+\langle L_{U_z}w_z,w_z\rangle/2$.
Substitution of \eqref{eq:correctionlimit} yields the scalar correction in \eqref{eq:correctedE}; Lemma \ref{lem:quadratic-coefficient} identifies the full quadratic coefficient at $U_z$ as $(n-2N)F_h(z)/8$.
Equations \eqref{eq:correctionlimit}--\eqref{eq:quadraticmajorant} and the scalar remainder estimate \eqref{eq:scalar-energy-remainder} prove \eqref{eq:localreducedexpansion}; here $R^{2d+2N-n}\le R^{d-(n-2N)/2}$ for $R\ge1$.
\end{proof}

For metric directions whose compactifications are TT with respect to $g_c$, the scalar correction vanishes and the quadratic energy is a positive multiple of the Hessian of total $Q_{2N}$-curvature.
\begin{proposition}\label{prop:energybridge}
Let $H,\widetilde H$ be trace-free tensor directions whose compactifications
\[
\bar H\coloneqq\Omega^2H,\qquad
\overline{\widetilde H}\coloneqq\Omega^2\widetilde H
\]
belong to $H^N(\Sph^n;\operatorname{Sym}^2T^*\Sph^n)$ and are weakly TT with respect to $g_c$.
Here $f_H$, $f_{\widetilde H}$, and $\cE$ are extended continuously from smooth spherical data; the last pairing in \eqref{eq:energybridge} is the duality pairing between the tensor spaces $H^N$ and $H^{-N}$.
Then $f_H=f_{\widetilde H}=0$ at $U=U_{0,1}$, and
\begin{equation}\label{eq:energybridge}
\cE(H,\widetilde H)=\frac{n-2N}{8}
D^2\mathcal Q(\bar H,\overline{\widetilde H})
=\frac{n-2N}{8}
\langle\bar H,\mathscr P_N^{(n)}(-\cA)\overline{\widetilde H}\rangle.
\end{equation}
\end{proposition}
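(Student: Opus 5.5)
The plan is to transport everything to the sphere by conformal invariance, where the bubble $U_{0,1}$ becomes the constant function $1$ and the Euclidean pair $(\delta,U)$ becomes $(g_c,1)$. By Lemma \ref{lem:energy-invariance} with $e^{2\omega}=\Omega^2$, the functional $J_h(u)$ on $\R^n$ (for compactly supported trace-free $h$) equals $\mathcal J(\Omega^2\exp(h),u/U)$ on $\Sph^n$, since $U=\Omega^{(n-2N)/2}$. Note that $\Omega^2\exp(h)=g_c\exp(g_c^{-1}\Omega^2h)$ because $g_c^{-1}\Omega^2h=\delta^{-1}h$ as endomorphisms. Thus the path $\exp(\eta_1H+\eta_2\widetilde H)$ corresponds to the fixed-volume path \eqref{eq:fixed-volume-path} at $g_c$ in the directions $\bar H,\overline{\widetilde H}$. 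I will first prove everything for smooth compactly supported data (and smooth spherical TT tensors approximated in $H^N$), then pass to the limit using Lemma \ref{lem:quadratic-continuity}, Lemma \ref{lem:cutoff-convergence}, and continuity of the source map into $\mathcal H^*$.

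\emph{Step 1: $f_H=0$.} Under the identification, $f_H$ corresponds, up to the factor $U^{(n+2N)/(n-2N)}$ coming from \eqref{eq:covariance}, to $D_gP_{2N,g}1|_{g_c}[\bar H]$. Theorem \ref{thm:ambientHessian} states that this pointwise first variation vanishes for smooth TT $\bar H$, since $g_c$ is Einstein with $\Ric=(n-1)g_c$. Approximating a weakly TT $H^N$ tensor by smooth TT tensors, for instance by spectral truncation of $\Delta_L$, which preserves the TT subspace, and using the continuity of $\bar H\mapsto f_H$ in $\mathcal H^*$ (the linear term is bounded by $\|\bar H\|_{H^N}$ through the same integration by parts as in Lemma \ref{lem:quadratic-continuity}) gives $f_H=0$. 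Consequently the correction term $-\tfrac12\langle f_H,L_U^{-1}f_{\widetilde H}\rangle$ in \eqref{eq:correctedE} drops out.

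\emph{Step 2: identifying the remaining term.} Write $g(s)$ for the fixed-volume path. Since $dv_{g(s)}=dv_{g_c}$ pointwise, we have
\[
\mathcal J(g(s),1)=\tfrac12\int P_{2N,g(s)}1\,dv_{g_c}-\text{const}=\tfrac{n-2N}{4}\,\mathcal Q(g(s))-\text{const}.
\]
Hence the mixed $s$-derivative of $\mathcal J(g(s),1)$ is $\frac{n-2N}{4}D^2\mathcal Q(\bar H,\overline{\widetilde H})$. On the Euclidean side the same quantity is $\tfrac12\int U\,D_g^2P_{2N,\delta}[H,\widetilde H]U\,dx$, because the volume is fixed and the nonlinear term does not depend on the metric. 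Multiplying by the factor $\tfrac14$ in \eqref{eq:correctedE}, which is half of the $\tfrac12$ above, yields $\frac{n-2N}{8}D^2\mathcal Q$.

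The step needing care is the normalization bookkeeping, in particular confirming that the general definition of $\cE$ (through stationary elimination and conformal invariance) matches the mixed Taylor coefficient with exactly these factors. The second equality in \eqref{eq:energybridge} is then \eqref{eq:TTpolynomial}, extended to weakly TT $H^N$ tensors by the continuity clause of Theorem \ref{thm:ambientHessian}. For that pairing I use the $H^N$--$H^{-N}$ duality, since $\mathscr P_N^{(n)}(-\cA)$ maps $H^N$ to $H^{-N}$.
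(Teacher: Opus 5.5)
Your proposal is correct and follows essentially the same route as the paper. Both transport the problem conformally to $(g_c,1)$, obtain $f_H=0$ from the first-variation clause of Theorem \ref{thm:ambientHessian} together with covariance, read off the coefficient from the fixed-volume identity $J_{\eta H}(U)=\frac{n-2N}{4}\mathcal Q(g_\eta)-\mathrm{const}$, and pass to weakly TT $H^N$ data by smooth TT approximation and Lemma \ref{lem:quadratic-continuity}. The only cosmetic difference is that you take the mixed derivative directly, whereas the paper extracts the $\eta^2$ coefficient and then polarizes.
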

\begin{proof}
For a smooth spherical TT tensor $\bar H$, Theorem \ref{thm:ambientHessian} gives $D_g(P_{2N,g}1)|_{g_c}[\bar H]=0$.
Conformal covariance therefore implies $f_H=0$, so the scalar correction vanishes.
The metric path $g_\eta=\Omega^2\exp(\eta H)$ preserves the round volume form, and \eqref{eq:covariance} implies
\[
J_{\eta H}(U)
=\frac{n-2N}{4}\mathcal Q(g_\eta)
-\frac{(n-2N)c_{n,N}}{2n}\Vol(g_c).
\]
Equivalently, this identity is evaluated on the compact sphere using \eqref{eq:geometricJ}; it does not require $H$ to have compact support.
The coefficient of $\eta^2$ is $(n-2N)D^2\mathcal Q|_{g_c}(\bar H,\bar H)/8$.
Polarization proves \eqref{eq:energybridge} for smooth TT data.
For weak TT data in $H^N$, use smooth TT approximation as in Theorem \ref{thm:ambientHessian}.
The second metric derivative and mixed metric--scalar derivative of $\mathcal J$ at $(g_c,1)$, and the form $\cE$, are continuous in these norms by Lemma \ref{lem:quadratic-continuity}.
Hence the source still vanishes and \eqref{eq:energybridge} holds in the limit.
\end{proof}

\section{Metric variations generated by Weyl tensors and parameter derivatives}\label{sec:weylmatrix}
We now restrict the fixed-volume Hessian of $\mathcal Q$ at $g_c$ in Theorem \ref{thm:ambientHessian} to the compactified polynomial TT tensors generated by algebraic Weyl tensors.
The resulting energy matrix $E$ and its scale derivative $S$ are defined in \eqref{eq:Ematrix}.
Indefiniteness of $S$, together with the identities for the translation derivatives of $F_{h_a}$ in Lemma \ref{lem:translation-Weyl}, produces the critical point of Proposition \ref{prop:seed}.

\subsection{Weyl tensors and the radial energy}\label{sec:seed}
We fix an integer $M\ge0$, set $d\coloneqq2M+2$, and assume $n>2N+2d$.
For a nonzero constant four-tensor $W$ satisfying \eqref{eq:Weyl-symmetries}, we define
\begin{equation}\label{eq:Weylchain}
H_r(W)_{ij}(x)\coloneqq |x|^{2r}W_{ikjl}x^kx^l,
\qquad d_r\coloneqq2r+2,\quad 0\le r\le M.
\end{equation}
\begin{lemma}\label{lem:weyl-TT}
The tensors in \eqref{eq:Weylchain} satisfy
\begin{equation}\label{eq:Weylgauge}
\tr H_r=0,\qquad x^iH_{r,ij}=0,\qquad \partial^iH_{r,ij}=0.
\end{equation}
For each $0\le r\le M$, the tensor $\bar H_r\coloneqq\Omega^2H_r$ belongs to $H^N(\Sph^n;\operatorname{Sym}^2T^*\Sph^n)$ and is weakly TT with respect to $g_c$.
The tensors $\Omega^2\chi_RH_r$ are smooth global TT tensors and converge to $\bar H_r$ in this space as $R\to\infty$.
\end{lemma}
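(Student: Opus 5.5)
The plan is to check the Euclidean identities \eqref{eq:Weylgauge} directly, transfer them to $g_c$ through the conformal transformation law for the divergence, and then deduce the Sobolev and cutoff statements from Lemma \ref{lem:cutoff-convergence}.

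\emph{Euclidean identities.} The trace is $\tr H_r=|x|^{2r}W_{ikil}x^kx^l$, which vanishes because $W$ is trace-free by \eqref{eq:Weyl-symmetries}. Contracting with $x^i$ gives $x^iH_{r,ij}=|x|^{2r}W_{ikjl}x^ix^kx^l$. This is zero because $W_{ikjl}$ is antisymmetric in $(i,k)$ while $x^ix^k$ is symmetric. For the divergence, the product rule gives
\[
\partial^iH_{r,ij}
=2r|x|^{2r-2}x^iW_{ikjl}x^kx^l
+|x|^{2r}\bigl(W_{iijl}x^l+W_{ikji}x^k\bigr).
\]
The first term vanishes by the radial identity just shown. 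In the second term, $W_{iijl}=0$ by antisymmetry and $W_{ikji}=0$ by trace-freeness.

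\emph{Transfer to $g_c=\Omega^2\delta$.} I will use the standard formula for the divergence of a symmetric $2$-tensor under a conformal change. For $\hat g=e^{2\omega}\delta$ and $\hat H=e^{2\omega}H$ with $\tr_\delta H=0$, it reads
\[
\operatorname{div}_{\hat g}\hat H_j=\partial^iH_{ij}+(n-2)\,\partial^i\omega\, H_{ij}.
\]
Up to the harmless overall power of $e^{\omega}$ depending on the index convention, the key point is that the correction term is linear in $H_{ij}\partial^i\omega$. Here $\omega=\log\Omega$ is radial, so $\partial^i\omega$ is a multiple of $x^i$. The correction term therefore vanishes by $x^iH_{r,ij}=0$, and $\bar H_r$ is TT with respect to $g_c$ on $\Sph^n$ minus the north pole.

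\emph{Cutoffs and membership in $H^N$.} The same computation applies to $\chi_RH_r$. The cutoff $\chi_R$ is radial, so $\partial^i\chi_R\,H_{r,ij}$ is again a multiple of $x^iH_{r,ij}=0$, and trace-freeness is clear. The tensors $\Omega^2\chi_RH_r$ are compactly supported in the chart, hence smooth global tensors on $\Sph^n$, and they are TT. Since $\deg H_r=2r+2\le d$ and $n>2N+2d$, Lemma \ref{lem:cutoff-convergence} gives $\bar H_r\in H^N$ together with the convergence $\Omega^2\chi_RH_r\to\bar H_r$. Weak TT then follows by passing to the limit: the trace and the divergence are continuous maps from $H^N$ into $L^2$ and $H^{N-1}$ respectively, so both identities persist.

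\emph{Main obstacle.} The only delicate point is the exact form of the conformal divergence formula, in particular confirming that every correction term is proportional either to the trace or to the contraction $H_{ij}\partial^i\omega$. I would verify this first by writing out the Christoffel symbols of $e^{2\omega}\delta$ explicitly.
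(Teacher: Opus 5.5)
Your proposal is correct and follows the paper's proof essentially step by step: you verify \eqref{eq:Weylgauge} directly from the Weyl symmetries, use the conformal divergence identity in which the radial $\log\Omega$ makes the correction vanish via $x^iH_{r,ij}=0$, note that radial cutoffs preserve these identities, and apply Lemma \ref{lem:cutoff-convergence} for $H^N$ membership and convergence. The only slip is cosmetic and does not affect the argument, since the correction term vanishes anyway: with indices raised by the respective metrics, the identity for trace-free $H$ is exactly $\nabla_{g_c}^i(\Omega^2H)_{ij}=\partial^iH_{ij}+n(\partial^i\log\Omega)H_{ij}$, with coefficient $n$ rather than $n-2$ and no extra conformal factor, as in \eqref{eq:conformaldivergence}.
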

\begin{proof}
The trace and radial contraction vanish by the Weyl symmetries.
For the divergence identity, derivatives falling on $|x|^{2r}$ vanish by $x^iH_{r,ij}=0$; the two derivatives of $x^kx^l$ vanish by skew symmetry and the Weyl trace identity, respectively.

With $\Omega$ as in \eqref{eq:roundbubble}, any trace-free $H$ satisfies
\begin{equation}\label{eq:conformaldivergence}
\nabla_{g_c}^{i}(\Omega^2H)_{ij}
=\partial^iH_{ij}+n(\partial^i\log\Omega)H_{ij}.
\end{equation}
Thus \eqref{eq:Weylgauge} implies that $\bar H_r$ is TT away from the north pole.
A radial cutoff preserves each identity in \eqref{eq:Weylgauge}, so $\Omega^2\chi(|x|/R)H_r$ is a smooth global TT tensor.
By Lemma \ref{lem:cutoff-convergence}, these tensors converge to $\bar H_r$ in $H^N$.
\end{proof}
Consequently, the continuous form in Theorem \ref{thm:ambientHessian} applies to these compactified tensors, although they need not be smooth at the north pole.

We normalize $W$ once and for all so that
\begin{equation}\label{eq:Wnorm}
\|\bar H_0(W)\|_{L^2(g_c)}^2=1.
\end{equation}
Orthogonality of two generating tensors $W,V$ refers to the Euclidean inner product $\langle W,V\rangle_\delta\coloneqq\sum_{i,j,k,l=1}^nW_{ijkl}V_{ijkl}$.
Each tensor is then normalized by \eqref{eq:Wnorm}.

For the tensors $H_j(W)$ of \eqref{eq:Weylchain}, normalized by \eqref{eq:Wnorm}, we set
\begin{equation}\label{eq:Ematrix}
\begin{gathered}
E_{ij}\coloneqq D^2\mathcal Q(\bar H_i,\bar H_j),\qquad
E\coloneqq(E_{ij})_{i,j=0}^M,\\
D\coloneqq\operatorname{diag}(2,4,\ldots,2M+2),\qquad S\coloneqq DE+ED.
\end{gathered}
\end{equation}
The pairings $E_{ij}$ are defined by Lemma \ref{lem:cutoff-convergence}.
When $M=3$, $S_N(n)$ denotes this matrix $S$, as in \eqref{eq:intro-dilation-matrix}.

Proposition \ref{prop:energybridge} gives $\cE(H_i,H_j)=(n-2N)E_{ij}/8$.
In particular, the scalar correction vanishes at $U_{0,1}$ for every linear combination of the tensors in \eqref{eq:Weylchain} and their radial cutoffs.
Translations need not preserve the spherical TT condition, so the full corrected form \eqref{eq:correctedE} is retained when differentiating in the center.

We set $t\coloneqq|x|^2$ and write $\cA(f(t)\bar H_0)=(Af)(t)\bar H_0$.
The normalized radial pairing is $\langle f,g\rangle_\mu\coloneqq\int_0^\infty f(t)g(t)\,d\mu(t)$; we write $\langle f\rangle_\mu\coloneqq\int_0^\infty f(t)\,d\mu(t)$.

\begin{lemma}\label{lem:weyl-radial}
The radial operator $A$ and the measure induced by \eqref{eq:Wnorm} are
\begin{equation}\label{eq:Amasterdiff}
Af=-t(1+t)^2f''+
\frac{1+t}{2}\bigl((n-8)t-(n+4)\bigr)f'
+(n-2)(1+t)f,
\end{equation}
where primes denote $t$-derivatives, and
\begin{equation}\label{eq:moments}
d\mu\coloneqq\frac{t^{\frac{n}{2}+1}(1+t)^{-n}}
{B(\frac{n}{2}+2,\frac{n}{2}-2)}\,dt,
\quad \mathfrak m_0=1,
\quad \frac{\mathfrak m_{r+1}}{\mathfrak m_r}=\frac{n+4+2r}{n-6-2r},
\end{equation}
with $\mathfrak m_r\coloneqq\langle t^r\rangle_\mu$ whenever the moments converge.
In particular,
\begin{equation}\label{eq:Amaster}
At^j={}-\frac12j(n+2j+2)t^{j-1}
+\bigl(n-2-2j(j+2)\bigr)t^j
+\frac12(j+2)(n-2j-2)t^{j+1}.
\end{equation}
The operator is symmetric for this pairing when $\deg f+\deg g<n/2-3$.
The radial pairings $\langle f,g\rangle_\mu$ converge whenever $\deg f+\deg g\le N+2M$ and $n>2N+4M+4$.
\end{lemma}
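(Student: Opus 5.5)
The plan is to compute everything in the stereographic chart, using $g_c=\Omega^2\delta$ and $\bar H=\Omega^2 f(t)H_0(W)$. First, the measure. Since $H_0$ is homogeneous of degree $2$ and $|\bar H|_{g_c}^2=\Omega^{-4}|\Omega^2 H|_\delta^2=|H|_\delta^2$, we get $dv_{g_c}=\Omega^n dx$. Averaging $|H_0(W)(x)|_\delta^2=W_{ikjl}W_{ipjq}x^kx^lx^px^q$ over spheres gives $c_W|x|^4$ for a constant $c_W>0$ depending only on $W$. Passing to $t=|x|^2$ with $dx=\tfrac12|\Sph^{n-1}|t^{n/2-1}dt$, the integrand of $\langle f\bar H_0,g\bar H_0\rangle_{L^2(g_c)}$ becomes a constant times $fg\,t^{n/2+1}(1+t)^{-n}dt$. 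The normalization \eqref{eq:Wnorm} then fixes the constant as $1/B(n/2+2,n/2-2)$. The moment ratio is the beta-function identity $B(a+1,b-1)/B(a,b)=a/(b-1)$ with $a=n/2+2+r$ and $b=n/2-2-r$, which gives $(n+4+2r)/(n-6-2r)$. Convergence of $\langle t^r\rangle_\mu$ requires $n/2+2+r<n$ at infinity, i.e.\ $2r<n-4$. Pairings of degree $\le N+2M$ need $2(N+2M)<n-4$, which is exactly $n>2N+4M+4$.

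Second, the operator. On TT tensors of the round sphere, $\cA=\nabla^*\nabla+2$. I would use the conformal change of the rough Laplacian from $\delta$ to $g_c=\Omega^2\delta$ acting on $\Omega^2 F$ with $F=f(t)H_0$. Several facts simplify this: $\tr F=0$, $x^iF_{ij}=0$, $\partial^iF_{ij}=0$ (Lemma \ref{lem:weyl-TT}), $\partial\log\Omega=-2x/(1+t)$ is radial, and $H_0$ is harmonic and homogeneous of degree $2$. Harmonicity follows because $\partial_m\partial_m(W_{ikjl}x^kx^l)=2W_{imjm}=0$. With these, every Christoffel correction is either a radial contraction, which vanishes, or a multiple of $x^m\partial_m F=(2f+2tf')H_0$. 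The Euclidean Laplacian gives $\Delta(fH_0)=(4tf''+(2n+8)f')H_0$.

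Converting these terms and multiplying by the conformal factors $\Omega^{-2}=(1+t)^2/4$ should produce exactly \eqref{eq:Amasterdiff}. A cross-check that avoids sign slips in the conformal formulas: $f=1$ gives $A1=(n-2)(1+t)$. The degree-two Weyl-tensor field is, up to conformal factor, a known eigentensor combination, and the $t^{-1}$ coefficient in \eqref{eq:Amaster} vanishes at $j=0$ as it should. Formula \eqref{eq:Amaster} then follows by direct substitution of $f=t^j$ into \eqref{eq:Amasterdiff}.

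Third, symmetry. I would write $A$ in divergence form with respect to $\mu$. The weight is $w=t^{n/2+1}(1+t)^{-n}$, and one checks $(t(1+t)^2w)'/w=\tfrac{1+t}{2}\bigl((n+4)-(n-8)t\bigr)$. This gives $Af=-w^{-1}(t(1+t)^2wf')'+(n-2)(1+t)f$, so $\langle Af,g\rangle_\mu-\langle f,Ag\rangle_\mu$ reduces to boundary terms $t(1+t)^2w(f'g-fg')$. At $t=0$ these vanish because of the factor $t^{n/2+2}$. At infinity they behave like $t^{n/2+2-n+\deg f+\deg g}$, which tends to $0$ when $\deg f+\deg g<n/2-2$. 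The stated slightly stronger hypothesis also makes each integral $\langle Af,g\rangle_\mu$ converge, since $A$ raises degree by one.

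The main obstacle is the second step. Getting the conformal transformation of $\nabla^*\nabla$ on symmetric two-tensors right, including the weight $\Omega^2$ and the curvature term $+2$, is where errors are likely. I would verify the result against \eqref{eq:Amaster} at small $j$ using the known eigenvalue $b$ of the degree-two tensor $\bar H_0$.
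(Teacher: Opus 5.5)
Your treatment of the measure, the moment ratios, the convergence range and the divergence form of $A$ follows the paper. The operator computation takes a different route. The paper passes to geodesic polar coordinates $g_c=d\varphi^2+\sin^2\varphi\,g_{\Sph^{n-1}}$ with $r=\tan(\varphi/2)$. It writes $f\bar H_0=\sin^2\varphi\,u(\varphi)Y$, where $u=r^2f(r^2)$ and $Y$ is the spherical TT tensor induced by $W_{ikjl}\theta^k\theta^l$, with rough-Laplacian eigenvalue $2(n-1)$ on $\Sph^{n-1}$. It then uses the separated operator $-u''-(n-1)\cot\varphi\,u'+2n\sin^{-2}\varphi\,u$ and changes variables. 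You instead stay in the stereographic chart and conformally transform the rough Laplacian. That needs only the Euclidean identities of Lemma~\ref{lem:weyl-TT}, $\Delta H_0=0$ and Euler's identity. It is more elementary, it avoids the warped-product formula and the eigenvalue of $Y$ (which the paper only sketches), and it works in the same chart as the later profiles $\Phi_j$. The paper's separation of variables is shorter and makes the spectral structure visible.

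Your computation does close, but your description of it is wrong in a way that matters. Not every radial contraction vanishes: $x^p\partial_iF_{pj}=-F_{ij}$. There are further zeroth-order terms $(\partial_m\partial_i\log\Omega)F_{mj}=-\tfrac{2}{1+t}F_{ij}$ and $|\partial\log\Omega|^2F=\tfrac{4t}{(1+t)^2}F$. Collected, they give
\[
g_c^{mk}\nabla_k\nabla_m(\Omega^2F)_{ij}
=\Delta F_{ij}-\frac{2(n-2)}{1+t}\,x^m\partial_mF_{ij}+\frac{8}{(1+t)^2}F_{ij}.
\]
The last term cancels $2\Omega^2F$ exactly, so $\cA(\Omega^2F)=-\Delta F+\frac{2(n-2)}{1+t}x^m\partial_mF$. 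With your formulas for $\Delta(fH_0)$ and $x^m\partial_m(fH_0)$ and the factor $(1+t)^2/4$, this yields \eqref{eq:Amasterdiff}. Dropping those terms, as your description suggests, would leave a spurious $+2f$.

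Your proposed checks would not catch that error. The identity $A1=(n-2)(1+t)$ is simply read off from the target. Also, $\bar H_0$ is not an eigentensor, since $A1$ is not a multiple of $1$. The correct test is $f=(1+t)^{-2}$, i.e.\ $f\bar H_0=\Omega^4H_0/4$, which is an $\ell=2$ TT harmonic with $\nabla^*\nabla$-eigenvalue $2n$. Indeed \eqref{eq:Amasterdiff} gives $A(1+t)^{-2}=2(n+1)(1+t)^{-2}$, and this test would detect a wrong zeroth-order coefficient.

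Finally, your boundary-term exponent is off by one. Since $t(1+t)^2w\sim t^{4-n/2}$, the boundary term is $O(t^{3-n/2+\deg f+\deg g})$. So the hypothesis $\deg f+\deg g<n/2-3$ is exactly what the boundary terms and the convergence of $\langle Af,g\rangle_\mu$ require, not a slightly stronger condition.
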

\begin{proof}
We set $r\coloneqq\tan(\varphi/2)$, so $g_c=d\varphi^2+\sin^2\varphi\,g_{\Sph^{n-1}}$.
Let $Y$ be the restriction of $W_{ikjl}\theta^k\theta^l$ to tangent vectors of $\Sph^{n-1}$.
This tensor is TT and has rough-Laplacian eigenvalue $2(n-1)$; this follows by twice differentiating its defining expression $W_{ikjl}\theta^k\theta^l$ tangentially and using the Weyl contractions.
For a tensor $\sin^2\varphi\,u(\varphi)Y$, the radial expression for $\nabla^*\nabla+2$ is
\[
-u''-(n-1)\cot\varphi\,u'
+\frac{2n}{\sin^2\varphi}u.
\]
Substituting $u=r^2f(r^2)$ and $\partial_\varphi=(1+r^2)\partial_r/2$ proves \eqref{eq:Amasterdiff}; applying it to $t^j$ gives \eqref{eq:Amaster}.
Polar coordinates and the beta integral prove \eqref{eq:moments}.
With $\omega(t)\coloneqq d\mu/dt$, the operator in \eqref{eq:Amasterdiff} has the divergence form
\[
A=-\omega^{-1}\partial_t\bigl(t(1+t)^2\omega\partial_t\bigr)
+(n-2)(1+t).
\]
Integration by parts gives the symmetry assertion.
The largest moment index in a matrix entry is $N+2M$, whereas $n/2-2>N+2M$, so all required integrals converge and their boundary terms vanish.
\end{proof}

For $a\coloneqq(\tau,a_1,\ldots,a_M)^{\mathsf T}$, we define $h_a\coloneqq\tau H_0(W)+\sum_{r=1}^M a_rH_r(W)$ and $a_0\coloneqq\tau$.

\begin{proposition}\label{prop:weyl-radial-energy}
The entries in \eqref{eq:Ematrix} satisfy
\begin{equation}\label{eq:radial-E}
E_{ij}=\langle t^i,\mathscr P_N^{(n)}(-A)t^j\rangle_\mu.
\end{equation}
Moreover,
\[
F_{h_a}(0,\eps)=\sum_{i,j=0}^M E_{ij}a_ia_j\eps^{d_i+d_j},
\]
and
\begin{equation}\label{eq:scalehessians}
\partial_\eps F_{h_a}(0,1)=a^{\mathsf T}Sa,
\qquad \partial_\eps^2 F_{h_a}(0,1)=a^{\mathsf T}(DS+SD-S)a.
\end{equation}
\end{proposition}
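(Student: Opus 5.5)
The plan is to reduce everything to the radial calculus of Lemma \ref{lem:weyl-radial} and the scaling behaviour of the tensors $H_r$. First I identify the compactified tensors: since $\Omega=2/(1+t)$, we have $\bar H_r=\Omega^2|x|^{2r}W_{ikjl}x^kx^l=t^r\bar H_0$. The normalization \eqref{eq:Wnorm} and the polar-coordinate computation in the proof of Lemma \ref{lem:weyl-radial} show that
\[
\langle f(t)\bar H_0,g(t)\bar H_0\rangle_{L^2(g_c)}=\langle f,g\rangle_\mu .
\]
Indeed, the angular integral of $|Y|^2$ is a constant absorbed into \eqref{eq:Wnorm}, and the radial weight $\Omega^{n}\cdot\Omega^{-4}\cdot\Omega^4 t^2\,r^{n-1}dr$ reduces to $d\mu$ up to the beta normalization. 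Since $\cA(f\bar H_0)=(Af)\bar H_0$ by definition of $A$, the polynomial $\mathscr P_N^{(n)}(-\cA)$ acts on $t^j\bar H_0$ as $(\mathscr P_N^{(n)}(-A)t^j)\bar H_0$, a polynomial of degree at most $j+N$ in $t$ by \eqref{eq:Amaster}.

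The first step, and the main point needing care, is the formula \eqref{eq:radial-E}. Theorem \ref{thm:ambientHessian} gives $D^2\mathcal Q(\bar H_i,\bar H_j)=\langle\bar H_i,\mathscr P_N^{(n)}(-\cA)\bar H_j\rangle$ only for smooth TT tensors and by continuous extension in $H^N$, whereas $\bar H_i$ is singular at the north pole. I would apply the theorem to the smooth TT cutoffs $\Omega^2\chi_RH_i$ supplied by Lemma \ref{lem:weyl-TT}, and pass to the limit using the $H^N$ convergence of Lemma \ref{lem:cutoff-convergence} and the $H^N$-continuity of the bilinear form. On the limiting side, I write the form as $\sum_k c_k\langle\bar H_i,(-\cA)^k\bar H_j\rangle$ and compute each term radially. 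The resulting pairing $\langle t^i,q(t)\rangle_\mu$ with $\deg q\le j+N\le N+2M$ converges under $n>2N+4M+4$, by Lemma \ref{lem:weyl-radial}. Equating the limit with the distributional pairing requires that boundary terms at $t=\infty$ vanish when the cutoff derivatives are removed; this is exactly the decay condition $n/2-2>N+2M$ already recorded there. I expect this justification to be the only non-formal step.

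For the second claim, I use $H_r(\eps x)=\eps^{2r+2}H_r(x)=\eps^{d_r}H_r(x)$, so $h_a(\eps\,\cdot)=\sum_r a_r\eps^{d_r}H_r$. Bilinearity of $\cE$, Proposition \ref{prop:energybridge} (the scalar correction vanishes for these spherical TT combinations) and \eqref{eq:modelF} then give
\[
F_{h_a}(0,\eps)=\tfrac{8}{n-2N}\sum_{i,j}a_ia_j\eps^{d_i+d_j}\cE(H_i,H_j)=\sum_{i,j}E_{ij}a_ia_j\eps^{d_i+d_j}.
\]
Differentiating at $\eps=1$ gives $\sum(d_i+d_j)E_{ij}a_ia_j=a^{\mathsf T}(DE+ED)a=a^{\mathsf T}Sa$. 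A second derivative gives
\[
\sum_{i,j}(d_i+d_j)(d_i+d_j-1)E_{ij}a_ia_j .
\]
Expanding $(d_i+d_j)^2$ shows that the matrix with these entries equals $D^2E+2DED+ED^2-(DE+ED)$, and this equals $DS+SD-S$ since $DS+SD=D^2E+2DED+ED^2$. This completes \eqref{eq:scalehessians}.
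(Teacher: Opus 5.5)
Your proposal is correct and follows the paper's own argument. The paper derives \eqref{eq:radial-E} from Theorem \ref{thm:ambientHessian} applied via the smooth TT cutoffs of Lemma \ref{lem:weyl-TT}, together with the radial calculus and moment convergence of Lemma \ref{lem:weyl-radial}. It then obtains the expansion of $F_{h_a}(0,\eps)$ and \eqref{eq:scalehessians} from the homogeneity $H_r(\eps x)=\eps^{d_r}H_r(x)$, Proposition \ref{prop:energybridge}, and two differentiations at $\eps=1$, exactly as you do. Your account of the cutoff limit is more explicit than the paper's. One small precision: the convergence condition should be stated for the total degree $i+j+N\le N+2M$ of the pairing $\langle t^i,q\rangle_\mu$, not for $\deg q$ alone, since that is the quantity Lemma \ref{lem:weyl-radial} requires.
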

\begin{proof}
Theorem \ref{thm:ambientHessian} and Lemmas \ref{lem:weyl-TT} and \ref{lem:weyl-radial} imply \eqref{eq:radial-E}.
Homogeneity and \eqref{eq:energybridge} give the displayed expansion of $F_{h_a}$.
Two differentiations at $\eps=1$ yield \eqref{eq:scalehessians}.
For example, \eqref{eq:lowP} and \eqref{eq:moments} yield
\[
E_{00}\big|_{N=1}=-\frac{n-2}{2(n-6)}.
\]
\end{proof}

\subsection{Translation derivatives and coefficient choice}
We index coordinates modulo $n$ and prescribe
\begin{equation}\label{eq:symmetric-Weyl-choice}
W_{ijij}\coloneqq\mathbf1_{\{j=i-1\text{ or }j=i+1\}}-\frac2{n-1}
\qquad(i\ne j).
\end{equation}
The other components are determined by \eqref{eq:Weyl-symmetries} or are zero: a component vanishes unless its two unordered index pairs agree.

\begin{lemma}\label{lem:translation-Weyl}
For $n\ge5$, \eqref{eq:symmetric-Weyl-choice} defines a nonzero Weyl tensor invariant under coordinate sign changes and cyclic permutation.
After normalizing it by \eqref{eq:Wnorm}, the associated reduced energy satisfies
\begin{equation}\label{eq:translationisotropy}
\nabla_\xi F_{h_a}(0,\eps)=0,\qquad
D_\xi^2F_{h_a}(0,1)=(a^{\mathsf T}Ta)\Id_n,\qquad
D_\xi\partial_\eps F_{h_a}(0,1)=0,
\end{equation}
where $T$ is the symmetric $(M+1)\times(M+1)$ coefficient matrix of $\partial_{\xi_1}^2F_{h_a}(0,1)$.
It depends on $N,n$ and the normalized tensor \eqref{eq:symmetric-Weyl-choice}, not on $a$.
For $M\ge1$, its first two entries satisfy
\begin{equation}\label{eq:Tentries}
T_{00}=0,
\qquad T_{01}=\frac{2(n+4)}nE_{00}.
\end{equation}
\end{lemma}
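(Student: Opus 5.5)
The plan has three parts: verify the algebraic properties of \eqref{eq:symmetric-Weyl-choice}; derive \eqref{eq:translationisotropy} from the symmetry group of $W$; compute the two entries of $T$ by averaging over coordinate directions, which replaces $\partial_{\xi_1}^2$ by $n^{-1}\Delta_\xi$.

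\textbf{The Weyl tensor.} The only nonzero components are $W_{ijij}=-W_{ijji}=-W_{jiij}=W_{jiji}$ with $i\ne j$, so the pair symmetries in \eqref{eq:Weyl-symmetries} are immediate.

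For the cyclic identity $W_{ijkl}+W_{iklj}+W_{iljk}=0$, note that a nonzero term needs its two unordered index pairs to agree. This forces the multiset $\{i,j,k,l\}$ to consist of two repeated indices, and in each such case the sum cancels by antisymmetry.

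For the trace, $\sum_iW_{ijil}$ vanishes trivially when $j\ne l$. When $j=l$ it equals $\#\{i:i=j\pm1\}-\frac{2(n-1)}{n-1}=2-2=0$, since for $n\ge3$ the neighbours $j\pm1$ are distinct mod $n$.

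For $n\ge5$ each index has a non-neighbour. Hence the values $1-\frac2{n-1}$ and $-\frac2{n-1}$ both occur and $W\ne0$. Each nonzero component contains every index an even number of times, which gives sign-change invariance. The defining rule is invariant under $i\mapsto i+1$ mod $n$, which gives cyclic invariance.

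\textbf{Symmetry consequences.} Let $G$ be the finite group of orthogonal maps generated by coordinate sign changes and the cyclic permutation. For $O\in G$, invariance of $W$ gives $H_r(W)(Ox)=O\,H_r(W)(x)\,O^{\mathsf T}$, interpreted as the pullback of a tensor. The form $\cE$ is invariant under Euclidean isometries, because $\delta$ and $U$ are $O(n)$-invariant and the construction is natural (Lemma \ref{lem:energy-invariance}); this passes to the polynomial limits through Lemmas \ref{lem:cutoff-convergence} and \ref{lem:quadratic-continuity}. Therefore $F_{h_a}(O\xi,\eps)=F_{h_a}(\xi,\eps)$ for all $O\in G$.

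It follows that $\nabla_\xi F_{h_a}(0,\eps)$ and $D_\xi\partial_\eps F_{h_a}(0,1)$ are $G$-fixed vectors, and sign changes force both to vanish. The Hessian $D^2_\xi F_{h_a}(0,1)$ is a $G$-invariant symmetric matrix. Sign changes kill its off-diagonal entries and cyclic permutations equalize its diagonal entries, so it is a scalar multiple of $\Id_n$. Since $F_{h_a}$ is a quadratic form in $a$, this scalar is $a^{\mathsf T}Ta$ for a symmetric matrix $T$ independent of $a$.

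\textbf{The entries of $T$.} Taking the trace gives $n\,a^{\mathsf T}Ta=\Delta_\xi F_{h_a}(0,1)$, and I will compute the right-hand side. Two inputs are needed.

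First, $\xi$-derivatives of $H_0(\xi+\cdot)$ are constant or affine tensors, which are null for $\cE$ by Lemma \ref{lem:gauge}. Hence $\cE(H_0(\xi+\cdot),H_0(\xi+\cdot))=\cE(H_0,H_0)$ for all $\xi$, which gives $T_{00}=0$. For the same reason, in the cross term only the $\xi$-dependence of $H_1$ matters.

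Second, $f_{H_0}=0$ by Proposition \ref{prop:energybridge}. So the scalar correction drops out of any pairing $\cE(H_0,X)$, and this pairing is bilinear and continuous in $X$ even when $\Omega^2X$ is not TT.

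Now compute $\Delta_\xi H_1(\xi+x)|_{\xi=0}=\Delta\bigl(|x|^2W_{ikjl}x^kx^l\bigr)$. With $q=W_{ikjl}x^kx^l$, this equals $2nq+4x\cdot\nabla q+|x|^2\Delta q=(2n+8)H_0$, using Euler's identity and $\Delta q=2W_{ikjk}=0$. The $a_0a_1$ coefficient of $\Delta_\xi F_{h_a}(0,1)$ is therefore
\[
2\cdot\frac{8}{n-2N}\,\cE\bigl(H_0,(2n+8)H_0\bigr)=2(2n+8)E_{00}.
\]
Comparing with the coefficient $2nT_{01}$ of $a_0a_1$ in $n\,a^{\mathsf T}Ta$ gives $T_{01}=\frac{2(n+4)}{n}E_{00}$.

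\textbf{Main obstacle.} The delicate point is justifying the differentiation in $\xi$ under the cutoff limits, together with gauge-nullity and isometry invariance for non-compactly-supported polynomial tensors. I plan to handle this with the uniform parameter-derivative estimates in Lemmas \ref{lem:cutoff-convergence} and \ref{lem:source-limits}, using that $\xi$-differentiation does not raise the polynomial degree, so the integrability condition $n>2N+2d$ persists.
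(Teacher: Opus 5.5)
Your proposal is correct and follows essentially the same route as the paper. Invariance of $W$ under coordinate sign changes and cyclic permutations gives the isotropy statements \eqref{eq:translationisotropy}, and taking the trace $\Delta_\xi$ of the Hessian, using gauge-nullity of the affine tensors $\partial_iH_0$ (Lemma \ref{lem:gauge}) together with $\Delta H_1=2(n+4)H_0$ and \eqref{eq:energybridge}, yields \eqref{eq:Tentries}. The only cosmetic difference is that you strip the $\xi$-dependence of $H_0$ before differentiating, whereas the paper writes $\Delta_\xi F_h(0,1)=\frac{16}{n-2N}\{\sum_i\cE(\partial_ih,\partial_ih)+\cE(h,\Delta h)\}$ and then invokes $\Delta H_0=0$.
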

\begin{proof}
The prescribed components satisfy the first Bianchi identity.
The constant $2/(n-1)$ removes the Ricci contraction, since
\[
\sum_{j\ne i}W_{ijij}=2-(n-1)\frac2{n-1}=0,
\]
and its off-diagonal entries vanish.
The tensor is nonzero for $n\ge5$.
Each nonzero component contains every coordinate index an even number of times, so coordinate sign changes preserve $W$; cyclic permutation preserves the adjacency relation.
We multiply $W$ by a positive constant to satisfy \eqref{eq:Wnorm}.

Invariance of $F_{h_a}$ in \eqref{eq:modelF} under coordinate sign changes forces $\partial_{\xi_i}F_{h_a}$, $\partial_{\xi_i}\partial_\eps F_{h_a}$, and $\partial_{\xi_i}\partial_{\xi_j}F_{h_a}$ for $i\ne j$ to vanish at $\xi=0$.
Cyclic permutation makes the diagonal entries equal.
This proves \eqref{eq:translationisotropy}.

Differentiating \eqref{eq:Weylchain} and using the zero Ricci contraction of $W$ yields
\[
\Delta H_0=0,
\qquad \Delta H_1=2(n+4)H_0.
\]
Two differentiations of \eqref{eq:modelF} yield
\[
\Delta_\xi F_h(0,1)=\frac{16}{n-2N}
\left\{\sum_{i=1}^n\cE(\partial_i h,\partial_i h)
+\cE(h,\Delta h)\right\}.
\]
Each $\partial_iH_0$ is affine.
Lemma \ref{lem:gauge} therefore implies $\cE(\partial_iH_0,H)=0$ whenever $\Omega^2H\in H^N(\Sph^n;\operatorname{Sym}^2T^*\Sph^n)$.
Comparing the coefficients of $\tau^2$ and $\tau a_1$ and using \eqref{eq:energybridge} gives \eqref{eq:Tentries}.
\end{proof}

\medskip
We now choose the coefficients of $h$ so that $D_\xi^2F_h(0,1)$ is invertible and $\partial_\eps^2F_h(0,1)\ne0$.
\begin{lemma}\label{lem:matrix-nonproportionality}
Let $S$ be a real symmetric indefinite $m\times m$ matrix, and let $D$ be diagonal.
Assume that the restriction of $S$ to the eigenspace $V$ for the smallest diagonal entry of $D$ is definite.
Then $DS+SD-S$ is not a scalar multiple of $S$.
If a nonzero symmetric matrix $T$ has zero restriction to $V$, then $T$ is not a scalar multiple of $S$ either.
\end{lemma}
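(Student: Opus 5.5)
The plan is to restrict both quadratic forms to vectors in $V$ and compare them there. Let $\delta_{\min}$ be the smallest diagonal entry of $D$ and $V=\ker(D-\delta_{\min}\Id)$. For $v\in V$ we have $Dv=\delta_{\min}v$, and $D$ is symmetric, so
\[
v^{\mathsf T}(DS+SD-S)v=(2\delta_{\min}-1)\,v^{\mathsf T}Sv .
\]
Thus on $V$ the quadratic form of $DS+SD-S$ is $(2\delta_{\min}-1)$ times that of $S$.

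Suppose, to reach a contradiction, that $DS+SD-S=cS$ for some real $c$. Because $S|_V$ is definite, there is a $v\in V$ with $v^{\mathsf T}Sv\neq0$. Comparing the two quadratic forms at this $v$ forces $c=2\delta_{\min}-1$. The identity then becomes $DS+SD=2\delta_{\min}S$, which we rewrite as $D'S+SD'=0$ with $D'\coloneqq D-\delta_{\min}\Id$. The matrix $D'$ is diagonal with nonnegative entries $d'_i$, and $d'_i=0$ exactly on the coordinates spanning $V$.

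Next we read this identity entrywise: $(d'_i+d'_j)S_{ij}=0$ for every pair $i,j$. If $i$ or $j$ lies outside $V$, then $d'_i+d'_j>0$, so $S_{ij}=0$. Hence $S$ is block diagonal, equal to $S|_V\oplus0$. Since $S|_V$ is definite, $S$ is then semidefinite, which contradicts the hypothesis that $S$ is indefinite. This contradiction proves the first assertion.

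For the second assertion, suppose $T=cS$. If $c=0$, then $T=0$, contradicting $T\neq0$. If $c\neq0$, then $S|_V=c^{-1}T|_V=0$, contradicting definiteness of $S|_V$ whenever $V\neq\{0\}$. Here $V$ is nonzero because it is an eigenspace of $D$.

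The only step needing care is the entrywise identity $(d'_i+d'_j)S_{ij}=0$ together with the positivity of $d'_i+d'_j$ off the $V\times V$ block. That positivity uses that $\delta_{\min}$ is the minimal diagonal entry; without minimality the argument would fail.
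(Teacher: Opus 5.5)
Your proof is correct and follows the paper's argument step by step. Restricting to $V$ forces $c=2\delta_{\min}-1$, and the entrywise identity $(d_i+d_j-2\delta_{\min})S_{ij}=0$ then makes $S$ supported on $V$, hence semidefinite; for the second assertion, $T|_V=0$ together with the definiteness of $S|_V$ forces $c=0$.
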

\begin{proof}
Write the diagonal entries of $D$ as $d_i$ and their minimum as $d_*$.
If $DS+SD-S=cS$, restriction to $V$ gives $c=2d_*-1$.
Hence $(d_i+d_j-2d_*)S_{ij}=0$ for every $i,j\in\{1,\ldots,m\}$, so $S$ is supported on $V$.
Since its restriction there is definite, $S$ cannot be indefinite.
Similarly, $T=cS$ and $T|_V=0$ imply $c=0$, contradicting $T\ne0$.
\end{proof}

\begin{lemma}\label{lem:nullcone}
Let $S$ be a real symmetric indefinite $m\times m$ matrix of rank at least three.
If a quadratic form vanishes on the zero set $\{a\in\R^m:a^{\mathsf T}Sa=0\}$, it is a scalar multiple of $a\mapsto a^{\mathsf T}Sa$.
A finite union of zero sets of quadratic forms not proportional to this form, and of proper linear subspaces, cannot cover that zero set.
\end{lemma}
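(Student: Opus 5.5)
First diagonalize $S$: by Sylvester's law of inertia there is an invertible change of variables $a=Pb$ after which $a^{\mathsf T}Sa=b_1^2+\cdots+b_p^2-b_{p+1}^2-\cdots-b_{p+q}^2$, with $p,q\ge1$ because $S$ is indefinite, $p+q=\operatorname{rank}S\ge3$, and $b_{p+q+1},\dots,b_m$ free. Linear changes of variables preserve quadratic forms, proportionality to the form of $S$, zero sets, and proper linear subspaces. So we may assume $S$ is this diagonal matrix, and write $Z=\{a^{\mathsf T}Sa=0\}$.

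For the first assertion, let $Q(a)=a^{\mathsf T}Ba$ vanish on $Z$. I would probe $Z$ with explicit points. For $i\le p<j\le p+q$ the vectors $e_i\pm e_j$ lie in $Z$, which gives $B_{ii}+B_{jj}=\mp 2B_{ij}$, so $B_{ij}=0$ and $B_{jj}=-B_{ii}$. Rank at least three means one sign class has two indices, say $i,i'\le p$; comparing both with a common $j$ gives $B_{ii}=B_{i'i'}$. The four vectors $e_i\pm e_{i'}\pm\sqrt2\,e_j$ (or the analogue with the roles of the signs exchanged) lie in $Z$ and force $B_{ii'}=0$; the same device handles pairs inside the negative block. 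For a degenerate index $k>p+q$, the vectors $e_i+e_j+te_k$ lie in $Z$ for all real $t$. Expanding in $t$ forces $B_{kk}=0$ and $B_{ik}+B_{jk}=0$, and using $e_i-e_j+te_k$ as well gives $B_{ik}=B_{jk}=0$. Hence $B=cS$ with $c=B_{11}$.

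For the second assertion I would argue by dimension and irreducibility. Because rank $\ge3$, the form $a^{\mathsf T}Sa$ is irreducible over $\R$: it is not a product of linear forms, since such a product has rank at most two. Its real zero set $Z$ contains smooth points, namely $a\in Z$ with $Sa\ne0$, and these exist by indefiniteness. Near such a point $Z$ is a real-analytic hypersurface of dimension $m-1$. I then claim that $Z_{\mathrm{reg}}=Z\cap\{Sa\ne0\}$ is connected, or at least that each of its components is Zariski dense in the complex quadric. Granting this, a polynomial vanishing on an open subset of $Z_{\mathrm{reg}}$ vanishes on all of the complex quadric, and is therefore divisible by $a^{\mathsf T}Sa$ by irreducibility. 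A quadratic form divisible by it is proportional to it, so each non-proportional quadric meets $Z$ in a nowhere dense subset of $Z_{\mathrm{reg}}$. A proper linear subspace meets $Z_{\mathrm{reg}}$ in a nowhere dense set as well, since it contains $Z_{\mathrm{reg}}$ only if the quadric contains a hyperplane. A finite union of closed nowhere dense subsets of the open set $Z_{\mathrm{reg}}$ is nowhere dense by Baire, so it cannot cover $Z$.

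The main obstacle is the density claim. My plan is to parametrize $Z_{\mathrm{reg}}$ locally as the graph of $b_1=\pm\sqrt{b_{p+1}^2+\cdots-b_2^2-\cdots}$ over an open set. This shows that any polynomial $Q$ vanishing on an open piece of $Z$ satisfies $Q(\sqrt{\phi},b')=0$ on an open set, where $\phi$ is a non-square polynomial; the non-square property uses rank $\ge3$. I would then run the division algorithm in $b_1$ modulo $b_1^2-\phi$ to conclude that $Q$ is divisible by $b_1^2-\phi$. This avoids connectivity altogether, so I would carry it out directly.
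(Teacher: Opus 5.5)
Your proposal is correct in outline, but it takes a different route from the paper. The paper proves one global divisibility fact. It writes the form as $a_1^2-\mathsf B(a')$ with $\mathsf B>0$ on an open set, divides in $a_1$, and evaluates at \emph{both} roots $a_1=\pm\sqrt{\mathsf B(a')}$. This shows that any polynomial vanishing on all of $Z$ is divisible by $a^{\mathsf T}Sa$, and it uses only indefiniteness. The first assertion is then a degree count. For the second, the paper multiplies the defining polynomials of a putative cover; the product vanishes on $Z$, and a rank $\ge3$ quadratic is irreducible, hence prime in $\R[a]$, so one factor must be divisible.

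You prove the first assertion by test vectors instead. For the second, you localize to a single sheet near a smooth point, where only the one equation $\sqrt{\phi}\,L+R=0$ is available. You must therefore square it to get $\phi L^2=R^2$ and use that $\phi$ is not a square; this is where rank $\ge3$ enters your argument. You then conclude by nowhere-density in $Z_{\mathrm{reg}}$. Your route gives a stronger local statement: each excluded set is nowhere dense in $Z_{\mathrm{reg}}$. The paper's product-and-primality argument avoids smooth points, local graphs and topology entirely, and its global lemma also gives the first assertion in one line, which makes your test-vector computation redundant.

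A few details need fixing.
\begin{enumerate}
\item Your test vectors never determine $B_{kk'}$ for two distinct degenerate indices $k,k'>p+q$. Once the other entries are known, the vector $e_i+e_j+e_k+e_{k'}\in Z$ gives $B_{kk'}=0$.
\item The graph $b_1=\pm\sqrt{\phi}$ describes $Z$ only near points with $b_1\ne0$. Either solve for whichever nondegenerate coordinate is nonzero (its complementary form still has rank $\ge2$, so it is not a square), or observe that an open piece of $Z_{\mathrm{reg}}$ cannot lie in $\{b_1=0\}$, because the form restricted to that hyperplane is not identically zero.
\item For a proper linear subspace, you need that it has empty interior in $Z_{\mathrm{reg}}$, not merely that it fails to contain $Z_{\mathrm{reg}}$. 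This follows from your local divisibility applied to a nonzero linear form defining a hyperplane that contains the subspace, since a degree-one polynomial cannot be divisible by the quadratic.
\item Baire is unnecessary: a finite union of closed nowhere dense sets is nowhere dense in any topological space.
\end{enumerate}
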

\begin{proof}
After a linear change of coordinates, the defining form is $a_1^2-\mathsf B(a')$, where $a'\coloneqq(a_2,\ldots,a_m)$ and $\mathsf B$ is positive on a nonempty open subset of $\R^{m-1}$.
Division in $a_1$ writes any polynomial as a multiple of $a_1^2-\mathsf B(a')$ plus $a_1L(a')+R(a')$.
If the polynomial vanishes on the zero set, evaluation at both $a_1=\pm\sqrt{\mathsf B(a')}$ implies $L=R=0$ on this open set, hence identically.
Thus it is divisible by the defining quadratic.
For a quadratic form, the quotient is constant.

A quadratic form of rank at least three is irreducible, since a product of two linear forms has rank at most two.
A proper linear subspace is contained in a hyperplane.
If finitely many of the stated zero sets covered $\{a\in\R^m:a^{\mathsf T}Sa=0\}$, the product of their quadratic or linear defining polynomials would vanish there and would therefore be divisible by the defining quadratic.
Irreducibility would force one factor to be divisible by it, contrary to the hypotheses.
\end{proof}

\begin{proposition}\label{prop:seed}
Let $n\ge25$, $M\ge1$, $d=2M+2$, and $n>2N+2d$.
If $E_{00}<0$ and the matrix $S$ in \eqref{eq:Ematrix} is indefinite, there is a trace-free polynomial $h$ of degree $d$, with nonzero homogeneous term of degree two, for which $(0,1)$ is a nondegenerate critical point of $F_h$.
\end{proposition}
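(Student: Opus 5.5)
We take $h=h_a=\tau H_0(W)+\sum_{r=1}^M a_rH_r(W)$, with $W$ the symmetric Weyl tensor \eqref{eq:symmetric-Weyl-choice} normalized by \eqref{eq:Wnorm}, and choose the coefficient vector $a$ on the null cone of $S$. By Lemma \ref{lem:translation-Weyl}, $\nabla_\xi F_{h_a}(0,1)=0$ for every $a$. By \eqref{eq:scalehessians}, $\partial_\eps F_{h_a}(0,1)=a^{\mathsf T}Sa$, so $(0,1)$ is a critical point exactly when $a^{\mathsf T}Sa=0$. For the second derivatives, \eqref{eq:translationisotropy} gives $D_\xi\partial_\eps F_{h_a}(0,1)=0$ and $D_\xi^2F_{h_a}(0,1)=(a^{\mathsf T}Ta)\Id_n$. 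Hence the full Hessian in $(\xi,\eps)$ is block diagonal with blocks $(a^{\mathsf T}Ta)\Id_n$ and $a^{\mathsf T}(DS+SD-S)a$. Nondegeneracy therefore reduces to finding $a$ with
\[
a^{\mathsf T}Sa=0,\qquad a^{\mathsf T}Ta\ne0,\qquad a^{\mathsf T}(DS+SD-S)a\ne0,\qquad \tau\ne0.
\]
The last condition gives the nonzero degree-two term. We also take $a_M\ne0$ if the degree is to be exactly $d$.

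To produce such an $a$ we apply Lemma \ref{lem:nullcone}, with the bad set consisting of the zero sets of $a^{\mathsf T}Ta$ and of $a^{\mathsf T}(DS+SD-S)a$, together with the hyperplanes $\{\tau=0\}$ and $\{a_M=0\}$. This requires that neither quadratic form be proportional to $S$, which we check with Lemma \ref{lem:matrix-nonproportionality}.

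The smallest diagonal entry of $D=\operatorname{diag}(2,4,\ldots)$ is $2$, with eigenspace $V=\R e_0$. The restriction of $S$ to $V$ is $S_{00}=4E_{00}<0$, which is definite. Since $S$ is indefinite by hypothesis, Lemma \ref{lem:matrix-nonproportionality} shows that $DS+SD-S$ is not a multiple of $S$.

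For $T$, \eqref{eq:Tentries} gives $T_{00}=0$, so $T|_V=0$. It also gives $T_{01}=2(n+4)E_{00}/n\ne0$, so $T\ne0$ (this uses $M\ge1$). The second part of Lemma \ref{lem:matrix-nonproportionality} then shows that $T$ is not a multiple of $S$. The two coordinate hyperplanes are proper subspaces. Lemma \ref{lem:nullcone} therefore yields the required $a$. The resulting $h_a$ is trace-free, has degree $d$, and satisfies $n>2N+2d$, so Proposition \ref{prop:weyl-radial-energy} and Lemma \ref{lem:translation-Weyl} apply; the hypothesis $n\ge25$ ensures $n\ge5$ in Lemma \ref{lem:translation-Weyl}.

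The main obstacle is the rank hypothesis in Lemma \ref{lem:nullcone}: we need $\operatorname{rank}S\ge3$. First, we would try to verify directly that $S$ has rank at least three in the cases used, from the explicit entries of $S_N(n)$ (for $M=3$, $\det S\ne0$ away from the crossing). Second, we would argue abstractly. If $\operatorname{rank}S=2$, the null cone is the union of two hyperplanes $L_1\cup L_2$. A quadratic form vanishing on both would be proportional to $\ell_1\ell_2$, hence to $S$. One must still exclude the possibility that $T$ vanishes on $L_1$ while $DS+SD-S$ vanishes on $L_2$; this would be checked using $S_{00}<0$ and $T_{00}=0$, which force $e_0\notin L_1\cup L_2$ and constrain the restrictions. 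Finally, if needed we would restrict $a$ to a three-dimensional coordinate subspace containing $e_0$ and $e_1$ on which $S$ remains indefinite of full rank.
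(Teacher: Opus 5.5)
Your reduction to the four conditions on $a$, and your treatment of the case $\operatorname{rank}S\ge3$, are the same as the paper's proof: the block-diagonal Hessian, Lemma \ref{lem:matrix-nonproportionality} with $V=\R e_0$, then Lemma \ref{lem:nullcone}. The genuine gap is the case $\operatorname{rank}S=2$. Indefiniteness only gives rank at least two, and for $M=1$ the matrix $S$ is $2\times2$, so rank two is forced there. None of your fallbacks handles this case.

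\begin{itemize}
\item The first is unavailable. The paper proves only $\mathfrak d_{\mathbf c}>0$, i.e.\ indefiniteness, and never $\operatorname{rank}S_N(n)\ge3$ on the whole range; moreover the proposition is stated for every $M\ge1$.
\item The third is impossible, because the restriction of a rank-two form to any subspace has rank at most two.
\item The second cannot be completed from the hypotheses. Write $a^{\mathsf T}Sa=\ell_1(a)\ell_2(a)$. Then $S_{00}<0$ gives $\ell_i(e_0)\ne0$, so neither component is $\{a_M=0\}$. On $\ker\ell_2$ one computes $a^{\mathsf T}(DS+SD-S)a=\ell_1(a)\,\ell_2(Da)$, which vanishes identically only if $D\ell_2\parallel\ell_2$, i.e.\ $\ker\ell_2=\{\tau=0\}$. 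The surviving bad case is that one component equals $\{\tau=0\}$ (equivalently $E_{ij}=0$ for $1\le i,j\le M$) and the other linear form divides $a^{\mathsf T}Ta$.
\end{itemize}

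Knowing only $T_{00}=0$ and $T_{01}\ne0$ does not exclude this bad case. For $M=1$, take $a^{\mathsf T}Sa=\tau(2a_1-\tau)$ and $a^{\mathsf T}Ta=\frac{n+4}{n}a_1(2a_1-\tau)$. These satisfy $E_{00}<0$, indefiniteness, and \eqref{eq:Tentries}. Yet every zero of $S$ has $\tau=0$ or $a^{\mathsf T}Ta=0$, so no $h_a$ in the single family gives a nondegenerate critical point.

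The missing idea is the paper's doubling of the family. Take a second Weyl tensor with the same coordinate-sign and cyclic symmetries, obtained from \eqref{eq:symmetric-Weyl-choice} by replacing $j=i\pm1$ with $j=i\pm2$. Its linear independence from the first is where $n\ge25$ is actually used, not merely to ensure $n\ge5$. Orthogonalize in $\langle\cdot,\cdot\rangle_\delta$ and normalize by \eqref{eq:Wnorm}. The fourth-moment identity on $\Sph^{n-1}$, together with the fact that the radial operator $A$ does not see the generating tensor, makes the two radial families $D^2\mathcal Q$-orthogonal with identical matrices $E$. The combined scale-derivative matrix is therefore $S\oplus S$, of rank $2\operatorname{rank}S\ge4$.

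The remaining ingredients carry over as follows:
\begin{itemize}
\item The shared symmetries keep $D^2_\xi F_h(0,1)$ a multiple of $\Id_n$.
\item The combined $T$ vanishes on the span of the two degree-two coefficients, by Lemma \ref{lem:gauge} and $\Delta H_0(W_i)=0$, yet is nonzero by \eqref{eq:Tentries}.
\item $S\oplus S$ restricts on that span to the negative definite matrix $4E_{00}\Id_2$.
\end{itemize}
Lemma \ref{lem:matrix-nonproportionality} with $D\oplus D$, followed by Lemma \ref{lem:nullcone} for the rank-four form, then finishes exactly as in your rank-$\ge3$ argument. One also avoids the proper subspaces where the degree-two or degree-$d$ part of the combined tensor vanishes.
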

\begin{proof}
By Lemma \ref{lem:translation-Weyl}, $T_{00}=0$ and $T_{01}\ne0$, whereas $S_{00}=4E_{00}<0$.
Lemma \ref{lem:matrix-nonproportionality} therefore shows that neither $T$ nor $DS+SD-S$ is proportional to $S$.
For rank at least three, Lemma \ref{lem:nullcone}, applied to $S,DS+SD-S,T$, ensures the existence of $a$ satisfying
\[
a^{\mathsf T}Sa=0,\qquad a^{\mathsf T}(DS+SD-S)a\ne0,\qquad a^{\mathsf T}Ta\ne0,\qquad
\tau a_M\ne0.
\]
Equations \eqref{eq:scalehessians} and \eqref{eq:translationisotropy} show that $D^2_{(\xi,\eps)}F_{h_a}(0,1)$ is
\[
\begin{pmatrix}(a^{\mathsf T}Ta)\Id_n&0\\0&a^{\mathsf T}(DS+SD-S)a\end{pmatrix},
\]
which is invertible.

\medskip
If $S$ has rank two, we use two linearly independent Weyl tensors with the same symmetries under coordinate sign changes and cyclic permutation.
Take \eqref{eq:symmetric-Weyl-choice} and the tensor obtained by replacing $j=i\pm1$ by $j=i\pm2$.
Their entries on pairs of cyclic distances one, two, and three show independence for $n\ge25$.
Orthogonalizing in $\langle\cdot,\cdot\rangle_\delta$ and then normalizing by \eqref{eq:Wnorm} gives tensors $W_1,W_2$ with the same symmetries.

The fourth-moment identity on $\Sph^{n-1}$ and the Weyl symmetries give
\[
\int_{\Sph^{n-1}}
\langle H_0(W_1),H_0(W_2)\rangle_\delta\,d\sigma
=c_n\langle W_1,W_2\rangle_\delta=0,
\qquad c_n>0.
\]
The radial operator in Lemma \ref{lem:weyl-radial} acts only on the radial factor and is independent of the generating Weyl tensor.
Consequently all pairings under $D^2\mathcal Q|_{g_c}$ between the two radial families vanish, first after multiplication by radial cutoffs and then by $H^N$ continuity.
Each normalized family has the same matrix $E$.
The sum of the two radial families therefore has scale-derivative matrix $S\oplus S$, of rank four.
Every such sum $h$ still has the shared coordinate symmetries, so $D_\xi^2F_h(0,1)$ remains a scalar multiple of $\Id_n$, although its coefficient may contain mixed products of coefficients from both families.
Its restriction to the two degree-two coefficients vanishes by Lemma \ref{lem:gauge} and $\Delta H_0(W_i)=0$; within either family, \eqref{eq:Tentries} gives a nonzero degree-two--degree-four entry.
The restriction of $S\oplus S$ to the degree-two coefficients is $4E_{00}\Id_2$, which is negative definite.
Applying Lemma \ref{lem:matrix-nonproportionality} with $D\oplus D$ therefore excludes both proportionalities as before.
Lemma \ref{lem:nullcone} now applies to this rank-four form, avoiding also the proper linear subspaces where the homogeneous term of degree two or degree $d$ in the combined tensor vanishes.
Thus $D^2_{(\xi,\eps)}F_h(0,1)$ is invertible.
\end{proof}

\section{Exact formulas and signs for the energy and scale-derivative matrices}\label{sec:boundedambient}
In this section, unless otherwise stated, we assume $N\ge1$ and $n>2N+16$, the integrability range for the polynomial symmetric two-tensors of degree at most eight used in the matrices.

\subsection{Polynomial ambient extensions and scalar extraction}
With $a$ as in \eqref{eq:roundambient}, $t=|x|^2$, and $\nu\coloneqq N-n/2$, we set
\begin{equation}\label{eq:ambient-R}
R\coloneqq\frac{1+\frac{\rho}{2}}{1-\frac{\rho t}{2}}.
\end{equation}
Only the finite Taylor expansion of this expression at $\rho=0$ is used.

\begin{proposition}\label{prop:Qj}
For $0\le j\le3$, we define
\begin{equation}\label{eq:Qclosed}
\Phi_j(R,t)\coloneqq R^2\sum_{h=0}^j
\frac{(-1)^h(-j)_h(j+2)_h}{(1-\frac{n}{2})_h\,h!}
[1+(t-1)R]^{j-h}[(R-1)(1+tR)]^h.
\end{equation}
Then $\widehat\Phi_j(\rho,t)\coloneqq \Phi_j(R,t)$ solves
\begin{equation}\label{eq:QODE}
2\rho a^2\partial_\rho^2\widehat\Phi_j+
\bigl(n\rho a-(n-2)a^2\bigr)\partial_\rho\widehat\Phi_j+A\widehat\Phi_j=0,\qquad
\widehat\Phi_j(0,t)=t^j,
\end{equation}
and $\deg_R\Phi_j\le2j+2$.
\end{proposition}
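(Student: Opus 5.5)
The plan is to convert the PDE \eqref{eq:QODE} in $(\rho,t)$ into an identity for polynomials in $R$ and verify it on the explicit finite sum \eqref{eq:Qclosed}. The degree bound and the initial condition are immediate, so I dispose of them first. Each summand of \eqref{eq:Qclosed} is $R^2$ times a product of $j-h$ linear and $h$ quadratic factors in $R$, of degree $2+(j-h)+2h\le 2j+2$, so $\deg_R\Phi_j\le2j+2$. At $\rho=0$ we have $R=1$, hence $R-1=0$ and only the $h=0$ term survives, giving $1\cdot[1+(t-1)]^j=t^j$.

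For the differential equation, I use the chain rule in $\rho$ through $R(\rho,t)$. From \eqref{eq:ambient-R},
\[
\partial_\rho R=\frac{\tfrac12(1+t)}{(1-\rho t/2)^2}=\frac{(1+t)R^2}{2a^2},
\]
and $\rho$ is a rational function of $R$ and $t$, namely $\rho=2(R-1)/(1+tR)$. Also $a=R(1-\rho t/2)$ and $1-\rho t/2=(1+t)/(1+tR)$, so $a=(1+t)R/(1+tR)$. Substituting these expressions, $\partial_\rho$ and $\partial_\rho^2$ become first- and second-order operators in $\partial_R$ with coefficients rational in $(R,t)$. The operator $A$ of \eqref{eq:Amasterdiff} acts on $t$ at fixed $\rho$. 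Since $R$ depends on $t$, I rewrite $\partial_t|_\rho=\partial_t|_R+(\partial_tR)|_\rho\,\partial_R$ with $(\partial_tR)|_\rho=\rho R^2/(2a)$, which I again express through $R$ and $t$. After clearing denominators, \eqref{eq:QODE} becomes a second-order linear PDE in $(R,t)$ with polynomial coefficients, and it remains to check that $\Phi_j$ solves it.

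To organize that check, I would introduce the variables $X=1+(t-1)R$ and $Y=(R-1)(1+tR)$ and write $\Phi_j=R^2\sum_h c_hX^{j-h}Y^h$. The expected structure is that the transformed operator maps $R^2X^{j-h}Y^h$ into a combination of $R^2X^{j-h+1}Y^{h-1}$ and $R^2X^{j-h}Y^{h}$-type terms times a common factor. The coefficient ratio
\[
\frac{c_{h+1}}{c_h}=-\frac{(h-j)(h+j+2)}{(h+1)(h+1-n/2)}
\]
should then be exactly the ratio that makes consecutive contributions cancel, mirroring the ratio computation in the proof of Lemma~\ref{lem:scalar-ambient-obstruction}. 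The factor $(j+2)_h$ matches the eigen-structure $(j+2)(n-2j-2)$ in \eqref{eq:Amaster}, and $(1-n/2)_h$ matches the indicial root of \eqref{eq:TTambient} at $\rho=0$.

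The main obstacle is this bookkeeping. If a clean two-term recursion does not emerge, I would fall back on the fact that only $0\le j\le3$ is claimed. In that case, for each $j$ I expand the cleared-denominator identity as a polynomial in $(R,t)$ of bounded degree, with coefficients rational in $n$, and verify that it vanishes identically by exact arithmetic. This is a finite check of the kind recorded in Appendix~\ref{app:certificates}. As a consistency check, the $\rho^1$ coefficient at $\rho=0$ must equal $-At^j/(n-2)$, which matches $\psi_b'(0)=-b/(n-2)$ in Proposition~\ref{prop:TTambient}.
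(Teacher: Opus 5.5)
Your proposal is correct and is essentially the paper's proof. The paper uses the same chain rules: your $\partial_\rho R=(1+t)R^2/(2a^2)$ and $(\partial_tR)|_\rho=\rho R^2/(2a)$ are its $(1+tR)^2/(2(1+t))$ and $R(R-1)/(1+t)$. It then divides by $1+t$ to reach the polynomial identity \eqref{eq:QpolynomialPDE} and cancels consecutive terms with the same coefficient ratio. Your hoped-for two-term structure does hold: with your $X,Y$, the left-hand operator of \eqref{eq:QpolynomialPDE} sends $R^2X^pY^q$ to $(1+t)R^4\bigl[-p(p+2q+2)X^{p-1}Y^q+q(q-\tfrac n2)X^pY^{q-1}\bigr]$, so the cancellation works for every $j$ and the fallback exact-arithmetic check is not needed.

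One slip in your side remark. Setting $\rho=0$ in \eqref{eq:QODE} gives $\partial_\rho\widehat\Phi_j(0,t)=+At^j/(n-2)$; for example, $j=0$ gives $1+t=A1/(n-2)$. This is what corresponds to $\psi_b'(0)=-b/(n-2)$, because $b$ corresponds to $-A$, not to $A$.
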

\begin{proof}
The initial value in \eqref{eq:QODE} follows from $R=1$ at $\rho=0$.
The degree bound follows term by term.
The differential identity can be checked as a finite polynomial identity without computing further Taylor coefficients of the ambient metric.
The chain rules needed for the check are
\begin{equation}\label{eq:Rchainrules}
\partial_\rho=\frac{(1+tR)^2}{2(1+t)}\partial_R,
\quad
\partial_t\big|_\rho=\partial_t\big|_R+
\frac{R(R-1)}{1+t}\partial_R,
\quad
\rho=\frac{2(R-1)}{1+tR}.
\end{equation}
Substituting \eqref{eq:Rchainrules} in \eqref{eq:QODE} and dividing by $1+t$ yields the polynomial identity
\begin{equation}\label{eq:QpolynomialPDE}
\begin{split}
0={}&R^2(R-1)\partial_R^2\Phi_j-2Rt(R-1)\partial_R\partial_t\Phi_j
-\frac12R(2R+n-4)\partial_R\Phi_j\\
&-t(1+t)\partial_t^2\Phi_j
+\frac12\bigl((n-8)t-n-4\bigr)\partial_t\Phi_j+(n-2)\Phi_j.
\end{split}
\end{equation}
We insert \eqref{eq:Qclosed} and clear the denominators depending on $n$.
The resulting polynomial coefficients cancel using
\[
\frac{(-j)_{h+1}(j+2)_{h+1}}{(1-\frac{n}{2})_{h+1}(h+1)!}
=\frac{(h-j)(h+j+2)}{(h+1-\frac{n}{2})(h+1)}
\frac{(-j)_h(j+2)_h}{(1-\frac{n}{2})_h h!}.
\]
Thus the left-hand side of \eqref{eq:QpolynomialPDE} is the zero polynomial for each $j=0,1,2,3$.
\end{proof}

For $j=0$, $\Phi_0=R^2$ and $\partial_\rho\widehat\Phi_0(0,t)=1+t=A1/(n-2)$.
For $0\le j\le3$, $\Phi_j$ has degree at most eight in $R$, independently of $N$.

With $W$ normalized by \eqref{eq:Wnorm}, we set $\Theta_{ij}\coloneqq\Theta_{\bar H_i,\bar H_j}$ for $0\le i,j\le3$ and define $\kappa_{ij,\ell}(t)$ by
\begin{equation}\label{eq:kappadefinition}
\frac{1+t}{2}\int_1^R u^2\partial_u\Phi_i(u,t)\,\partial_u\Phi_j(u,t)\,du
\eqqcolon\sum_\ell\kappa_{ij,\ell}(t)(R^\ell-1).
\end{equation}
We set
\begin{equation}\label{eq:xvariable}
\upsilon\coloneqq\frac{2\rho}{(1+\frac{\rho}{2})^2},\qquad
C_{N,n}\coloneqq\frac N2(\frac{n}{2}-N+1)_{N-1}(\frac{n}{2}+1)_{N-1}>0.
\end{equation}
We define the truncated series
\begin{equation}\label{eq:GN}
\mathcal G_N(\upsilon)\coloneqq{}_3F_2\left(
\begin{matrix}\frac{n}{2}-N,\ 1-N-\frac{n}{2},\ \frac12-N\\
1-2N,\ 1-N\end{matrix};\upsilon\right)
\pmod{\upsilon^N}.
\end{equation}
Only coefficients of order less than $N$ are used.
We set $\mathscr H_{ij}(\upsilon)\coloneqq a^2\Theta'_{ij}$, expressing $\rho$ as a formal power series in $\upsilon$ by inverting \eqref{eq:xvariable} near zero.

\begin{lemma}\label{lem:weyl-scalar-extraction}
The determinant variation satisfies
\begin{equation}\label{eq:ThetaWeyl}
(a^2\Theta'_{ij})'=-a^2\langle \partial_\rho\widehat\Phi_i,\partial_\rho\widehat\Phi_j\rangle_\mu,\qquad
\Theta'_{ij}(0)=-\frac{\langle t^i,At^j\rangle_\mu}{2(n-1)},
\end{equation}
and hence has the finite expression
\begin{equation}\label{eq:Hfinite}
a^2\Theta'_{ij}
=\Theta'_{ij}(0)-
\sum_\ell\langle\kappa_{ij,\ell}(t)(R^\ell-1)\rangle_\mu.
\end{equation}
The normalized matrix entries are
\begin{equation}\label{eq:normalizedcoefficient}
e_{ij}\coloneqq \frac{E_{ij}}{C_{N,n}}
=\frac{[\upsilon^{N-1}]\mathcal G_N(\upsilon)
(\upsilon\partial_\upsilon+1-\frac{n}{2})\mathscr H_{ij}(\upsilon)}
{(1-\frac{n}{2})[\upsilon^{N-1}]\mathcal G_N(\upsilon)}.
\end{equation}
For $N=1$, this reduces to $E_{ij}=\Theta'_{ij}(0)/2$.
\end{lemma}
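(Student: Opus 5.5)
The lemma has three parts, and I will take them in order.

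\textbf{Step 1: the determinant equation \eqref{eq:ThetaWeyl}.} I start from the bilinear form of \eqref{eq:ThetaODE} recorded at the end of the proof of Theorem \ref{thm:ambientHessian}:
\[
(a^2\Theta_{h,k}')'=-a^2\langle h_\rho',k_\rho'\rangle,\qquad
\Theta'_{h,k}(0)=-\frac{\langle h,\cA k\rangle}{2(n-1)}.
\]
For $h=\bar H_i$, Proposition \ref{prop:TTambient} says the normalized first ambient variation solves \eqref{eq:TTambient} with $b$ replaced by $-\cA$. Since $\cA$ acts on $f(t)\bar H_0$ through the radial operator $A$, Proposition \ref{prop:Qj} identifies this variation with $\widehat\Phi_i(\rho,t)\bar H_0$. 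Compare the sign: \eqref{eq:TTambient} has $-b\psi$ with $b=-A$, which gives the $+A\widehat\Phi$ term in \eqref{eq:QODE}.

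The $L^2(g_c)$ pairings then reduce, by the normalization \eqref{eq:Wnorm} and Lemma \ref{lem:weyl-radial}, to $\langle\cdot,\cdot\rangle_\mu$. This gives \eqref{eq:ThetaWeyl}.

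These are non-smooth directions, so I first apply everything to the cutoffs $\Omega^2\chi_R H_i$. I then pass to the limit using Lemma \ref{lem:weyl-TT}, Lemma \ref{lem:cutoff-convergence}, and convergence of the moments for $n>2N+16$.

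\textbf{Step 2: the finite expression \eqref{eq:Hfinite}.} I integrate \eqref{eq:ThetaWeyl} from $0$ to $\rho$ and change variables to $R$ via \eqref{eq:Rchainrules}. The key check is the identity
\[
a^2\,\partial_\rho\widehat\Phi_i\,\partial_\rho\widehat\Phi_j\,d\rho=\frac{1+t}{2}\,u^2\,\partial_u\Phi_i\,\partial_u\Phi_j\,du,
\]
to be confirmed from $\partial_\rho=\frac{(1+tR)^2}{2(1+t)}\partial_R$ and $a=R(1+tR)^{-1}(1+t)\cdot\frac12$. The latter follows from $\rho=2(R-1)/(1+tR)$, which gives $a=R(1+t)/(1+tR)$ up to a constant to be verified. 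Since $R=1$ at $\rho=0$, the definition \eqref{eq:kappadefinition} then yields \eqref{eq:Hfinite}, with the $\mu$-pairing taken termwise.

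\textbf{Step 3: the coefficient formula \eqref{eq:normalizedcoefficient}.} I start from \eqref{eq:WronskianHessian}, which expresses $E_{ij}$ as $[\rho^{N-1}]$ of $a^n\Theta'_{ij}(\frac{\nu}{2}f_N^2-\rho f_Nf_N')$. I write $a^n\Theta'=a^{n-2}\mathscr H$ and change variable to $\upsilon=2\rho/a^2$.

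The residue-type extraction $[\rho^{N-1}]G=\operatorname{Res}\rho^{-N}G\,d\rho$ transforms under $\rho\mapsto\upsilon$ with Jacobian
\[
d\upsilon/d\rho=2(1-\rho/2)/a^3.
\]
I plan to show that $a^{n-2}(\frac{\nu}{2}f_N^2-\rho f_Nf_N')$, multiplied by the Jacobian and the factor $(\rho/\upsilon)^{N}$, equals a multiple of an operator of the form $\mathcal G_N(\upsilon)(\upsilon\partial_\upsilon+1-\frac n2)$ acting on $\mathscr H$, after one integration by parts in the residue. The integration by parts is what moves $\rho f_N'$ onto $\mathscr H$ and produces the derivative $\upsilon\partial_\upsilon$.

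This needs two inputs. First, $f_N$, a truncated ${}_2F_1$ in $-\rho/2$, becomes after the quadratic substitution a truncated hypergeometric series in $\upsilon$, via a quadratic transformation of ${}_2F_1$. Second, its square combines by Clausen's formula into the ${}_3F_2$ in \eqref{eq:GN}.

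For the constant: apply the same extraction with $\mathscr H\equiv1$. This fixes the denominator $(1-\frac n2)[\upsilon^{N-1}]\mathcal G_N$, and $C_{N,n}$ absorbs the prefactor $2^{N-1}((N-1)!)^2/\nu$ together with the leading coefficients. For $N=1$ the formula reduces directly to $E_{ij}=\Theta'_{ij}(0)/2$ from $\mathscr P_1$.

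\textbf{Main obstacle.} I expect Step 3 to be the hard part. The difficulty is matching the truncated Clausen product with the Wronskian combination, because the quadratic and Clausen transformations are valid for full series but here only truncations modulo $\upsilon^N$ are available. I would first verify the identity as formal power series modulo $\upsilon^N$: check that both sides satisfy the same third-order recurrence, derived from $L_\nu f_N=O(\rho^{N-1})$ and the symmetric-square ODE, and that they agree in their initial coefficients.
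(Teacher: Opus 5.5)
Your plan follows the paper's own proof. It uses the bilinear form of \eqref{eq:ThetaODE} with $h_\rho=\widehat\Phi_i\bar H_0$ and the chain-rule identity \eqref{eq:detpolyidentity}. It then substitutes $\upsilon=2\rho/a^2$, integrates by parts once in the formal residue, and applies the quadratic transformation and Clausen's identity. Several details of the sketch need correcting.

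\textbf{The relation for $a$.} The relation is exactly $a=R(1+t)/(1+tR)$, with no factor $\tfrac12$. This is what produces the weight $\tfrac{1+t}{2}R^2$.

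\textbf{The residue conversion in Step 3.} Your conversion factor is inverted. The correct rule is $[\rho^{N-1}]X=[\upsilon^{N-1}]\bigl(X(\upsilon/\rho)^N\,d\rho/d\upsilon\bigr)$. Concretely, integrate the $\rho f_Nf_N'$ term by parts. The bracket in \eqref{eq:WronskianHessian} then has the same coefficient of $\rho^{N-1}$ as $\tfrac12f_N^2a^{n-3}(1-\tfrac\rho2)(\upsilon\partial_\upsilon+1-\tfrac n2)\mathscr H_{ij}$. Using $a^nf_N^2\equiv a^{2N}\mathcal G_N(\upsilon)$ and $\rho^{-N}a^{2N-3}(1-\tfrac\rho2)\,d\rho=2^{N-1}\upsilon^{-N}\,d\upsilon$, this gives $E_{ij}=\frac{4^{N-1}((N-1)!)^2}{2\nu}[\upsilon^{N-1}]\mathcal G_N(\upsilon\partial_\upsilon+1-\tfrac n2)\mathscr H_{ij}$.

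\textbf{The constant.} Setting $\mathscr H\equiv1$ does not fix the constant, because $C_{N,n}$ is defined independently in \eqref{eq:xvariable}. You still have to check that $\frac{4^{N-1}((N-1)!)^2(1-\frac n2)}{2\nu}[\upsilon^{N-1}]\mathcal G_N=C_{N,n}$. This follows from $[\upsilon^{N-1}]\mathcal G_N=N(\tfrac n2-N)_{N-1}(\tfrac n2+1)_{N-1}/\bigl(4^{N-1}((N-1)!)^2\bigr)$. That formula in turn comes from $(\tfrac12-N)_{N-1}/(1-2N)_{N-1}=N/4^{N-1}$.

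\textbf{Truncation.} The truncation issue does not need a third-order recurrence. Treat $N$ in the hypergeometric parameters as a free parameter. Use the quadratic transformation and Clausen's identity as formal power-series identities with coefficients rational in that parameter. Then specialize coefficientwise below order $N$, where the denominators $(1-N)_r$ and $(1-2N)_r$ do not vanish. Note also that the transformed upper parameters $\frac{n/2-N}{2}$ and $\frac{1-N-n/2}{2}$ sum to $\tfrac12-N$. So the lower parameter $1-N$ is exactly the one Clausen's identity requires.
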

\begin{proof}
The bilinear form of \eqref{eq:ThetaODE} gives \eqref{eq:ThetaWeyl}.
The chain rules \eqref{eq:Rchainrules} give
\begin{equation}\label{eq:detpolyidentity}
a^2(\partial_\rho\widehat\Phi_i)(\partial_\rho\widehat\Phi_j)\,d\rho
=\frac{1+t}{2}R^2(\partial_R\Phi_i)(\partial_R\Phi_j)\,dR.
\end{equation}
Integration and \eqref{eq:kappadefinition} prove \eqref{eq:Hfinite}.

The quadratic transformation \cite[(15.8.16)]{DLMF}, applied to \eqref{eq:scalarhypergeom}, gives
\[
f_N=a^{\nu}\,{}_2F_1\left(\frac{\frac{n}{2}-N}{2},\frac{1-N-\frac{n}{2}}{2};1-N;
\upsilon\right)\pmod{\rho^N}.
\]
Clausen's identity \cite[(16.12.2)]{DLMF} therefore yields
\begin{equation}\label{eq:Clausenhere}
a^n f_N^2=a^{2N}\mathcal G_N(\upsilon)\pmod{\rho^N}.
\end{equation}
These identities at singular lower parameters are read coefficientwise below order $N$, where all denominators are nonzero.

The change of variables
\[
\rho^{-N}a^{2N-3}(1-\frac{\rho}{2})\,d\rho
=2^{N-1}\upsilon^{-N}\,d\upsilon
\]
and integration by parts in the formal residue transform \eqref{eq:WronskianHessian} into
\[
E_{ij}=\frac{4^{N-1}((N-1)!)^2}{2\nu}
[\upsilon^{N-1}]\mathcal G_N(\upsilon)
(\upsilon\partial_\upsilon+1-\frac{n}{2})\mathscr H_{ij}(\upsilon).
\]
Cancellation of the rising factorials in the highest retained coefficient of \eqref{eq:GN} yields
\[
\frac{4^{N-1}((N-1)!)^2(1-\frac{n}{2})}{2\nu}
[\upsilon^{N-1}]\mathcal G_N(\upsilon)
=\frac N2(\frac{n}{2}-N+1)_{N-1}(\frac{n}{2}+1)_{N-1}=C_{N,n}.
\]
This proves \eqref{eq:normalizedcoefficient}, including its value for $N=1$.
\end{proof}
Appendix \ref{app:cauchy} carries out the finite coefficient extraction from \eqref{eq:Hfinite}.

\subsection{Finite summation and matrix signs}\label{sec:gaussian}\label{sec:signtheorem}

Equations \eqref{eq:Hfinite}, \eqref{eq:normalizedcoefficient}, and \eqref{eq:moments} lead to the following representation.

\begin{proposition}\label{prop:summand}
For $0\le i\le j\le3$ and $m\coloneqq i+j$, one has
\begin{equation}\label{eq:finite-summand}
e_{ij}=c_{ij}+
\sum_{k=0}^{N-1}
\frac{(1-N)_k(1+N)_k}{(2-\frac{n}{2})_k(1+\frac{n}{2})_k}
r_{ij}(k,n),
\end{equation}
where $c_{ij}$ and $r_{ij}$ are rational functions independent of $N$, and
\begin{equation}\label{eq:summand-degree}
r_{ij}(k,n)(k+2-\frac{n}{2})_{m+2}
\end{equation}
is a polynomial in $k$ of degree at most $m+1$.
\end{proposition}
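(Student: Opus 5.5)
The plan is to compute the coefficient extraction in \eqref{eq:normalizedcoefficient} explicitly, using the finite expression \eqref{eq:Hfinite} for $\mathscr H_{ij}$. The first step is to write each $R^\ell-1$ as a power series in $\upsilon$ with coefficients depending on $t$. Since $R=a/(1-\rho t/2)$, I would expand $R^\ell$ binomially in powers of $\rho t/2$ and then pass to $\upsilon$. A cleaner route is to note that $(1+t)$-weighted expressions combine after integration against $d\mu$. The moments in \eqref{eq:moments} turn each $\langle t^p(R^\ell-1)\rangle_\mu$ into a hypergeometric series in $\rho$ (or $\upsilon$) whose coefficients are ratios of Pochhammer symbols in $n/2$. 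Specifically, $\langle t^p(1-\rho t/2)^{-\ell}\rangle_\mu$ is a Gauss series with parameters $(\ell,\tfrac n2+2+p;\,\cdot)$ over $(\tfrac n2-2-p)$-type denominators, truncated at order $N$. After applying the quadratic transformation used in Lemma \ref{lem:weyl-scalar-extraction}, it becomes a series in $\upsilon$ with parameters shifted by $\tfrac n2$.

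The second step multiplies by the truncated ${}_3F_2$ series $\mathcal G_N(\upsilon)$, applies $(\upsilon\partial_\upsilon+1-\tfrac n2)$, and extracts $[\upsilon^{N-1}]$, dividing by $[\upsilon^{N-1}]\mathcal G_N$. I would reverse the order of summation, so that the index $k$ labels the coefficient of $\mathscr H_{ij}$ at order $N-1-k$ or at order $k$, whichever is cleaner. The ratio of the ${}_3F_2$ coefficient at order $N-1-k$ to that at order $N-1$ then produces the factor $(1-N)_k(1+N)_k$ in the numerator and $(2-\tfrac n2)_k(1+\tfrac n2)_k$ in the denominator. The reversal $r\mapsto N-1-r$ of Pochhammer symbols with the lower parameters $1-2N,1-N$ yields exactly such $N$-dependent products.

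The remaining $k$-dependence comes from the moment ratios in \eqref{eq:moments} and from the polynomials $\kappa_{ij,\ell}$. These are finitely many because $\deg_R\Phi_j\le2j+2$ by Proposition \ref{prop:Qj}, and their $t$-degree grows with $m=i+j$. The $k$-dependence is therefore rational, with denominators drawn from the shifted factorials $(k+2-\tfrac n2)_{m+2}$. The $N$-independent term $c_{ij}$ comes from $\Theta'_{ij}(0)$ and from the $\ell$-sum constant terms (the ``$-1$'' in $R^\ell-1$) evaluated at order zero.

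The main obstacle will be showing that all the $\ell$- and $t$-power contributions, for the various $\ell\le 2m+4$, combine so that the summand has the common Pochhammer prefactor stated in \eqref{eq:finite-summand} with a single rational $r_{ij}(k,n)$. This combination must have denominator exactly $(k+2-\tfrac n2)_{m+2}$ and numerator of degree at most $m+1$ in $k$. I would first check that each moment $\mathfrak m_{p+k}$, normalized by $\mathfrak m_k$-type factors, contributes a denominator factor $(n-6-2k-2s)$ for $s\le p\le m+\ldots$; these are linear factors of $(k+2-\tfrac n2)_{m+2}$ after reindexing. The degree bound I would obtain by counting: each $t$ power raises the numerator degree by one along with one denominator factor, plus one extra factor from $\upsilon\partial_\upsilon$. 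If direct bookkeeping is too messy, I would instead verify the degree claim on the explicit $\kappa_{ij,\ell}$ for the ten pairs $0\le i\le j\le3$ by exact symbolic computation, as in Appendix \ref{app:certificates}.
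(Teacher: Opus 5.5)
\textbf{There is a genuine gap.} Your outer architecture matches the paper's. You write \eqref{eq:normalizedcoefficient} as $\sum_{k=0}^{N-1}\frac{g_{N-1-k}}{g_{N-1}}\frac{k+1-n/2}{1-n/2}[\upsilon^k]\mathscr H_{ij}$ with $g_r=[\upsilon^r]\mathcal G_N$, and you put $\Theta'_{ij}(0)$ and the constants coming from the $-1$ in $R^\ell-1$ into $c_{ij}$. The gap is the step that carries the whole content of the proposition.

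By \eqref{eq:coefficientratio}, $g_{N-1-k}/g_{N-1}$ equals the stated weight times an extra factor $(1)_k/(\tfrac32)_k$, which your accounting omits. So \eqref{eq:finite-summand} with an $N$-independent rational $r_{ij}$ holds exactly when $[\upsilon^k]\mathscr H_{ij}$ equals $\frac{(1/2)_k}{k!}$ times a rational function of $k$ for every $k\ge1$. Then $\frac{(1/2)_k}{k!}\cdot\frac{k!}{(3/2)_k}=\frac1{2k+1}$. Your claim that ``the remaining $k$-dependence is rational'' would instead leave $k!/(\tfrac32)_k$ inside $r_{ij}$.

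More seriously, your mechanism does not yield a closed form in $k$ at all. Expanding $R^\ell=a^\ell(1-\rho t/2)^{-\ell}$ gives
\[
\langle t^p(1-\rho t/2)^{-\ell}\rangle_\mu=\mathfrak m_p\,{}_2F_1\bigl(\ell,\tfrac n2+2+p;\,p+3-\tfrac n2;\,-\tfrac\rho2\bigr).
\]
For generic $n$, these parameters satisfy the condition required for a quadratic transformation only when $\ell=2p+4$. Already the single $(0,0)$ term $\kappa_{00,5}R^5=\frac25(1+t)R^5$ violates this. Otherwise, re-expanding from $\rho$ to $\upsilon$ produces convolution sums whose length grows with $k$, and nothing in your outline collapses them to one hypergeometric term times a rational function. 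Your counting of denominators from moment ratios is only a heuristic. The computer fallback cannot close the gap either: without a formula in symbolic $k$, you can check only finitely many $k$, whereas the proposition must hold for all $N$.

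\textbf{The missing idea is the Cauchy-transform reduction of Appendix \ref{app:cauchy}.} Set $\zeta=(t-1)/(t+1)$ and $c=\sqrt{1-\upsilon}$.
\begin{enumerate}
\item Then $R=(1-\zeta)/(c-\zeta)$ is a M\"obius function of $\zeta$ with a simple pole, and $d\mu\propto(1-\zeta^2)^{n/2-3}(1+\zeta)^4\,d\zeta$.
\item Because $\deg_t\kappa_{ij,\ell}\le\ell$, each $\langle\kappa_{ij,\ell}R^\ell\rangle_\mu$ is a multiple of $\partial_c^{\ell-1}\bigl(\Pi_\ell(c)\mathfrak F(c)-\text{polynomial}\bigr)$, where $\Pi_\ell$ is the polynomial \eqref{eq:PiCauchy}.
\item The first-order ODE \eqref{eq:cauchyODE} eliminates every derivative of the single function $\mathfrak F$. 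This gives $\mathscr H_{ij}=A_{ij}\mathfrak F+\sqrt{1-\upsilon}\,B_{ij}+c_{ij}$ with finite Laurent polynomials $A_{ij},B_{ij}$.
\item Both $[\upsilon^k]\upsilon^\ell\mathfrak F=(\tfrac n2-\tfrac52)\frac{(1/2)_{k-\ell}}{(k-\ell)!(n/2-3-k+\ell)}$ and $[\upsilon^k]\upsilon^\ell\sqrt{1-\upsilon}=\frac{(1/2)_{k-\ell}}{(k-\ell)!(1-2k+2\ell)}$ are $\frac{(1/2)_k}{k!}$ times rational functions of $k$, via the shift factors $\beta_\ell(k)$.
\item This is exactly \eqref{eq:Hcoefficients}, and it gives $r_{ij}=\frac{k+1-n/2}{(1-n/2)(2k+1)}R_{ij}$ as a genuine rational function of symbolic $k$.
\end{enumerate}
Only at that point does the degree bound \eqref{eq:summand-degree} become a finite exact check. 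The paper records this check in Table \ref{tab:finite-extraction-checks}. The same representation supplies the Laurent lower bound $-(i+j)$. That bound shows the non-analytic homogeneous part of $\mathfrak F$ enters only at order $\upsilon^{n/2-3-i-j}>N-1$, so it does not affect the extracted coefficients.
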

The proof, including the construction of $c_{ij}$ and $r_{ij}$ by finite polynomial division and beta integration, is given in Appendix \ref{app:cauchy}.
The specific finite identities used in that extraction are verified in Lemma \ref{lem:finite-extraction-checks}; Table \ref{tab:finite-extraction-checks} records their pole and degree data.

For example,
\[
c_{00}=-\frac15,
\]
and
\begin{equation}\label{eq:r00}
r_{00}(k,n)=
-\frac{4(2k+3)(n-4)(n-1)}
{15(2k-n+4)(2k-n+6)}.
\end{equation}
Also
\[
c_{01}=-\frac{(n+4)(7n-12)}{35(n-6)(n-2)},
\]
\begin{equation}\label{eq:r01}
r_{01}(k,n)=
\frac{4(2k+3)(n-4)(n-1)(n+4)(4k+7n-11)}
{105(n-2)(2k-n+4)(2k-n+6)(2k-n+8)}.
\end{equation}
The other eight entries are determined by the same finite formulas in Appendix \ref{app:cauchy}.

For $m\coloneqq i+j$ as in Proposition \ref{prop:summand}, we use $\mathcal P_{m+1}\coloneqq\{p\in\R[k]:\deg p\le m+1\}$ and a real parameter $X$.
For $p\in\mathcal P_{m+1}$, we define
\begin{equation}\label{eq:gaussian-operator}
\cL_{m,X}p(k)
\coloneqq\bigl((k+1)^2-X\bigr)p(k+1)
-(k+\frac{n}{2})(k-\frac{n}{2}+m+3)p(k).
\end{equation}
\begin{lemma}\label{lem:detL}
The operator $\cL_{m,X}$ preserves $\mathcal P_{m+1}$, and
\begin{equation}\label{eq:gaussian-determinant}
\det\cL_{m,X}
=\prod_{\ell=1}^{m+2}\bigl((\frac{n}{2}-\ell)^2-X\bigr).
\end{equation}
\end{lemma}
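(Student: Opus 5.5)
We check directly that $\cL_{m,X}$ preserves $\mathcal P_{m+1}$, then identify $\det\cL_{m,X}$ as a polynomial in $X$ by matching its leading coefficient and its roots; the roots come from explicit kernel polynomials.

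\textbf{Preservation of $\mathcal P_{m+1}$.} We apply $\cL_{m,X}$ to monomials $k^d$ with $0\le d\le m+1$. Expanding gives
\[
(k+\tfrac n2)(k-\tfrac n2+m+3)=k^2+(m+3)k-\tfrac n2(\tfrac n2-m-3).
\]
Hence
\[
\cL_{m,X}k^d=(k+1)^{d+2}-X(k+1)^d-k^{d+2}-(m+3)k^{d+1}+\tfrac n2(\tfrac n2-m-3)k^d.
\]
The $k^{d+2}$ terms cancel. The coefficient of $k^{d+1}$ is $(d+2)-(m+3)=d-m-1$, which vanishes at $d=m+1$. So $\cL_{m,X}$ maps $\mathcal P_{m+1}$ into itself.

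\textbf{Degree and leading coefficient in $X$.} Write $\cL_{m,X}=\cL_{m,0}-XT$ with $(Tp)(k)=p(k+1)$. In the monomial basis, $T$ is unipotent triangular, so $\det T=1$. Therefore $\det\cL_{m,X}$ is a polynomial in $X$ of degree $m+2=\dim\mathcal P_{m+1}$ with leading coefficient $(-1)^{m+2}$. The right-hand side of \eqref{eq:gaussian-determinant} has the same degree and the same leading coefficient.

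\textbf{Roots via explicit kernel elements.} Fix $1\le\ell\le m+2$ and put $s=\tfrac n2-\ell$, $X=s^2$. The equation $\cL_{m,X}p=0$ becomes the first-order recurrence
\[
\frac{p(k+1)}{p(k)}=\frac{(k+\tfrac n2)(k-\tfrac n2+m+3)}{(k+1+\tfrac n2-\ell)(k+1-\tfrac n2+\ell)}.
\]
Using $\Gamma$-function ratios, a solution is
\[
p_\ell(k)=\frac{\Gamma(k+\tfrac n2)}{\Gamma(k+1+\tfrac n2-\ell)}\cdot\frac{\Gamma(k-\tfrac n2+m+3)}{\Gamma(k+1-\tfrac n2+\ell)}
=(k+1+\tfrac n2-\ell)_{\ell-1}\,(k+1-\tfrac n2+\ell)_{m+2-\ell}.
\]
This is a nonzero polynomial of degree $(\ell-1)+(m+2-\ell)=m+1$, so $p_\ell\in\mathcal P_{m+1}$. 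We verify $\cL_{m,X}p_\ell=0$ directly from the shift property $(x+1)_r/(x)_r=(x+r)/x$ of each Pochhammer factor. Hence $\det\cL_{m,X}$ vanishes at $X=(\tfrac n2-\ell)^2$ for each $\ell=1,\dots,m+2$.

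\textbf{Conclusion and the main obstacle.} For $n$ outside the finite set $\{\ell+\ell':1\le\ell<\ell'\le m+2\}$, these $m+2$ roots are distinct, since $(\tfrac n2-\ell)^2=(\tfrac n2-\ell')^2$ forces $n=\ell+\ell'$. For such $n$, a polynomial of degree $m+2$ with these roots and leading coefficient $(-1)^{m+2}$ equals the product in \eqref{eq:gaussian-determinant}. Both sides are polynomials in $(n,X)$, so the identity extends to all $n$ by continuity. The only step needing care is the kernel element: checking that the two Pochhammer degrees are nonnegative throughout $1\le\ell\le m+2$ and add up exactly to $m+1$. This is what makes $p_\ell$ lie in $\mathcal P_{m+1}$ rather than in a larger space.
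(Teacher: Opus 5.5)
Your proof is correct and follows essentially the same route as the paper: cancellation of the top term with coefficient $d-m-1$ for preservation of $\mathcal P_{m+1}$, the same Pochhammer kernel polynomials $p_\ell$ at $X=(\frac n2-\ell)^2$, the leading coefficient $(-1)^{m+2}$ from the unit-determinant translation, and distinct roots for generic $n$ followed by polynomial continuation. Your explicit exceptional set $n=\ell+\ell'$ and the $\Gamma$-ratio derivation of $p_\ell$ spell out details the paper leaves implicit.
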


\begin{proof}
For $p=k^r$, the degree-$r+2$ terms cancel, and the coefficient of $k^{r+1}$ is $r-m-1$.
It vanishes when $r=m+1$; for $r\le m$ the degree is already at most $m+1$.
Thus $\mathcal P_{m+1}$ is preserved.
For example, at $m=0$ the matrix in the basis $(1,k)$ is
\[
\begin{pmatrix}
\frac{n^2}{4}-\frac{3n}{2}+1-X&1-X\\-1&\frac{n^2}{4}-\frac{3n}{2}+3-X
\end{pmatrix}.
\]
At $X=(n/2-\ell)^2$, we define
\[
p_\ell(k)\coloneqq
(k+\frac{n}{2}-\ell+1)_{\ell-1}
(k-\frac{n}{2}+\ell+1)_{m+2-\ell}.
\]
Then
\[
\frac{p_\ell(k+1)}{p_\ell(k)}
=\frac{(k+\frac{n}{2})(k-\frac{n}{2}+m+3)}
{(k+1)^2-(\frac{n}{2}-\ell)^2},
\]
so $p_\ell$ lies in the kernel.
For generic $n$ the $m+2$ displayed roots are distinct.
The determinant has degree $m+2$ in $X$, and its leading coefficient is $(-1)^{m+2}$ because the translation $p(k)\mapsto p(k+1)$ has determinant one in the monomial basis.
The identity follows for generic $n$, and hence for all $n\in\mathbb R$ by polynomial continuation.
\end{proof}

\begin{theorem}\label{thm:closedentry}
Let $0\le i\le j\le3$ and $m\coloneqq i+j$.
Under \eqref{eq:admissibility} for the profiles $H_i,H_j$, the system
\begin{equation}\label{eq:gaussian-system}
\cL_{m,N^2}p_{ij}(k)
=r_{ij}(k,n)
(k+2-\frac{n}{2})_{m+2}
\end{equation}
has a unique solution $p_{ij}\in\mathcal P_{m+1}$, and
\begin{equation}\label{eq:closed-entry}
e_{ij}
=c_{ij}
-\frac{n\,p_{ij}(0)}
{2(2-\frac{n}{2})_{m+1}}.
\end{equation}
Thus every normalized entry $e_{ij}$, $0\le i,j\le3$, is a rational function of $(n,N^2)$ obtained from a linear system of size independent of $N$.
\end{theorem}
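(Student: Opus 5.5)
The plan is a Gosper-type telescoping of the finite sum in \eqref{eq:finite-summand}. Write the hypergeometric weight as
\[
w_k\coloneqq\frac{(1-N)_k(1+N)_k}{(2-\frac{n}{2})_k(1+\frac{n}{2})_k},
\]
so that $e_{ij}-c_{ij}=\sum_{k=0}^{N-1}w_kr_{ij}(k,n)$. The ratio of consecutive weights is
\[
\frac{w_{k+1}}{w_k}=\frac{(k+1)^2-N^2}{(k+2-\frac n2)(k+1+\frac n2)}.
\]
This ratio depends on $N$ only through $X=N^2$, which explains why $\cL_{m,N^2}$ appears.

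I will look for an antidifference of the form $T_k\coloneqq w_k\,q(k)$, with the ansatz
\[
q(k)=\frac{(k+\frac n2)\,p(k)}{(k+2-\frac n2)_{m+2}}\quad\text{or a close variant},\qquad p\in\mathcal P_{m+1},
\]
chosen so that $T_{k+1}-T_k=w_kr_{ij}(k,n)$. Writing $T_{k+1}-T_k=w_k\bigl[\frac{w_{k+1}}{w_k}q(k+1)-q(k)\bigr]$ and clearing the common denominator $(k+2-\frac n2)_{m+2}$, I expect the shifted Pochhammer factors to combine. The bracket should then become exactly $\cL_{m,N^2}p(k)/(k+2-\frac n2)_{m+2}$: the factor $(k+1)^2-N^2$ multiplies $p(k+1)$, and $(k+\frac n2)(k-\frac n2+m+3)$ multiplies $p(k)$. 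This is the normalization I must get right, by matching the factor $(k-\frac n2+m+3)$ that appears when $(k+3-\frac n2)_{m+2}$ is compared with $(k+2-\frac n2)_{m+2}$, and by absorbing $(k+1+\frac n2)$ into $q$. The first concrete step is therefore to fix $q$ so that the telescoping equation is literally \eqref{eq:gaussian-system}. Solvability and uniqueness then follow from Lemma \ref{lem:detL}: the determinant $\prod_{\ell=1}^{m+2}((\frac n2-\ell)^2-N^2)$ is nonzero because $n/2-\ell\ne\pm N$ for $1\le\ell\le m+2\le8$ when $n>2N+16$. The degree bound of Proposition \ref{prop:summand} puts the right-hand side in $\mathcal P_{m+1}$.

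Summing the telescoping identity from $k=0$ to $N-1$ gives $T_N-T_0$. The factor $(1-N)_N=0$ kills $w_N$. The denominators at $k=N$ stay nonzero since $n>2N+16$, so $T_N=0$ and the sum equals $-T_0=-w_0q(0)$, with $w_0=1$. Evaluating $q(0)$ with the correct normalization should give $\frac{n}{2}\,p(0)/(2-\frac n2)_{m+1}$, up to the factor $(2-\frac n2+m+1)$ absorbed into the ansatz. This yields \eqref{eq:closed-entry}. Rationality in $(n,N^2)$ follows from Cramer's rule applied to the system, whose entries are polynomial in $n$ and $X=N^2$.

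The main obstacle is bookkeeping. I need to pick the exact rational prefactor in $q$ so that the numerator operator is exactly $\cL_{m,X}$, with the same shift conventions as in the lemma, and so that the boundary value produces the precise constant $-n/(2(2-\frac n2)_{m+1})$. I would fix this by checking the case $m=0$ against \eqref{eq:r00} and the explicit $2\times2$ matrix given in the proof of Lemma \ref{lem:detL}.
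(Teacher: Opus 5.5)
Your proposal is the paper's argument. The paper takes $g_{ij}(k)=(k+\frac n2)p_{ij}(k)/(k+2-\frac n2)_{m+1}$; note the Pochhammer length $m+1$, not the $m+2$ of your first ansatz, which would produce $\cL_{m+1,X}$ instead of $\cL_{m,X}$. With this choice \eqref{eq:gaussian-system} is literally the telescoping identity $\mathsf w_{N,k}r_{ij}=\mathsf w_{N,k+1}g_{ij}(k+1)-\mathsf w_{N,k}g_{ij}(k)$, and $\mathsf w_{N,0}=1$, $\mathsf w_{N,N}=0$ leave $-g_{ij}(0)$, exactly as you anticipated. The only other difference is that the paper obtains the needed bound $n>2N+2m+4$ (for $\det\cL_{m,N^2}\ne0$ and finiteness of $g_{ij}(N)$) from admissibility of $H_i,H_j$ via $m\le2j$, rather than from the section-wide assumption $n>2N+16$.
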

\begin{proof}
Since $\deg H_j=2j+2$ and $m=i+j\le2j$, condition \eqref{eq:admissibility} for these profiles implies
\[
n>2N+2m+4,
\]
so every factor in \eqref{eq:gaussian-determinant} with $X=N^2$ is positive.
The same inequality keeps the endpoint $g_{ij}(N)$ in the telescoping argument finite and places the discarded Cauchy-transform term at order $n/2-3-m>N-1$ in Appendix \ref{app:cauchy}; these are the three uses of this bound in the finite extraction.
Lemma \ref{lem:detL} and the degree bound in Proposition \ref{prop:summand} give the unique solution of \eqref{eq:gaussian-system}.
The system has size $m+2$.
For entries with $0\le i,j\le2$, one has $m\le4$, so the system has size at most six; for $0\le i,j\le3$, its size is at most eight.

We set
\[
\mathsf w_{N,k}\coloneqq\frac{(1-N)_k(1+N)_k}{(2-\frac{n}{2})_k(1+\frac{n}{2})_k}
\]
and
\[
g_{ij}(k)\coloneqq
\frac{(k+\frac{n}{2})p_{ij}(k)}
{(k+2-\frac{n}{2})_{m+1}}.
\]
Using
\[
\frac{\mathsf w_{N,k+1}}{\mathsf w_{N,k}}
=\frac{(k+1)^2-N^2}{(k+2-\frac{n}{2})(k+1+\frac{n}{2})},
\]
equation \eqref{eq:gaussian-system} is exactly
\begin{equation}\label{eq:telescoping}
\mathsf w_{N,k}r_{ij}(k,n)
=\mathsf w_{N,k+1}g_{ij}(k+1)-\mathsf w_{N,k}g_{ij}(k).
\end{equation}
Since $\mathsf w_{N,0}=1$ and $\mathsf w_{N,N}=0$, the sum in \eqref{eq:finite-summand} telescopes.
The inequality $n>2N+2m+4$ makes $g_{ij}(N)$ finite, so the term at $k=N$ is zero and the remaining term is $-g_{ij}(0)$.
This proves \eqref{eq:closed-entry}.
\end{proof}

\begin{corollary}\label{cor:weyl-leading-sign}
For $n>2N+4$, the first normalized entry is
\begin{equation}\label{eq:e00}
e_{00}=-\frac{F_N(n)}{
((n-2)^2-4N^2)((n-4)^2-4N^2)},
\end{equation}
where
\begin{equation}\label{eq:Fpolynomial}
F_N(n)\coloneqq{}n^4-8n^3
+\frac{68-8N^2}{3}n^2
+\frac{32N^2-80}{3}n
+\frac{16}{5}(N^2-1)(N^2-4).
\end{equation}
In particular, $e_{00}<0$ and
\begin{equation}\label{eq:B00}
-S_{00}=-4E_{00}>0.
\end{equation}
\end{corollary}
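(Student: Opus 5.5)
We specialize Theorem \ref{thm:closedentry} to $i=j=0$, so $m=0$ and $\mathcal P_1=\{p_0+p_1k\}$. Only the profile $H_0$ (degree $d=2$) is involved, so the admissibility condition \eqref{eq:admissibility} reads $n>2N+4$. This is exactly what the proof of Theorem \ref{thm:closedentry} needs: every factor of $\det\cL_{0,N^2}=((n/2-1)^2-N^2)((n/2-2)^2-N^2)$ is positive. These two factors are, up to the factor $16$, the denominator $((n-2)^2-4N^2)((n-4)^2-4N^2)$ in \eqref{eq:e00}, which already explains its shape.

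The computation proceeds in three steps. First, we take $c_{00}=-1/5$ and $r_{00}$ from \eqref{eq:r00}. We multiply $r_{00}$ by $(k+2-n/2)_2=(k+2-n/2)(k+3-n/2)=\tfrac14(2k-n+4)(2k-n+6)$. This cancels the denominator and leaves the linear right-hand side
\[
-\tfrac{1}{15}(2k+3)(n-4)(n-1).
\]
Second, we solve the $2\times2$ system \eqref{eq:gaussian-system}. We use the displayed matrix of $\cL_{0,X}$ in the basis $(1,k)$ from the proof of Lemma \ref{lem:detL}, set $X=N^2$, and apply Cramer's rule to obtain $p_{00}(0)=p_0$. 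Third, we substitute into \eqref{eq:closed-entry}:
\[
e_{00}=-\tfrac15-\frac{n p_0}{2(2-n/2)}.
\]
We then bring everything over the common denominator $((n-2)^2-4N^2)((n-4)^2-4N^2)$ and verify that the numerator equals $-F_N(n)$. This last check is routine expansion, but it is where arithmetic slips are likely, so we would confirm it with one value, for instance $N=1$, where $e_{00}=E_{00}/C_{1,n}$ must agree with $E_{00}|_{N=1}=-(n-2)/(2(n-6))$ using $C_{1,n}=1/2$.

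The main obstacle is the sign claim $F_N(n)>0$ for all real $n>2N+4$. We substitute $n=2N+4+s$ with $s>0$ and expand $F_N$ as a polynomial in $(N,s)$. The hope is that all coefficients are positive, or that they can be grouped into evidently positive pieces. Where a negative coefficient appears, we would try to absorb it using $N\ge1$ by also writing $N=1+u$ with $u\ge0$. Failing that, we would treat $F_N$ as a quadratic in $N^2$ and control its discriminant over the region $n>2N+4$.

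Once $F_N>0$ holds and the denominator is positive (both factors are positive because $n-4>2N$), we get $e_{00}<0$. Since $C_{N,n}>0$, this gives $E_{00}<0$. Finally, $S=DE+ED$ with $D_{00}=2$, so $S_{00}=4E_{00}$, which yields \eqref{eq:B00}.
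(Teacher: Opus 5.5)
Your proposal is correct. The derivation of \eqref{eq:e00} is the same as the paper's: you solve the size-two system of Theorem \ref{thm:closedentry} at $m=0$, and your right-hand side $-\tfrac1{15}(2k+3)(n-4)(n-1)$ and your $N=1$ check are both right. Besides determinant positivity, the other two uses of $n>2N+2m+4$ in that theorem's proof, finiteness of $g_{00}(N)$ and the Cauchy-transform order, also hold at $m=0$ under $n>2N+4$.

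Your sign argument, however, differs from the paper's. The paper never analyzes $F_N$. It reads $e_{00}<0$ term by term off the finite sum \eqref{eq:finite-summand}:
\begin{itemize}
\item $c_{00}=-1/5$;
\item $\mathsf w_{N,k}>0$ for $0\le k<N$, since both Pochhammer products have sign $(-1)^k$;
\item \eqref{eq:r00} is negative, since $2k-n+4$ and $2k-n+6$ are both negative when $k\le N-1$ and $n>2N+4$.
\end{itemize}
This needs no polynomial inequality and no closed form.

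Your route takes the sign from the closed form instead, and it is easier than you expect. If you stop before combining fractions, Cramer's rule gives
\[
e_{00}=-\frac15-\frac{4n(n-1)\bigl(3n^2-18n+28-4N^2\bigr)}{15\bigl((n-2)^2-4N^2\bigr)\bigl((n-4)^2-4N^2\bigr)}.
\]
Since $3n^2-18n+28-4N^2>3n^2-18n+28-(n-4)^2=2(n-2)(n-3)>0$, both terms are negative. Equivalently, with $n=2N+4+2\sigma$,
\[
\begin{aligned}
\tfrac{15}{16}F_N(n)={}&3\sigma(1+\sigma)(2N+\sigma)(2N+1+\sigma)\\
&+2(N+2+\sigma)(2N+3+2\sigma)\bigl((N+1)(2N+1)+3\sigma(2N+1+\sigma)\bigr).
\end{aligned}
\]
So your first hope is realized: every coefficient in $(N,\sigma)$ is nonnegative, the constant term of $F_N$ is $64/5$, and none of your fallbacks are needed.

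Your approach gets the formula and a concrete positivity certificate for $F_N$ from one computation. The paper's approach shows the sign without any closed form, by a term-by-term mechanism that applies to any entry whose $c_{ij}$ and $r_{ij}$ have fixed signs.
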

\begin{proof}
For $i=j=0$, the scalar extraction and finite-summation argument require only $n>2N+4$.
Solving the size-two system in Theorem \ref{thm:closedentry} gives \eqref{eq:e00}.
In \eqref{eq:finite-summand}, $c_{00}=-1/5$, $\mathsf w_{N,k}>0$, and \eqref{eq:r00} gives $r_{00}(k,n)<0$ for $0\le k<N$ when $n>2N+4$.
Thus $e_{00}<0$, and \eqref{eq:B00} follows from $S_{00}=4E_{00}$ and $C_{N,n}>0$.
\end{proof}

We use the rational matrix formulas to prove uniform signs in the required dimension ranges.

For $M=3$, we write $\widehat S\coloneqq-S/C_{N,n}$, so that $\widehat S_{ij}=-2(i+j+2)e_{ij}$ and $\widehat S_{00}>0$ by \eqref{eq:B00}.
For a fixed tail $\mathbf c=(c_1,c_2,c_3)$, we define
\begin{equation}\label{eq:tail-discriminant}
\mathfrak d_{\mathbf c}(N,n)\coloneqq
\left(\sum_{i=1}^3\widehat S_{0i}c_i\right)^2
-\widehat S_{00}\sum_{i,j=1}^3\widehat S_{ij}c_ic_j.
\end{equation}
\begin{lemma}\label{lem:tail-discriminant}
If $\mathfrak d_{\mathbf c}>0$, then $S$ is indefinite, and the equation $(\tau,\mathbf c)^{\mathsf T}S(\tau,\mathbf c)=0$ has the two roots
\begin{equation}\label{eq:tail-roots}
\tau=\frac{-\sum_{i=1}^3\widehat S_{0i}c_i
\pm\sqrt{\mathfrak d_{\mathbf c}(N,n)}}{\widehat S_{00}}.
\end{equation}
\end{lemma}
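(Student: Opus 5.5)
The lemma is elementary linear algebra for the quadratic form $q(\tau)\coloneqq(\tau,\mathbf c)^{\mathsf T}\widehat S(\tau,\mathbf c)$ in the single variable $\tau$, with the tail $\mathbf c$ fixed. I will work with $\widehat S=-S/C_{N,n}$. Since $C_{N,n}>0$, the matrix $S$ is indefinite if and only if $\widehat S$ is, and $(\tau,\mathbf c)^{\mathsf T}S(\tau,\mathbf c)=0$ if and only if $q(\tau)=0$. So both assertions may be proved for $\widehat S$.

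First I expand $q$ using the symmetry of $\widehat S$:
\[
q(\tau)=\widehat S_{00}\tau^2+2\tau\sum_{i=1}^3\widehat S_{0i}c_i+\sum_{i,j=1}^3\widehat S_{ij}c_ic_j .
\]
By \eqref{eq:B00}, $\widehat S_{00}>0$, so $q$ is a genuine quadratic in $\tau$ with positive leading coefficient. Its reduced discriminant (one quarter of the usual one) is exactly $\mathfrak d_{\mathbf c}(N,n)$ from \eqref{eq:tail-discriminant}. The quadratic formula, written with the factor $2$ absorbed, then gives the two real roots \eqref{eq:tail-roots} whenever $\mathfrak d_{\mathbf c}>0$. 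These roots are distinct.

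For indefiniteness, I test $\widehat S$ on two vectors.
\begin{itemize}
\item On $e_0=(1,0,0,0)^{\mathsf T}$ we have $e_0^{\mathsf T}\widehat S e_0=\widehat S_{00}>0$.
\item At the vertex $\tau_*=-\sum_{i}\widehat S_{0i}c_i/\widehat S_{00}$, completing the square gives $q(\tau_*)=-\mathfrak d_{\mathbf c}/\widehat S_{00}<0$.
\end{itemize}
So $\widehat S$ takes both signs, and hence is indefinite. Consequently $S=-C_{N,n}\widehat S$ is indefinite as well.

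There is no real obstacle here. The only points needing care are the factor-of-two bookkeeping that identifies \eqref{eq:tail-discriminant} as the reduced discriminant, and invoking $\widehat S_{00}>0$ from Corollary \ref{cor:weyl-leading-sign}, which is what makes the division in \eqref{eq:tail-roots} legitimate.
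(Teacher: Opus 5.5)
Your proof is correct and follows the paper's own argument: you identify $\mathfrak d_{\mathbf c}$ as one quarter of the discriminant of $\tau\mapsto(\tau,\mathbf c)^{\mathsf T}\widehat S(\tau,\mathbf c)$ and apply the quadratic formula to get the roots. You then complete the square to obtain the negative value $-\mathfrak d_{\mathbf c}/\widehat S_{00}$ at the vertex, which together with $\widehat S_{00}>0$ and $C_{N,n}>0$ gives the indefiniteness of $S$.
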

\begin{proof}
The quantity $\mathfrak d_{\mathbf c}$ is one quarter of the discriminant of $\tau\mapsto(\tau,\mathbf c)^{\mathsf T}\widehat S(\tau,\mathbf c)$.
The quadratic formula gives \eqref{eq:tail-roots}.
Completing the square yields
\begin{equation}\label{eq:tail-negative-direction}
\min_{\tau\in\R}(\tau,\mathbf c)^{\mathsf T}\widehat S(\tau,\mathbf c)
=-\frac{\mathfrak d_{\mathbf c}(N,n)}{\widehat S_{00}}.
\end{equation}
This exhibits a negative direction for $\widehat S$, whose $(0,0)$ entry is positive; hence $S$ is indefinite.
\end{proof}

\begin{theorem}\label{thm:algebraic}
For every pair $(N,n)$ satisfying \eqref{eq:dimensionrange} with $N\ge4$, the matrix $S$ for $M=3$ is indefinite.
More precisely, $\mathfrak d_{\mathbf c}(N,n)>0$ for the fixed tails in \eqref{eq:fixed-tails}.
\end{theorem}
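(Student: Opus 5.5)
By Lemma \ref{lem:tail-discriminant}, it suffices to prove $\mathfrak d_{\mathbf c}(N,n)>0$ for the tails \eqref{eq:fixed-tails}, since $\widehat S_{00}>0$ is already known from \eqref{eq:B00}. The plan is to make $\mathfrak d_{\mathbf c}$ an explicit rational function of $(n,N^2)$ and then certify its sign by exact polynomial arithmetic.

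First I compute the ten normalized entries $e_{ij}$, $0\le i\le j\le3$. Appendix \ref{app:cauchy} supplies $c_{ij}$ and $r_{ij}$ as explicit rational functions. Solving the linear system \eqref{eq:gaussian-system} of size $m+2\le8$ with $X=N^2$ and inserting the result into \eqref{eq:closed-entry} then gives each $e_{ij}$ in closed form. By Lemma \ref{lem:detL}, every denominator divides a product of the factors $(n/2-\ell)^2-N^2$ with $1\le\ell\le m+2\le8$, together with factors of the form $(2-n/2)_{m+1}$. All of these have a fixed sign for $n>2N+16$. Substituting $\widehat S_{ij}=-2(i+j+2)e_{ij}$ into \eqref{eq:tail-discriminant} and clearing a common denominator gives $\mathfrak d_{\mathbf c}=\mathcal N_{\mathbf c}(n,N)/\mathcal D(n,N)$, where $\mathcal D$ is a product of these factors and hence of known sign. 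The question therefore reduces to the sign of a single polynomial numerator $\mathcal N_{\mathbf c}$.

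For $N\ge18$ with $\mathbf c=(150,-29,1)$, I substitute $N=18+\alpha$ and $n=2N+17+\beta$ with $\alpha,\beta\ge0$. I then expand $\mathcal N_{\mathbf c}$ (with the appropriate sign) as a polynomial in $(\alpha,\beta)$ and check that all its coefficients are positive. The substitution is legitimate because the entries depend on $N$ only through $N^2$ and are polynomial after clearing denominators, so the expansion is exact. Since both variables range over nonnegative integers, positivity of every coefficient with a nonzero constant term gives $\mathfrak d>0$ on the whole range.

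For $4\le N\le17$ with $\mathbf c=(100,-20,1)$, I fix each $N$ separately and set $n=n_0(N)+\beta$, where $n_0(N)$ is the threshold in \eqref{eq:dimensionrange}. I then check that the resulting univariate polynomial in $\beta$ has positive coefficients. If a few low coefficients come out negative, I would fall back on a Sturm sequence or an explicit root bound, combined with direct evaluation at the finitely many integers below that bound. These finite verifications constitute Proposition \ref{prop:tail-certificates}.

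The main obstacle is the uniform case $N\ge18$. Coefficient positivity after the shifts is not guaranteed: near the crossing of Theorem \ref{thm:cross} the discriminant is small, of relative size about $(17-a_*)$, and lower-order terms in $1/N$ could introduce negative coefficients. If that happens, I would first try a different base shift or rescale $\beta$. If negative coefficients persist, I would dominate each negative monomial by an AM--GM combination of neighboring positive monomials, and check the remaining finitely many small values of $(\alpha,\beta)$ directly.
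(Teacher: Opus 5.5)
Your plan is the paper's proof. The paper writes $\mathfrak d_{\mathbf c}=\mathfrak A_{\mathbf c}(n/2,N^2)/\mathfrak D(n/2,N^2)$ with $\mathfrak D>0$ for $n-2N\ge17$, and both of your coefficient checks succeed outright: all $858$ coefficients of $2^{39}\mathfrak A_{(150,-29,1)}$ are positive after $N=18+u$, $n=2N+17+\eta$, and all $42$ coefficients are positive for each $4\le N\le17$ after $n=n_{\min}(N)+2u$, so none of your fallbacks is needed.

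One small correction: the common denominator also contains linear factors such as $n/2-1$, coming from $c_{ij}$ and from the coefficients of $r_{ij}$, not only the factors of Lemma~\ref{lem:detL} and $(2-n/2)_{m+1}$. These are likewise positive in the range, so the argument is unaffected.
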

\begin{proof}
The rational entries in Theorem \ref{thm:closedentry} give
\[
\mathfrak d_{\mathbf c}(N,n)
=\frac{\mathfrak A_{\mathbf c}(\frac{n}{2},N^2)}
{\mathfrak D(\frac{n}{2},N^2)}.
\]
The denominator is the same for both tails.
Its factorization \eqref{eq:tail-denominator} is strictly positive when $n-2N\ge17$.
The numerator normalization and the exact coefficient checks are given in Proposition \ref{prop:tail-certificates}.

For $\mathbf c=(150,-29,1)$, substitute
\[
N=18+u,\qquad n=2N+17+\eta.
\]
The polynomial
\[
2^{39}\mathfrak A_{\mathbf c}
(18+u+\frac{17+\eta}{2},(18+u)^2)
\]
has $858$ nonzero integer coefficients, all strictly positive, including its constant term.
Its degrees in $u,\eta$ are $32,41$.
Hence $\mathfrak d_{\mathbf c}>0$ for all $N\ge18$ and $n\ge2N+17$, without a subdivision of the dimension range.

For $\mathbf c=(100,-20,1)$ and each $4\le N\le17$, let $n_{\min}(N)$ be the first dimension in \eqref{eq:dimensionrange}.
The polynomial
\[
\mathfrak A_{\mathbf c}(\frac{n_{\min}(N)}{2}+u,N^2)
\]
has degree $41$ and all $42$ coefficients strictly positive.
These fourteen coefficient-positivity checks give $\mathfrak d_{\mathbf c}>0$ for every $n\ge n_{\min}(N)$.
Lemma \ref{lem:tail-discriminant} completes the proof.
\end{proof}

The fixed tails are used to prove indefiniteness of $S$ and to solve the scale equation $\partial_\eps F_{h_a}(0,1)=0$ in \eqref{eq:scalehessians}.
Proposition \ref{prop:seed} then chooses a zero of the scale-derivative form outside the zero sets of $a\mapsto a^{\mathsf T}(DS+SD-S)a$ and $a\mapsto a^{\mathsf T}Ta$, using Lemma \ref{lem:nullcone}; positivity of the discriminant alone does not impose these nondegeneracy conditions.

\begin{remark}\label{rem:no-universal-tail}
Within the family spanned by $H_0,H_1,H_2,H_3$, a single nonzero tail independent of both $N$ and $n$ cannot solve the scale equation throughout the range $N\ge4$ in \eqref{eq:dimensionrange}.
\end{remark}
\begin{proof}
The quadratic form
\[
\mathbf c\mapsto
\mathfrak d_{\mathbf c}(4,28)+\mathfrak d_{\mathbf c}(18,53)
\]
is negative definite by Lemma \ref{lem:common-tail-obstruction}.
Real roots in \eqref{eq:tail-roots} would require each discriminant to be nonnegative, which is impossible for a nonzero common tail.
Thus the use of two tails reflects an obstruction in this family.
\end{proof}

\section{Localization on a fixed metric}
\label{sec:analysis}
We fix a trace-free polynomial tensor $h$ of degree at most $d$, with $n>2N+2d$, and use the projected family of Proposition \ref{prop:projected-family}.
The estimates in Proposition \ref{prop:localreduction} concern one compactly supported metric perturbation.
We now place infinitely many such perturbations in a single smooth metric and estimate their contributions to the reduced energy near each concentration point.

Let $\mathcal U\Subset\R^n\times(0,\infty)$ be a small neighborhood of $(0,1)$.
The parameter norms are taken over $\overline{\mathcal U}$; constants may depend on $N,n,h,\chi,\mathcal U$, but not on the concentration scales or the index of a perturbation.

\subsection{The localized metric and reduced energy}\label{sec:fixedmetric}
For all sufficiently large $j\in\mathbb N$, we take
\[
x_j\coloneqq(2^{-j},0,\ldots,0),\qquad
\rho_j\coloneqq2^{-j^2-10},\qquad \mu_j\coloneqq\mu_0 2^{-j^3},
\]
where $\mu_0>0$ will be small.
The balls $B_{2\rho_j}(x_j)$ are disjoint.
We choose $0<\lambda_j<\rho_j$ and set
\begin{equation}\label{eq:globalbumps}
k_j(x)\coloneqq\mu_j\lambda_j^d\chi(\frac{|x-x_j|}{\rho_j})
h(\frac{x-x_j}{\lambda_j}),\qquad
k_\infty\coloneqq\sum_j k_j.
\end{equation}
We write $\eta_j\coloneqq\mu_j\lambda_j^d$ and $R_j\coloneqq\rho_j/\lambda_j$, and require
\begin{equation}\label{eq:outerchoice}
R_j\to\infty,\qquad
\frac{R_j^{2N-n}}{\eta_j^2}
=\frac{\lambda_j^{n-2N-2d}}{\mu_j^2\rho_j^{n-2N}}\to0.
\end{equation}
The second condition is equivalent to
\[
R_j^{-\frac{n-2N}{2}}=o(\eta_j),\qquad
R_j^{2N-n}=o(\eta_j^2).
\]
These are the decay rates for the equation error and energy contributions, respectively, of the other perturbations in \eqref{eq:remote-bump-estimates}.
We may choose each $\lambda_j$ so that $R_j^{-1}\le2^{-j}$ and the second quantity in \eqref{eq:outerchoice} is at most $2^{-j}$.
This is possible because $n>2N+2d$.
The jet bounds \eqref{eq:jetbound} are independent of these choices.

We set $\tilde{k}_j(y)\coloneqq k_\infty(x_j+\lambda_j y)$, so the rescaled metric is $\exp(\tilde{k}_j)$.
Let $w_{j,\xi,\eps}\coloneqq w_{\tilde{k}_j,\xi,\eps}$ be the correction from Proposition \ref{prop:projected-family}, and define
\[
J_j(\xi,\eps)\coloneqq J_{\tilde{k}_j}(U_{\xi,\eps}+w_{j,\xi,\eps}).
\]
The change of variables $x=x_j+\lambda_jy$, with the scalar factor multiplied by $\lambda_j^{-(n-2N)/2}$ as in Lemma \ref{lem:energy-invariance}, identifies $J_j$ with the energy on the fixed metric $\exp(k_\infty)$.

\begin{lemma}\label{lem:localized-energy}
For sufficiently small $\mu_0$, the tensor $k_\infty$ is smooth, compactly supported, and vanishes to infinite order at zero.
The projected corrections satisfy
\begin{equation}\label{eq:fixed-corrector-convergence}
\|w_{j,\xi,\eps}\|_{C^1(\overline{\mathcal U};\mathcal H)}\le C\eta_j,\qquad
\eta_j^{-1}w_{j,\xi,\eps}\to-L_{U_{\xi,\eps}}^{-1}f_{h,\xi,\eps}
\quad\text{in }C^1(\overline{\mathcal U};\mathcal H),
\end{equation}
and
\begin{equation}\label{eq:fixedreducedexpansion}
J_j(\xi,\eps)=J_0(U)+\frac{n-2N}{8}\eta_j^2
\bigl(F_h(\xi,\eps)+o_{C^1(\overline{\mathcal U})}(1)\bigr).
\end{equation}
\end{lemma}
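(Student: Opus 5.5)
The plan is to reduce Lemma \ref{lem:localized-energy} to Proposition \ref{prop:localreduction}. In the rescaled frame of the $j$-th perturbation, $\tilde k_j$ splits as the local bump plus the other bumps. The local bump is exactly $\eta_j\chi_{R_j}h$ after rescaling. Every other bump will be shown to contribute errors that are $o(\eta_j)$ at the level of the equation and $o(\eta_j^2)$ at the level of the energy, in $C^1(\overline{\mathcal U})$.

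\emph{Step 1: smoothness.} Smoothness and infinite-order vanishing at $0$ follow from \eqref{eq:jetbound}: $\|\partial^r k_j\|_{L^\infty}\le C_r\mu_0 2^{-j^3}\rho_j^{d-r}$. Since $\rho_j=2^{-j^2-10}$, this is at most $C_r\mu_0 2^{-j^3+r(j^2+10)}$. This decays faster than any power of $|x_j|=2^{-j}$, so the series converges in every $C^r$, and the limit vanishes to infinite order at the accumulation point $0$. Compact support is immediate because the supports lie in disjoint balls inside a fixed ball.

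\emph{Step 2: smallness and separation.} The second point is $\kappa(\tilde k_j)\le\sum_i\kappa(k_i)\le C\mu_0\sum_i 2^{-i^3}\rho_i^d$, which is small for small $\mu_0$. This uses the scale invariance of $\kappa$ from Lemma \ref{lem:metric-perturbation}. Hence Proposition \ref{prop:projected-family} applies uniformly in $j$, and $\|w_j\|_{C^1}\le C\|\mathscr E_{\tilde k_j}(U_{\xi,\eps})\|_{C^1(\overline{\mathcal U};\mathcal H^*)}$. I would then write $\tilde k_j=\eta_j h_{R_j}+\sum_{i\ne j}\tilde k_{j,i}$. The bump $\tilde k_{j,i}$ is supported where $|y|\gtrsim|x_i-x_j|/\lambda_j\ge 2^{-\max(i,j)-1}/\lambda_j$. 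This is far beyond $R_j$ once $\lambda_j$ is chosen small enough relative to $2^{-j-1}$.

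\emph{Step 3: remote-bump estimates (main obstacle).} On the support of $\tilde k_{j,i}$, the source $D_gP_{2N}[\tilde k_{j,i}]U_{\xi,\eps}$ is bounded pointwise by terms of the form $|\partial^r\tilde k_{j,i}|\,|y|^{-(n-2N)-(2N-r)}$. The derivative factor is $|\partial^r\tilde k_{j,i}|\le C\mu_i\rho_i^{d-r}\lambda_j^r$. Taking the $L^{2n/(n+2N)}$ norm over a region $|y|\ge D_{ij}/\lambda_j$ should give $C\mu_i\rho_i^d(\lambda_j/D_{ij})^{(n-2N)/2}$ up to harmless factors, where $D_{ij}=|x_i-x_j|$. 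This is the estimate the paper labels \eqref{eq:remote-bump-estimates}.

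For $i>j$, the support is small but at distance about $2^{-j}$. For $i<j$, the support is large but at distance about $2^{-j}$ as well. In both cases the bound is at most $C\mu_i\rho_i^d\lambda_j^{(n-2N)/2}2^{j(n-2N)/2}$. I must show that the sum over $i$ is $o(\eta_j)=o(\mu_j\lambda_j^d)$. The sum is dominated by $\lambda_j^{(n-2N)/2}$, and $(n-2N)/2>d$. Using the super-exponential smallness of $\lambda_j$ built into \eqref{eq:outerchoice}, together with $R_j^{-(n-2N)/2}=o(\eta_j)$, the sum is $o(\eta_j)$ after possibly shrinking $\lambda_j$ further. The same choice is permitted since only upper bounds on $\lambda_j$ are imposed.

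The energy contributions are treated the same way. Cross terms between the local bump and a remote bump, and quadratic remote terms, are controlled by integrating products of the decay \eqref{eq:quadraticmajorant} type over the remote regions. This yields $o(\eta_j^2)$, comparable to $R_j^{2N-n}$. Higher-order terms in the metric are handled by the $\kappa$-smallness. Parameter derivatives in $(\xi,\eps)$ only cost constants, since $|\partial_{\xi,\eps}U|\le CU$.

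\emph{Step 4: conclusion.} With these bounds, I would repeat the proof of Proposition \ref{prop:localreduction}, with $\mathscr E_{\tilde k_j}(U)$ replacing $\mathscr E_{\eta h_R}(U)$. The additional error is $o(\eta_j)$. This gives the first estimate in \eqref{eq:fixed-corrector-convergence}, and the convergence of $\eta_j^{-1}w_j$ follows from \eqref{eq:correctionlimit}, because $R_j^{d-(n-2N)/2}$, $\eta_jR_j^d\le C\mu_j\rho_j^d$, and $\eta_j^\gamma$ all tend to $0$. Finally, \eqref{eq:localreducedexpansion}, together with the $o(\eta_j^2)$ remote energy bounds, yields \eqref{eq:fixedreducedexpansion}.
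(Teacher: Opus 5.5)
The gap is in Step~4. When you ``repeat the proof of Proposition \ref{prop:localreduction}'' with $\tilde k_j$ in place of $\eta_jh_{R_j}$, the operator error in the equation for $\eta_j^{-1}w_{j,\xi,\eps}$ is $\eta_j^{-1}(P_{2N,\exp(\tilde k_j)}-(-\Delta)^N)w_{j,\xi,\eps}$. The only bound you have for it is $C\kappa(\tilde k_j)\|\eta_j^{-1}w_{j,\xi,\eps}\|_{\mathcal H}$.

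Since $\kappa$ is dilation invariant, $\kappa(\tilde k_j)=\kappa(k_\infty)$ for every $j$. The earlier bumps $k_i$ with $i<j$ do not shrink in this scale-invariant norm. Hence $\mathcal B_j\coloneqq P_{2N,\exp(\tilde k_j)}-P_{2N,\exp(\eta_jh_{R_j})}$ has operator norm of order $\mu_0$, and this does not tend to zero as $j\to\infty$.

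Your Step~3 controls the remote bumps only when they act on the bubble $U_{\xi,\eps}$, where spatial separation gives decay. It says nothing about $\mathcal B_j$ acting on the correction $w_{j,\xi,\eps}$, which has no a priori localization. So your argument yields only $\limsup_j\|\eta_j^{-1}w_{j,\xi,\eps}+L_{U_{\xi,\eps}}^{-1}f_{h,\xi,\eps}\|_{C^1(\overline{\mathcal U};\mathcal H)}\le C\mu_0$, not convergence. Likewise, in the energy, $\eta_j^{-2}\langle\mathcal B_jw_{j,\xi,\eps},w_{j,\xi,\eps}\rangle$ is only $O(\mu_0)$, not $o(1)$. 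Relatedly, \eqref{eq:localreducedexpansion} concerns the correction for the metric $\exp(\eta_jh_{R_j})$, not $w_{j,\xi,\eps}$. It cannot simply be added to remote bounds evaluated at $U_{\xi,\eps}$.

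The missing idea is strong rather than norm convergence. The operator $\mathcal B_j$ is uniformly bounded from $\mathcal H$ to $\mathcal H^*$. Its coefficients vanish on $B_{2R_j}(0)$ with $R_j\to\infty$, so it annihilates any fixed function in $C_c^\infty(\R^n)$ for large $j$. Density then gives $\mathcal B_j\to0$ strongly, uniformly on compact subsets of $\mathcal H$.

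The paper then inverts the full projected operator $A_{j,\xi,\eps}=L_{U_{\xi,\eps}}+P_{2N,\exp(\tilde k_j)}-(-\Delta)^N$, which is uniformly invertible for small $\mu_0$. It compares $\eta_j^{-1}w_{j,\xi,\eps}$ with $\hat w_{\xi,\eps}\coloneqq-L_{U_{\xi,\eps}}^{-1}f_{h,\xi,\eps}$. The non-small operator now appears only as $(A_{j,\xi,\eps}-L_{U_{\xi,\eps}})\hat w_{\xi,\eps}$. It acts on the fixed compact family $\{\hat w_{\xi,\eps},\partial_{\xi,\eps}\hat w_{\xi,\eps}\}$, where strong convergence applies. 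Once $\eta_j^{-1}w_{j,\xi,\eps}\to\hat w_{\xi,\eps}$ in $C^1(\overline{\mathcal U};\mathcal H)$, the term $\eta_j^{-2}\langle\mathcal B_jw_{j,\xi,\eps},w_{j,\xi,\eps}\rangle$ and its parameter derivative also vanish, by the same strong convergence.

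A smaller point: appealing to $\kappa$-smallness for terms nonlinear in the remote bumps would also give only $O(\mu_0)$ relative errors. The clean route is locality. You may replace $U_{\xi,\eps}$ by $(1-\chi_{R_j})U_{\xi,\eps}$, whose $\mathcal H$ norm is $O(R_j^{-(n-2N)/2})$. This gives $\|\mathcal B_jU_{\xi,\eps}\|_{\mathcal H^*}=O(R_j^{-(n-2N)/2})=o(\eta_j)$ and $\langle\mathcal B_jU_{\xi,\eps},U_{\xi,\eps}\rangle=O(R_j^{2N-n})=o(\eta_j^2)$ by \eqref{eq:outerchoice}, for all remote terms at once.
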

\begin{proof}
In this proof only, write $z=(\xi,\eps)$ and abbreviate the corresponding parameter subscripts and arguments by $z$; write $z_0=(\xi_0,\eps_0)$ when a reference parameter is needed.
The series is locally finite away from zero.
For every fixed $r,m\in\mathbb N\cup\{0\}$, \eqref{eq:jetbound} implies
\[
\|\partial^r k_j\|_{L^\infty(\R^n)}\le C_r\mu_j\rho_j^{d-r}
=o(|x_j|^m).
\]
It therefore extends smoothly by zero at the accumulation point, with every derivative zero there.
Disjointness and \eqref{eq:jetbound} also imply $\kappa(k_\infty)\le C\sum_j\mu_j\rho_j^d\le C\mu_0$.
The bound $\kappa(k_\infty)\le C\mu_0$ is invariant under rescaling, so Proposition \ref{prop:projected-family} applies to $\tilde{k}_j$ for each retained index $j\in\mathbb N$.

The other perturbations contribute the differential operator
\[
\mathcal B_j\coloneqq P_{2N,\exp(\tilde{k}_j)}-P_{2N,\exp(\eta_jh_{R_j})}.
\]
The coefficients of $\mathcal B_j$ vanish on $B_{2R_j}(0)$; this operator is independent of $z$, and $\|\mathcal B_j\|_{\mathcal H\to\mathcal H^*}\le C\mu_0$.
For $|\beta|\le1$, differentiation of \eqref{eq:roundbubble} yields
\begin{equation}\label{eq:bubble-tail-H}
\|(1-\chi_R)\partial_z^\beta U_z\|_{\mathcal H}
\le CR^{-\frac{n-2N}{2}}.
\end{equation}
Indeed, \eqref{eq:roundbubble} implies $|\partial_x^N\partial_z^\beta U_z(x)|\le C|x|^{N-n}$ for $|x|\ge R$.
Squaring and integrating over $\R^n\setminus B_R(0)$ bounds the squared norm by $O(R^{2N-n})$; the terms in $\partial_x^N[(1-\chi_R)\partial_z^\beta U_z]$ with derivatives on $\chi_R$ satisfy the same squared $L^2$ bound on $R\le|x|\le2R$.
Locality of the bilinear form allows both bubble factors in $\langle \mathcal B_jU_z,U_z\rangle$ to be replaced by $(1-\chi_{R_j})U_z$.
Consequently
\begin{equation}\label{eq:remote-bump-estimates}
\|\mathcal B_jU_z\|_{C^1(\overline{\mathcal U};\mathcal H^*)}\le CR_j^{-\frac{n-2N}{2}},\qquad
\|\langle \mathcal B_jU_z,U_z\rangle\|_{C^1(\overline{\mathcal U})}\le CR_j^{2N-n}.
\end{equation}
Combining \eqref{eq:local-residual-estimate} and \eqref{eq:remote-bump-estimates} yields
\begin{equation}\label{eq:fixed-residual-limit}
\left\|\eta_j^{-1}\mathscr E_{\tilde{k}_j}(U_z)-f_{h,z}
\right\|_{C^1(\overline{\mathcal U};\mathcal H^*)}
\le C\left(R_j^{d-\frac{n-2N}{2}}+\mu_j\rho_j^d
+\eta_j^{-1}R_j^{-\frac{n-2N}{2}}\right)\to0.
\end{equation}
Proposition \ref{prop:projected-family} proves the first bound in \eqref{eq:fixed-corrector-convergence}.

The operators $\mathcal B_j$ need not tend to zero in global operator norm.
They do converge strongly from $\mathcal H$ to $\mathcal H^*$: on each compactly supported smooth test function their action is zero for all large $j\in\mathbb N$, with a uniform operator bound from \eqref{eq:operatorperturbation}.
Density of $C_c^\infty(\R^n)$ in $\mathcal H$ proves the assertion.
This convergence is uniform on compact subsets of $\mathcal H$.
By \eqref{eq:operatorperturbation}, $P_{2N,\exp(\eta_jh_{R_j})}-(-\Delta)^N$ tends to zero in operator norm since $\eta_jR_j^d=\mu_j\rho_j^d\to0$.

For $v,\phi\in K_z^\perp$, we define
\[
\langle A_{j,z}v,\phi\rangle
\coloneqq\left\langle
\bigl(L_{U_z}+P_{2N,\exp(\tilde{k}_j)}-(-\Delta)^N\bigr)v,\phi
\right\rangle,
\qquad \hat w_z\coloneqq-L_{U_z}^{-1}f_{h,z}.
\]
Thus $A_{j,z}:K_z^\perp\to(K_z^\perp)^*$ is the projected linear operator.
The inverse of $A_{j,z}$ and its first parameter derivatives are uniformly bounded after identifying $K_z^\perp$ with $K_{z_0}^\perp$ by Lemma \ref{lem:Jacobi-identifications}.
The families $\hat w_z$ and $\partial_z\hat w_z$ have compact image in $\mathcal H$.
Thus the strong convergence just proved, and the fact that the metric operators are independent of $z$, imply
\[
(A_{j,z}-L_{U_z})\hat w_z\to0
\quad\text{in }C^1(\overline{\mathcal U};(K_z^\perp)^*).
\]
The projection derivatives are norm-continuous by Lemma \ref{lem:Jacobi-identifications}; they therefore act on compact families in $\mathcal H$, so the same strong convergence applies to these terms.
Subtracting $L_{U_z}\hat w_z=-f_{h,z}$ from \eqref{eq:projectedequation} divided by $\eta_j$, we obtain
\[
\frac{w_{j,z}}{\eta_j}-\hat w_z
=-A_{j,z}^{-1}\left\{
\frac{\mathscr E_{\tilde{k}_j}(U_z)}{\eta_j}-f_{h,z}
+(A_{j,z}-L_{U_z})\hat w_z
-\frac{c_{n,N}}{\eta_j}\mathcal N_p(U_z,w_{j,z})\right\}.
\]
Equations \eqref{eq:fixed-residual-limit}, \eqref{eq:nonlinear-remainder-bound}, and \eqref{eq:nonlinear-U-derivative} make the right side tend to zero in $C^1(\overline{\mathcal U};\mathcal H)$.
This proves the second assertion in \eqref{eq:fixed-corrector-convergence}.

For the energy at $U_z$, the change in the quadratic metric term caused by replacing $h$ with $h_{R_j}$ and the terms of metric order at least three are bounded by $C\eta_j^2(R_j^{2d+2N-n}+\mu_j\rho_j^d)$, as in \eqref{eq:quadraticmajorant}.
The contribution $\langle\mathcal B_jU_z,U_z\rangle/2$ is $O(R_j^{2N-n})=o(\eta_j^2)$ in $C^1(\overline{\mathcal U})$ by \eqref{eq:remote-bump-estimates}.
In the expansion at $U_z+w_{j,z}$, \eqref{eq:fixed-residual-limit} and \eqref{eq:fixed-corrector-convergence} identify the mixed and scalar quadratic terms of \eqref{eq:correctedE}.
The remaining operator pairing also vanishes after division by $\eta_j^2$.
In fact,
\[
\eta_j^{-2}\langle\mathcal B_jw_{j,z},w_{j,z}\rangle\to0,\qquad
\eta_j^{-2}\partial_{z_\alpha}\langle\mathcal B_jw_{j,z},w_{j,z}\rangle
=2\eta_j^{-2}\langle\mathcal B_jw_{j,z},\partial_{z_\alpha}w_{j,z}\rangle
\to0.
\]
These limits follow from strong convergence, uniform boundedness, and the convergence of $w_{j,z}/\eta_j$ to $\hat w_z$ in $C^1(\overline{\mathcal U};\mathcal H)$.
After division by $\eta_j^2$, the pairing of $(P_{2N,\exp(\eta_jh_{R_j})}-(-\Delta)^N)w_{j,z}$ with $w_{j,z}$ is bounded by $C\mu_j\rho_j^d$ in $C^1(\overline{\mathcal U})$.
Lemma \ref{lem:nonlinear-remainders} bounds the scalar Taylor remainder by $O(\eta_j^{2+\gamma})$ in $C^1(\overline{\mathcal U})$.
This proves \eqref{eq:fixedreducedexpansion}.
\end{proof}

\subsection{Positivity and passage to a fixed metric}
\begin{lemma}\label{lem:greenpositive}
For fixed $n>2N$, there is a nonnegative integer $L_0$ such that every smooth metric $g$ in a sufficiently small $C^{L_0}$ neighborhood of $g_c$ has coercive $P_{2N,g}$ and a Green function that is strictly positive off the diagonal.
\end{lemma}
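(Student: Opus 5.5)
The plan is to treat $P_{2N,g}$ as a small perturbation of $P_{2N,g_c}$ and to transfer both properties from the round metric. There, Gover's factorization \eqref{eq:roundfactorization} exhibits $P_{2N,g_c}$ as a product of $N$ second-order operators $-\Delta_{g_c}+c_r$ with $c_r=(\frac n2-r)(\frac n2+r-1)>0$. Each factor is coercive on $H^1$ and has a strictly positive Green function. Hence the Green function of $P_{2N,g_c}$ is an iterated convolution of positive kernels, and it equals the explicit round kernel $G_c(x,y)=c\,|x-y|_{\R^{n+1}}^{2N-n}$, with $c>0$, which is strictly positive off the diagonal.

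\emph{Step 1 (coercivity).} Since $\Lambda_\ell\ge\Lambda_0>0$, we have $\langle P_{2N,g_c}u,u\rangle\ge c_0\|u\|_{H^N}^2$. The coefficients of $P_{2N,g}$ are universal polynomials in $g,g^{-1}$ and derivatives of $g$ up to order $2N$. Arguing as in Lemma \ref{lem:metric-perturbation}, in bilinear form with at most $N$ derivatives on each factor, this gives
\[
\|P_{2N,g}-P_{2N,g_c}\|_{H^N\to H^{-N}}\le C\|g-g_c\|_{C^{2N}}.
\]
The same estimate holds after accounting for the change of volume form. Taking $L_0\ge2N$ and a small neighborhood therefore yields coercivity, and hence invertibility.

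\emph{Step 2 (Green function positivity).} This is the main obstacle, because it requires uniform control of $G_g-G_c$ relative to $G_c$ both near and away from the diagonal.
\begin{itemize}
\item \emph{Parametrix.} Write $G_g$ via a parametrix: in $g$-normal or conformal normal coordinates, $G_g(x,y)=c\,d_g(x,y)^{2N-n}(1+\beta(x,y))+\gamma(x,y)$. Here $\beta=O(d_g)$ and $\gamma$ is either less singular or, when $n-2N$ is small, contains logarithmic terms.
\item \emph{Small remainders.} The parametrix error is a pseudodifferential operator of lower order whose symbol seminorms depend continuously on finitely many derivatives of $g$. Elliptic estimates for the invertible operator $P_{2N,g}$, uniform by Step 1, then show that $|\beta|\le Cd_g$ uniformly and that $\sup_{x\ne y}d_g(x,y)^{n-2N}|G_g-G_c|\to0$ as $\|g-g_c\|_{C^{L_0}}\to0$, for some large $L_0$.
\item \emph{Conclusion.} I would first try this directly: represent $G_g=G_c+G_c*(P_{2N,g_c}-P_{2N,g})G_g$. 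Then use the Hardy–Littlewood–Sobolev-type bound $\int d(x,z)^{2N-n-k}d(z,y)^{2N-n}\,dv\le C\,d(x,y)^{2N-n}$ for $0\le k<2N$, after integrating by parts so that at most $2N-1$ derivatives fall on the kernel. A Neumann series gives $|G_g-G_c|\le C\epsilon\,d^{2N-n}$, with at most logarithmic loss absorbed by splitting into small and large distances. Since $G_c\ge c\,d^{2N-n}$ on the compact sphere (diameter bounded), strict positivity off the diagonal follows.
\end{itemize}

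The delicate point is the kernel bound with the top-order term. In the Neumann series, the operator $P_{2N,g}-P_{2N,g_c}$ is of order $2N$, so one derivative must be moved onto the coefficient, costing one more metric derivative. Alternatively, one can write $P_{2N,g}$ in divergence form and use that $\nabla_z^{N}G_c(x,z)$ is bounded by $d^{N-n}$; convolving two such factors gives $d^{2N-n}$, again a valid HLS-type bound.

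Step 3 fixes $L_0$ as the maximum number of metric derivatives used in Steps 1 and 2; this is finite and depends only on $n$ and $N$.
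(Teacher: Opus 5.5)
Your argument is correct in outline and uses the paper's ingredients: Gover's factorization \eqref{eq:roundfactorization} for the round kernel, perturbative coercivity in $H^N$, and a parametrix whose coefficients depend on finitely many metric jets. The difference is in how you organize Step 2.

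You aim for one global relative bound $\sup_{x\ne y}d(x,y)^{n-2N}|G_g-G_c|\to0$, and then conclude from the explicit chordal kernel $G_c\ge c\,d^{2N-n}$.

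The paper splits the argument instead. Near the diagonal it uses only that the uniform parametrix gives $G_g=\gamma_{n,N}d_g^{2N-n}+o(d_g^{2N-n})$, with the $o(\cdot)$ uniform over the $C^{L_0}$ neighborhood. Hence $G_g>0$ on a common neighborhood of the diagonal. On $\{d_{g_c}(x,y)\ge r\}$ it needs only $\sup|G_g-G_{g_c}|\to0$, obtained from the resolvent identity and interior elliptic estimates, together with the positive minimum of the round kernel on that compact set.

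The split is more economical. It never compares the two singularities beyond their common leading coefficient, and it uses only qualitative convergence where both kernels are smooth. Your global estimate is stronger; it in fact follows from the paper's two pieces, since $d_g/d_{g_c}\to1$. It also makes the final positivity a one-line consequence.

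The one step that does not work as written is the Neumann series. The operator $G_c(P_{2N,g_c}-P_{2N,g})$ has order zero. If the unknown kernel is controlled only pointwise, all $2N$ derivatives must land on $G_c$, giving the nonintegrable size $d^{-n}$. Leibniz's rule does not let you send the excess derivative to the coefficient alone.

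The divergence-form variant does not rescue the iteration either. To bound $\nabla^N$ of each new iterate you again meet $\nabla_x^N\nabla_z^NG_c\sim d^{-n}$, a Calder\'on--Zygmund kernel. So pointwise bounds do not close the iteration; you would need weighted H\"older spaces.

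What does work is your divergence-form idea used once, with no iteration. The uniform parametrix you already invoke gives $|\nabla_z^jG_g(z,y)|\le C\,d(z,y)^{2N-n-j}$ for $0\le j\le N$. Placing at most $N$ derivatives on each factor of the bilinear form of $P_{2N,g_c}-P_{2N,g}$ then yields
\[
|G_g-G_c|(x,y)\le C\|g-g_c\|_{C^{2N}}\sum_{j,k\le N}\int_{\Sph^n}d(x,z)^{2N-n-j}\,d(z,y)^{2N-n-k}\,dv_{g_c}(z)\le C\|g-g_c\|_{C^{2N}}\,d(x,y)^{2N-n}.
\]
This is exactly the weighted bound you want.
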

\begin{proof}
The factors in \eqref{eq:roundfactorization} have positive zeroth-order coefficients.
Thus $P_{2N,g_c}$ is coercive, and composition of the strictly positive Green kernels of these second-order factors gives a strictly positive round Green kernel.
Continuity of the quadratic form gives a finite integer $L_0$ and a neighborhood $\mathcal V$ of $g_c$ in the $C^{L_0}$ topology such that
\[
\int_{\Sph^n}vP_{2N,g}v\,dv_g
\ge c_0\|v\|_{H^N(g_c)}^2
\qquad(g\in\mathcal V)
\]
for a constant $c_0>0$ independent of $g$.
We increase $L_0$ and shrink $\mathcal V$ below as necessary; all metrics under consideration are smooth.
For fixed $n,N$, the constants in this proof are uniform over $\mathcal V$.

The local elliptic parametrix has the leading term
\begin{equation}\label{eq:greenleading}
G_g(x,y)=\gamma_{n,N}d_g(x,y)^{2N-n}
+o(d_g(x,y)^{2N-n}),\qquad
\gamma_{n,N}\coloneqq\frac{\Gamma(\frac{n}{2}-N)}{4^N\pi^{\frac{n}{2}}\Gamma(N)}>0.
\end{equation}
For the corresponding expansion on a fixed closed manifold, see \cite[(161)]{RobertLocalization}.
Here the relative remainder is uniform as the metric varies:
\[
\lim_{r\downarrow0}\;
\sup_{g\in\mathcal V}\;
\sup_{0<d_g(x,y)<r}
\left|d_g(x,y)^{n-2N}G_g(x,y)-\gamma_{n,N}\right|=0.
\]
To obtain this uniformity, construct the local parametrix to a fixed finite order sufficient to leave a bounded remainder kernel.
The coefficients involve finitely many metric jets, bounded uniformly in $\mathcal V$ once $L_0$ is large enough.
The metrics have a common positive injectivity-radius bound and uniformly equivalent coordinate norms.
The remaining kernel is controlled by elliptic estimates and the uniform inverse bound supplied by coercivity.
All lower singularities, including possible logarithmic terms, are $o(d_g^{2N-n})$ uniformly.
It follows that $G_g>0$ in a common neighborhood of the diagonal.

For each fixed $r>0$, the resolvent identity and interior elliptic estimates also give
\[
\sup_{d_{g_c}(x,y)\ge r}|G_g(x,y)-G_{g_c}(x,y)|\to0
\qquad\text{as }g\to g_c\text{ in }C^{L_0}.
\]
Choose $r$ so that $d_{g_c}(x,y)<r$ lies in the preceding common diagonal neighborhood.
The round kernel has a positive minimum on $\{d_{g_c}(x,y)\ge r\}$, so a further shrinking of $\mathcal V$ gives positivity there as well.
\end{proof}

\begin{lemma}\label{lem:positive-regularity}
For a metric in the neighborhood of Lemma \ref{lem:greenpositive}, every nonzero weak solution $u\in H^N(\Sph^n)$ of $P_{2N,g}u=c_{n,N}(u_+)^{(n+2N)/(n-2N)}$ is smooth and strictly positive.
\end{lemma}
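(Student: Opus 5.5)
The plan is to use the Green representation to get positivity first, then bootstrap regularity in the critical case. Fix $g$ in the neighborhood $\mathcal V$ of Lemma \ref{lem:greenpositive}, and let $u\in H^N(\Sph^n)$ be a nonzero weak solution.

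\textbf{Step 1: representation and nonnegativity.} Since $P_{2N,g}$ is coercive, it is invertible from $H^N$ to $H^{-N}$. The right-hand side $f\coloneqq c_{n,N}(u_+)^{p}$, with $p=(n+2N)/(n-2N)$, lies in $L^{2n/(n+2N)}\subset H^{-N}$ by Sobolev embedding. Hence
\[
u(x)=\int_{\Sph^n}G_g(x,y)f(y)\,dv_g(y)
\]
almost everywhere. Strict positivity of $G_g$ off the diagonal and $f\ge0$ give $u\ge0$. Thus $u=u_+$, and $u$ solves $P_{2N,g}u=c_{n,N}u^{p}$.

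\textbf{Step 2: regularity (the main obstacle).} The nonlinearity is critical, so plain Sobolev bootstrapping does not gain integrability. I would handle this in the Brezis--Kato/Trudinger manner, writing $u^{p}=Vu$ with $V\coloneqq u^{4N/(n-2N)}\in L^{n/(2N)}$. Split $V=V_1+V_2$, where $\|V_1\|_{L^{n/(2N)}}$ is small and $V_2\in L^\infty$. The operator $v\mapsto P_{2N,g}^{-1}(V_1v)$ is then a contraction on $L^s$ for every $s$ in a range $(n/(n-2N),\infty)$; this uses Hardy--Littlewood--Sobolev bounds for the kernel $|G_g|\le Cd_g^{2N-n}$, uniform by \eqref{eq:greenleading}. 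Solving $u=P_{2N,g}^{-1}(V_1u)+P_{2N,g}^{-1}(V_2u)$ by this fixed-point argument gives $u\in L^s$ for all $s<\infty$. Consequently $u^{p}\in L^s$ for all $s$. Elliptic $L^s$ estimates for the $2N$-th order operator $P_{2N,g}$ then give $u\in W^{2N,s}\subset C^{2N-1,\alpha}$. From there, Schauder estimates bootstrap: where $u>0$ the map $u\mapsto u^{p}$ is smooth, but where $u$ vanishes $u^{p}$ is only $C^{\lfloor p\rfloor,\alpha}$-type. So I would first prove positivity (Step 3) and then finish the smoothness bootstrap.

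\textbf{Step 3: strict positivity.} Since $u\not\equiv0$ and $u\ge0$ is continuous, $f$ is positive on a set of positive measure. Using $G_g(x,y)>0$ for all $x\ne y$, the representation formula gives
\[
u(x)\ge\int G_g(x,y)f(y)\,dv_g(y)>0
\]
for every $x$. Compactness of $\Sph^n$ and continuity then give $\min u>0$. Now $u\mapsto u^{p}$ is smooth on the range of $u$, and Schauder estimates for $P_{2N,g}$ bootstrap $u\in C^{k,\alpha}$ for every $k$. Hence $u$ is smooth.

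The step I expect to be delicate is the uniform Hardy--Littlewood--Sobolev/Brezis--Kato estimate in Step 2. If it proves troublesome, I would instead cite the standard critical-exponent regularity of Mazumdar or Van der Vaart for GJMS-type operators with positive Green function.
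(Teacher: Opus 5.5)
Your argument is correct, and it differs from the paper's mainly in how regularity is obtained. The paper does not prove the critical-exponent regularity itself. It invokes \cite[Proposition 8.3]{MazumdarRegularity} (with $k=N$, no boundary, and the $u_+$ power allowed) to get $u\in C^{2N}$ directly, and rules out $u_+\equiv0$ by coercivity. Only then does it use the Green representation for strict positivity, followed by the smooth bootstrap.

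You reverse the order. First you use the Green representation at the $L^{2n/(n+2N)}$ level to get $u\ge0$, so the nonlinearity becomes $Vu$ with $V=u^{4N/(n-2N)}\in L^{n/(2N)}$. Then you prove $u\in L^s$ for all $s<\infty$ by an integral-equation Brezis--Kato argument. It rests on the kernel bound $|G_g|\le Cd_g^{2N-n}$ and Hardy--Littlewood--Sobolev, which replaces the truncation arguments unavailable for higher-order operators.

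What each route buys: yours is self-contained given Lemma \ref{lem:greenpositive} and standard elliptic $L^s$/Schauder theory, and it shows that regularity needs only the size of $G_g$ while positivity needs its sign. The paper's route is shorter because it delegates the critical step to the citation, which is also your stated fallback.

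Three points to tighten when you write it out:
\begin{itemize}
\item The HLS constant depends on $s$, so the splitting $V=V_1+V_2$ must be chosen for each target exponent. It must also be chosen simultaneously for $2n/(n-2N)$.
\item Identifying the $L^{s}$ fixed point with $u$ uses uniqueness of solutions of $(I-T_1)w=T_2u$ in $L^{2n/(n-2N)}$, where $T_iv\coloneqq P_{2N,g}^{-1}(V_iv)$. The gain in integrability comes from finitely many iterations.
\item In Step 3, the representation formula holds at every point, not just almost everywhere, because both sides are continuous once $u\in C^{2N-1,\alpha}$.
\end{itemize}
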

\begin{proof}
The smooth, formally self-adjoint operator $P_{2N,g}$ has principal part $(-\Delta_g)^N$ and is coercive by Lemma \ref{lem:greenpositive}.
We apply \cite[Proposition 8.3]{MazumdarRegularity} with $k=N$, $\Omega_M=M=\Sph^n$ (so the boundary condition is empty), and $f=c_{n,N}$.
This regularity result includes the power of $u_+$ in our equation and gives $u\in C^{2N}(\Sph^n)$.

If $u_+$ were identically zero, coercivity would imply $u=0$.
Thus Lemma \ref{lem:greenpositive} and the Green representation
\[
u(x)=c_{n,N}\int_{\Sph^n}G_g(x,y)(u_+(y))^{\frac{n+2N}{n-2N}}\,dv_g(y)
\]
give $u(x)>0$ everywhere.
Since the manifold is compact, $u$ has a positive minimum.
The nonlinearity is smooth on its range, and elliptic regularity then gives $u\in C^\infty(\Sph^n)$.
\end{proof}

\begin{theorem}\label{thm:fixed-metric-transfer}
Let $h$ be a trace-free polynomial tensor on $\R^n$, with $\deg h\le d$ and $n>2N+2d$.
Suppose $F_h$ has a nondegenerate critical point at $(0,1)$.
Then there are a single smooth metric $g$ on $\Sph^n$ and positive smooth functions $u_j$ such that
\[
P_{2N,g}u_j=c_{n,N}u_j^{\frac{n+2N}{n-2N}},\qquad
\max_{\Sph^n}u_j\to\infty\quad\text{as }j\to\infty.
\]
The metric can be chosen arbitrarily close to $g_c$ in any prescribed finite $C^L$ norm, with coercive GJMS operator and positive Green function.
If in addition $h(0)=\partial h(0)=0$ and the degree-two part of $h$ is $H_0(W)$ for a nonzero constant tensor $W$ satisfying \eqref{eq:Weyl-symmetries}, then $g$ is not locally conformally flat.
\end{theorem}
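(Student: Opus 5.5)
We build the metric from the localized perturbation $k_\infty$ of Section~\ref{sec:fixedmetric}, and obtain the solutions $u_j$ from critical points of the reduced energies $J_j$ by a degree argument.

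\textbf{Setup.} Take the data $x_j,\rho_j,\mu_j=\mu_02^{-j^3},\lambda_j$ satisfying \eqref{eq:outerchoice}, and form $k_\infty$ as in \eqref{eq:globalbumps}. By Lemma~\ref{lem:localized-energy}, $k_\infty$ is smooth, compactly supported in the stereographic chart, and flat at $0$. Put $\hat g\coloneqq\exp(k_\infty)$ on $\R^n$ and let $g\coloneqq\Omega^2\hat g$ on $\Sph^n$, with $g=g_c$ near the north pole. Since $\|\partial^rk_j\|_\infty\le C_r\mu_j\rho_j^{d-r}$ and the supports are disjoint, $\|g-g_c\|_{C^L}\le C_L\mu_0$ for each fixed $L$. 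Choosing $\mu_0$ small therefore makes $g$ arbitrarily $C^L$-close to $g_c$. With $L\ge L_0$, Lemma~\ref{lem:greenpositive} then gives coercivity of $P_{2N,g}$ and a positive Green function.

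\textbf{Critical points by degree.} Since $(0,1)$ is a nondegenerate critical point of $F_h$, choose $\mathcal U$ to be a small ball around $(0,1)$ on which $\nabla F_h\neq0$ on $\partial\mathcal U$. Then $\deg(\nabla F_h,\mathcal U,0)=\operatorname{sign}\det D^2F_h(0,1)\ne0$. By \eqref{eq:fixedreducedexpansion},
\[
\frac{8}{(n-2N)\eta_j^2}\nabla J_j=\nabla F_h+o(1)
\]
uniformly on $\overline{\mathcal U}$. For large $j$ the straight-line homotopy has no zeros on $\partial\mathcal U$, so $\nabla J_j$ has a zero $(\xi_j,\eps_j)\in\mathcal U$. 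This is the step where the $C^1$ control, rather than mere $C^0$ control, of the expansion is essential, since the critical point may be a saddle.

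By Proposition~\ref{prop:projected-family}, $v_j\coloneqq U_{\xi_j,\eps_j}+w_{j,\xi_j,\eps_j}$ solves $\mathscr E_{\tilde k_j}(v_j)=0$ weakly. Undo the scaling via Lemma~\ref{lem:energy-invariance}, setting $\hat v_j(x)=\lambda_j^{-(n-2N)/2}v_j((x-x_j)/\lambda_j)$, which solves the equation for $\hat g$ on $\R^n$. Conformal covariance \eqref{eq:covariance} with $g=\Omega^2\hat g$ then shows that $u_j\coloneqq\Omega^{-(n-2N)/2}\hat v_j$ is a weak solution on $\Sph^n$ in $H^N$. Here the removable point at the north pole is handled by the energy-space identification of $\mathcal H$ with $H^N(\Sph^n)$. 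Since $u_j\ne0$, Lemma~\ref{lem:positive-regularity} makes $u_j$ smooth and positive, so $(u_j)_+=u_j$.

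\textbf{Blow-up and non-conformal-flatness.} Because $\|w_j\|_{\mathcal H}\le C\eta_j\to0$ while $\lambda_j\to0$, the functions $u_j$ concentrate at $x_j\to0$. Their $L^{2n/(n-2N)}$ mass in $B_{\lambda_j}(x_j)$ stays bounded below, while that ball shrinks. If $\sup_j\max u_j$ were bounded, this mass would tend to zero, a contradiction; hence $\max u_j\to\infty$.

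For the last claim, the degree-two part of $h$ is $H_0(W)$ and the lower-order terms vanish. On the ball where $\chi\equiv1$, the metric $k_j=\eta_jh((x-x_j)/\lambda_j)$ has Weyl tensor at $x_j$ equal to $\eta_j\lambda_j^{-2}$ times a nonzero multiple of $W$. Indeed, the linearized Weyl tensor of $\exp(H_0(W))$ at $0$ is a nonzero constant multiple of $W$, and higher-order metric terms vanish at $x_j$ because $h(0)=\partial h(0)=0$. So $W_g(x_j)\neq0$, and $g$ is not locally conformally flat.

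The main obstacle is verifying carefully that the sphere-level solution is a genuine $H^N$ weak solution across the north pole and through the rescalings. I would handle this by working throughout with $\mathcal J$ and the isometry $u\mapsto u/U$ between $\mathcal H$ and $H^N(\Sph^n)$.
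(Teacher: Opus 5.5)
Your proposal is correct and follows the paper's proof step by step. It uses the same metric $g=\Omega^2\exp(k_\infty)$, degree stability of $\nabla F_h$ under the $C^1$ convergence \eqref{eq:fixedreducedexpansion}, rescaling and conformal transport to $\Sph^n$ followed by Lemma~\ref{lem:positive-regularity}, concentration of a fixed amount of $L^{2n/(n-2N)}$ mass in a ball of radius $O(\lambda_j)$, and the second-jet Weyl computation at $x_j$.

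There are three minor points. First, fix $\mathcal U$ before $\mu_0$, since the threshold $\kappa_0$ depends on $\mathcal U$. Second, state the blow-up quantitatively as $\max u_j\ge c\lambda_j^{-(n-2N)/2}$, so that you get $\max u_j\to\infty$ and not merely unboundedness. Third, the paper crosses the north pole by testing against cutoffs of $H^N(\Sph^n)$ norm $O(r^{(n-2N)/2})$; this is the same density fact that underlies your identification of $\mathcal H$ with $H^N(\Sph^n)$.
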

\begin{proof}
We choose $\mathcal U$ to be a small ball about $(0,1)$ in the coordinates $(\xi,\eps)$, with no other critical point of $F_h$ in its closure.
The local degree of $\nabla F_h$ at $(0,1)$ is $\operatorname{sign}\det D^2F_h(0,1)\ne0$.
We construct $k_\infty$ by \eqref{eq:globalbumps}--\eqref{eq:outerchoice}, and put
\begin{equation}\label{eq:fixedsphereg}
g\coloneqq\Omega^2\exp(k_\infty)
\end{equation}
in the stereographic chart.
It equals the round metric $g_c$ outside a compact subset of the chart and extends smoothly to $\Sph^n$.
The jet bounds are independent of $\lambda_j$, so $\mu_0$ can be chosen to make $g-g_c$ as small as prescribed in $C^L$ and in the finite differentiability norm required by Lemma \ref{lem:greenpositive}.

By \eqref{eq:fixedreducedexpansion}, $8\nabla J_j/((n-2N)\eta_j^2)$ converges uniformly to $\nabla F_h$.
Thus $C^1$ convergence of the energies supplies the uniform gradient convergence needed for degree stability; no $C^2$ convergence of $J_j$ is required.
Since $\nabla F_h$ is bounded away from zero on $\partial\mathcal U$, homotopy invariance of degree gives critical points $(\xi_j,\eps_j)$ of $J_j$ in $\mathcal U$.
Every accumulation point in $\overline{\mathcal U}$ is a zero of $\nabla F_h$ by uniform convergence.
Since $(0,1)$ is the only such zero, $(\xi_j,\eps_j)\to(0,1)$.
Equation \eqref{eq:multiplier-matrix} makes every multiplier zero.
Rescaling as in \eqref{eq:globalbumps} yields weak solutions
\[
u_j^\flat(x)\coloneqq\lambda_j^{-\frac{n-2N}{2}}(U_{\xi_j,\eps_j}+w_{j,\xi_j,\eps_j})
(\frac{x-x_j}{\lambda_j}),\qquad
P_{2N,\exp(k_\infty)}u_j^\flat=c_{n,N}(u_j^\flat)_+^{\frac{n+2N}{n-2N}}.
\]
By \eqref{eq:covariance} and Proposition \ref{prop:projected-inverse}, $u_j\coloneqq\Omega^{-(n-2N)/2}u_j^\flat$ belongs to $H^N(\Sph^n)$ and satisfies the required equation away from the north pole.
For a smooth test function, its product with a standard cutoff equal to one in the geodesic ball of radius $r$ about the north pole and zero outside the ball of radius $2r$ has $H^N(\Sph^n)$ norm $O(r^{(n-2N)/2})$.
Both sides of the equation define continuous functionals on $H^N(\Sph^n)$.
Testing away from the north pole and letting $r\downarrow0$ therefore proves the weak equation on all of $\Sph^n$.
The solutions are nonzero by their $L^{2n/(n-2N)}$ closeness to a bubble.
Lemma \ref{lem:positive-regularity} proves smoothness and strict positivity.

Equation \eqref{eq:fixed-corrector-convergence} implies
\begin{equation}\label{eq:rescaledconvergence}
\lambda_j^{\frac{n-2N}{2}}u_j^\flat(x_j+\lambda_j y)\to U(y)
\quad\text{in }\dot H^N(\R^n)\text{ and }L^{\frac{2n}{n-2N}}(\R^n).
\end{equation}
Equivalently,
\[
\lambda_j^{\frac{n-2N}{2}}\Omega(x_j+\lambda_j y)^{\frac{n-2N}{2}}
u_j(x_j+\lambda_j y)\to U(y).
\]
The factor $\Omega$ is bounded above and below near zero.
A fixed positive amount of $L^{2n/(n-2N)}$ mass lies in $B_{C\lambda_j}(x_j)$, whose $g$-volume is $O(\lambda_j^n)$.
Hence
\[
\|u_j\|_{L^\infty(\Sph^n)}\ge c\lambda_j^{-\frac{n-2N}{2}}\to\infty.
\]

Under the additional hypothesis on the second jet, $k_j$ and its first derivatives vanish at $x_j$.
Its quadratic term is $\mu_j\lambda_j^{d-2}H_0(W)$.
The normal-coordinate curvature formula therefore implies nonzero Weyl curvature at $x_j$.
Higher polynomial terms and the nonlinear terms in the exponential do not affect this second jet.
Conformal multiplication by $\Omega^2$ preserves nonvanishing of the Weyl tensor.
This proves the last assertion.
\end{proof}

\subsection{Positive mass for the constructed metrics}\label{sec:positive-mass}
We prove that the metrics constructed from the polynomial perturbations can also be chosen to satisfy the positive-mass condition \eqref{eq:mass-condition}.
Throughout this subsection, $n>2N+3$.
The set of poles $\mathcal Z(g)$ is defined by the Weyl-derivative conditions in \eqref{eq:mass-locus}.
We fix a sufficiently large integer $T_0>n+2N+8$ and a smooth family of conformal-normal representatives
\[
g_\xi:=\Lambda_\xi^{\frac{4}{n-2N}}g,\qquad
\Lambda_\xi(\xi)=1,\qquad d\Lambda_\xi(\xi)=0,
\]
whose geodesic normal coordinates satisfy $\det g_\xi(x)=1+O(|x|^{T_0})$.
We choose this family by the equivariant construction in Lemma \ref{lem:CN-finite-part} below.
The local relation between metric Taylor coefficients and covariant derivatives of the Weyl tensor in \cite[(3.21)]{CompactnessMass} identifies the Weyl-derivative conditions defining $\mathcal Z(g)$ with vanishing of the conformal-normal metric terms through degree $\lfloor(n-2N)/2\rfloor$.
The Green function expansion in \cite[Theorem 5.8]{CompactnessMass} therefore gives, at every $\xi\in\mathcal Z(g)$,
\begin{equation}\label{eq:CN-mass-expansion}
\begin{split}
G_{g_\xi}(\xi,x)={}&\gamma_{n,N}r^{2N-n}
+\sum_{e=d_0}^{n-2N}r^{e+2N-n}a_{\xi,e}(\theta)
+m_g(\xi)+O_{C^{2N}}(r^\eps),\\
d_0:={}&\left\lfloor\frac{n-2N}{2}\right\rfloor+1,
\qquad r:=|x|,\qquad \theta:=\frac{x}{r}.
\end{split}
\end{equation}
Here $\gamma_{n,N}$ is the fundamental-solution coefficient in \eqref{eq:greenleading}, every $a_{\xi,e}$ has zero spherical mean, and
$0<\eps<\min\{1,2d_0+2N-n\}$.
The remainder notation means that its derivatives of order $j$ are $O(r^{\eps-j})$ for $0\le j\le2N$.
Thus $m_g(\xi)$ is the finite part after subtraction of the radial singularity and every displayed angular term, including the bounded term with $e=n-2N$.
Equivalently, it is the limit of the spherical average after subtraction of $\gamma_{n,N}r^{2N-n}$.

\begin{proposition}[Positive mass along a fixed perturbation]\label{prop:mass-ray}
Let $H$ be a smooth symmetric two-tensor on $\Sph^n$ such that $\dot W:=DW_{g_c}[H]$ is not identically zero, and set
\begin{equation}\label{eq:mass-ray}
g_t:=g_c\exp(tg_c^{-1}H).
\end{equation}
There are constants $t_0>0$ and $c>0$, depending on this fixed variation, such that
\begin{equation}\label{eq:uniform-CN-mass}
m_{g_t}(\xi)\ge ct^2
\qquad\bigl(0<|t|<t_0,\ \xi\in\mathcal Z(g_t)\bigr).
\end{equation}
After decreasing $t_0$, the operators $P_{2N,g_t}$ are coercive and their Green functions are strictly positive off the diagonal.
\end{proposition}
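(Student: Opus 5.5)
The plan is to expand $m_{g_t}(\xi)$ in $t$ at the round metric, show that its second-order coefficient is a positive-definite quadratic form in $\dot W$ near $\xi$, and then make this uniform over the moving locus $\mathcal Z(g_t)$.

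\emph{Vanishing of the low orders.} On the round sphere the Green function of $P_{2N,g_c}$ in conformal normal coordinates is exactly $\gamma_{n,N}r^{2N-n}$, since $g_c$ is conformally flat and $\xi$ can be sent to the origin of a flat chart. Hence $m_{g_c}\equiv0$. The first variation of the finite part is a linear functional of $H$, and it is invariant under round isometries. It vanishes because, at $\xi\in\mathcal Z(g_t)$, the conformal-normal metric has no terms through degree $\lfloor(n-2N)/2\rfloor$. The linear correction to the Green function is therefore a sum of terms of homogeneity strictly above zero, or of zero spherical mean, and these are subtracted in \eqref{eq:CN-mass-expansion}. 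The approach I would take is the Green-function analogue of the Schoen and Brendle--Marques computations. Write $G_{g_\xi}=\gamma_{n,N}r^{2N-n}+\Gamma_1+\Gamma_2+\dots$ in $t$, and evaluate the constant spherical mean of $\Gamma_2$ by a Pohozaev-type or integration-by-parts identity over $\Sph^n\setminus B_\delta(\xi)$. The identity expresses $m(\xi)$ as a boundary flux plus a bulk term.

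\emph{The quadratic coefficient.} The bulk term should reduce, through the bubble identification of Section \ref{sec:reduction}, to a quadratic energy of the type $\cE$. The key algebraic step is to identify the $t^2$ coefficient with
\[
c_1\int_{\Sph^n}|\dot W|^2\,G_{g_c}(\xi,\cdot)^{\frac{2(n-2N+\cdots)}{n-2N}}\,dv_{g_c},
\]
or a similarly weighted positive integral of $\dot W$ with $c_1>0$. Only the Weyl part of $H$ should survive, since gauge directions (Lie derivatives) and conformal directions leave the mass invariant; this is the role of Lemma \ref{lem:gauge} and Lemma \ref{lem:energy-invariance}. For the positivity of the resulting TT quadratic form, I would use the spherical-harmonic structure: in a neighbourhood of a pole the relevant tensors are the Weyl-generated polynomial directions of Lemma \ref{lem:weyl-TT}. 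I would check the sign via the radial operator of Lemma \ref{lem:weyl-radial} and the Hessian polynomial $\mathscr P_N^{(n)}$, evaluated in the conformal normal frame. The key expected fact is that, restricted to the inverted (Green) picture, this form is positive when $n>2N+3$.

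\emph{Uniformity in $\xi$.} The set $\mathcal Z(g_t)$ moves with $t$, but it lies in a neighbourhood of the zero set of $\dot W$ and its derivatives to the stated order, since $W_{g_t}=t\dot W+O(t^2)$. The quadratic coefficient $Q(\xi)$ is continuous in $\xi$, and it is strictly positive wherever the weighted integral is nonzero. That integral is nonlocal and controlled by $\dot W\not\equiv0$ on the whole sphere, so $Q\ge 2c>0$ on $\Sph^n$. The smooth dependence of the equivariant conformal-normal family of Lemma \ref{lem:CN-finite-part} gives $m_{g_t}(\xi)=t^2Q(\xi)+O(t^3)$ uniformly in $\xi$, which yields \eqref{eq:uniform-CN-mass}. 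Coercivity and positivity of the Green function then follow from Lemma \ref{lem:greenpositive}, because $g_t\to g_c$ in every $C^{L}$.

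The main obstacle is the second step: turning the finite part of the second-order Green correction into a manifestly positive expression. This requires checking that all nonradial and log-type terms through homogeneity zero are exactly those removed in \eqref{eq:CN-mass-expansion}, and that no negative boundary contribution survives. I would first attempt it by conformally inverting at $\xi$, so that the mass becomes the constant term at infinity of an asymptotically flat metric. I would then apply Proposition \ref{prop:energybridge} to the inverted TT part of $H$.
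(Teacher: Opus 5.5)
Your outline matches the paper's: the finite part vanishes at $g_c$, is stationary there, has a positive $t^2$ coefficient, and is made uniform through Lemma~\ref{lem:CN-finite-part}. Conformal inversion at the pole is also the right first move. The gap is the central step, positivity of the $t^2$ coefficient, which you leave as an expectation, and the tools you name cannot deliver it. Proposition~\ref{prop:energybridge} concerns the bubble-reduced energy, i.e.\ $D^2\mathcal Q|_{g_c}$ read through $(\delta,U)$, not the mass. The profiles $H_r(W)$ and the radial operator of Lemmas~\ref{lem:weyl-TT}--\ref{lem:weyl-radial} belong to the noncompactness construction and say nothing about an arbitrary smooth $H$.

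The missing mechanism works at $\xi\in\mathcal Z_{\mathrm{lin}}$ as follows.
\begin{itemize}
\item You replace $g_t$ by the normalized auxiliary path $\bar g_0\exp(tL_\xi)$. Along $g_t$ itself the conformal-normal metric can carry low-degree jets of order $t^2$, and \eqref{eq:CN-stationarity} shows the switch does not change the second derivative.
\item You invert with $G_{\bar g_0}(\xi,\cdot)$ to get $\exp(tK_\xi)$ on $\R^n$ with $K_\xi=O(|y|^{-d_0})$. Then $K_\xi\in\mathcal H$ precisely because $2d_0+2N>n$.
\item Green's identity applied to $1$ and the Green-function quotient gives \eqref{eq:mass-action}.
\item Differentiating twice, the relevant Hessian is the \emph{flat} one \eqref{eq:flat-mass-Hessian}. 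On the TT part only the leading coefficient $\hat c_{N,n}>0$ of \eqref{eq:TT-leading-coefficient} survives, the gauge part drops, and the conformal contribution cancels exactly between the two terms of \eqref{eq:mass-action}.
\item The result is $\gamma_{n,N}^2\frac{n-2N}{2}\hat c_{N,n}\|\Pi_{\mathrm{TT}}K_\xi\|_{\mathcal H}^2$.
\item $\Pi_{\mathrm{TT}}K_\xi\neq0$ because the principal symbol of the linearized Weyl tensor annihilates only conformal and gauge directions, so its vanishing would force $\dot W\equiv0$.
\end{itemize}
The sign therefore comes from $\hat c_{N,n}>0$, valid for every $n>2N$. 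The restriction $n>2N+3$ enters only through the definition of the mass, not through positivity.

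The uniformity step has a second gap: you assert $Q\ge2c>0$ on all of $\Sph^n$. Take a pole where $\nabla^j\dot W(\xi)\neq0$ for some $j\le\lfloor(n-2N-4)/2\rfloor$. There the linear part of the conformal-normal metric starts at a degree $d$ with $2d+2N\le n$, so the inverted tensor is not in $\mathcal H$. The quadratic form that produces the positive coefficient is then not even finite, and the extended finite part $\mathfrak m$ at such a pole is only a renormalization convention with no reason to have a sign.

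The paper argues differently:
\begin{itemize}
\item It proves positivity of $B(\xi)=\partial_t^2\mathfrak m(g_t,\xi)|_{t=0}$ only on the compact set $\mathcal Z_{\mathrm{lin}}$ and extends it by continuity to a neighborhood $V$.
\item It uses the uniform convergence $t^{-1}\nabla^j_{g_t}W_{g_t}\to\nabla^j_{g_c}\dot W$ to get $\mathcal Z(g_t)\subset V$ for small $t\neq0$. If $\mathcal Z_{\mathrm{lin}}=\emptyset$, then $\mathcal Z(g_t)=\emptyset$ and the estimate is vacuous.
\item On $\mathcal Z(g_t)$ one has $\mathfrak m=m_{g_t}$, which gives the bound.
\end{itemize}
You already observe that $\mathcal Z(g_t)$ lies near the zero set of $\dot W$ and its derivatives; that localization has to replace your global positivity claim.

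Likewise, your first-variation argument only applies at points of the moving set $\mathcal Z(g_t)$. The uniform Taylor expansion at a fixed pole needs \eqref{eq:CN-stationarity} at every $\xi$. The paper obtains this by averaging $H$ over the stabilizer of $\xi$ and using that rotationally symmetric metrics are conformally round.
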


The operator assertions follow from Lemma \ref{lem:greenpositive}.
To prove the mass estimate on the moving set $\mathcal Z(g_t)$, we first extend its finite part to all poles.

\begin{lemma}[A smooth finite part at every pole]\label{lem:CN-finite-part}
For metrics near $g_c$, the conformal-normal family can be chosen so that a function $\mathfrak m(g,\xi)$ is smooth on every smooth finite-dimensional family of metrics and poles,
agrees with $m_g(\xi)$ for $\xi\in\mathcal Z(g)$, and satisfies
\begin{equation}\label{eq:CN-stationarity}
\mathfrak m(g_c,\xi)=0,\qquad
D_g\mathfrak m(g_c,\xi)[H]=0
\qquad\text{for every }H\text{ and }\xi.
\end{equation}
The construction is equivariant under round isometries.
\end{lemma}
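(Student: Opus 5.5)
We plan to define the finite part at every pole, not only at poles in $\mathcal Z(g)$. We first choose the conformal-normal family equivariantly and smoothly in $(g,\xi)$. At the round metric the stereographic-type factor gives an exact conformal-normal representative at each pole: $g_c$ is conformally flat, and a round isometry moving $\xi_0$ to $\xi$ transports the construction. For $g$ near $g_c$, we apply the standard conformal-normal construction of Lee--Parker type to finite order $T_0$. It solves triangular algebraic equations for the Taylor coefficients of $\log\Lambda_\xi$ in $g$-normal coordinates, and these depend polynomially on finitely many jets of $g$ at $\xi$. We then take $\Lambda_\xi$ to be that finite Taylor polynomial, cut off by a fixed smooth function of $d_{g}(\xi,\cdot)$ and normalized so that $\Lambda_\xi\equiv1$ when $g=g_c$. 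Uniqueness of the Taylor recursion, together with naturality of normal coordinates, gives smooth dependence and equivariance under round isometries.

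Next we define $\mathfrak m(g,\xi)$ from the Green function of the coercive operator $P_{2N,g_\xi}$ (Lemma \ref{lem:greenpositive}). We construct a local parametrix $\Gamma_{g,\xi}(x)$ from the conformal-normal jets of $g_\xi$ at $\xi$, to a fixed finite order. It contains the radial term $\gamma_{n,N}r^{2N-n}$ and all correction terms homogeneous of degree at most $0$ in $r$, including possible $r^0\log r$ pieces. We then set $\mathfrak m(g,\xi)$ to be the limit as $r\downarrow0$ of the spherical average of $G_{g_\xi}(\xi,\cdot)-\Gamma_{g,\xi}$. We keep only the radial-mean subtraction of the singular terms and define $\mathfrak m$ as the constant part of the remainder. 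Because the parametrix coefficients are polynomial in finitely many metric jets, and the remainder $P_{2N,g_\xi}(G-\Gamma)$ lies in $L^p$ for large $p$ and depends smoothly on $(g,\xi)$, elliptic estimates make $\mathfrak m$ smooth on every smooth finite-dimensional family. At $\xi\in\mathcal Z(g)$, the conformal-normal terms through degree $\lfloor(n-2N)/2\rfloor$ vanish by \cite[(3.21)]{CompactnessMass}. The surviving nonradial terms in $\Gamma$ then have zero spherical mean, and any radial correction terms vanish. Comparison with \eqref{eq:CN-mass-expansion} therefore gives $\mathfrak m=m_g(\xi)$. The main difficulty is this step: we must arrange that the radial parts of the lower-order parametrix terms, which are nonzero away from $\mathcal Z(g)$, are subtracted by a formula that is smooth in $(g,\xi)$ and does not alter the value on $\mathcal Z(g)$. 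We would first try to define them directly as the spherical means of the explicit parametrix coefficients.

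For \eqref{eq:CN-stationarity}, at $g=g_c$ we have $\Lambda_\xi\equiv1$, so $g_\xi=g_c$. Its Green function pulled back by stereographic projection from $\xi$ is exactly $\gamma_{n,N}r^{2N-n}$ up to the conformal factor, which is trivial in these coordinates. Hence $\mathfrak m(g_c,\xi)=0$. For the first variation, we observe that $D_g\mathfrak m(g_c,\xi)[H]$ is linear in $H$ and, by equivariance, is invariant under the isotropy group $O(n)$ of $\xi$. Variations $H=\mathcal L_Xg_c$ give zero by diffeomorphism naturality, and conformal variations $H=\varphi g_c$ give zero because the conformal-normal construction absorbs them. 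Every smooth $H$ then reduces to a TT part $h$. Linearizing the parametrix identity, we get $D\mathfrak m[h]$ as a pairing of $h$ against $O(n)$-invariant local data built from $r^{2N-n}$ and its derivatives, of the form $\int\langle h,\nabla^2(\cdots)\rangle$ with a radial kernel. The only $O(n)$-invariant symmetric tensors are multiples of $g_c$ and $dr\otimes dr$, and after the radial-mean subtraction their pairing with a trace-free, divergence-free $h$ integrates by parts to zero. Equivalently, this linear term is the first variation of the mass of the round Green function, which vanishes because $D_g(P_{2N,g}1)|_{g_c}$ vanishes on TT tensors (Theorem \ref{thm:ambientHessian}) and $P_{2N,g_c}$ is conformally flat to first order in the TT decomposition. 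This completes the plan.
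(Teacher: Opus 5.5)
Your proposal has two genuine gaps: the conformal-normal family is set up incorrectly at $g_c$, and the first-variation argument is not justified.

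\textbf{The normalization at $g_c$.} You correctly note that the stereographic factor gives an exact conformal-normal representative. You then impose $\Lambda_\xi\equiv1$ at $g=g_c$ and conclude $g_\xi=g_c$. The round metric is not conformal-normal at any pole. In $g_c$-normal coordinates $\det g_c=(\sin r/r)^{2(n-1)}=1-\frac{n-1}{3}r^2+O(r^4)$, so the determinant recursion forces a nonzero quadratic term in $\log\Lambda_\xi$.

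This breaks your proof of $\mathfrak m(g_c,\xi)=0$. With $g_\xi=g_c$, the Green function in these coordinates is $\gamma_{n,N}(2\sin\frac r2)^{2N-n}$, the chordal distance to the power $2N-n$. It carries radial corrections and, for even $n-2N$, a nonzero constant term, so the conformal factor is not trivial.

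The construction that works is the paper's. At $g_c$, take the representative exactly flat near $\xi$. For nearby $g$, keep that round-to-flat factor and extend only the difference of the prescribed Taylor coefficients with a fixed radial cutoff that is equivariant under round isometries. Two extensions with the same $T_0$-jet differ by $1+O(r^{T_0})$, so they give the same finite part because $T_0>n-2N$.

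Your ``main difficulty'' also has to be settled, not just attempted. No indicial root of $(-\Delta)^N$ lies strictly between $2N-n$ and $0$, so all negative-homogeneity corrections are unique. At homogeneity zero, prescribe zero spherical mean for the angular correction and a radial $\log r$ with no parameter-dependent additive constant.

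If the parametrix stops at homogeneity zero, the residual is only $O(r^{1-2N}(1+|\log r|)^b)$. It therefore lies in $L^a$ only for $a<n/(2N-1)$, not in $L^p$ for large $p$. Taking $n/(2N)<a<n/(2N-1)$ puts the remainder in $W^{2N,a}\hookrightarrow C^0$, and $\mathfrak m$ is defined as its value at the pole.

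\textbf{The first variation.} Your reduction via the York decomposition leaves each piece unproved.
\begin{enumerate}
\item The Lie-derivative directions need naturality under all diffeomorphisms. A cutoff construction has this only through the jet-locality just described.
\item The conformal directions need $\mathfrak m=0$ on every conformally round metric near $g_c$. That requires an argument: a local M\"obius map imposes the value and gradient normalizations on the round-to-flat factor, and uniqueness of the normalized recursion identifies its jet with the chosen representative.
\item For the TT part, integrating by parts against an $O(n)$-invariant kernel built from $G_{g_c}(\xi,\cdot)$ ignores the nonintegrable singularity at $\xi$. Its boundary terms are exactly what the renormalization must control.
\item The appeal to Theorem \ref{thm:ambientHessian} is a non sequitur. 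The identity $D_g(P_{2N,g}1)|_{g_c}[h]=0$ concerns $P_{2N}$ applied to constants, not the constant term of the Green function.
\end{enumerate}

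The missing idea is averaging. Once $\mathfrak m$ vanishes on conformally round metrics, average $H$ over the stabilizer of $\xi$. Equivariance and linearity leave $D_g\mathfrak m(g_c,\xi)[H]$ unchanged. The averaged tensor is rotationally symmetric, so its exponential path consists of metrics $a(r)^2dr^2+b(r)^2g_{\Sph^{n-1}}$, which are conformally round. Hence the derivative vanishes for every $H$, with no decomposition or integration by parts needed.

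If you keep your route, the same averaging closes the TT step. A smooth rotationally symmetric TT tensor has the form $\alpha\bigl(dr^2-\frac{\sin^2 r}{n-1}g_{\Sph^{n-1}}\bigr)$ with $\alpha'+n\cot r\,\alpha=0$. Smoothness at $r=0$ then forces $\alpha=0$.
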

\begin{proof}
The value and gradient conditions on $\Lambda_\xi$ fix its Taylor terms of degrees zero and one.
At degree $e\ge2$, a further conformal change $g\mapsto e^{2\varphi_e}g$, where $\varphi_e$ is homogeneous of degree $e$, changes the degree-$e$ term of $\log\det g$ in the new normal coordinates by
\[
\frac{2}{e(e+1)}\bigl(r^2\Delta\varphi_e+e(e-1)(n-2)\varphi_e\bigr),
\]
as follows from the linearized geodesic equation.
The operator $r^2\Delta$ has nonnegative eigenvalues on homogeneous polynomials of degree $e$, so this map is invertible.
The determinant normalization therefore determines the required conformal Taylor coefficients uniquely and smoothly from the metric jet.
The recursion and geodesic coordinates are natural under rotations of the orthonormal frame.
At $g_c$, choose the normalized representative to be exactly flat near $\xi$.
For nearby metrics, retain the exact round-to-flat conformal factor there and extend the difference of the prescribed Taylor coefficients with a fixed radial cutoff.
Both the round factor and the cutoff can be chosen equivariantly under round isometries.
Two such extensions with the same prescribed jet differ by a conformal factor $1+O(r^{T_0})$, and their centered normal coordinates differ by $x\mapsto x+O(r^{T_0+1})$.
Conformal covariance and this coordinate change alter the leading singular term of the Green function by $O(r^{T_0+2N-n})$ and the lower singularities by terms of still higher order.
Since $T_0>n-2N$, they give the same finite part.

In the resulting coordinates, we construct the parametrix for the Green function through homogeneity zero; see \cite[Theorem 5.1]{Ludewig} for the general expansion of elliptic Green functions of this type.
For a spherical harmonic of degree $\ell$, the indicial roots of $(-\Delta)^N$ are
\[
\ell+2a\quad\text{and}\quad-\ell-n+2+2a,
\qquad 0\le a\le N-1.
\]
There is no root strictly between $2N-n$ and $0$, while at homogeneity zero the only kernel is the constant angular mode.
The negative homogeneous corrections are therefore uniquely determined.
At homogeneity zero, prescribe zero spherical mean for the bounded angular correction and include a radial logarithm when its source is nonzero.
Fix $\log r$ without a parameter-dependent additive constant.
This specifies a cutoff local singular function $S_{g,\xi}$ containing all singular and bounded nonconstant terms.
Its distributional residual satisfies
\[
E_{g,\xi}:=P_{2N,g_\xi}S_{g,\xi}-\delta_\xi
=O\bigl(r^{1-2N}(1+|\log r|)^b\bigr)
\]
for a fixed nonnegative integer $b$.
The corrections have larger homogeneity than the fundamental solution, so their boundary terms at $r=0$ vanish and produce no additional distributions supported at $\xi$.
Choose $a$ with $n/(2N)<a<n/(2N-1)$.
After pulling each moving pole to the coordinate origin, every fixed parameter derivative of the residual obeys the same $O(r^{1-2N}(1+|\log r|)^b)$ bound, uniformly on compact parameter sets.
The parametrix coefficients depend smoothly on the metric jets, so dominated differentiation proves smooth dependence of the residual in $L^a$.
Coercivity, the resolvent identity, and elliptic estimates give the same dependence for
\[
G_{g_\xi}(\xi,\cdot)-S_{g,\xi}
=-P_{2N,g_\xi}^{-1}E_{g,\xi}
\quad\text{in }W^{2N,a}\hookrightarrow C^0.
\]
We define $\mathfrak m(g,\xi)$ to be the value of this remainder at the fixed coordinate origin.
Frame equivariance makes these local definitions agree as the pole varies.

At $\xi\in\mathcal Z(g)$, the conformal-normal metric logarithm starts at degree $d_0$.
Since $2d_0>n-2N$, every nonlinear metric contribution has degree too large to enter the parametrix through homogeneity zero.
Each remaining linear scalar source has zero spherical mean: averaging its metric coefficient over $O(n)$ gives zero, because an invariant homogeneous symmetric tensor that is both trace-free and tangential must vanish.
Thus every radial correction beyond the leading fundamental solution, including the radial logarithm, vanishes at these poles, and the expansion is precisely \eqref{eq:CN-mass-expansion}.
This proves agreement with $m_g$.

For a conformally round metric, the explicit conformal transform of the round Green function has zero finite part in a locally flat representative.
A local Euclidean M\"obius transformation imposes the required value and gradient normalizations on its conformal factor.
Uniqueness of the normalized conformal Taylor recursion then identifies its jet with the chosen representative to the order needed for the finite part.
Hence $\mathfrak m$ also vanishes for every conformally round metric near $g_c$.
Fix $\xi$ and average $H$ over the orthogonal stabilizer of $\xi$ in the round isometry group.
Equivariance and linearity give the same value of $D_g\mathfrak m(g_c,\xi)$ on $H$ and on this averaged tensor.
The average is rotationally symmetric, and its exponential path from $g_c$ is conformally round.
Indeed, for a rotationally symmetric metric $a(r)^2dr^2+b(r)^2g_{\Sph^{n-1}}$, the change of variable
$d\vartheta/\sin\vartheta=a(r)\,dr/b(r)$ gives its conformal equivalence to the round metric.
The coordinate change and conformal factor extend smoothly at both poles by the smoothness conditions on $a$ and $b$.
The finite part along the averaged path is consequently zero, proving \eqref{eq:CN-stationarity}.
\end{proof}

Away from $\mathcal Z(g)$, only the fixed finite-part convention just constructed is used.
In particular, the proof does not require conformal covariance of a general renormalized mass in even dimensions; see \cite[Theorem 7.1]{Ludewig}.

We next record the flat quadratic form that determines $D_g^2\mathfrak m(g_c,\xi)$.
We use the componentwise extension of $\mathcal H=\dot H^N(\R^n)$ from \eqref{eq:homogeneous-norm} for tensors and set $P_0:=(-\Delta)^N$.
Writing $\mathcal F$ for the unitary Fourier transform, the same norm is
\[
\|v\|_{\mathcal H}^2=\int_{\R^n}|\zeta|^{2N}|(\mathcal Fv)(\zeta)|^2\,d\zeta.
\]
For compactly supported perturbations of $\delta$, we define $\mathcal Q(g):=\int_{\R^n}Q_{2N,g}\,dv_g$ as in the closed case.
The flat metric is stationary for this integral, because its linearization is an integrated divergence.
Theorem \ref{thm:ambientHessian} and its leading coefficient \eqref{eq:TT-leading-coefficient} give
\begin{equation}\label{eq:flat-mass-TT}
D^2\mathcal Q|_\delta[T,T]
=-\hat c_{N,n}\|T\|_{\mathcal H}^2,
\qquad
\hat c_{N,n}:=(-1)^{N+1}[b^N]\mathscr P_N^{(n)}(b)>0
\end{equation}
for Euclidean TT tensors $T$.
For $N=1$, the same definition gives $\hat c_{1,n}=1/[4(n-1)]$.
Indeed, the operator representing $D^2\mathcal Q|_\delta$ has constant coefficients and total differential order $2N$; on Euclidean TT tensors, its symbol agrees with the principal symbol of the operator representing $D^2\mathcal Q|_{g_c}$.
All lower-order terms contain background curvature and vanish at the flat metric.

For a general symmetric tensor $K\in\mathcal H$, the Euclidean decomposition is
\begin{equation}\label{eq:flat-mass-decomposition}
K=T+\mathcal L_X\delta+2\omega\delta,
\qquad T:=\Pi_{\mathrm{TT}}K,
\qquad \nabla X,\omega\in\mathcal H.
\end{equation}
Here $\mathcal L_X\delta$ is the Lie derivative of the Euclidean metric.
The TT projection is given, for $\zeta\ne0$, by
\[
\Pi(\zeta):=I-\frac{\zeta\otimes\zeta}{|\zeta|^2},\qquad
\mathcal F(\Pi_{\mathrm{TT}}K)(\zeta)
:=\Pi(\zeta)(\mathcal F K)(\zeta)\Pi(\zeta)
-\frac{\tr\bigl(\Pi(\zeta)(\mathcal F K)(\zeta)\bigr)}{n-1}\Pi(\zeta).
\]
This bounded degree-zero symbol defines the projection on $\mathcal H$ and determines the remaining components in \eqref{eq:flat-mass-decomposition}.
The linearization of $Q_{2N,g}$ at the flat metric is
\begin{equation}\label{eq:flat-Q-linearization}
DQ_{2N,\delta}[K]
=\frac{(-\Delta)^{N-1}}{2(n-1)}
\bigl(\partial_i\partial_jK_{ij}-\Delta\tr K\bigr)
=P_0\omega.
\end{equation}
It vanishes on the TT and Lie derivative components, and the conformal component follows from \eqref{eq:covariance}.
For compactly supported $\omega$, conformal covariance also gives the exact identity
\[
\mathcal Q(e^{2t\omega}\delta)
=\frac2{n-2N}\int_{\R^n}
e^{\frac{(n-2N)t\omega}{2}}P_0e^{\frac{(n-2N)t\omega}{2}}\,dx.
\]
Its second derivative is $(n-2N)\|\omega\|_{\mathcal H}^2$.
Diffeomorphism invariance eliminates the Lie derivative component of the Hessian, while \eqref{eq:flat-Q-linearization} eliminates the mixed TT--conformal term.
It follows that
\begin{equation}\label{eq:flat-mass-Hessian}
D^2\mathcal Q|_\delta[K,K]
=-\hat c_{N,n}\|T\|_{\mathcal H}^2
+(n-2N)\|\omega\|_{\mathcal H}^2.
\end{equation}
The identities first hold for compactly supported variations and extend as quadratic forms on $\mathcal H$ by their Fourier symbols and density.

We define the compact set determined by the linearized Weyl derivatives of the fixed variation in \eqref{eq:mass-ray} by
\begin{equation}\label{eq:linearized-mass-locus}
\mathcal Z_{\mathrm{lin}}:=\left\{\xi\in\Sph^n:
\nabla_{g_c}^j\dot W(\xi)=0\quad
0\le j\le\left\lfloor\frac{n-2N-4}{2}\right\rfloor\right\}.
\end{equation}

\begin{lemma}[Second variation of the finite part]\label{lem:CN-mass-Hessian}
For every $\xi\in\mathcal Z_{\mathrm{lin}}$, the normalized first metric variation determines a tensor $K_\xi\in\mathcal H$ such that
\begin{equation}\label{eq:CN-mass-Hessian}
\left.\frac{d^2}{dt^2}\right|_{t=0}\mathfrak m(g_t,\xi)
=\gamma_{n,N}^2\frac{n-2N}{2}\hat c_{N,n}
\|\Pi_{\mathrm{TT}}K_\xi\|_{\mathcal H}^2>0.
\end{equation}
\end{lemma}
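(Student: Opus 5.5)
Because $\mathfrak m(g_c,\xi)=0$ and $D_g\mathfrak m(g_c,\xi)=0$ by \eqref{eq:CN-stationarity}, the second derivative along $g_t$ is a quadratic form in $H$ alone. The plan is to compute this form in conformal normal coordinates at $\xi$ after blowing up the pole.

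\textbf{Step 1 (reduction to flat space).} Stereographically project so that $\xi$ goes to the origin, and use the flat representative from Lemma \ref{lem:CN-finite-part}. Write $g_t=\exp(tk_1+t^2k_2+O(t^3))\delta$ near $0$. Here $k_1$ is the conformal-normal linearized metric and $k_2$ is the second-order normalization correction. Next apply the Kelvin inversion $y=x/|x|^2$, which sends the pole to infinity. Under this inversion the Green function $G_{g_\xi}(\xi,\cdot)$ becomes, up to the factor $\gamma_{n,N}$ and the Kelvin weight, a solution of $P_{2N}\Phi=0$ near infinity with $\Phi\to 1$. The finite part $\mathfrak m$ becomes the coefficient of $|y|^{2N-n}$ in the expansion of $\Phi$ at infinity, that is, an ADM-type mass of an asymptotically flat metric $\delta+tK+O(t^2)$. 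Here $K$ is the inverted, normalized first variation; this defines $K_\xi$. Since $\xi\in\mathcal Z_{\mathrm{lin}}$, the conformal-normal jet of $k_1$ vanishes through degree $d_0-1$. So $K=O(|y|^{-d_0})$ with $2d_0>n-2N$, which should give $K\in\mathcal H$.

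\textbf{Step 2 (mass as energy).} As in the Schoen/Lee--Parker mechanism, I would express the mass by a Pohozaev/energy identity. Pair the equation for $\Phi$ with the ambient total $Q$-curvature, i.e.\ use the conformal factor $\Phi^{4/(n-2N)}$, which makes $Q_{2N}=0$. Then $\mathfrak m$ is a positive multiple of a boundary flux at infinity, and that flux equals minus the bulk integral of $Q_{2N}$ of the metric, up to terms that vanish quadratically. Expanding to second order in $t$, the first-order terms vanish by \eqref{eq:CN-stationarity}. The second-order term is $-\tfrac12 D^2\mathcal Q|_\delta[K,K]$ after the scalar correction $\omega$ is optimized; this optimization is the conformal factor $\Phi$ at first order, whose linear equation is \eqref{eq:flat-Q-linearization}. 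Stationary elimination of $\omega$ removes the positive conformal term in \eqref{eq:flat-mass-Hessian}, and the Lie-derivative part drops out. What remains is $\hat c_{N,n}\|\Pi_{\mathrm{TT}}K\|_{\mathcal H}^2$ times the normalization constant $\gamma_{n,N}^2(n-2N)/2$, which comes from the Kelvin weights and from $P(1)=\tfrac{n-2N}{2}Q$.

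\textbf{Step 3 (justification and positivity).} The second-order term $k_2$ enters only linearly. Its contribution is a first variation at the flat metric, and it vanishes because the linearization at $\delta$ is an integrated divergence. This is legitimate when the boundary flux at infinity decays, which holds since $k_2$ has degree at least $2d_0$. Strict positivity requires $\Pi_{\mathrm{TT}}K_\xi\neq0$. If it vanished, then $K$ would be pure gauge plus conformal, so the linearized Weyl tensor of $K$ would vanish. Via \cite[(3.21)]{CompactnessMass}, this would force $\dot W$ and all its covariant derivatives to vanish at $\xi$. The full Taylor expansion of $k_1$ then vanishes, so $\dot W\equiv0$ near $\xi$, and by analyticity of the round-background linearized Weyl equation on the relevant jets, contradicting $\dot W\not\equiv0$.

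I expect the main obstacle to be Step 2. The delicate part is making the boundary-flux identification rigorous with the logarithmic and angular terms in \eqref{eq:CN-mass-expansion}, and verifying that the angular zero-mean terms contribute nothing to the flux at second order. The exact constant $\gamma_{n,N}^2(n-2N)/2$ also has to be tracked through the inversion.
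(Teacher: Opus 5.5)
Your overall strategy matches the paper's: blow the pole up to an asymptotically flat picture, write the finite part through a Green/energy identity, and read off the second variation as the flat $Q$-Hessian with the conformal mode eliminated. However, two steps fail as written.

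\textbf{The second-order jets.} Your claim in Step 3 that $k_2$ has degree at least $2d_0$ is false in general. The hypothesis $\xi\in\mathcal Z_{\mathrm{lin}}$ constrains only the linearized derivatives $\nabla^j\dot W(\xi)$. Along $g_t=g_c\exp(tg_c^{-1}H)$ one has $W_{g_t}(\xi)=\frac{t^2}{2}\ddot W(\xi)+O(t^3)$, and $\ddot W(\xi)$ need not vanish: even when $H$ is pure gauge near $\xi$, the exponential path agrees with a gauge orbit only to first order. Then the degree-two part of the conformal-normal logarithm of $g_t$ has a nonzero $t^2$ coefficient, and $\xi\notin\mathcal Z(g_t)$ for small $t\ne0$. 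After inversion that term decays only like $|y|^{-2}$, which is not in $\mathcal H$ since $n\ge2N+4$. So along $g_t$ you cannot invoke the expansion \eqref{eq:CN-mass-expansion}, which underlies your identification of $\mathfrak m(g_t,\xi)$ with a coefficient of $|y|^{2N-n}$, nor the flux decay you cite.

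The missing step is to use \eqref{eq:CN-stationarity}, which you quote at the outset, to change the path. The paper replaces $g_t$ by $g_t^*=\mathcal B_{\xi,t}^{-1}(\bar g_0\exp(tL_\xi))$, which has the same velocity. Its frozen logarithm $tL_\xi$ preserves the normal-coordinate and determinant normalizations, so $\xi\in\mathcal Z(g_t^*)$ for all small $t$. The two second derivatives differ by $D_g\mathfrak m(g_c,\xi)$ applied to the difference of accelerations, which vanishes. Along $g_t^*$ there is no $k_2$: the blown-up path is exactly $\exp(tK_\xi)$ with the uniform decay \eqref{eq:mass-AF-decay}. That decay justifies the Green identity $m_t=-\gamma_{n,N}^2\lim_{R\to\infty}\int_{B_R}f_tu_t\,dv_{\hat g_t}$ together with its two $t$-derivatives. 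Inserting $u_t=1-P_{2N,\hat g_t}^{-1}f_t$ gives the exact formula \eqref{eq:mass-action}. Its quadratic term $\langle f_t,P_{2N,\hat g_t}^{-1}f_t\rangle$ must be kept rather than discarded ``up to terms that vanish quadratically,'' since it is exactly what cancels the conformal part.

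\textbf{Nondegeneracy.} Your argument here does not work. $H$ is only smooth; in the application it is a sum of compactly supported bumps. So infinite-order vanishing of $\dot W$ at $\xi$, or even vanishing near $\xi$, says nothing about $\dot W$ elsewhere, and there is no analyticity to appeal to. Consider a pole in a region where $H\equiv0$; such poles lie in $\mathcal Z_{\mathrm{lin}}$ and are among the points where \eqref{eq:mass-condition} must be verified. There all local data at $\xi$ vanish although $\dot W\not\equiv0$, so no argument using only the jet at $\xi$ can give $\Pi_{\mathrm{TT}}K_\xi\ne0$. Positivity comes from global structure instead. $K_\xi$ is a tensor on all of $\R^n$, once the normalization is extended smoothly off the centered chart. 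If $\Pi_{\mathrm{TT}}K_\xi=0$, then $K_\xi$ is a Lie derivative plus a conformal term, so $DW_\delta[K_\xi]=0$ on all of $\R^n$. Because the background is conformally flat, the linearized Weyl tensor is covariant under the conformal and coordinate changes relating $\exp(tK_\xi)$ to $g_t^*$. Hence $DW_{g_c}[H]=0$ on $\Sph^n\setminus\{\xi\}$, and therefore on $\Sph^n$, contradicting the hypothesis.
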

\begin{proof}
Fix $\xi\in\mathcal Z_{\mathrm{lin}}$.
Let $\mathcal B_{\xi,t}$ denote the conformal normalization and centered coordinate change of $g_t$, extended smoothly away from its centered chart, and set $\bar g_t:=\mathcal B_{\xi,t}(g_t)$.
We define the endomorphism
\[
L_\xi:=\left.\frac{d}{dt}\right|_{t=0}
\log(\bar g_0^{-1}\bar g_t).
\]
Linearizing the relation between metric Taylor coefficients and covariant derivatives of the Weyl tensor used in \eqref{eq:CN-mass-expansion} shows that $L_\xi$ starts at degree $d_0$ near the pole.
Lemma \ref{lem:CN-finite-part} makes $\bar g_0$ exactly flat in that chart.
Since the coordinates are normal for every sufficiently small $t\in\mathbb R$, $L_\xi(x)x=0$ there, and determinant normalization gives $\tr L_\xi=O(r^{T_0})$.

The original nonlinear path can have lower metric jets of order $t^2$.
To obtain the decay needed after inversion, we replace it by the normalized auxiliary path
\[
\bar g_t^*:=\bar g_0\exp(tL_\xi),\qquad
g_t^*:=\mathcal B_{\xi,t}^{-1}(\bar g_t^*).
\]
The compact paths $g_t^*$ and $g_t$ have the same value and first derivative at zero.
The frozen logarithm $tL_\xi$ preserves the normal-coordinate identity and the determinant normalization through order $T_0$.
The conformal factor used in $\mathcal B_{\xi,t}$ still has value one and zero gradient at the pole.
Uniqueness of the normalized conformal Taylor recursion therefore identifies $\bar g_t^*$ with the chosen conformal-normal representative of $g_t^*$ to the order needed for the finite part.
Its logarithm starts at degree $d_0$ for every sufficiently small $t\in\mathbb R$, so $\xi\in\mathcal Z(g_t^*)$.
Since the two paths have the same first derivative, the chain rule expresses the difference of their second finite-part derivatives as $D_g\mathfrak m(g_c,\xi)$ applied to the difference of their accelerations.
This term vanishes by \eqref{eq:CN-stationarity}.

We apply the fixed conformal factor $(G_{\bar g_0}(\xi,\cdot)/\gamma_{n,N})^{4/(n-2N)}$ to $\bar g_t^*$.
The background becomes Euclidean on the punctured sphere, with coordinates $y=x/|x|^2$ near infinity.
We write the resulting path as $\hat g_t=\exp(tK_\xi)$ in the global Euclidean coordinates.
The tensor $K_\xi$ satisfies
\begin{equation}\label{eq:mass-AF-decay}
\partial^jK_\xi=O(|y|^{-d_0-j}),\qquad
K_\xi(y)y=0,\qquad \tr K_\xi=O(|y|^{-T_0})
\end{equation}
near infinity, through every fixed derivative order used below.
Since $2d_0+2N>n$, cutoff approximation places $K_\xi$ in the tensor space $\mathcal H$.

We define
\[
f_t:=P_{2N,\hat g_t}1=\frac{n-2N}{2}Q_{2N,\hat g_t},
\qquad
u_t:=1-P_{2N,\hat g_t}^{-1}f_t.
\]
The operator and forcing are interpreted weakly using $dv_{\hat g_t}$, so
$\langle f_t,v\rangle:=\int_{\R^n}f_tv\,dv_{\hat g_t}$ for compactly supported test functions.
The linear part of $f_t$ belongs to $\mathcal H^*$ by \eqref{eq:flat-Q-linearization}, and its nonlinear remainder is $O(|y|^{-2d_0-2N})$ and belongs to $L^{2n/(n+2N)}$.
The inverse $P_{2N,\hat g_t}^{-1}:\mathcal H^*\to\mathcal H$ exists for small $t\in\mathbb R$ by conformal transport of the coercive compact quadratic form.
Conformal covariance and uniqueness identify $u_t$, in the punctured-sphere coordinates, with
\[
u_t=\frac{G_{\bar g_t^*}(\xi,\cdot)}{G_{\bar g_0}(\xi,\cdot)}.
\]
Indeed, this quotient solves $P_{2N,\hat g_t}u_t=0$, and its difference from one belongs to $\mathcal H$ by \eqref{eq:CN-mass-expansion}.
Writing $m_t:=m_{g_t^*}(\xi)$, the same expansion gives
\[
\frac1{|\Sph^{n-1}|}\int_{\Sph^{n-1}}u_t(R\theta)\,d\theta
=1+\frac{m_t}{\gamma_{n,N}}R^{2N-n}
+O(R^{2N-n-\eps}).
\]
Its retained nonspherical terms start at order $R^{-d_0}$ and have zero spherical mean.
Their coefficients are linear in $t$, since quadratic metric terms have degree at least $2d_0>n-2N$ before inversion.
Differentiating the parameter-dependent parametrix and inverse, followed by scaled interior Schauder estimates, gives these expansions with their scaled derivatives through order $2N$ for the first two parameter derivatives as well.

Green's identity applied to $1$ and $u_t$ yields
\[
m_t=-\gamma_{n,N}^2\lim_{R\to\infty}
\int_{B_R}f_tu_t\,dv_{\hat g_t}.
\]
We justify the limit and its two derivatives explicitly.
In the formally self-adjoint divergence form of $P_{2N,\hat g_t}$, the potential cancels from the skew Green identity.
The remaining terms have at least one derivative on each scalar factor before integration by parts.
Every boundary term beyond the flat expression contains a metric coefficient variation and a derivative of $u_t-1$.
The decay in \eqref{eq:mass-AF-decay} and the differentiated Green expansion bound its integral by
\begin{equation}\label{eq:mass-boundary-error}
O\bigl(R^{n-2N-2d_0}\bigr)\to0.
\end{equation}
For the flat operator, spherical averaging removes every angular term and gives
\[
\int_{\partial B_R}\partial_\nu(-\Delta)^{N-1}(u_t-1)\,d\sigma
\to-\frac{m_t}{\gamma_{n,N}^2},
\]
where $\nu$ is the outward Euclidean unit normal.
This fixes the minus sign in the expression for $m_t$ obtained from Green's identity.
For the linear curvature term, we set $V_i:=\partial_j(K_\xi)_{ij}$.
Tangency gives $y\cdot V=-\tr K_\xi$, so the spherical mean of
$\partial_iV_i-\Delta\tr K_\xi$ is $O(R^{-T_0-2})$.
Spherical averaging commutes with $\Delta$, and the boundary integral obtained from the linearized curvature in \eqref{eq:flat-Q-linearization} is therefore $O(R^{n-2N-T_0})$ and tends to zero.
All quadratic and higher curvature-density terms are $O(R^{-2d_0-2N})$ and are integrable.
The product of the linear curvature term with $u_t-1$ has the same integrable decay.
These bounds hold for the first two parameter derivatives, which justifies differentiating the improper integrals twice.

Since $u_t-1=-P_{2N,\hat g_t}^{-1}f_t$, the identity becomes
\begin{equation}\label{eq:mass-action}
m_t=-\gamma_{n,N}^2\left(
\lim_{R\to\infty}\int_{B_R}f_t\,dv_{\hat g_t}
-\langle f_t,P_{2N,\hat g_t}^{-1}f_t\rangle\right).
\end{equation}
The brackets are the dual pairing of $\mathcal H^*$ and $\mathcal H$.
At zero, $f_0=0$.
Decompose $K_\xi=T_\xi+\mathcal L_X\delta+2\omega_\xi\delta$ as in \eqref{eq:flat-mass-decomposition}; then
\[
f'_0=\frac{n-2N}{2}P_0\omega_\xi.
\]
The quadratic integrals in \eqref{eq:flat-mass-Hessian} pass to $K_\xi$ by cutoff approximation in $\mathcal H$.
The acceleration of the exponential path is $K_\xi^2=O(R^{-2d_0})$, whose linear-curvature boundary contribution also tends to zero by \eqref{eq:mass-boundary-error}.
Thus differentiating \eqref{eq:mass-action} twice gives
\[
\begin{aligned}
m''_0={}&-\gamma_{n,N}^2\frac{n-2N}{2}
\left(-\hat c_{N,n}\|T_\xi\|_{\mathcal H}^2
+(n-2N)\|\omega_\xi\|_{\mathcal H}^2\right)\\
&\quad+2\gamma_{n,N}^2\left(\frac{n-2N}{2}\right)^2
\|\omega_\xi\|_{\mathcal H}^2\\
={}&\gamma_{n,N}^2\frac{n-2N}{2}\hat c_{N,n}
\|T_\xi\|_{\mathcal H}^2.
\end{aligned}
\]
The scalar contribution cancels exactly.

Finally, $T_\xi$ cannot vanish.
For $n\ge4$, the kernel of the principal symbol of the linearized Weyl tensor consists of infinitesimal conformal and coordinate changes.
For example, when the frequency $\zeta$ is parallel to $e_1$, a TT tensor satisfies $(\mathcal FT)_{1j}=0$ and
\[
\mathcal F(DW_\delta[T])_{1a1b}
=\frac{n-3}{2(n-2)}|\zeta|^2(\mathcal FT)_{ab}
\qquad(2\le a,b\le n),
\]
up to the common choice of curvature sign.
The conformal and Lie derivative components have zero linearized Weyl tensor.
If $T_\xi=0$, it follows that $DW_\delta[K_\xi]=0$ on $\R^n$.
Conformal and coordinate transport would then give $DW_{g_c}[H]=0$ on the punctured sphere and, by smoothness, on all of $\Sph^n$.
This contradicts the hypothesis of Proposition \ref{prop:mass-ray} and proves \eqref{eq:CN-mass-Hessian}.
\end{proof}

\begin{proof}[Proof of Proposition \ref{prop:mass-ray}]
Lemma \ref{lem:CN-finite-part} gives the uniform expansion
\begin{equation}\label{eq:CN-uniform-Taylor}
\mathfrak m(g_t,\xi)=\frac{t^2}{2}B(\xi)+O(t^3),
\qquad
B(\xi):=\left.\frac{d^2}{dt^2}\right|_{t=0}\mathfrak m(g_t,\xi),
\end{equation}
where $B$ is continuous on the whole sphere.
If $\mathcal Z_{\mathrm{lin}}$ is nonempty, Lemma \ref{lem:CN-mass-Hessian} gives a positive minimum of $B$ on this compact set.
Choose a neighborhood $V$ of $\mathcal Z_{\mathrm{lin}}$ and $b>0$ such that $B\ge b$ on $V$.
For every required derivative order,
\begin{equation}\label{eq:mass-Weyl-limit}
t^{-1}\nabla_{g_t}^jW_{g_t}
\to\nabla_{g_c}^j\dot W
\quad\text{uniformly on }\Sph^n\text{ as }t\to0.
\end{equation}
Every limit of points $\xi_i\in\mathcal Z(g_{t_i})$ with $t_i\to0$ therefore lies in $\mathcal Z_{\mathrm{lin}}$.
Compactness implies $\mathcal Z(g_t)\subset V$ for all sufficiently small nonzero $t$.
On this set $\mathfrak m=m_g$, and \eqref{eq:CN-uniform-Taylor} gives
$m_{g_t}(\xi)\ge bt^2/4$ after decreasing $t_0$.
If $\mathcal Z_{\mathrm{lin}}$ is empty, \eqref{eq:mass-Weyl-limit} instead implies that $\mathcal Z(g_t)$ is empty for sufficiently small nonzero $t$, so the asserted estimate is vacuous.
\end{proof}

\begin{proposition}[Mass for the noncompactness metrics]\label{prop:mass-noncompactness}
For $N\ge4$ and $n$ in \eqref{eq:dimensionrange}, the metrics supplied by Theorem \ref{thm:fixed-metric-transfer} for the polynomial symmetric two-tensors of Proposition \ref{prop:seed} with $M=3$ can also be chosen to satisfy \eqref{eq:mass-condition}.
The same assertion holds for the sixth-order construction of \cite[Theorem 1.4]{GongKimWei} in every dimension $n\ge27$.
In each case, the metric remains arbitrarily close to $g_c$ in any prescribed finite $C^L$ norm and supports an $L^\infty$-unbounded sequence of positive solutions.
\end{proposition}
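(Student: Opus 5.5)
The plan is to deduce the mass condition for the constructed metrics from Proposition \ref{prop:mass-ray}, applied to a single fixed smooth tensor $H$. The metric of Theorem \ref{thm:fixed-metric-transfer} is $g=\Omega^2\exp(k_\infty)$, where $k_\infty=\sum_j k_j$ and each $k_j$ carries the factor $\mu_j=\mu_0 2^{-j^3}$. We therefore regard $\mu_0$ as the ray parameter and write $k_\infty=\mu_0 K$, with $K:=\sum_j 2^{-j^3}\lambda_j^d\chi(|x-x_j|/\rho_j)h((x-x_j)/\lambda_j)$. The scales $\lambda_j,\rho_j$ are chosen once and for all, independently of $\mu_0$. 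This is legitimate because condition \eqref{eq:outerchoice} becomes only easier when $\mu_j$ is smaller, so for each fixed $\mu_0$ we may let $\lambda_j$ shrink independently of $\mu_0$ for all large $j$; fixing $\lambda_j$ first and then restricting $\mu_0\le1$ keeps \eqref{eq:outerchoice} valid uniformly. Lemma \ref{lem:localized-energy} shows that $K$ is smooth, compactly supported in the chart, and flat at the accumulation point. Setting $H:=\Omega^2K$, we obtain $g=g_c\exp(\mu_0 g_c^{-1}H)$ exactly, because $\Omega^2$ is scalar and commutes with the endomorphism exponential. Hence $g$ lies on the ray \eqref{eq:mass-ray} with $t=\mu_0$.

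Next we verify the hypothesis $\dot W=DW_{g_c}[H]\not\equiv0$. By conformal invariance of the Weyl tensor it suffices to compute $DW_\delta[K]$ at $x_j$. The polynomial $h$ from Proposition \ref{prop:seed} has vanishing constant and affine parts and degree-two part $\tau H_0(W)$ with $\tau\ne0$. By the normal-coordinate computation already used at the end of Theorem \ref{thm:fixed-metric-transfer}, the linearized Weyl tensor at $x_j$ is a nonzero multiple of $W$, so $\dot W(x_j)\ne0$. Proposition \ref{prop:mass-ray} then yields $t_0>0$ such that, for $0<\mu_0<t_0$, the metric satisfies $m_g(\xi)\ge c\mu_0^2>0$ on $\mathcal Z(g)$, has coercive $P_{2N,g}$, and has a positive Green function. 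We then shrink $\mu_0$ further so that the remaining requirements of Theorem \ref{thm:fixed-metric-transfer} also hold. These are the smallness $\kappa(k_\infty)\le C\mu_0$ needed in Lemma \ref{lem:localized-energy} and the $C^L$ closeness, both of which depend monotonically on $\mu_0$ once the scales are fixed. With these choices the noncompactness conclusion and \eqref{eq:mass-condition} hold simultaneously. Throughout we need $n>2N+3$, which holds in the range \eqref{eq:dimensionrange}.

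For the sixth-order construction of \cite[Theorem 1.4]{GongKimWei} the argument is the same. That metric is again a conformally round metric multiplied by an exponential of a sum of localized Weyl-generated polynomial bumps with prefactor $\mu_0$. We write it as a ray with $N=3$, check that the degree-two Weyl jet is nonzero, and invoke Proposition \ref{prop:mass-ray}; the condition $n\ge27>2N+3$ holds.

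The main obstacle is the independence of the scale choices from $\mu_0$. The path must be an exact ray $g_c\exp(tg_c^{-1}H)$ with one fixed $H$, while Theorem \ref{thm:fixed-metric-transfer} originally chose $\lambda_j$ after $\mu_j$. I would handle this by choosing the $\lambda_j$ against the smallest relevant value, namely by requiring $\lambda_j^{n-2N-2d}\le 2^{-j}\rho_j^{n-2N}2^{-2j^3}\mu_*^2$ for a fixed $\mu_*\le\mu_0$ (or simply for $\mu_0$ itself, since the quotient in \eqref{eq:outerchoice} scales as $\mu_0^{-2}$ and tends to zero for each fixed $\mu_0>0$). Then only the constants $C$, and not the limits, depend on $\mu_0$. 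A secondary point is that $H$ is smooth on all of $\Sph^n$, including the north pole, since $K$ is compactly supported in the chart.
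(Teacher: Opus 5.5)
For $N\ge4$ your plan is the paper's argument. You fix the scales $\lambda_j$, and hence $K$ and $H=\Omega^2K$, before the amplitude. You then treat the amplitude as the ray parameter $t$ in \eqref{eq:mass-ray} and invoke Proposition \ref{prop:mass-ray}.

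Your first justification is backwards, though. With $\lambda_j,\rho_j$ fixed and $\mu_j=t\bar\mu_j$, the quotient $\lambda_j^{n-2N-2d}/(\mu_j^2\rho_j^{n-2N})$ in \eqref{eq:outerchoice} is multiplied by $t^{-2}$. So it gets worse, not easier, as the amplitude decreases, and it is not uniform over $t\le1$. Choosing the $\lambda_j$ against some $\mu_*\le\mu_0$ is circular, because $t_0$ comes out of Proposition \ref{prop:mass-ray} only after $H$, and hence the $\lambda_j$, has been fixed. The correct reason is the one in your parenthetical remark, which is exactly what the paper uses. For each fixed $t>0$ the quotient still tends to zero as $j\to\infty$, and neither Lemma \ref{lem:localized-energy} nor Theorem \ref{thm:fixed-metric-transfer} needs anything uniform in $t$.

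Your check of $\dot W\not\equiv0$ through the second jet at $x_j$ is valid. In the rank-two case of Proposition \ref{prop:seed}, the degree-two part is $H_0$ of the nonzero Weyl tensor $\tau_1W_1+\tau_2W_2$. The paper argues globally instead: $K$ is a nonzero compactly supported Euclidean TT tensor, and the principal-symbol computation in Lemma \ref{lem:CN-mass-Hessian} shows that its linearized Weyl tensor cannot vanish identically. That argument needs no knowledge of a particular jet.

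The genuine gap is the sixth-order case, where ``the argument is the same'' is not justified. The construction of \cite{GongKimWei} has no free amplitude prefactor:
\begin{itemize}
\item its configuration is $x_j=(j^{-1},0,\ldots,0)$, $\lambda_j=2^{-j}$, $\rho_j=(4j^2)^{-1}$, $\bar\mu_j=2^{-j/4}=\lambda_j^{1/4}$;
\item closeness to $g_c$ comes from raising the initial index;
\item the solutions come from a strict local minimum of their reduced energy, controlled by their own remote and local error estimates.
\end{itemize}
Inserting a factor $t$ into $\bar\mu_j$ breaks the prescription $\bar\mu_j=\lambda_j^{1/4}$ on which those estimates rest.

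The paper instead scales the generating Weyl tensor by $t$. This multiplies the whole fixed tensor $\bar k_\infty$ by $t$ while keeping the prescription. The complete quadratic reduced energy is then multiplied by $t^2$, so its strict local minimum persists. One must still verify that the errors, normalized by this scale, vanish:
\begin{itemize}
\item the remote error is bounded by a constant times $t^{-2}(4j^2)^{n-6}2^{-j(n-53/2)}$, which tends to zero precisely because $n\ge27$;
\item the local error is $O_t(\bar\mu_j\lambda_j^{20\eps_1})$.
\end{itemize}
Only then does Proposition \ref{prop:mass-ray} apply to $H=\Omega^2\bar k_\infty$. Your intended check of a nonzero degree-two Weyl jet also presupposes the form of their degree-ten polynomial; the TT-plus-symbol argument avoids that.
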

\begin{proof}
For $N\ge4$ and $n$ in \eqref{eq:dimensionrange}, fix the polynomial $h$ supplied by Proposition \ref{prop:seed} with $M=3$, whose degree is at most $d=8$.
Fix the points $x_j$, radii $\rho_j$, and relative amplitudes $\bar\mu_j:=\bar\mu_0 2^{-j^3}$ as in Subsection \ref{sec:fixedmetric}, with $\bar\mu_0>0$ fixed and sufficiently small.
Choose every $\lambda_j$ before choosing a common additional amplitude, so that \eqref{eq:outerchoice} holds with $\mu_j=\bar\mu_j$.
Let $\bar k_\infty$ be the resulting tensor in \eqref{eq:globalbumps}, and consider the fixed ray
\begin{equation}\label{eq:fixed-mass-ray}
g_t:=\Omega^2\exp(t\bar k_\infty),\qquad
H:=\Omega^2\bar k_\infty.
\end{equation}
By Lemma \ref{lem:localized-energy}, $\bar k_\infty$ is smooth, compactly supported, and vanishes to infinite order at the accumulation point.
The identities in \eqref{eq:Weylgauge} show that each translated radial cutoff preserves the Euclidean trace and divergence conditions.
Thus $\bar k_\infty$ is a nonzero Euclidean TT tensor.
The calculation of the principal symbol of the linearized Weyl tensor in Lemma \ref{lem:CN-mass-Hessian} implies $DW_\delta[\bar k_\infty]\not\equiv0$, and conformal covariance gives $DW_{g_c}[H]\not\equiv0$.

Replacing $\bar\mu_j$ by $\mu_j:=t\bar\mu_j$ multiplies the second quotient in \eqref{eq:outerchoice} by $t^{-2}$.
Its limit remains zero for every fixed $t>0$.
Consequently \eqref{eq:fixedreducedexpansion} and the proof of Theorem \ref{thm:fixed-metric-transfer} give an unbounded sequence of positive solutions for every sufficiently small fixed $t>0$ in \eqref{eq:fixed-mass-ray}.
Proposition \ref{prop:mass-ray} then permits this same $t$ to be chosen so that \eqref{eq:mass-condition} holds.

\medskip
For $N=3$, use the polynomial symmetric two-tensor $h$ of degree ten and the configuration of \cite[Section 11.7]{GongKimWei}:
\[
x_j=(j^{-1},0,\ldots,0),\qquad
\lambda_j=2^{-j},\qquad
\rho_j=(4j^2)^{-1},\qquad
\bar\mu_j=2^{-\frac{j}{4}}.
\]
Fix the initial index and the full tensor
\[
\bar k_\infty(x):=\sum_j\bar\mu_j\lambda_j^{10}
\chi(\frac{|x-x_j|}{\rho_j})h(\frac{x-x_j}{\lambda_j})
\]
before choosing $t$ in \eqref{eq:fixed-mass-ray}.
Scaling the generating Weyl tensor by $t$ multiplies $\bar k_\infty$ by $t$ and preserves the prescription $\bar\mu_j=\lambda_j^{1/4}$ used there.
The complete quadratic reduced energy is multiplied by $t^2$, so its strict local minimum from \cite[Corollary 11.31]{GongKimWei} persists.
The remote error in \cite[(11.34)]{GongKimWei}, divided by the leading quadratic scale, is bounded for fixed $t>0$ by a constant times
\[
t^{-2}(4j^2)^{n-6}2^{-j(n-\frac{53}{2})}\to0
\qquad(n\ge27).
\]
The normalized local error tends to zero as well, since it is $O_t(\bar\mu_j\lambda_j^{20\eps_1})$ for the fixed $\eps_1>0$ in that estimate.
Thus their reduction still gives infinitely many solutions on each sufficiently small fixed metric $g_t$.
The tensor $\bar k_\infty$ is smooth, compactly supported, nonzero, and Euclidean TT, so $DW_{g_c}[H]\not\equiv0$ and Proposition \ref{prop:mass-ray} applies.
Finally, decreasing the fixed amplitude $t$ gives any prescribed finite-norm closeness in both constructions.
\end{proof}

\medskip
For $N=1,2$, the positive-mass assertion for the known noncompactness examples follows from \cite[Corollary 5.12]{CompactnessMass}.
Its hypotheses hold because the metrics can be chosen sufficiently close to $g_c$ to have positive Yamabe constant and the operator properties of Lemma \ref{lem:greenpositive}, while their nonzero Weyl curvature excludes the round conformal class.
That corollary uses the mass identities in \cite[Proposition 5.11]{CompactnessMass}, together with the positive mass theorem at order two and the positive energy theorem at order four.
For order six and above, Proposition \ref{prop:mass-noncompactness} supplies the additional positivity argument.

Condition \eqref{eq:mass-condition} is nonvacuous for these metrics: it includes the open round region outside all perturbations and their accumulation point.
At the accumulation point the conformal-normal metric jet is flat to every order, so the angular terms in \eqref{eq:CN-mass-expansion} vanish.
The uniform estimate also covers every additional point in a transition region where the required Weyl derivatives vanish.

\section{Proofs of Theorems \ref{thm:main} and \ref{thm:cross}}
\label{sec:applications}\label{sec:finish}\label{sec:asymptoticcross}
\begin{proof}[Proof of Theorem \ref{thm:main}]
For $N=1$, the noncompactness assertion follows from Brendle \cite{BrendleBlowup} when $n\ge52$ and Brendle--Marques \cite{BrendleMarques} when $25\le n\le51$.
For $N=2$ it follows from Wei--Zhao \cite[Theorem 1.1]{WeiZhao}, and for $N=3$ from Gong--Kim--Wei \cite[Theorem 1.4]{GongKimWei}.
A constant rescaling of the solutions gives the normalization in \eqref{eq:introPDE}.

The finite-norm closeness in our statement also follows from these constructions.
Their final perturbations are sums of shrinking bumps with exponentially decaying amplitudes and polynomially growing cutoff derivatives; for every fixed $L\in\mathbb N\cup\{0\}$, the series converges in $C^L$ and its tail tends to zero in that norm.
Increasing the initial gluing index retains infinitely many bubbles while making the metric arbitrarily close to $g_c$; see \cite[Proposition 25]{BrendleBlowup}, \cite[Proposition 25]{BrendleMarques}, \cite[Proposition 12.2]{WeiZhao}, and \cite[Section 11.7]{GongKimWei}.
Choosing this closeness in the finite norm required by Lemma \ref{lem:greenpositive} also gives coercivity and strict positivity of the Green function.
For $N=1,2$, condition \eqref{eq:mass-condition} follows from \cite[Corollary 5.12]{CompactnessMass}, since the examples have positive Yamabe constant and are not conformally round.
For $N=3$, the fixed-amplitude choice in Proposition \ref{prop:mass-noncompactness} proves the same mass assertion.

\medskip
We now assume $N\ge4$ and fix a pair in \eqref{eq:dimensionrange}.
Theorem \ref{thm:algebraic} and \eqref{eq:B00} give indefiniteness of $S=S_N(n)$ and $E_{00}<0$.
The family $H_0,H_1,H_2,H_3$ has maximal degree $d=8$, and $n-2N>16=2d$.
In particular, $n\ge28$.

Proposition \ref{prop:seed} supplies a polynomial $h$ with a nondegenerate critical point at $(0,1)$ and nonzero homogeneous term of degree two.
By \eqref{eq:Weylchain}, $h(0)=\partial h(0)=0$.
Thus all hypotheses of Theorem \ref{thm:fixed-metric-transfer} hold.
That theorem constructs the fixed non-locally-conformally-flat metric, arbitrary finite-norm closeness to $g_c$, and the positive blow-up sequence.
Proposition \ref{prop:mass-noncompactness} chooses the amplitude of this fixed configuration so that \eqref{eq:mass-condition} holds and the positive blow-up sequence persists.
Conformal covariance \eqref{eq:covariance} implies $Q_{2N,u_j^{4/(n-2N)}g}=2c_{n,N}/(n-2N)$ for every $j\in\mathbb N$.
\end{proof}

\begin{proof}[Proof of Theorem \ref{thm:cross}]
We use the rational entries in Theorem \ref{thm:closedentry}.
For $M=3$, the entries of $-S_N(n)/C_{N,n}$ satisfy, as $N\to\infty$,
\begin{equation}\label{eq:matrix-asymptotic}
-\frac{(S_N(n))_{ij}}{C_{N,n}}
=N^2\left(M_{0,ij}(n-2N)+\frac{M_{1,ij}(n-2N)}{N}
+O(N^{-2})\right).
\end{equation}
The remainder is uniform, together with each fixed number of $n$-derivatives, when $n-2N$ belongs to a compact subset of $(16,\infty)$.
Indeed, substitution of $n=2N+x$ into the rational entries, followed by division by $N^2$, yields rational functions of $1/N$ and $x$ whose denominators have nonzero limits at $1/N=0$ for $x>16$.

For each fixed $(i,j)\in\{0,1,2,3\}^2$, write $e_{ij}=\mathsf U/\mathsf V$ and let $\alpha$ be the degree in $N$ of the numerator after substituting $n=2N+x$.
The numerator and denominator then have the expansions
\[
\begin{aligned}
\mathsf U&=\mathsf u_0(n-2N)N^\alpha+\mathsf u_1(n-2N)N^{\alpha-1}+\cdots,\\
\mathsf V&=\mathsf v_0(n-2N)N^{\alpha-2}+\mathsf v_1(n-2N)N^{\alpha-3}+\cdots.
\end{aligned}
\]
The quotient expansion is
\[
M_{0,ij}=-2(i+j+2)\frac{\mathsf u_0}{\mathsf v_0},\qquad
M_{1,ij}=-2(i+j+2)\frac{\mathsf u_1\mathsf v_0-\mathsf u_0\mathsf v_1}{\mathsf v_0^2},
\]
with all functions evaluated at $n-2N$.
The entries of $M_0$ are displayed in Appendix \ref{app:crosspoly}.

Taking the determinant yields
\begin{equation}\label{eq:determinant-asymptotic}
\frac{\det S_N(n)}{C_{N,n}^4}
=N^8\left(\mathcal F_0(n-2N)+\frac{\mathcal F_1(n-2N)}{N}+O(N^{-2})\right),
\end{equation}
where
\[
\mathcal F_0\coloneqq\det M_0,\qquad
\mathcal F_1\coloneqq\tr\bigl(\operatorname{adj}(M_0)M_1\bigr).
\]
Exact simplification yields
\begin{equation}\label{eq:determinant-denominator}
\begin{split}
\mathcal D(n-2N)\coloneqq{}&66467128245643125\,(n-2N-16)\\
&\cdot(n-2N-14)^2(n-2N-12)^3
\prod_{a=1}^5(n-2N-2a)^4
\end{split}
\end{equation}
and
\begin{equation}\label{eq:determinant-coefficients}
\mathcal F_0(n-2N)=\frac{2^{32}G_0(n-2N)}{\mathcal D(n-2N)},\qquad
\mathcal F_1(n-2N)=\frac{2^{32}G_1(n-2N)}{\mathcal D(n-2N)}.
\end{equation}
The polynomials $G_0,G_1$ are given in Appendix \ref{app:crosspoly}.

Lemma \ref{lem:crossing-signs} gives a unique simple root $a_*$ of $G_0$ in $(16,17)$, with $G_0'(a_*)<0$, and positivity of the first three leading principal minors of $M_0(a_*)$.
Since $\mathcal D(a_*)>0$, we have $\mathcal F_0'(a_*)<0$.

We define
\[
\Xi(\hbar,x)\coloneqq
\left.N^{-8}C_{N,n}^{-4}\det S_N(n)
\right|_{N=\frac{1}{\hbar},\ n=\frac{2}{\hbar}+x}.
\]
Before substitution, the quotient is expressed through the rational entries $e_{ij}$; no operator of noninteger order or continuation of the finite product $C_{N,n}$ is required.
By \eqref{eq:matrix-asymptotic}, $\Xi$ extends analytically to $\hbar=0$ near $x=a_*$ and satisfies
\[
\Xi(\hbar,x)=\mathcal F_0(x)+\hbar \mathcal F_1(x)+O(\hbar^2).
\]
The implicit-function theorem ensures a unique zero $x(\hbar)$ near $a_*$, with $x(0)=a_*$ and
\[
x'(0)=-\frac{\mathcal F_1(a_*)}{\mathcal F_0'(a_*)}.
\]
Since the two determinant coefficients have the same denominator and $G_0(a_*)=0$, this derivative is
\begin{equation}\label{eq:crossing-correction}
c_*=-\frac{G_1(a_*)}{G_0'(a_*)}.
\end{equation}
Evaluation yields the constants in Theorem \ref{thm:cross}.
Setting $n_N=2N+x(1/N)$ proves \eqref{eq:crossstatement}.
The first three leading principal minors of $-S_N(n)/C_{N,n}$ remain positive on a fixed neighborhood of $n_N-2N$, while its determinant decreases through zero.
Sylvester's criterion proves that $S_N(n)$ is negative definite just below $n_N$ and has exactly one positive eigenvalue just above it.
Shrinking the fixed neighborhood if necessary proves all assertions of Theorem \ref{thm:cross}.
\end{proof}

\appendix

\section{Finite extraction of the rational summands}\label{app:cauchy}
As in Section \ref{sec:boundedambient}, we assume $N\ge1$ and $n>2N+16$.
We construct the functions $c_{ij}$ and $r_{ij}$ in \eqref{eq:finite-summand} from the profiles $\Phi_j$ and the polynomials $\kappa_{ij,\ell}$ in \eqref{eq:Qclosed} and \eqref{eq:kappadefinition}; no interpolation in $N$ is involved.

We set
\[
c\coloneqq\sqrt{1-\upsilon},\qquad \zeta\coloneqq\frac{t-1}{t+1}.
\]
Then $R=(1-\zeta)/(c-\zeta)$, and the measure in \eqref{eq:moments} is proportional to $(1-\zeta^2)^{n/2-3}(1+\zeta)^4\,d\zeta$.
We define
\[
\langle f\rangle_0\coloneqq
\frac{\displaystyle\int_{-1}^1 f(\zeta)(1-\zeta^2)^{\frac{n}{2}-3}\,d\zeta}
{\displaystyle\int_{-1}^1(1-\zeta^2)^{\frac{n}{2}-3}\,d\zeta}.
\]
For integers $r\ge0$ and $c>1$, we define
\[
\nu_r\coloneqq\langle\zeta^r\rangle_0,\qquad
\mathcal Z_4\coloneqq\langle(1+\zeta)^4\rangle_0,\qquad
\mathfrak F(c)\coloneqq\left\langle\frac1{c-\zeta}\right\rangle_0.
\]
\begin{lemma}\label{lem:cauchy-transform}
The moments satisfy
\begin{equation}\label{eq:evenmoments}
\nu_{2r}=\frac{(\frac{1}{2})_r}{(\frac{n}{2}-\frac{3}{2})_r},\qquad \nu_{2r+1}=0,\qquad
\mathcal Z_4=\frac{n(n+2)}{(n-3)(n-1)}.
\end{equation}
The Cauchy transform satisfies
\begin{equation}\label{eq:cauchyODE}
(c^2-1)\mathfrak F'(c)=2(\frac{n}{2}-3)c\mathfrak F(c)-(n-5).
\end{equation}
Its regular coefficients at $\upsilon=0$ are
\begin{equation}\label{eq:cauchyseries}
\mathfrak F(\sqrt{1-\upsilon})
=(\frac{n}{2}-\frac{5}{2})\sum_{k\ge0}
\frac{(\frac{1}{2})_k}{k!(\frac{n}{2}-3-k)}\upsilon^k,
\end{equation}
up to a homogeneous solution of order $\upsilon^{n/2-3}$; if $n/2-3$ is an integer, a logarithmic term can occur at that order.
Only coefficients of order less than $n/2-3$ are asserted and used.
\end{lemma}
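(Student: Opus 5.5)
The plan is to work directly with the normalized weight $w(\zeta)\coloneqq(1-\zeta^2)^{n/2-3}$ on $(-1,1)$, which is integrable because $n>2N+16$ forces $n/2-3>0$. The identities in \eqref{eq:evenmoments} are beta integrals. Odd moments vanish by symmetry. Substituting $\zeta^2=s$ gives
\[
\int_{-1}^1\zeta^{2r}w\,d\zeta=B(r+\tfrac12,\tfrac n2-2),
\]
so
\[
\nu_{2r}=\frac{B(r+\frac12,\frac n2-2)}{B(\frac12,\frac n2-2)}=\frac{(\frac12)_r}{(\frac n2-\frac32)_r}.
\]
Expanding $(1+\zeta)^4$ then gives $\mathcal Z_4=1+6\nu_2+\nu_4$. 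With $\nu_2=1/(n-3)$ and $\nu_4=3/((n-3)(n-1))$, a short simplification yields $n(n+2)/((n-3)(n-1))$.

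For the ODE \eqref{eq:cauchyODE}, I will use the weight identity $(1-\zeta^2)w'=-(n-6)\zeta w$ together with integration by parts. Write $I\coloneqq\int_{-1}^1 w\,d\zeta$. The key step is to differentiate $(c^2-1)\mathfrak F'(c)=-\langle (c^2-1)/(c-\zeta)^2\rangle_0$. Using $c^2-1=(c-\zeta)(c+\zeta)-(1-\zeta^2)$, this splits into
\[
\Bigl\langle \frac{c+\zeta}{c-\zeta}\Bigr\rangle_0=2c\,\mathfrak F-1
\qquad\text{and}\qquad
\Bigl\langle\frac{1-\zeta^2}{(c-\zeta)^2}\Bigr\rangle_0 .
\]
For the second term, note that $(c-\zeta)^{-2}=\partial_\zeta (c-\zeta)^{-1}$. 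Integrating by parts moves the derivative onto $(1-\zeta^2)w=(1-\zeta^2)^{n/2-2}$, and the boundary terms vanish since $n/2-2>0$. The result is $(n-4)\langle \zeta/(c-\zeta)\rangle_0=(n-4)(c\mathfrak F-1)$. Collecting terms gives
\[
(c^2-1)\mathfrak F'=-(2c\mathfrak F-1)+(n-4)(c\mathfrak F-1)=(n-6)c\mathfrak F-(n-5),
\]
which is \eqref{eq:cauchyODE}. I will double-check this bookkeeping carefully, since sign and constant errors are the main risk here.

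For \eqref{eq:cauchyseries}, change variables to $\upsilon=1-c^2$. Then $(c^2-1)\partial_c=2\upsilon c\,\partial_\upsilon$, and dividing by $c$ turns the ODE into
\[
2\upsilon\,\partial_\upsilon\mathfrak F-(n-6)\mathfrak F=-(n-5)(1-\upsilon)^{-1/2}.
\]
This is a first-order Euler-type equation whose indicial root is $n/2-3$. For $k<n/2-3$, matching the coefficient of $\upsilon^k$ gives
\[
(2k-n+6)f_k=-(n-5)\frac{(\frac12)_k}{k!},
\]
so $f_k=(\frac n2-\frac52)(\frac12)_k/\bigl(k!(\frac n2-3-k)\bigr)$, as asserted. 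The homogeneous solution is $\upsilon^{n/2-3}$, and when $n/2-3$ is an integer, resonance can produce a logarithm at exactly that order.

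The main obstacle is justifying that the genuine function $\mathfrak F(\sqrt{1-\upsilon})$, which is defined for $\upsilon<0$ (that is, $c>1$), equals this particular solution plus an admissible homogeneous term. Since every solution of the first-order ODE is the particular regular series plus $C\upsilon^{n/2-3}$ (possibly with $\log\upsilon$), and since nothing below that order can involve the homogeneous part, the regular coefficients of order less than $n/2-3$ are forced. I will not determine the constant $C$, because those coefficients are never used. This is also why the proof of Theorem \ref{thm:closedentry} needs $n/2-3-m>N-1$.
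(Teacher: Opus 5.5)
Your proposal is correct and follows essentially the same route as the paper: beta integrals for the moments, the vanishing integral of $\partial_\zeta\bigl[(1-\zeta^2)^{n/2-2}/(c-\zeta)\bigr]$ (your integration by parts combined with the split $c^2-1=(c-\zeta)(c+\zeta)-(1-\zeta^2)$), the substitution $c=\sqrt{1-\upsilon}$, coefficient matching below the indicial root, and the homogeneous and possibly logarithmic term at order $\upsilon^{n/2-3}$. Your bookkeeping checks out, including $(c^2-1)\mathfrak F'=(n-6)c\mathfrak F-(n-5)$ and $\mathcal Z_4=1+6\nu_2+\nu_4=n(n+2)/((n-3)(n-1))$.
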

\begin{proof}
Symmetry gives the odd moments, and the beta integral gives the even moments in \eqref{eq:evenmoments}.
Expanding $(1+\zeta)^4$ then gives $\mathcal Z_4$.
Integrating the $\zeta$-derivative of $(1-\zeta^2)^{n/2-2}/(c-\zeta)$ over $(-1,1)$ gives \eqref{eq:cauchyODE}; its boundary term vanishes because $n/2-2>0$.
After $c=\sqrt{1-\upsilon}$, equation \eqref{eq:cauchyODE} becomes
\[
\upsilon\partial_\upsilon\mathfrak F-(\frac{n}{2}-3)\mathfrak F
=-(\frac{n}{2}-\frac{5}{2})(1-\upsilon)^{-\frac{1}{2}}.
\]
Matching coefficients proves \eqref{eq:cauchyseries}.
The homogeneous equation has solution $\upsilon^{n/2-3}$, and resonance at an integer exponent accounts for the possible logarithmic term.
\end{proof}

We now use this Cauchy transform to eliminate the radial integrals.
\begin{proof}[Proof of Proposition \ref{prop:summand}]
Fix $0\le i\le j\le3$.
For each nonzero $\kappa_{ij,\ell}$, we form the polynomial
\begin{equation}\label{eq:PiCauchy}
\Pi_\ell(\zeta)\coloneqq
\kappa_{ij,\ell}\left(\frac{1+\zeta}{1-\zeta}\right)
(1+\zeta)^4(1-\zeta)^\ell.
\end{equation}
For $0\le i\le j\le3$, substitution of \eqref{eq:Qclosed} verifies $\deg_t\kappa_{ij,\ell}\le\ell$, so $\Pi_\ell$ is a polynomial.
Lemma \ref{lem:finite-extraction-checks} verifies this polynomiality and the Laurent, pole, and degree data in Table \ref{tab:finite-extraction-checks}.
If $\Pi(\zeta)=\sum_m\pi_m\zeta^m$, division by $c-\zeta$ yields
\[
T_\Pi(c)\coloneqq\Pi(c)\mathfrak F(c)
-\sum_m\pi_m\sum_{r=0}^{m-1}c^{m-1-r}\nu_r
=\left\langle\frac{\Pi(\zeta)}{c-\zeta}\right\rangle_0.
\]
Differentiation therefore yields
\begin{equation}\label{eq:Cauchyreduction}
\langle\kappa_{ij,\ell}R^\ell\rangle_\mu
=\frac{(-1)^{\ell-1}}{(\ell-1)!\mathcal Z_4}
\partial_c^{\ell-1}T_{\Pi_\ell}(c).
\end{equation}
We substitute \eqref{eq:Cauchyreduction} in \eqref{eq:Hfinite} and eliminate derivatives of $\mathfrak F$ with \eqref{eq:cauchyODE}.
The result has the form
\begin{equation}\label{eq:Hcauchyform}
\mathscr H_{ij}(\upsilon)=A_{ij}(\upsilon)\mathfrak F(\sqrt{1-\upsilon})
+\sqrt{1-\upsilon}\,B_{ij}(\upsilon)+c_{ij},
\end{equation}
where the finite Laurent polynomials $A_{ij}$ and $B_{ij}$ are obtained by the preceding substitutions, and
\begin{equation}\label{eq:cijfinite}
c_{ij}\coloneqq-\frac{\langle t^i,At^j\rangle_\mu}{2(n-1)}
+\sum_\ell\langle\kappa_{ij,\ell}\rangle_\mu.
\end{equation}
The exact Laurent bounds in Table \ref{tab:finite-extraction-checks} show that neither polynomial contains a power below $-(i+j)$.
Hence the homogeneous solution described after \eqref{eq:cauchyseries} contributes only at order at least $\upsilon^{n/2-3-i-j}$.
Since $n>2N+16$ and $i+j\le6$,
\[
\frac{n}{2}-3-i-j>N-1.
\]
Thus neither this term nor a logarithmic term at that order contributes to \eqref{eq:normalizedcoefficient}.
This also justifies coefficient extraction at the endpoint of the radial integral.

For each integer $\ell$, we set $a_{ij,\ell}\coloneqq[\upsilon^\ell]A_{ij}(\upsilon)$ and $b_{ij,\ell}\coloneqq[\upsilon^\ell]B_{ij}(\upsilon)$, with zero coefficients outside the Laurent ranges in Table \ref{tab:finite-extraction-checks}.
For an indeterminate $k$, we define
\[
\beta_\ell(k)\coloneqq
\begin{cases}
\displaystyle\frac{(-k)_\ell}{(\frac{1}{2}-k)_\ell} &\text{for } \ell\ge0,\\[2mm]
\displaystyle\frac{(k+\frac{1}{2})_{-\ell}}{(k+1)_{-\ell}} &\text{for } \ell<0.
\end{cases}
\]
For integers $k\ge\max\{\ell,0\}$, $\beta_\ell(k)$ is the ratio of $(1/2)_{k-\ell}/(k-\ell)!$ to $(1/2)_k/k!$.
Hence \eqref{eq:cauchyseries} and \eqref{eq:Hcauchyform} imply
\begin{align}
R_{ij}(k,n)
\coloneqq{}&\sum_\ell a_{ij,\ell}(\frac{n}{2}-\frac{5}{2})
\frac{\beta_\ell(k)}{\frac{n}{2}-3-k+\ell}
+\sum_\ell b_{ij,\ell}
\frac{\beta_\ell(k)}{1-2k+2\ell},\label{eq:Rfinite}\\
[\upsilon^k]\mathscr H_{ij}(\upsilon)
={}&\frac{(\frac{1}{2})_k}{k!}R_{ij}(k,n)
+c_{ij}\mathbf1_{k=0}.\label{eq:Hcoefficients}
\end{align}
For integers $0\le k<\ell$, the numerator $(-k)_\ell$ vanishes, as required for a shifted power series.
Thus the formulas also cover the initial coefficients.

If $g_r\coloneqq[\upsilon^r]\mathcal G_N(\upsilon)$, cancellation of rising factorials in \eqref{eq:GN} yields
\begin{equation}\label{eq:coefficientratio}
\frac{g_{N-1-k}}{g_{N-1}}=
\frac{(1-N)_k(1+N)_k(1)_k}
{(2-\frac{n}{2})_k(1+\frac{n}{2})_k(\frac{3}{2})_k},\qquad 0\le k<N.
\end{equation}
Inserting \eqref{eq:Hcoefficients} in \eqref{eq:normalizedcoefficient} proves \eqref{eq:finite-summand}, with
\begin{equation}\label{eq:rCauchydefinition}
r_{ij}(k,n)=\frac{k+1-\frac{n}{2}}{(1-\frac{n}{2})(2k+1)}R_{ij}(k,n).
\end{equation}
Equations \eqref{eq:Qclosed}, \eqref{eq:kappadefinition}, and \eqref{eq:PiCauchy}--\eqref{eq:rCauchydefinition} determine all ten summands by rational operations.
The last finite check, recorded in Table \ref{tab:finite-extraction-checks}, is that multiplication by $(k+2-n/2)_{i+j+2}$ gives a polynomial of degree $i+j+1$.
This proves the bound in Proposition \ref{prop:summand}.

For the entry $(i,j)=(0,1)$, we write $v=n/2$.
Equations \eqref{eq:Qclosed} and \eqref{eq:kappadefinition} give just three nonzero coefficients:
\[
\kappa_{01,5}=\frac{2(v+2)(1+t)}{5(v-1)},\qquad
\kappa_{01,6}=\frac{(v+2)(t^2-1)}{2(v-1)},\qquad
\kappa_{01,7}=-\frac{12t(1+t)}{7(v-1)}.
\]
Cauchy reduction gives $c_{01}$ and $r_{01}$ displayed in Section \ref{sec:gaussian}; in particular,
\[
r_{01}(k,2v)(k+2-v)_3
=\frac{(v-2)(v+2)(2v-1)}{105(v-1)}
(2k+3)(4k+14v-11).
\]
Thus the right-hand side of \eqref{eq:gaussian-system} is quadratic.
Solving that size-three system and using \eqref{eq:closed-entry} yields
\begin{equation}\label{eq:worked-e01}
\begin{split}
e_{01}={}&-\frac{(v+2)(7v-6)}{35(v-3)(v-1)}
+\frac{v(v+2)(2v-1)(10v-11)}{210(v-1)((v-1)^2-N^2)}\\
&+\frac{4v(v+2)(2v-1)}{105((v-2)^2-N^2)}
-\frac{3v(v+2)(2v-3)(2v-1)}{70(v-3)((v-3)^2-N^2)}.
\end{split}
\end{equation}
\end{proof}

\section{The determinant coefficients}\label{app:crosspoly}
All coefficients in this appendix are determined before a vector $a=(\tau,a_1,a_2,a_3)^{\mathsf T}$ is chosen.
The ambient definition of $Q_{2N}$ determines the Hessian polynomial $\mathscr P_N^{(n)}$; its pairings on the four tensors $H_0,H_1,H_2,H_3$ determine $E$ and the scale-derivative matrix $S_N(n)$ through \eqref{eq:Ematrix}.
The expansions \eqref{eq:matrix-asymptotic}--\eqref{eq:determinant-coefficients} then determine $M_0,M_1,G_0,G_1$.
The profile coefficients only select a vector on which the already determined quadratic form $a^{\mathsf T}S_N(n)a$ is evaluated.
In particular, both fixed tails in \eqref{eq:fixed-tails} are tested against the same matrix and do not affect any coefficient listed here.

The displayed rational and integer coefficients use the specified basis, normalization, and common denominator.
A fixed invertible change of basis acts on $S_N(n)$ by congruence and multiplies its determinant by a nonzero square.
Consequently the crossing dimensions and the constants $a_*,c_*$ are properties of the scale variation of the restricted Hessian, independent of such coordinate choices.
Their dependence on the chosen four-dimensional space of polynomial symmetric two-tensors remains part of their definition.

We use the Pochhammer identity $\prod_{\ell=1}^r(n-2N-2\ell)=2^r(n/2-N-r)_r$ to write the denominators below.
The leading matrix-valued function in \eqref{eq:matrix-asymptotic} has entries
\begin{align}\label{eq:M0}
M_{0,00}(n-2N)&=\frac{32}{15\cdot2^2(\frac{n}{2}-N-2)_2},\\
M_{0,01}(n-2N)&=\frac{16(7(n-2N)-24)}{35\cdot2^3(\frac{n}{2}-N-3)_3},\notag\\
M_{0,02}(n-2N)&=\frac{64(7(n-2N)^2-62(n-2N)+112)}{105\cdot2^4(\frac{n}{2}-N-4)_4},\notag\\
M_{0,03}(n-2N)&=\frac{16\left(77(n-2N)^3-1254(n-2N)^2{}+5896(n-2N)-7360\right)}{231\cdot2^5(\frac{n}{2}-N-5)_5},\notag\\
M_{0,11}(n-2N)&=\frac{64(n-2N)(7(n-2N)-44)}{105\cdot2^4(\frac{n}{2}-N-4)_4},\notag\\
M_{0,12}(n-2N)&=\frac{16\left(77(n-2N)^3-858(n-2N)^2{}+880(n-2N)+6400\right)}{231\cdot2^5(\frac{n}{2}-N-5)_5},\notag\\
M_{0,13}(n-2N)&=\frac{32\left(\begin{gathered}1001(n-2N)^4-18018(n-2N)^3\\{}+59488(n-2N)^2+292448(n-2N)-1121280\end{gathered}\right)}
{5005\cdot2^6(\frac{n}{2}-N-6)_6},\notag\\
M_{0,22}(n-2N)&=\frac{32\left(\begin{gathered}3003(n-2N)^4-46332(n-2N)^3\\{}+52052(n-2N)^2+1263600(n-2N)-2563200\end{gathered}\right)}
{15015\cdot2^6(\frac{n}{2}-N-6)_6},\notag\\
M_{0,23}(n-2N)&=\frac{16\left(\begin{gathered}1001(n-2N)^5-21736(n-2N)^4\\{}+55484(n-2N)^3+1203904(n-2N)^2\\{}-6221632(n-2N)+3064320\end{gathered}\right)}
{2145\cdot2^7(\frac{n}{2}-N-7)_7},\notag\\
M_{0,33}(n-2N)&=\frac{128\left(\begin{gathered}17017(n-2N)^6-466752(n-2N)^5\\{}+1409980(n-2N)^4+51095200(n-2N)^3\\{}-407661632(n-2N)^2+427000832(n-2N)\\{}+1451950080\end{gathered}\right)}
{255255\cdot2^8(\frac{n}{2}-N-8)_8}.\notag
\end{align}
The remaining entries are determined by $M_{0,ij}=M_{0,ji}$.

The leading determinant polynomial is
\begin{equation}\label{eq:G0}
\begin{aligned}
G_0(n-2N)\coloneqq{}&26794508741(n-2N)^{14}-5999186112920(n-2N)^{13}\\
&+506809345021408(n-2N)^{12}-23932779365064856(n-2N)^{11}\\
&+735249277022163332(n-2N)^{10}\\
&-15860702168480103792(n-2N)^9\\
&+250690571756780680096(n-2N)^8\\
&-2972645826389547138688(n-2N)^7\\
&+26730031012975840494400(n-2N)^6\\
&-182244042721887163732224(n-2N)^5\\
&+931191022949146422606848(n-2N)^4\\
&-3470331005064168386539520(n-2N)^3\\
&+8946493873369876360396800(n-2N)^2\\
&-14324027140196950966272000(n-2N)\\
&+10776521802741362196480000.
\end{aligned}
\end{equation}
The first correction polynomial is
\begin{equation}\label{eq:G1}
\begin{aligned}
G_1(n-2N)\coloneqq{}&241150578669(n-2N)^{15}-53590061423899(n-2N)^{14}\\
&+4407318986030532(n-2N)^{13}-200391992731357856(n-2N)^{12}\\
&+5863842030770392812(n-2N)^{11}\\
&-118889284143889031916(n-2N)^{10}\\
&+1733931186168043500352(n-2N)^9\\
&-18451377129657163696576(n-2N)^8\\
&+142131831056651623825792(n-2N)^7\\
&-758711188116215386120256(n-2N)^6\\
&+2413518937445144513746688(n-2N)^5\\
&-1028217016766028118428672(n-2N)^4\\
&-30799572236242831569694720(n-2N)^3\\
&+154489901430717390761164800(n-2N)^2\\
&-350635730795977337044992000(n-2N)\\
&+331932342914338801582080000.
\end{aligned}
\end{equation}

\section{Exact arithmetic for the sign and crossing computations}
\label{app:certificates}
The finite computations in this appendix were carried out by computer using exact rational arithmetic.
We specify their inputs, operations, and results here; no external coefficient tables or source files are needed to reproduce them.
Floating-point arithmetic is used only for the decimal evaluations in Theorem \ref{thm:cross} and the numerical illustration in Figure \ref{fig:dimension-crossing}.

\begin{lemma}\label{lem:finite-extraction-checks}
For $0\le i\le j\le3$, every expression $\Pi_\ell$ in \eqref{eq:PiCauchy} is a polynomial.
The functions $A_{ij},B_{ij}$ in \eqref{eq:Hcauchyform} are Laurent polynomials, and their lowest powers, together with the pole and degree data of $r_{ij}$ defined by \eqref{eq:rCauchydefinition}, are as in Table \ref{tab:finite-extraction-checks}.
\end{lemma}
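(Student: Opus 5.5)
This is a finite exact-arithmetic verification, so the proof is a certificate computation organized in the same order as the proof of Proposition \ref{prop:summand}. We fix $v=n/2$ as a symbolic parameter and work in $\mathbb Q(v)[t]$, $\mathbb Q(v)[\zeta]$, and Laurent polynomials in $\upsilon$ over $\mathbb Q(v)$. The steps are as follows.

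\textbf{Step 1: the polynomials $\kappa_{ij,\ell}$.} For each of the ten pairs $0\le i\le j\le3$ we build $\Phi_i,\Phi_j$ from \eqref{eq:Qclosed}. We form the integrand $u^2\partial_u\Phi_i\,\partial_u\Phi_j$ and integrate it termwise from $1$ to $R$, which gives a polynomial in $R$ vanishing at $R=1$. We multiply by $(1+t)/2$ and read off $\kappa_{ij,\ell}(t)$ as the coefficient of $R^\ell$; the coefficient of $R^0$ is then forced to be minus the sum of the others. We check directly that $\deg_t\kappa_{ij,\ell}\le\ell$.

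\textbf{Step 2: polynomiality of $\Pi_\ell$.} Substitute $t=(1+\zeta)/(1-\zeta)$. Each monomial $t^s$ with $s\le\ell$ becomes
\[
(1+\zeta)^s(1-\zeta)^{-s}.
\]
The factor $(1-\zeta)^\ell$ clears these denominators, so $\Pi_\ell$ in \eqref{eq:PiCauchy} is a polynomial in $\zeta$. This settles the first claim of the lemma.

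\textbf{Step 3: the Cauchy reduction.} We compute $T_{\Pi_\ell}$ and its derivatives through \eqref{eq:Cauchyreduction}. Each derivative $\partial_c^{m}\mathfrak F$ is rewritten via \eqref{eq:cauchyODE} as a rational function of $c$ times $\mathfrak F$, plus a rational function of $c$. We then set
\[
c^2=1-\upsilon,
\]
and collect the result into the form \eqref{eq:Hcauchyform}. Odd powers of $c$ are absorbed into the factor $\sqrt{1-\upsilon}$. The denominators coming from $(c^2-1)^{-m}$ become powers of $\upsilon^{-1}$, so $A_{ij}$ and $B_{ij}$ are Laurent polynomials.

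\textbf{Step 4: the lowest powers.} We record the lowest $\upsilon$-power of $A_{ij}$ and of $B_{ij}$ and verify that each is at least $-(i+j)$. The worst case is $\ell=7$, which a priori gives $\upsilon^{-6}$, so this check relies on genuine cancellation rather than on the a priori order.

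\textbf{Step 5: the pole and degree data for $r_{ij}$.} We form $R_{ij}(k,n)$ by \eqref{eq:Rfinite} and $r_{ij}$ by \eqref{eq:rCauchydefinition}, treating $k$ as an indeterminate. We multiply $r_{ij}$ by $(k+2-v)_{i+j+2}$, reduce the result to lowest terms in $\mathbb Q(v)[k]$, and confirm three things:
\begin{itemize}
\item the denominator is independent of $k$;
\item the remaining $k$-poles, namely those coming from $\beta_\ell$ and from $1-2k+2\ell$, all cancel;
\item the resulting degree is $i+j+1$.
\end{itemize}
As a cross-check we compare the output with the displayed $r_{00}$, $r_{01}$ and with \eqref{eq:worked-e01}.

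\textbf{Main obstacle.} The hard part is Step 5. The individual terms of \eqref{eq:Rfinite} have poles at $k=v-3+\ell$ from the first sum, at $k=\ell+\tfrac12$ from the second, and additional poles from $\beta_\ell$, and these must cancel exactly. Exact symbolic arithmetic in $\mathbb Q(v,k)$ together with gcd reduction makes this a mechanical certificate. The sizes are manageable: degree at most $41$ overall, and ten entries. As a safeguard we would also confirm the identity at several rational values of $v$.
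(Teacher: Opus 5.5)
Your plan runs the same exact-arithmetic pipeline as the paper: build $\Phi_j$, integrate to get $\kappa_{ij,\ell}$, Cauchy-reduce with \eqref{eq:cauchyODE}, read off the Laurent data, form $r_{ij}$ and clear $(k+2-v)_{i+j+2}$. Steps 1--2 are fine.

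The step whose stated justification does not establish what it claims is Step 3. After reducing $\partial_c^s\mathfrak F$, you obtain $\alpha(c)\mathfrak F+\beta(c)$, where $\alpha$ and $\beta$ are polynomials in $c$ divided by powers of $c^2-1$. Knowing that the denominators are powers of $\upsilon$ does not put this in the form \eqref{eq:Hcauchyform}. That form requires:
\begin{itemize}
\item $\alpha$ to be \emph{even} in $c$, since a term $c\,\upsilon^p\mathfrak F$ cannot be absorbed into $\sqrt{1-\upsilon}\,B_{ij}$;
\item $\beta$ to be odd, so that $c_{ij}$ is the only even non-$\mathfrak F$ term.
\end{itemize}
This parity is exactly what the claim that $A_{ij},B_{ij}$ are Laurent polynomials contains, so you must either check it in the output or prove it. A proof goes as follows. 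The map $(R,t)\mapsto(-tR,1/t)$ fixes both $1+(t-1)R$ and $(R-1)(1+tR)$, so $\Phi_j(-tR,1/t)=t^2\Phi_j(R,t)$. Substituting $u\mapsto -tu$ in \eqref{eq:kappadefinition} then gives $t^{\ell-4}\kappa_{ij,\ell}(1/t)=(-1)^{\ell-1}\kappa_{ij,\ell}(t)$, that is, $\Pi_\ell(-\zeta)=(-1)^{\ell-1}\Pi_\ell(\zeta)$. Hence each $\langle\Pi_\ell(\zeta)(c-\zeta)^{-\ell}\rangle_0$ is odd in $c$. Since $\mathfrak F$ is odd and not rational, uniqueness of the decomposition forces $\alpha$ to be even and $\beta$ odd.

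There are also some smaller corrections.
\begin{itemize}
\item In Step 4 the worst case is not $\ell=7$; that is only the worst case for $(0,1)$. Since $\deg_R\Phi_j=2j+2$, $\ell$ runs up to $2(i+j)+5$, which is $17$ for $(3,3)$, giving a naive pole of order $16$.
\item The bound $-(i+j)$ is not an accident. Since $\deg_t\Phi_j\le j$, you have $\deg_t\kappa_{ij,\ell}\le i+j+1$, so $\Pi_\ell$ is divisible by $(1-\zeta)^{\ell-i-j-1}$. In the Leibniz expansion of $\partial_c^{\ell-1}(\Pi_\ell\mathfrak F)$, this caps the pole of $\alpha$ and $\beta$ at $c=1$ at order $i+j$. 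Once the parity above is known, that is the $\upsilon$-order.
\item The lemma asserts the table's data exactly, so record the exact lowest power, not just a lower bound.
\item Likewise, check that the reduced denominator of $r_{ij}$ itself is $\prod_{\ell=2}^{i+j+3}(k-v+\ell)$. Polynomiality of $r_{ij}(k+2-v)_{i+j+2}$ only shows that the poles are simple and lie among these shifts, not that each one occurs.
\item The degree $41$ you mention belongs to the discriminant certificates of Proposition \ref{prop:tail-certificates}, not to this lemma.
\end{itemize}
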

\begin{proof}
To verify these identities and reconstruct the ten normalized entries, work over $\mathbb Q(v,X)$, with $n=2v$, as follows.
\begin{enumerate}
\item Form $\Phi_j$ for $0\le j\le3$ using \eqref{eq:Qclosed}, and
integrate their products as in \eqref{eq:kappadefinition} to obtain $\kappa_{ij,\ell}$ for $0\le i\le j\le3$.
\item Apply \eqref{eq:PiCauchy}--\eqref{eq:Cauchyreduction}, eliminating
derivatives of $\mathfrak F$ by \eqref{eq:cauchyODE}.
Extract the finite Laurent polynomials in \eqref{eq:Hcauchyform}; then \eqref{eq:cijfinite}, \eqref{eq:Rfinite}, and \eqref{eq:rCauchydefinition} give $c_{ij}$ and $r_{ij}$.
\item Clear denominators to verify the degree bound
\eqref{eq:summand-degree} for these expressions.
Put $m=i+j$, replace $N^2$ by $X$ in \eqref{eq:gaussian-system}, and solve by matching coefficients of $1,k,\ldots,k^{m+1}$.
Equation \eqref{eq:closed-entry} gives $e_{ij}$.
Set $\widehat S_{ij}=-2(i+j+2)e_{ij}$ and use symmetry for $i>j$.
\end{enumerate}
All integrations in this procedure are integrations of polynomials or the moments \eqref{eq:moments} and \eqref{eq:evenmoments}.
Every linear system has size at most eight.
Clearing denominators verifies the ambient identity \eqref{eq:QpolynomialPDE}, the absence of powers below $-(i+j)$ in \eqref{eq:Hcauchyform}, and the degree bound used in step 3.
Substitution verifies each solved system before its entries are used.

Table \ref{tab:finite-extraction-checks} records the finite identities behind Proposition \ref{prop:summand}.
The second column is the least power of $\upsilon$ present in each of $A_{ij}$ and $B_{ij}$.
The third column lists the shifts in the monic denominator $\prod_\ell(k-v+\ell)$ of $r_{ij}(k,2v)$ over $\mathbb Q(v)$; every listed factor is simple.
The degree refers to the polynomial in \eqref{eq:summand-degree}.
In the same ten checks, direct substitution of \eqref{eq:Qclosed} gives $\deg_t\kappa_{ij,\ell}\le\ell$, which verifies the polynomiality in \eqref{eq:PiCauchy}.
These are identities in the rational function fields, not tests at sampled values of $n$ or $N$.
\begin{table}[ht]
\centering
\begin{tabular}{ccccc}\toprule
$(i,j)$ & Lowest Laurent power & Pole shifts $\ell$ & Degree in $k$ & System size\\\midrule
$(0,0)$ & $0$  & $2,3$             & $1$ & $2$\\
$(0,1)$ & $-1$ & $2,\ldots,4$      & $2$ & $3$\\
$(0,2)$ & $-2$ & $2,\ldots,5$      & $3$ & $4$\\
$(0,3)$ & $-3$ & $2,\ldots,6$      & $4$ & $5$\\
$(1,1)$ & $-2$ & $2,\ldots,5$      & $3$ & $4$\\
$(1,2)$ & $-3$ & $2,\ldots,6$      & $4$ & $5$\\
$(1,3)$ & $-4$ & $2,\ldots,7$      & $5$ & $6$\\
$(2,2)$ & $-4$ & $2,\ldots,7$      & $5$ & $6$\\
$(2,3)$ & $-5$ & $2,\ldots,8$      & $6$ & $7$\\
$(3,3)$ & $-6$ & $2,\ldots,9$      & $7$ & $8$\\\bottomrule
\end{tabular}
\caption{Exact pole and degree checks for the ten rational summands.}
\label{tab:finite-extraction-checks}
\end{table}
\end{proof}

For either tail $\mathbf c$ in \eqref{eq:fixed-tails}, we form \eqref{eq:tail-discriminant} from the entries reconstructed in Lemma \ref{lem:finite-extraction-checks}.
We define
\begin{equation}\label{eq:tail-denominator}
\begin{split}
\mathfrak D(v,X)\coloneqq{}&294819525(v-8)(v-7)(v-6)
(v-5)^2(v-4)^2(v-3)^4(v-2)^2(v-1)^2\\
&\quad\cdot\prod_{\ell=1}^{5}\bigl((v-\ell)^2-X\bigr)^2
\prod_{\ell=6}^{8}\bigl((v-\ell)^2-X\bigr),
\end{split}
\end{equation}
and define $\mathfrak A_{\mathbf c}(v,X)$ by multiplying $\mathfrak d_{\mathbf c}(N,n)$ by $\mathfrak D(v,X)$, with $v=n/2$ and $X=N^2$.
This fixes the numerator normalization; no arbitrary scalar multiple is taken.

\begin{proposition}\label{prop:tail-certificates}
For both tails, $\mathfrak A_{\mathbf c}$ belongs to $\mathbb Z[v,X]$ and has degrees $41$ in $v$ and $13$ in $X$.
The denominator $\mathfrak D(n/2,N^2)$ is positive for $N\ge1$ and $n-2N\ge17$.
For $4\le N\le17$, let $n_{\min}(N)$ be the first dimension in \eqref{eq:dimensionrange}.
The coefficients of
\[
\mathfrak A_{(100,-20,1)}(\frac{n_{\min}(N)}{2}+u,N^2),\qquad
2^{39}\mathfrak A_{(150,-29,1)}(\frac{53}{2}+u+\frac{\eta}{2},(18+u)^2)
\]
have the following supports and minimum nonzero values:
\begin{center}
\begin{tabular}{llll}\toprule
Polynomial & Nonzero indices & Number & Smallest coefficient\\\midrule
Each low-order polynomial & $0\le r\le41$ & $42$ & $4680554778900$\\
High-order polynomial & $0\le r\le32,\ 0\le s\le41-r$ & $858$ & $2660746213125$\\\bottomrule
\end{tabular}
\end{center}
The indices in the first and second rows refer to the monomials $u^r$ and $u^r\eta^s$, respectively.
The first row applies separately to each $N=4,\ldots,17$.
The second polynomial has integer coefficients.
Coefficients outside the stated index sets vanish; in particular, both constant coefficients are positive.
\end{proposition}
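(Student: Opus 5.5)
This is an exact-arithmetic verification, so the plan is to reconstruct the objects symbolically and then check each asserted property as an identity in $\mathbb Q[v,X]$ or $\mathbb Z[u,\eta]$, with no sampling in $n$ or $N$.

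\textbf{Step 1: the rational numerator.} Following the three steps of Lemma \ref{lem:finite-extraction-checks}, I compute the ten normalized entries $e_{ij}\in\mathbb Q(v,X)$ through Theorem \ref{thm:closedentry}. Each entry is $c_{ij}$ plus $p_{ij}(0)$ divided by the product in \eqref{eq:gaussian-determinant}. Hence its denominator divides products of $(v-\ell)$ and of $((v-\ell)^2-X)$ for $\ell\le i+j+2\le8$.

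In $\mathfrak d_{\mathbf c}$, the products $\widehat S_{0i}\widehat S_{0j}$ and $\widehat S_{00}\widehat S_{ij}$ produce at most squared factors. These are exactly the factors collected in \eqref{eq:tail-denominator}, with multiplicities matching the largest pole shifts. For $\ell=6,7,8$ the factors $((v-\ell)^2-X)$ occur only through $\widehat S_{ij}$ with $i+j\ge4$, so they appear to the first power.

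I then multiply by $\mathfrak D$ and reduce. This exhibits $\mathfrak A_{\mathbf c}$ as a polynomial, and I check that its coefficients lie in $\mathbb Z$ and that its degrees in $v$ and $X$ are $41$ and $13$. Any genuinely nonintegral coefficient would refute the normalization, so this is a direct check rather than an estimate.

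\textbf{Step 2: positivity of $\mathfrak D$.} With $v=n/2$ and $n-2N\ge17$, we have $v-N\ge17/2$. Then every factor $(v-\ell)$ with $\ell\le8$ is positive. Each factor $(v-\ell)^2-N^2=(v-\ell-N)(v-\ell+N)$ is also positive, since $v-\ell-N\ge1/2$. Finally, the constant $294819525$ is positive.

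\textbf{Step 3: the coefficient tables.} For each $N=4,\dots,17$, I substitute $v=n_{\min}(N)/2+u$ and $X=N^2$ into $\mathfrak A_{(100,-20,1)}$ and expand in $u$. This gives a univariate polynomial of degree $41$, and I list its $42$ coefficients.

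For the high-order tail, I substitute $v=53/2+u+\eta/2$ and $X=(18+u)^2$. This is consistent with $N=18+u$ and $n=2N+17+\eta$. I multiply by $2^{39}$ to clear the powers of $2$ coming from the halves; the largest such power is bounded by the $v$-degree. Expanding, I read off the support: $r\le32$ in $u$ and $r+s\le41$. The count is
\[
\sum_{r=0}^{32}(42-r)=858,
\]
which matches the asserted number of terms. I also record the minimum coefficient.

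\textbf{Main obstacle.} The hard part is not conceptual but the size of the symbolic computation: rational functions in $(v,X)$ with degree-$41$ numerators. I would control it by computing each $e_{ij}$ in partial-fraction form, like \eqref{eq:worked-e01}, before forming $\mathfrak d_{\mathbf c}$. I would also cross-check the final polynomials against direct rational evaluation of $\mathfrak d_{\mathbf c}$ at several integer pairs $(N,n)$.
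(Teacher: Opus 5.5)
Your plan matches the paper's proof: reconstruct the entries exactly via Lemma~\ref{lem:finite-extraction-checks}, multiply $\mathfrak d_{\mathbf c}$ by $\mathfrak D$ and check integrality and degrees, get positivity of $\mathfrak D$ from $(v-\ell)^2-N^2=(v-\ell-N)(v-\ell+N)$, and read off the shifted coefficient tables by exact expansion. The paper writes those coefficients as explicit binomial sums in the coefficients $\alpha_{pq}^{\mathbf c}$ of $\mathfrak A_{\mathbf c}$, which is equivalent to your direct substitution. One small correction: bounding the powers of $2$ by the $v$-degree only justifies a factor $2^{41}$, so the fact that $2^{39}$ already gives integer coefficients is not automatic and must itself be confirmed by the exact expansion.
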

\begin{proof}
Exact cancellation after the three reconstruction steps in the proof of Lemma \ref{lem:finite-extraction-checks} gives the claimed integer numerators and their degrees.
Every factor of \eqref{eq:tail-denominator} is positive in the stated range, since
\[
(v-\ell)^2-N^2
=\left(\frac{n-2N}{2}-\ell\right)
\left(2N+\frac{n-2N}{2}-\ell\right)>0
\qquad(1\le\ell\le8).
\]
To specify the remaining finite checks, write $\mathfrak A_{\mathbf c}(v,X)=\sum_{p,q}\alpha_{pq}^{\mathbf c}v^pX^q$.
For $\mathbf c=(100,-20,1)$, the binomial theorem gives
\begin{equation}\label{eq:low-tail-coefficient-check}
[u^r]\mathfrak A_{\mathbf c}(\frac{n_{\min}(N)}{2}+u,N^2)
=\sum_{p\ge r,q}\alpha_{pq}^{\mathbf c}\binom pr
\left(\frac{n_{\min}(N)}2\right)^{p-r}N^{2q}.
\end{equation}
For $\mathbf c=(150,-29,1)$, the coefficient of $u^r\eta^s$ in the second polynomial is
\begin{equation}\label{eq:high-tail-coefficient-check}
2^{39-s}\sum_{p\ge s,q}\alpha_{pq}^{\mathbf c}\binom ps
\sum_{j=\max\{0,r-2q\}}^{\min\{r,p-s\}}
\binom{p-s}{j}\binom{2q}{r-j}
\left(\frac{53}2\right)^{p-s-j}18^{2q-r+j}.
\end{equation}
Exact rational evaluation of these finite sums gives the displayed supports and minima.
The first check uses the unscaled polynomial for each of the fourteen orders; the second includes the factor $2^{39}$.
Thus the signs hold on the entire parameter ranges, not only at sampled dimensions.
\end{proof}

\begin{lemma}\label{lem:common-tail-obstruction}
The following $3\times3$ matrix is negative definite:
\[
\left(\widehat S_{0i}\widehat S_{0j}
-\widehat S_{00}\widehat S_{ij}\right)_{i,j=1}^3
\bigg|_{(N,n)=(4,28)}
+\left(\widehat S_{0i}\widehat S_{0j}
-\widehat S_{00}\widehat S_{ij}\right)_{i,j=1}^3
\bigg|_{(N,n)=(18,53)}.
\]
\end{lemma}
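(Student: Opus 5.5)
The claim is a finite exact-arithmetic statement about a symmetric $3\times3$ matrix of rational numbers. I would prove it by computing that matrix exactly and then applying Sylvester's criterion.

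First, I reconstruct the normalized entries $e_{ij}$ for $0\le i\le j\le3$ as rational functions of $(v,X)$, with $v=n/2$ and $X=N^2$. This follows the three steps in the proof of Lemma \ref{lem:finite-extraction-checks}: form the profiles $\Phi_j$ and the coefficients $\kappa_{ij,\ell}$, perform the Cauchy reduction to obtain $c_{ij}$ and $r_{ij}$, and solve the Gaussian system \eqref{eq:gaussian-system}. Formula \eqref{eq:closed-entry} then gives $e_{ij}$. As consistency checks, the computed $e_{00}$ should agree with \eqref{eq:e00} and $e_{01}$ with \eqref{eq:worked-e01}.

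Next, I evaluate at the two parameter points. At $(N,n)=(4,28)$ we have $v=14$ and $X=16$; at $(18,53)$ we have $v=53/2$ and $X=324$. Both satisfy $n-2N\ge17$, so every factor of $\mathfrak D$ is nonzero and all entries are finite rationals. I set $\widehat S_{ij}=-2(i+j+2)e_{ij}$ and form, for each point, the matrix $B^{(p)}_{ij}=\widehat S_{0i}\widehat S_{0j}-\widehat S_{00}\widehat S_{ij}$ for $1\le i,j\le3$. This $B^{(p)}$ is exactly the Gram matrix of the quadratic form $\mathbf c\mapsto\mathfrak d_{\mathbf c}$ in \eqref{eq:tail-discriminant}.

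Finally, I add the two matrices and check that the leading principal minors of the sum alternate in sign, $\Delta_1<0$, $\Delta_2>0$, $\Delta_3<0$. By Sylvester's criterion this proves negative definiteness.

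A preliminary sanity check supports the expected sign pattern. Since $\widehat S_{00}>0$ by \eqref{eq:B00}, each $B^{(p)}$ equals $-\widehat S_{00}$ times the Schur complement of $\widehat S_{00}$ in $\widehat S$. So $B^{(p)}$ is negative definite exactly when $\widehat S$ is positive definite at that point, which is not the case here. Each summand therefore has a positive direction, and only the sum can be definite. This is consistent with the role the lemma plays in Remark \ref{rem:no-universal-tail}.

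The only real obstacle is doing the exact reconstruction of the $(i,j)$ entries with $i+j$ up to $6$ reliably, in particular the size-eight system for $(3,3)$. For this I would rely on the symbolic pipeline already used for Proposition \ref{prop:tail-certificates}, with exact rational arithmetic throughout. To cross-check, I would evaluate the resulting $\mathfrak d_{\mathbf c}$ at the fixed tails and confirm it is positive, matching Theorem \ref{thm:algebraic} at the relevant point. The minors themselves are then routine rational arithmetic, and I would record their exact values (or their signs together with magnitudes) as the certificate.
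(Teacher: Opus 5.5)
Your plan matches the paper's proof: the paper also evaluates the exact rational entries at $(N,n)=(4,28)$ and $(18,53)$ and certifies negative definiteness of the sum through unpivoted $LDL^{\mathsf T}$ pivots $d_1,d_2,d_3$, all negative with explicit rational bounds, and this is equivalent to your alternating leading principal minors because $d_k=\Delta_k/\Delta_{k-1}$. Your Schur-complement remark is also correct, and it explains why neither summand alone can be negative definite, consistent with the role of the lemma in Remark \ref{rem:no-universal-tail}.
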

\begin{proof}
We evaluate the rational entries at the two displayed pairs.
Unpivoted $LDL^{\mathsf T}$ elimination gives diagonal pivots $d_1,d_2,d_3$ satisfying the exact rational bounds
\[
-112<d_1<-111,\qquad -194<d_2<-193,\qquad -1958<d_3<-1957.
\]
Each bound is checked by clearing a positive denominator.
Hence the matrix is negative definite.
The same entries and inequalities were independently checked from the ambient recurrences \eqref{eq:qrecurrence}, \eqref{eq:thetarecurrence}, and \eqref{eq:WronskianHessian}, using \eqref{eq:Amaster} and \eqref{eq:moments}.
This check uses no closed-entry input.
\end{proof}

\medskip
Finally, we locate the determinant crossing and determine its direction.
\begin{lemma}\label{lem:crossing-signs}
The polynomial $G_0$ in \eqref{eq:G0} has a unique root $a_*$ in $(16,17)$, and this root is simple.
A rational isolating interval is
\[
16.20187147804706198873008171619853
<a_*
<16.20187147804706198873008171619854.
\]
On this interval, the first three leading principal minors of $M_0$ are positive, $G_0'<0$, and $G_1>0$.
Consequently $c_*=-G_1(a_*)/G_0'(a_*)>0$.
\end{lemma}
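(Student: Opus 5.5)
The statement is a finite exact-arithmetic claim about the explicit integer polynomials $G_0$ in \eqref{eq:G0} and $G_1$ in \eqref{eq:G1}, and about the explicit rational entries $M_{0,ij}$ in \eqref{eq:M0}. I would prove it by certified computation over $\mathbb Q$, in the same spirit as Proposition \ref{prop:tail-certificates}.

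The first step is the uniqueness and simplicity of the root in $(16,17)$. I would substitute $x=16+y$ and then $y=1/(1+z)$, and clear denominators, so that $(1+z)^{14}G_0(16+1/(1+z))$ is a polynomial in $z$ with integer coefficients. By Descartes' rule of signs this Möbius-transformed polynomial should show exactly one sign change. That gives exactly one root in $(16,17)$, counted with multiplicity, so the root is simple. If the coefficient signs do not immediately give exactly one variation, I would instead compute a Sturm sequence for $G_0$ over $\mathbb Z$ with exact arithmetic and count sign changes at $16$ and $17$. Simplicity would then follow separately from $\gcd(G_0,G_0')=1$, or more directly from $G_0'\ne0$ on the isolating interval.

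The second step is to certify the isolating interval. Let $\alpha<\beta$ be the two displayed 32-digit rationals. I would evaluate $G_0(\alpha)$ and $G_0(\beta)$ exactly and check that they have opposite signs. Since the root in $(16,17)$ is unique, this pins $a_*$ to $(\alpha,\beta)$.

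The third step handles the remaining sign conditions on the whole interval $I=[\alpha,\beta]$, not just at a point. For each function $F\in\{G_0',G_1,m_1,m_2,m_3\}$, where $m_k$ is the $k$-th leading principal minor of $M_0$, I would do the following:
\begin{enumerate}
\item Write $F$ as a rational function of $x=n-2N$, using the formulas \eqref{eq:M0}.
\item Clear the positive Pochhammer denominators. These are products of $(x/2-r)$ with $r\le8$, which are positive for $x>16$.
\item Evaluate the resulting polynomial numerator exactly at the midpoint of $I$.
\item Bound its variation over $I$ by $|I|\cdot\sup_{[16,17]}|F'|$. The supremum can be bounded crudely by summing absolute values of the coefficients times $17^{\deg}$.
\end{enumerate}
Since $|I|<10^{-31}$, this crude bound is far smaller than the midpoint value, provided the midpoint value is not tiny. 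The sign is then constant on $I$. In particular, $G_0'<0$ on $I$ confirms simplicity of the root again, and $c_*=-G_1(a_*)/G_0'(a_*)$ is a positive number divided by a positive number, hence positive.

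The main obstacle is making sure none of the certified quantities is nearly zero at $a_*$. That applies in particular to the minors $m_1,m_2,m_3$, whose clean sign is exactly what Theorem \ref{thm:cross} uses to get negative definiteness just below the crossing. If some minor were small, I would enlarge the precision of the interval evaluation, or use exact interval arithmetic with Horner's scheme on $[\alpha,\beta]$. The first minor is easy: $M_{0,00}>0$ directly from \eqref{eq:M0}, since $(x/2-2)_2>0$ for $x>16$.
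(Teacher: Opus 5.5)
Your plan is correct and is essentially the paper's argument: a finite exact-arithmetic certificate on the explicit polynomials $G_0$, $G_1$ and on the rational entries of $M_0$ in \eqref{eq:M0}. The only difference is the choice of tools. The paper counts the roots of $G_0$ in $(16,17)$ with a Sturm sequence (sign variations $7$ and $6$ at the endpoints). It then certifies the signs of the three leading minors, of $G_0'$ and of $G_1$ on the isolating interval by Sturm counts plus rational endpoint evaluation. Your Descartes test after a M\"obius map (with a Sturm fallback) and your midpoint--Lipschitz bound are equally rigorous substitutes; the latter is easily sufficient given the interval width $10^{-32}$.
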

\begin{proof}
The rational entries determine $M_0,M_1$ by \eqref{eq:matrix-asymptotic}; their determinant and first correction give \eqref{eq:G0}--\eqref{eq:G1}.
To reproduce the root counts for a polynomial $P$, start with $P,P'$ and repeatedly take the negative polynomial remainder.
At an endpoint, discard zeros and count sign changes in the resulting Sturm sequence.
The sequence of $G_0$ has variations $7$ and $6$ at $16$ and $17$, respectively, and its last nonzero remainder is constant.
Thus its unique root in $(16,17)$ is simple.

For each of the first three leading principal minors of $M_0$ in \eqref{eq:M0}, reduce its quotient to coprime polynomials.
Sturm counts give no root of either polynomial on the isolating interval, and rational endpoint evaluation gives a positive quotient.
The same procedure gives no zero of $G_0'$ or $G_1$ there, with $G_0'<0$ and $G_1>0$.
All endpoints and sign comparisons are rational; the decimal values in the introduction are outputs, not inputs.
\end{proof}

\section{Obstruction tensors and factorization at \texorpdfstring{$n=2N$}{n=2N}}
\label{app:obstruction-comparison}

This appendix compares the linearizations of the ambient obstruction tensor and Graham's extended obstruction tensors in TT directions.
The scalar ambient equation \eqref{eq:TTambient} computes both, but the product formula comes from the residue of $\boldsymbol\Omega^{(N-1)}$ at $n=2N$.
This extended tensor is not defined as a natural tensor on general metrics at that dimension, and its linearization for $n>2N$ need not have the same real linear factors.
Throughout this appendix, we assume $\Ric(g_E)=(n-1)g_E$, write $a=1+\rho/2$, and take $h$ to be a TT eigentensor satisfying $(\Delta_L+2(n-1))h=bh$.

We write $\boldsymbol\Omega^{(r)}$ for Graham's $r$-th extended obstruction tensor, reserving the unadorned $\Omega$ for the stereographic conformal factor.
In the ambient normal form of \eqref{eq:ambientnormal}, its definition is \cite[Definition 2.2]{GrahamExtended}
\begin{equation}\label{eq:extended-obstruction-definition}
\boldsymbol\Omega^{(r)}_{ij}
=\left.(\widetilde\nabla_{\partial_\rho}^{\,r-1}
\widetilde{\mathrm{Rm}})_{\rho i j\rho}
\right|_{\mathsf t=1,\rho=0},\qquad n>2r+2.
\end{equation}
The restriction on $n$ suffices for the comparison below.
For $r=1$, Graham's normalization gives $\boldsymbol\Omega^{(1)}=(4-n)^{-1}B$, where $B$ is the Bach tensor \cite[(1.5)]{GrahamExtended}.
Accordingly, \eqref{eq:extended-first} agrees with the Bach-tensor linearization of Lin--Yuan \cite[Proposition 3.4]{LinYuanII}, after identifying their Einstein operator with $-\cA$.

\begin{proposition}\label{prop:extended-TT-linearization}
For $r\ge1$, $n>2r+2$, and a TT eigentensor $h$ satisfying $(\Delta_L+2(n-1))h=bh$, the linearized extended obstruction tensor is
\begin{equation}\label{eq:extended-obstruction-linearization}
D\boldsymbol\Omega^{(r)}_{g_E}[h]
=\frac12\left.
\partial_\rho^{r-1}(\psi_b''+a^{-1}\psi_b')
\right|_{\rho=0}h.
\end{equation}
In the first three orders it is given by \eqref{eq:extended-first}--\eqref{eq:extended-third}.
For $r=3$ and every integer $n\ge9$, its quadratic factor in $b$ has no real roots.
\end{proposition}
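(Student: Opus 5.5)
The plan is to compute $\widetilde{\mathrm{Rm}}_{\rho ij\rho}$ for the normal-form ambient metric \eqref{eq:ambientnormal}, linearize it along the TT direction, and then take $r-1$ further covariant $\rho$-derivatives.

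\textbf{Curvature component.} In normal form, the component $\widetilde R_{\rho ij\rho}$ at $\mathsf t=1$ equals
\[
\tfrac12 g''_{ij}-\tfrac14 g^{kl}g'_{ik}g'_{jl}
\]
(Fefferman--Graham, \cite[(3.16)]{FG}). We will also use that $\widetilde\nabla_{\partial_\rho}\partial_\rho=0$ and that $\widetilde\nabla_{\partial_\rho}\partial_i=\tfrac12 g^{kl}g'_{il}\partial_k$ (in the $\mathsf t=1$ frame, modulo $\mathsf t$-components). With these, $(\widetilde\nabla_{\partial_\rho}^{r-1}\widetilde{\mathrm{Rm}})_{\rho ij\rho}$ becomes a one-dimensional covariant derivative along $\rho$ acting on the symmetric tensor $\mathcal R_{ij}(\rho):=\widetilde R_{\rho ij\rho}$. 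Concretely,
\[
\widetilde\nabla_\rho T_{ij}=T'_{ij}-\tfrac12 g^{kl}g'_{ik}T_{lj}-\tfrac12 g^{kl}g'_{jk}T_{il}.
\]
The $\mathsf t$-components do not contribute, because $\partial_{\mathsf t}$ contracts trivially into $\widetilde{\mathrm{Rm}}$: the ambient metric is straight.

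\textbf{Linearization.} At the Einstein background $g_\rho=a^2g_E$, the background curvature is $\mathcal R^0=\tfrac12(a^2)''g_E-\tfrac14 a^{-2}((a^2)')^2g_E=\tfrac14 g_E-\tfrac14 g_E=0$. The derivative operator is $\widetilde\nabla_\rho T=T'-a^{-1}a'\cdot 2T=a^2(a^{-2}T)'$. Since $\mathcal R^0=0$ and the background connection coefficients are pure multiples of the identity, linearizing $\widetilde\nabla^{r-1}_\rho\mathcal R$ only requires linearizing $\mathcal R$ itself: every term in which a connection variation multiplies $\mathcal R^0$ vanishes. We then insert $\dot g_\rho=a^2\psi_bh$ from Proposition \ref{prop:TTambient}. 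A direct computation with $a'=\tfrac12$ and $a''=0$ gives
\[
\dot{\mathcal R}=\tfrac12(a^2\psi_b)''h-\tfrac12\cdot 2\cdot a^{-2}(a^2)'(a^2\psi_b)'h\cdot\tfrac12+\tfrac14a^{-4}((a^2)')^2a^2\psi_bh\cdot 2\cdot\tfrac12 ,
\]
which should simplify to $\tfrac12a^2(\psi_b''+a^{-1}\psi_b')h$; the $\psi_b$ terms without derivatives cancel. Applying the linearized operator then gives $D(\widetilde\nabla^{r-1}_\rho\mathcal R)=a^2\partial_\rho^{r-1}\bigl(\tfrac12(\psi_b''+a^{-1}\psi_b')\bigr)h$. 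Evaluating at $\rho=0$, where $a=1$, gives \eqref{eq:extended-obstruction-linearization}. The range $n>2r+2$ guarantees that $\psi_b$ is determined through order $r+1$ by \eqref{eq:qrecurrence}, since the denominators $n-2m-2$ with $m\le r$ are nonzero.

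\textbf{Explicit orders.} Next we use the recurrence \eqref{eq:qrecurrence} to expand $\psi_{b,2},\psi_{b,3},\psi_{b,4}$. With $(a^{-1})^{(j)}(0)=(-1)^jj!/2^j$, the formula yields \eqref{eq:extended-first}--\eqref{eq:extended-third}. For $r=1$ this gives $\tfrac12(\psi_{b,2}+\psi_{b,1})$. Since $\psi_{b,2}=-b\psi_{b,1}/(n-4)$, this equals $-\tfrac{b(b-n+4)}{2(n-2)(n-4)}$, and it is consistent with Lin--Yuan through $B=(4-n)\boldsymbol\Omega^{(1)}$.

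\textbf{Main obstacle: the $r=3$ claim.} We will factor the cubic polynomial in $b$ for $r=3$. We expect a factor $b$, since $\psi_b\equiv1$ when $b=0$. The remaining quadratic $\alpha(n)b^2+\beta(n)b+\gamma(n)$ has coefficients rational in $n$, and we will show that its discriminant $\beta^2-4\alpha\gamma$ is negative for integers $n\ge9$. The approach is to clear the positive denominators $(n-2)(n-4)(n-6)(n-8)$ and substitute $n=9+u$. The goal is then to check that the resulting polynomial in $u$ has all coefficients of one sign, in the same style as Proposition \ref{prop:tail-certificates}. If that sign pattern fails, we will fall back on a Sturm-sequence count on $[9,\infty)$, as in Lemma \ref{lem:crossing-signs}. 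Both the algebra and the sign check are routine, but they carry the only real risk in the argument.
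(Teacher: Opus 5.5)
Your derivation of the general identity \eqref{eq:extended-obstruction-linearization} is the paper's argument, and it is correct. The component $\tfrac12g''-\tfrac14g^{kl}g'_{ik}g'_{jl}$ vanishes identically on $a^2g_E$, connection variations only meet these vanishing components, and the background normal derivative is $\partial_\rho-a^{-1}=a^2\partial_\rho a^{-2}$. Your linearized curvature does simplify to $\tfrac12(a^2\psi_b''+a\psi_b')h$. The explicit part, however, contains two errors, and the second one breaks your plan for the last assertion.

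First, at $m=1$ the recurrence \eqref{eq:qrecurrence} reads $\psi_{b,2}=\frac{2-b}{n-4}\psi_{b,1}=\frac{b(b-2)}{(n-2)(n-4)}$. You dropped the $2m^2$ term, and your final sign does not follow even from your own $\psi_{b,2}$. The correct value is $\tfrac12(\psi_{b,2}+\psi_{b,1})=\frac{b(b-n+2)}{2(n-2)(n-4)}$, which is \eqref{eq:extended-first}. The formula you wrote has nonzero root $b=n-4$ and contradicts the statement, so your claimed agreement with Lin--Yuan is unfounded.

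Second, for $r=3$ the quantity is $\tfrac12(\psi_{b,4}+\psi_{b,3}-\psi_{b,2}+\tfrac12\psi_{b,1})$. Since $\psi_{b,4}$ has $b^4$-coefficient $1/[(n-2)(n-4)(n-6)(n-8)]\neq0$, the polynomial is quartic, not cubic. Removing only the factor $b$ leaves a real cubic, which always has a real root, so there is no quadratic whose discriminant you could test.

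The missing idea is a second universal factor. For $b=n-2$, the function $\psi_b=2a^{-1}-1$ solves \eqref{eq:TTambient} exactly, and
\[
\psi_b''+a^{-1}\psi_b'=a^{-2}(a^2\psi_b')'=0.
\]
Hence $b-(n-2)$ divides the linearization at every order, just as $b$ does by your own argument with $\psi_0\equiv1$.

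Dividing the $r=3$ quartic by $b(b-n+2)$ leaves $b^2-22b+n^2-2n+72=(b-10)(b-12)+(n-8)(n+6)$. Its discriminant is $4\bigl(50-(n-1)^2\bigr)$, which is negative exactly when $(n-1)^2>50$, i.e.\ for integers $n\ge9$. Substituting $n=9+u$ gives $-4(u^2+16u+14)$. So your one-sign coefficient check does succeed once the correct quadratic is isolated, and no Sturm sequence is needed.
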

\begin{proof}
The curvature convention here is fixed by the ambient identity \cite[(2.12)]{GrahamExtended}
\[
\left.\widetilde{\mathrm{Rm}}_{\rho i j\rho}\right|_{\mathsf t=1}
=\frac12\left(g''_{\rho,ij}
-\frac12g_\rho^{kl}g'_{\rho,ik}g'_{\rho,jl}\right).
\]
Lemma \ref{lem:Einstein-ambient} gives $g_\rho=a^2g_E$, so these curvature components and all their repeated normal derivatives vanish.
Along $g_E\exp(sg_E^{-1}h)$, the first ambient variation is $a^2\psi_bh$.
Differentiating the displayed curvature identity gives
\[
D\left(\left.\widetilde{\mathrm{Rm}}_{\rho i j\rho}
\right|_{\mathsf t=1}\right)[h]
=\frac12(a^2\psi_b''+a\psi_b')h_{ij}.
\]
By \cite[(2.14)]{GrahamExtended}, with $\widetilde\Gamma^k_{\rho j}=(2a)^{-1}\delta^k_j$ and $\widetilde\nabla_{\partial_\rho}\partial_\rho=0$, each further normal covariant derivative acts on this coefficient by $\partial_\rho-a^{-1}$.
The variations of the connection multiply vanishing background components.
The identity $\partial_\rho-a^{-1}=a^2\partial_\rho a^{-2}$ therefore proves \eqref{eq:extended-obstruction-linearization}.
Thus the recurrence \eqref{eq:qrecurrence} determines every required linearized extended tensor without its nonlinear curvature formula.
For $\boldsymbol\Omega^{(r)}$, the same recurrence is continued through $\psi_{b,r+1}$; its denominators are nonzero when $n>2r+2$.

For the first three orders this yields
\begin{align}
D\boldsymbol\Omega^{(1)}_{g_E}[h]
&=\frac{b(b-n+2)}{2(n-2)(n-4)}h,
\label{eq:extended-first}\\
D\boldsymbol\Omega^{(2)}_{g_E}[h]
&=-\frac{b(b-n+2)(b-6)}{2(n-2)(n-4)(n-6)}h,
\label{eq:extended-second}\\
D\boldsymbol\Omega^{(3)}_{g_E}[h]
&=\frac{b(b-n+2)
\bigl((b-10)(b-12)+(n-8)(n+6)\bigr)}
{2(n-2)(n-4)(n-6)(n-8)}h.
\label{eq:extended-third}
\end{align}
The quadratic factor in \eqref{eq:extended-third} has discriminant $4\bigl(50-(n-1)^2\bigr)$, which is negative at every integer $n\ge9$.
It therefore does not split into real linear factors there.
At $n=8$, however, the term $(n-8)(n+6)$ contributes nothing to the residue, and the remaining factors are $b(b-6)(b-10)(b-12)$.
\end{proof}

We now take the residue at $n=2N$ to obtain the factorization.
Here the \emph{residue} is the coefficient of $(n-2N)^{-1}$ in the natural rational tensor formula, before specializing the dimension.

\begin{proposition}\label{prop:critical-obstruction-product}
For every $N\ge2$, the residue at $n=2N$ of the linearization of $\boldsymbol\Omega^{(N-1)}$ in TT directions is
\begin{equation}\label{eq:critical-extended-residue}
\operatorname*{Res}_{n=2N}
D\boldsymbol\Omega^{(N-1)}_{g_E}[h]
=\frac{(-1)^N}{2^N(N-1)!}
\prod_{j=0}^{N-1}[b-j(2N-j-1)]h.
\end{equation}
Let $\mathcal O^{(2N)}$ denote the obstruction tensor in Graham's normalization.
Its linearization is
\begin{equation}\label{eq:critical-obstruction-linearization}
D\mathcal O^{(2N)}_{g_E}[h]
=-\frac1{4(N-1)}
\prod_{j=0}^{N-1}[b-j(2N-j-1)]h.
\end{equation}
The variational identity recovers \eqref{eq:Matsumoto-critical}.
For $n>2N$, the operator representing the fixed-volume Hessian of total $Q_{2N}$-curvature on TT tensors need not be a constant multiple of $D\boldsymbol\Omega^{(N-1)}$.
\end{proposition}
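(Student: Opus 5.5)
The plan is to read everything off the closed linearization \eqref{eq:extended-obstruction-linearization} with $r=N-1$, which is legitimate for $n>2N$. The resulting expression $\frac12\partial_\rho^{N-2}(\psi_b''+a^{-1}\psi_b')|_{\rho=0}$ involves $\psi_{b,m}$ only for $m\le N$. By \eqref{eq:qrecurrence}, the coefficients $\psi_{b,m}$ with $m\le N-1$ have denominators $n-2m'-2$ with $m'\le N-2$, and these are regular at $n=2N$. The only pole at $n=2N$ therefore enters through $\psi_{b,N}=\partial_\rho^{N-2}\psi_b''(0)$, whose recurrence step $m=N-1$ has denominator $n-2N$. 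Hence
\[
\operatorname*{Res}_{n=2N}D\boldsymbol\Omega^{(N-1)}_{g_E}[h]
=\frac12\Bigl[(2(N-1)^2-b)\psi_{b,N-1}+(N-1)^2(N-2)\psi_{b,N-2}\Bigr]_{n=2N}h .
\]
Call the bracket $\mathcal N_N(b)$. It is a polynomial in $b$ of degree $N$.

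\textbf{Leading coefficient.} This step is routine. The leading coefficient of $\psi_{b,N-1}$ is $(-1)^{N-1}/[2^{N-1}(n/2-N+1)_{N-1}]$, as used in the proof of Theorem \ref{thm:ambientHessian}. At $n=2N$ this equals $(-1)^{N-1}/[2^{N-1}(N-1)!]$. Multiplying by $-b$ and by $\frac12$ gives exactly the constant $(-1)^N/(2^N(N-1)!)$ in \eqref{eq:critical-extended-residue}.

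\textbf{Roots, the main step.} It remains to show that $\mathcal N_N$ vanishes at the $N$ values $b_j=j(2N-j-1)$, $0\le j\le N-1$. These values are distinct, after the reindexing used in Proposition \ref{prop:Matsumoto-comparison}. Vanishing of $\mathcal N_N(b)$ is exactly the condition that the formal Taylor recursion of \eqref{eq:TTambient} at $n=2N$ continues past order $N$ without obstruction. So I would exhibit, for each $b_j$, a genuine solution of \eqref{eq:TTambient} at $n=2N$ that is analytic at $\rho=0$ with $\psi(0)=1$. For this, pass to the variable $\upsilon$ of \eqref{eq:xvariable} and try an ansatz $\psi=a^{\alpha}F(\upsilon)$. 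The hope is that the equation becomes Gauss's hypergeometric equation in $\upsilon$, as happened for $f_N$ via the quadratic transformation in Lemma \ref{lem:weyl-scalar-extraction}. The exponent $\alpha$ would solve a quadratic in $b$; writing $b=j(2N-j-1)$ should make an upper parameter a nonpositive integer or cancel the resonance. The first thing I would try is to match this against the closed polynomial solutions of Proposition \ref{prop:Qj}, which are polynomial in $R$ and hence analytic. If the direct hypergeometric reduction is messy, a fallback is to verify termination directly from the three-term recurrence at $n=2N$. That is the step I expect to be hardest. Once the roots are established, degree and leading coefficient give \eqref{eq:critical-extended-residue}.

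\textbf{Obstruction tensor and Matsumoto's formula.} For \eqref{eq:critical-obstruction-linearization}, I would invoke Graham's identification of the obstruction tensor as a fixed multiple of the residue of $\boldsymbol\Omega^{(N-1)}$ at $n=2N$ \cite{GrahamExtended}. The constant relating the two displays is $(-1)^{N+1}2^{N-2}(N-2)!$, and I would check it against Graham's normalization at $N=2$ using the Bach tensor relation $\boldsymbol\Omega^{(1)}=(4-n)^{-1}B$. The variational identity that identifies the critical-dimension gradient of $\mathcal Q$ with a multiple of $\mathcal O^{(2N)}$ then reproduces \eqref{eq:critical-polynomial}, and so recovers \eqref{eq:Matsumoto-critical}.

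\textbf{Non-proportionality for $n>2N$.} A single example suffices. For $N=2$, \eqref{eq:lowP} gives $\mathscr P_2^{(n)}$ with roots $0$ and $(n-2)^2(n^2-4)/(8(n-1))$. By \eqref{eq:extended-first}, $D\boldsymbol\Omega^{(1)}$ has roots $0$ and $n-2$. These second roots differ for every $n>4$, so the two operators are not constant multiples of each other.
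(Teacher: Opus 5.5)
Your reduction is correct, and it is organized differently from the paper. Only the step $m=N-1$ of \eqref{eq:qrecurrence} carries the denominator $n-2N$, so the residue is $\frac12\mathcal N_N(b)$, and its degree and leading coefficient are as you compute. At $n=2N$ the coefficients $\psi_{b,0},\dots,\psi_{b,N-1}$ are forced, while the $m=N-1$ equation reads $0\cdot\psi_{b,N}=\mathcal N_N(b)$. Hence $\mathcal N_N(b)=0$ exactly when \eqref{eq:TTambient} has a formal solution with $\psi(0)=1$ at $n=2N$.

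However, all the content of \eqref{eq:critical-extended-residue} lies in the $N$ roots, and there you only describe what you hope will happen; nothing is proved. Two of your three suggested routes also fail as stated. The profiles $\Phi_j$ of Proposition~\ref{prop:Qj} solve \eqref{eq:QODE}, where $-A$ acts in the $t$-variable and the initial data $t^j$ are not eigenfunctions of $A$. So they give no scalar solution at a fixed eigenvalue $b_j$. Moreover, the solutions you need are not polynomial in $\rho$: for $j=1$, $b=n-2$, the solution is $\psi=(2-\rho)/(2+\rho)$ for every $n$. So a termination check in the three-term recurrence \eqref{eq:qrecurrence} is not available. Verifying $\mathcal N_N(b_j)=0$ from that recurrence for all $N$ and $j$ would need a closed form for $\psi_{b,m}$, which you do not have.

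The missing idea is that \eqref{eq:TTambient} is exactly Gauss's equation in $\sigma\coloneqq\rho/(\rho+2)$:
\[
\sigma(1-\sigma)\Psi''+\Bigl((n-2)\sigma-\frac{n-2}{2}\Bigr)\Psi'-b\Psi=0,
\]
with $\alpha+\beta=1-n$, $\alpha\beta=b$, and $c=1-\frac{n}{2}$. For $b=j(n-j-1)$ take $\alpha=-j$. Then ${}_2F_1(-j,j+1-n;1-\frac{n}{2};\sigma)$ is a polynomial of degree $j\le N-1$ whose denominators $(1-N)_k$, $k\le j$, do not vanish at $n=2N$. This is the analytic solution your argument needs, and it closes the gap. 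By Pfaff's transformation it equals $a^{-j}{}_2F_1(-j,\frac{n}{2}-j;1-\frac{n}{2};-\frac{\rho}{2})$. So your prefactor $a^{\alpha}$ with $\alpha(1-n-\alpha)=b$ is right, but for the variable $-\rho/2$; in $\upsilon=4\sigma(1-\sigma)$ no prefactor is needed.

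The paper uses the same substitution more directly. The two-term recurrence in $\sigma$ gives $\mathsf c_m=\prod_{j<m}[b-j(n-j-1)]/(m!(1-\frac{n}{2})_m)$ for every $m$. Only $(1-\frac{n}{2})_N$ vanishes at $n=2N$, and $\sigma=\rho/2+O(\rho^2)$, so the residue of $\psi_{b,N}$ and its factorization follow at once, with no separate leading-coefficient and root-counting step.

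The rest of your proposal matches the paper: Graham's residue constant, Matsumoto's identity, and the $N=2$ root comparison. For Matsumoto's identity you should state it explicitly, $D^2\mathcal Q|_{g_E}(h,k)=(-1)^N(N-1)\langle h,D\mathcal O^{(2N)}_{g_E}[k]\rangle$, and note that it uses obstruction-flatness of the Einstein background.
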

\begin{proof}
We regard $n$ as a formal parameter in the rational coefficient formulas and set
\[
\sigma\coloneqq\frac{\rho}{\rho+2},\qquad
\Psi_b(\sigma)\coloneqq
\psi_b\left(\frac{2\sigma}{1-\sigma}\right).
\]
The scalar version of \eqref{eq:TTambient} becomes
\begin{equation}\label{eq:extended-hypergeometric-ode}
\sigma(1-\sigma)\Psi_b''
+\left((n-2)\sigma-\frac{n-2}{2}\right)\Psi_b'
-b\Psi_b=0.
\end{equation}
Primes in this display denote $\sigma$-derivatives.
If $\Psi_b=\sum_{m\ge0}\mathsf c_m\sigma^m$, its coefficient equation is
\[
(m+1)(m+1-\frac{n}{2})\mathsf c_{m+1}
=\bigl[b-m(n-m-1)\bigr]\mathsf c_m,
\qquad \mathsf c_0=1.
\]
Consequently, as a formal rational identity,
\begin{equation}\label{eq:extended-factorial-coefficients}
\mathsf c_m=
\frac{\displaystyle\prod_{j=0}^{m-1}
[b-j(n-j-1)]}
{m!(1-\frac{n}{2})_m}.
\end{equation}
Only finite jets are being asserted; no convergence or continuation of the ambient metric through order $N$ at $n=2N$ is needed.

Fix $N\ge2$.
At $n=2N$, all $\mathsf c_m$ with $m<N$ are regular, whereas $(1-n/2)_N$ has a simple zero.
Since $\sigma=\rho/2+O(\rho^2)$, only $\mathsf c_N\sigma^N$ contributes to the residue of $\psi_{b,N}=\psi_b^{(N)}(0)$.
We obtain
\begin{equation}\label{eq:critical-profile-residue}
\operatorname*{Res}_{n=2N}\psi_{b,N}
=\frac{(-1)^N}{2^{N-1}(N-1)!}
\prod_{j=0}^{N-1}[b-j(2N-j-1)].
\end{equation}
All lower derivatives in \eqref{eq:extended-obstruction-linearization}, with $r=N-1$, are regular at this dimension.
Thus \eqref{eq:critical-extended-residue} follows.

Graham's residue identity \cite[Proposition 2.8]{GrahamExtended} is
\[
\operatorname*{Res}_{n=2N}\boldsymbol\Omega^{(N-1)}
=\frac{(-1)^{N-1}}{2^{N-2}(N-2)!}\mathcal O^{(2N)},
\]
in the normalization for which $\mathcal O^{(4)}$ is the Bach tensor.
Metric variation commutes with the residue of this rational natural tensor expression.
Thus \eqref{eq:critical-extended-residue} gives \eqref{eq:critical-obstruction-linearization}, also obtained directly by combining \cite[Theorem 3.4]{Matsumoto} with \cite[Theorem 4.2]{Matsumoto}.
Since the Einstein background is obstruction-flat, \cite[(0-4)]{Matsumoto} gives
\[
D^2\mathcal Q|_{g_E}(h,k)
=(-1)^N(N-1)
\langle h,D\mathcal O^{(2N)}_{g_E}[k]\rangle_{L^2(g_E)},
\]
which recovers \eqref{eq:Matsumoto-critical}.

The product in \eqref{eq:critical-obstruction-linearization} comes from the resonant coefficient at $n=2N$.
It describes the residue of the extended tensor, rather than its value at the pole.
Formula \eqref{eq:extended-third} shows concretely why the same factorization cannot simply be imposed on the extended tensor away from that dimension.
Moreover, for $n>2N$ the operator representing the fixed-volume Hessian of total $Q_{2N}$-curvature on TT tensors is not in general a constant multiple of $D\boldsymbol\Omega^{(N-1)}$.
Already at $N=2$, \eqref{eq:lowP} gives its nonzero polynomial root as
\[
\frac{(n^2-4)(n-2)^2}{8(n-1)},
\]
whereas \eqref{eq:extended-first} has nonzero root $n-2$.
These roots coincide at $n=4$ and differ for every $n>4$.
The determinant equation and scalar coefficient extraction in Section \ref{sec:ambient} therefore supply information not obtained by substituting a dimension $n>2N$ into the obstruction product for $n=2N$.
\end{proof}

\section{Second variation of total renormalized volume coefficients}
\label{app:volume-hessians}

The determinant equation in Lemma \ref{lem:ThetaODE} supplies a common input for two families of metric functionals.
The fixed-volume Hessian of total $Q_{2N}$-curvature is determined by the weighted derivative of $\Theta_{h,k}$ in \eqref{eq:WronskianHessian}; the Hessians of total renormalized volume coefficients are determined by the coefficients of $a^n\Theta_{h,k}$ itself.
We keep the Einstein normalization $\Ric(g_E)=(n-1)g_E$, the operator $\cA$ in \eqref{eq:Aconvention}, and the fixed-volume exponential path \eqref{eq:fixed-volume-path}.
In this appendix, we compute second variations as mixed derivatives along \eqref{eq:fixed-volume-path}, as in Theorem \ref{thm:ambientHessian}.

\subsection{Coefficient extraction and negativity on TT tensors}

For an integer $N\ge1$ with $n\ge\max\{3,2N\}$, we define the ambient volume coefficients by
\begin{equation}\label{eq:volume-coefficients}
v(g,\rho)\coloneqq\left(\frac{\det g_\rho}{\det g}\right)^{\frac{1}{2}},
\qquad v_j(g)\coloneqq[\rho^j]v(g,\rho),\qquad 0\le j\le N.
\end{equation}
Here $v_0=1$ and $g_\rho$ is the normal-form family in \eqref{eq:ambientnormal}.
At $n=2N$, its order-$N$ trace is determined even though the trace-free part at that order is not; hence $v_N$ is still a natural scalar invariant \cite{FG,GrahamExtended}.
Our convention is $v_j=(-2)^jv^{(2j)}$, where $v^{(2j)}$ is the coefficient of $r^{2j}$ in the Poincar\'e volume density and $\rho=-r^2/2$ \cite[(3.2)--(3.3)]{ChangFangGraham}.
In particular, $v_1=R_g/(2(n-1))$, and for locally conformally flat metrics $v_j=\sigma_j(g^{-1}\mathsf P_g)$, where $\sigma_j$ is the $j$th elementary symmetric polynomial of the eigenvalues.
At the Einstein reference, $v(g_E,\rho)=a^n$ and $v_j(g_E)=2^{-j}\binom nj$.
We define
\begin{equation}\label{eq:total-volume-functionals}
\mathcal V_j(g)\coloneqq\int_Mv_j(g)\,dv_g,
\qquad
\widehat{\mathcal V}_j(g)\coloneqq
\Vol(g)^{-\frac{n-2j}{n}}\mathcal V_j(g).
\end{equation}
These are integrals of local volume coefficients; the finite constant term in the renormalized volume of a Poincar\'e--Einstein filling is a different functional, studied in \cite{GuillarmouMoroianuSchlenker}.
Chang--Fang--Graham \cite[Theorem 3.1]{ChangFangGraham} compute the second variation of $\mathcal V_j$ restricted to metrics of fixed volume in a conformal class; below we compute it in TT directions.

\begin{theorem}\label{prop:volume-TT-hessian}
Let $N\ge1$, $n\ge\max\{3,2N\}$, and let $(M^n,g_E)$ be a smooth closed Einstein manifold with $\Ric(g_E)=(n-1)g_E$.
For smooth TT tensors $h,k$, the first variation $D_gv_N|_{g_E}[h]$ vanishes pointwise, and
\begin{equation}\label{eq:volume-TT-hessian}
D^2\mathcal V_N|_{g_E}(h,k)
=\frac12[\rho^N]\bigl(a^n\Theta_{h,k}\bigr)
=\left\langle h,\mathscr R_N^{(n)}(-\cA)k\right\rangle_{L^2(g_E)},
\end{equation}
where $\mathscr R_N^{(n)}$ is a polynomial of degree $N$ with coefficients rational in $n$.
For the normalized functional, the right-hand side is multiplied by $\Vol(g_E)^{-(n-2N)/n}$.
Both forms extend continuously to TT tensors in $H^N(M;\operatorname{Sym}^2T^*M)$, defined with respect to $g_E$.
If $\Theta_b$ denotes the formal solution in Lemma \ref{lem:ThetaODE} for a unit TT eigentensor with $-\cA h=bh$, then
\begin{equation}\label{eq:volume-polynomial}
\mathscr R_j^{(n)}(b)=\frac12[\rho^j]\bigl(a^n\Theta_b\bigr),
\qquad 1\le j\le N,
\end{equation}
and the generating identity through degree $N$ is
\begin{equation}\label{eq:volume-generating}
\begin{split}
2\sum_{j=1}^N\mathscr R_j^{(n)}(b)\rho^j
&=a(\rho)^{n-1}\left[
\frac{b\rho}{2(n-1)}
-\int_0^\rho(\rho-t)a(t)(\psi_b'(t))^2\,dt\right]
+O(\rho^{N+1}).
\end{split}
\end{equation}
\end{theorem}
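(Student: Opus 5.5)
The plan is to follow the proof of Theorem \ref{thm:ambientHessian}, with one simplification: $v_N$ is read off directly from the determinant, so no scalar harmonic extension is needed. First fix an $L^2$-normalized TT eigentensor $h$ with $-\cA h=bh$, and use the path $g_0(s)=g_E\exp(sg_E^{-1}h)$. This path preserves $dv_g$ pointwise, so
\[
\mathcal V_N(g(s))=\int_M[\rho^N]v(g(s),\rho)\,dv_{g_E}.
\]
Write
\[
v(g(s),\rho)=a^n\exp\Bigl(\tfrac12\log\tfrac{\det g_\rho(s)}{a^{2n}\det g_E}\Bigr).
\]
By Proposition \ref{prop:TTambient}, the first variation of $\log\det g_\rho$ is $a^{-2}\tr(\dot g_\rho)$ relative to the background, which equals $\psi_b\tr_{g_E}h=0$ pointwise. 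This gives the vanishing of $D_gv_N|_{g_E}[h]$ for eigentensors, and then for all smooth TT tensors by spectral sums.

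For the Hessian, differentiate the exponential twice. The square of the vanishing first variation drops out, and integration leaves
\[
\tfrac12[\rho^N]\bigl(a^n\Theta_b\bigr),
\]
where $\Theta_b$ is the quantity defined in \eqref{eq:ThetaDefinition}. Since $v(g,0)=1$ and $v_0=1$, no volume-change term arises. For the normalized functional $\widehat{\mathcal V}_N$, $\Vol$ is constant along the path, so the Hessian is simply multiplied by the constant factor $\Vol(g_E)^{-(n-2N)/n}$. Polarizing, and repeating the $b\ne c$ orthogonality argument from Theorem \ref{thm:ambientHessian} with $\Theta_{h,k}$, gives the bilinear form and the identity $\mathscr R_N^{(n)}(b)=\tfrac12[\rho^N](a^n\Theta_b)$.

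Next, integrate \eqref{eq:ThetaODE}. From $(a^2\Theta_b')'=-a^2(\psi_b')^2$ and $\Theta_b'(0)=b/(2(n-1))$ we get
\[
a^2\Theta_b'=\frac{b}{2(n-1)}-\int_0^\rho a^2(\psi_b')^2.
\]
This should be converted into the stated generating identity \eqref{eq:volume-generating}. One more integration of $\Theta_b'$ carries a weight $a^{-2}$, and my first attempt will be to verify the closed form by differentiating both sides and matching them with the ODE, using $a'=1/2$. The main worry is whether the displayed kernel $(\rho-t)a(t)$ with prefactor $a^{n-1}$ comes out exactly, or only modulo $O(\rho^{N+1})$. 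If it only holds to that order, I will use that only coefficients through degree $N$ are asserted, and compare coefficients via the recurrence \eqref{eq:thetarecurrence}.

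For the degree claim, the recurrences \eqref{eq:qrecurrence} and \eqref{eq:thetarecurrence} make $\theta_m$ a polynomial of degree at most $m$ in $b$, so $\mathscr R_N$ has degree at most $N$. Exactness of the degree comes from the leading coefficient of $\theta_N$, which is the same sum of positive Pochhammer quotients as in \eqref{eq:TT-leading-coefficient} with a nonzero sign. At $n=2N$ the denominators $n-2m-2$ for $m\le N-2$ are still nonzero: the recurrence for $\psi_{b,m+1}$ is only needed up to $m+1=N-1$, because $\theta_N$ involves $\psi_b'$ only through order $N-1$. This point must be checked carefully. Finally, continuity on $H^N$ TT tensors follows as before from the degree-$N$ polynomial in $\cA$, together with density of finite eigen-sums.
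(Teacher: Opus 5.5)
Your route follows the paper in outline: log-determinant form of $v$, second $s$-derivative giving $\tfrac12[\rho^N](a^n\Theta_{h,k})$, and the polynomial read off from \eqref{eq:thetarecurrence}. However, the first-variation step has a genuine, if localized, gap in the boundary case $n=2N$, which the statement explicitly includes. You derive $D_gv_N|_{g_E}[h]=0$ from $\dot g_\rho=a^2\psi_bh$, that is, from Proposition~\ref{prop:TTambient}. The $\rho^N$ coefficient of the first variation of $\log\det g_\rho$ needs the order-$N$ coefficient of $\dot g_\rho$. At $n=2N$ the recurrence \eqref{eq:qrecurrence} for $\psi_{b,N}$ (the case $m=N-1$) has denominator $n-2N=0$. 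This is not an artifact of the formula: at $n=2N$ the trace-free part of the order-$N$ ambient coefficient is genuinely undetermined. So your identity $a^{-2}\tr_{g_E}\dot g_\rho=\psi_b\tr_{g_E}h$ has no content at the top order.

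You need an argument that sees only the trace, which is determined. Write \eqref{eq:FGnormal} as $(\log\det g_\rho)''+\frac12\tr\bigl((g_\rho^{-1}g_\rho')^2\bigr)=0$ and linearize at $g_\rho=a^2g_E$, where $g_\rho^{-1}g_\rho'=a^{-1}\Id$. The pointwise first variation $\ell_h$ of $\log\bigl(\det g_\rho/(a^{2n}\det g_E)\bigr)$ then satisfies $\ell_h''+a^{-1}\ell_h'=0$, with $\ell_h(0)=\tr_{g_E}h=0$ and $\ell_h'(0)=DR_{g_E}[h]/(n-1)=0$. Hence $\ell_h$ vanishes through degree $N$ for every $n\ge\max\{3,2N\}$, for any smooth TT $h$ and without spectral sums. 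This is how the paper proves the first assertion. Your Hessian computation does survive at $n=2N$: the squared first-variation term you discard is $O(\rho^{2N})$ once the first variation vanishes through degree $N-1$. Proposition~\ref{prop:TTambient} does supply that much there, since the denominators $n-2m-2$ with $m\le N-2$ are positive.

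On \eqref{eq:volume-generating}, your worry is unfounded and the differentiation check you propose succeeds. Since $a'=\tfrac12$, one has $(a^2\Theta_b')'=a\,(a\Theta_b)''$. So \eqref{eq:ThetaODE} is equivalent to $(a\Theta_b)''=-a(\psi_b')^2$ with $(a\Theta_b)(0)=0$ and $(a\Theta_b)'(0)=b/(2(n-1))$. Two integrations give the bracket in \eqref{eq:volume-generating} exactly, and multiplying by $a^{n-1}$ gives $a^n\Theta_b$. Your $a^{-2}$-weighted integration closes the same way, because $\int_t^\rho a(s)^{-2}\,ds=(\rho-t)/(a(t)a(\rho))$. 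Thus the identity is exact on the determined jets, and $O(\rho^{N+1})$ only records that nothing beyond degree $N$ is asserted; this is the paper's argument.

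Two small points on the degree claim. For $N=1$ the quadratic sum in \eqref{eq:thetarecurrence} is empty, so exact degree one comes directly from $\theta_1=b/(2(n-1))$. For $N\ge2$ at $n=2N$, the Pochhammer factors in the leading coefficient become $(N-r)_r$ and $(r)_{N-r}$, which are still positive.
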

\begin{proof}
Along \eqref{eq:fixed-volume-path}, the volume form of $g_0$ is fixed pointwise.
We write $\ell_h(\rho)$ for the first variation in direction $h$ of $\log(\det g_\rho/(a^{2n}\det g_E))$.
The first variation of the normal Ricci equation \eqref{eq:FGnormal} gives
\[
\ell_h''+a^{-1}\ell_h'=0,\qquad
\ell_h(0)=0,\qquad
\ell_h'(0)=\frac{D R_{g_E}[h]}{n-1}=0.
\]
The last equality uses the TT condition and the Einstein property.
Thus $\ell_h$ vanishes through degree $N$, including at $n=2N$, without requiring the undetermined trace-free order-$N$ ambient coefficient.
Taking coefficients proves $D_gv_N|_{g_E}[h]=0$.
Differentiating the square root of the determinant therefore gives
\[
\left.\partial_{s_1}\partial_{s_2}
\int_M v(g_0(s_1,s_2),\rho)\,dv_{g_E}\right|_{0,0}
=\frac12a^n\Theta_{h,k}(\rho).
\]
Coefficient extraction proves the first equality in \eqref{eq:volume-TT-hessian}.
The recurrences \eqref{eq:qrecurrence} and \eqref{eq:thetarecurrence} give \eqref{eq:volume-polynomial} and the degree bound.
For $N\ge2$, the coefficient of $b^N$ is
\[
\frac{(-1)^{N+1}}{2^{N+1}N!}
\sum_{r=1}^{N-1}\binom{N-2}{r-1}
\frac{1}
{(\frac{n}{2}-r)_r\,(\frac{n}{2}-N+r)_{N-r}},
\]
obtained from the quadratic source for $\theta_N$ in \eqref{eq:thetarecurrence}.
It is nonzero in the stated dimension range, so the degree is exactly $N$; for $N=1$, \eqref{eq:volume-low-one} gives the assertion.
Self-adjointness of $\cA$, followed by polarization, gives the operator formula as in the proof of Theorem \ref{thm:ambientHessian}.
Integration by parts and density of smooth TT tensors give the $H^N$ extension.
Since the volume is constant along the path, normalization contributes only the stated positive factor.

For \eqref{eq:volume-generating}, rewrite \eqref{eq:ThetaODE} as
\[
(a\Theta_b)''=-a(\psi_b')^2,
\qquad (a\Theta_b)(0)=0,
\qquad (a\Theta_b)'(0)=\frac b{2(n-1)}.
\]
Two integrations and multiplication by $a^{n-1}$ prove the identity.
For $N\ge2$, only $\psi_b$ through degree $N-1$ is needed; the recurrence denominators remain nonzero at $n=2N$.
\end{proof}

\begin{corollary}\label{cor:volume-TT-sign}
For $n\ge\max\{3,2N\}$,
\begin{equation}\label{eq:volume-alternating}
\mathscr R_N^{(n)}(b)
=\sum_{j=1}^N(-1)^{j+1}d_{N,j}(n)b^j,
\qquad d_{N,j}(n)>0.
\end{equation}
The second variations of $\mathcal V_N$ and $\widehat{\mathcal V}_N$ are strictly negative on every nonzero TT tensor on the round sphere.
The first three polynomials, in their respective dimension ranges, are
\begin{align}
\mathscr R_1^{(n)}(b)&=\frac b{4(n-1)},
\label{eq:volume-low-one}\\
\mathscr R_2^{(n)}(b)&=\frac b8-\frac{b^2}{4(n-2)^2},
\label{eq:volume-low-two}\\
\mathscr R_3^{(n)}(b)&=\frac{n-2}{32}b
-\frac{3n-8}{24(n-2)(n-4)}b^2
+\frac{b^3}{6(n-2)^2(n-4)}.
\label{eq:volume-low-three}
\end{align}
\end{corollary}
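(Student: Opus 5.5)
Starting from the generating identity \eqref{eq:volume-generating}, I would extract the coefficient of $\rho^N$ to get an explicit formula for $\mathscr R_N^{(n)}(b)$, then show that its coefficients alternate in sign. The identity writes $2\mathscr R_N^{(n)}(b)$ as the $\rho^N$ coefficient of
\[
a^{n-1}\frac{b\rho}{2(n-1)}-a^{n-1}\int_0^\rho(\rho-t)a(t)(\psi_b'(t))^2\,dt .
\]
The first term contributes only to the linear coefficient, namely $\frac{b}{2(n-1)}2^{-(N-1)}\binom{n-1}{N-1}>0$. For the second term, it is more convenient to use the variable $\sigma=\rho/(\rho+2)$ from Appendix \ref{app:obstruction-comparison}. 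There $\Psi_b=\sum\mathsf c_m\sigma^m$ with $\mathsf c_m=\prod_{j<m}[b-j(n-j-1)]/(m!(1-\frac n2)_m)$ by \eqref{eq:extended-factorial-coefficients}. For $n>2m$ we have $\operatorname{sign}(1-\frac n2)_m=(-1)^m$. Writing $b-j(n-j-1)$ and expanding in $b$, the coefficient of $b^p$ in $\mathsf c_m$ has sign $(-1)^m\cdot(-1)^{m-p}=(-1)^p$, because each root $j(n-j-1)\ge0$ and $b$ enters through $(b-\text{positive})$ factors. Hence $\mathsf c_m(b)=\sum_p(-1)^p\gamma_{m,p}b^p$ with $\gamma_{m,p}\ge0$, and $\gamma_{m,p}>0$ for $1\le p\le m$ (the $j=0$ factor is $b$, so there is no constant term for $m\ge1$).

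Next I would propagate this sign pattern through the remaining operations. The change of variables $\rho\mapsto\sigma$, $d\sigma/d\rho=2/(\rho+2)^2$, and the derivative $\psi_b'$ are linear with coefficients that are power series in $\rho$; I would rather work with $\sigma$-coefficients throughout and convert at the end. In $(\psi_b')^2$, a product of two series whose $b^p$-coefficients have sign $(-1)^p$, the $b^q$ coefficient has sign $(-1)^q$ and is a positive combination, as long as every $\rho$-series multiplier involved has nonnegative coefficients. Then $-\int_0^\rho(\rho-t)a(\cdots)$ and multiplication by $a^{n-1}$ act as positive linear maps on the $\rho$-coefficients, since $a$ and $a^{n-1}$ have positive coefficients, and the overall minus sign flips the sign. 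The quadratic term therefore contributes $\sum_{q\ge2}(-1)^{q+1}(\text{positive})\,b^q$ to the $\rho^N$ coefficient, which matches $(-1)^{j+1}d_{N,j}$ for $j\ge2$, and it adds nothing to $b^1$ because $\psi_b'$ is $O(b)$. The $b^1$ coefficient is then the positive term computed above.

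The main obstacle is that the $\rho$-Taylor coefficients of $\psi_b'$ themselves need not inherit the sign pattern: $\sigma=\rho/2-\rho^2/4+\dots$ has alternating coefficients, and $\psi_b'(\rho)=\Psi_b'(\sigma)\sigma'(\rho)$ mixes these. I would try first to absorb this by writing everything in $\sigma$. Since $a=1/(1-\sigma)$, we have $\rho=2\sigma/(1-\sigma)$ and $d\rho=2\,d\sigma/(1-\sigma)^2$, all of which have positive $\sigma$-coefficients. So $(a\Theta_b)$ in $\sigma$ satisfies an integral identity with positive kernels, and it would suffice to show that $[\rho^N]$ of a series with positive $\sigma$-coefficients is positive after multiplication by $a^{n-1}=(1-\sigma)^{1-n}$. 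That step still involves $\sigma^k=(\rho/(\rho+2))^k$, whose $\rho$-coefficients alternate, so I would instead keep the final extraction in $\rho$ and verify positivity via $a^{n-1}\sigma^k=2^{-k}\rho^k(1+\rho/2)^{n-1-k}$. This has positive $\rho$-coefficients when $k\le n-1$, which holds here since only $k\le N\le n/2$ matters. This handles the hard step. Strictness ($d_{N,j}>0$) follows because the top coefficient is the nonzero one already computed in the proof of Theorem \ref{prop:volume-TT-hessian} and every intermediate combination includes a strictly positive term.

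Finally, on the round sphere, TT tensors satisfy $\cA=\nabla^*\nabla+2\ge2$, so $-\cA$ has spectrum $b\le-2<0$. Then each term $(-1)^{j+1}d_{N,j}b^j=-d_{N,j}|b|^j<0$, so $\mathscr R_N^{(n)}(b)<0$ on every eigenvalue, and the Hessian is strictly negative on nonzero TT tensors, including for the normalized functional by the positive factor. The low-order formulas \eqref{eq:volume-low-one}--\eqref{eq:volume-low-three} follow by direct substitution of \eqref{eq:qrecurrence} into \eqref{eq:volume-generating}.
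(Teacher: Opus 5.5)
Your outer structure matches the paper's: extract $[\rho^N]$ from \eqref{eq:volume-generating}, take the $b^1$ term from the first summand and $b^2,\dots,b^N$ from minus a sign-patterned integral, then use that round TT spectra satisfy $b\le-2$. Your signs for $\mathsf c_m$ are also correct. The gap is the step that carries your route: that $(a\Theta_b)$ ``in $\sigma$ satisfies an integral identity with positive kernels'' because $a$, $\rho$, $d\rho$ have positive $\sigma$-coefficients.

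That justification omits $d\sigma/d\rho=\tfrac12(1-\sigma)^2$, which enters squared through $(\psi_b')^2=\tfrac14(1-\sigma)^4(\partial_\sigma\Psi_b)^2$. In fact $a(t)\psi_b'(t)^2\,dt=\tfrac12(1-\tau)(\partial_\tau\Psi_b)^2\,d\tau$, whose kernel is not positive. Your claim is true only because of an exact cancellation you would have to prove. Namely $\partial_\rho^2+a^{-1}\partial_\rho=\tfrac14(1-\sigma)^4\partial_\sigma^2$, so \eqref{eq:ThetaODE} becomes $\partial_\sigma^2\Theta_b=-(\partial_\sigma\Psi_b)^2$ with $\partial_\sigma\Theta_b(0)=b/(n-1)$. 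Equivalently,
\[
\Theta_b=\frac{b\sigma}{n-1}-\int_0^\sigma(\sigma-\tau)(\partial_\tau\Psi_b)^2\,d\tau .
\]
With this identity your route does close. Your sign pattern for $\mathsf c_m$ handles the integrand. The extraction $a^n\sigma^k=2^{-k}\rho^k a^{n-k}$ has nonnegative $\rho$-coefficients for $k\le N\le n$, and $[\rho^N]a^n\sigma^N=2^{-N}$. Strictness for $j\ge2$ comes from the cross term $\mathsf c_1\mathsf c_{N-1}$.

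More importantly, the obstacle you work around does not exist. Set $b=-x$ and multiply \eqref{eq:qrecurrence} by $n-2m-2$, which is positive for $m\le N-2$ even when $n=2N$. The right side becomes
\[
(2m^2+x)\psi_{-x,m}+\tfrac14 m(m-1)(2m+n-2)\psi_{-x,m-1}.
\]
Since $\psi_{-x,1}=x/(n-2)$, induction shows every $\rho$-Taylor coefficient $\psi_{-x,m}$, $1\le m\le N-1$, has strictly positive coefficients on $x,\dots,x^m$. This is the paper's argument. Then $(\psi_b')^2$, the kernel $(\rho-t)a(t)$, and the factor $a^{n-1}$ all act positively directly in $\rho$. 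The first summand contributes $-\binom{n-1}{N-1}x/(2^{N+1}(n-1))$ to $\mathscr R_N^{(n)}(-x)$. The product $\psi_{-x,1}\psi_{-x,N-1}$ gives strict positivity of each $d_{N,j}$ with $j\ge2$. The sign mixing from $\sigma=\rho/2-\rho^2/4+\cdots$ never needs to be confronted, so no change of variables is needed. Your spectral negativity argument and the low-order formulas are fine as stated.
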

\begin{proof}
Put $b=-x$.
For $1\le m\le N-1$, the polynomial $\psi_{-x,m}$ has strictly positive coefficients of $x,x^2,\ldots,x^m$.
Indeed, $\psi_{-x,1}=x/(n-2)$, and \eqref{eq:qrecurrence} gives this assertion inductively: after multiplication by the positive factor $n-2m-2$, its right-hand side becomes
\[
(2m^2+x)\psi_{-x,m}
+\frac14m(m-1)(2m+n-2)\psi_{-x,m-1}.
\]
In \eqref{eq:volume-generating}, the integral term consequently has nonnegative coefficients in $\rho$ and $x$ through the required order.
The factors $a(t)$ and $a(\rho)^{n-1}$ preserve this property, and integration sends $t^m$ to $\rho^{m+2}/((m+1)(m+2))$.
For $N\ge2$, the contribution of $\psi_{-x,1}\psi_{-x,N-1}$ to $[\rho^{N-2}](\psi_{-x}')^2$ supplies every power $x^2,\ldots,x^N$ with a strictly positive coefficient.
The first term in \eqref{eq:volume-generating} supplies the coefficient
\[
-\binom{n-1}{N-1}\frac{x}{2^{N+1}(n-1)}
\]
in $\mathscr R_N^{(n)}(-x)$.
This proves \eqref{eq:volume-alternating}, including $N=1$.
On the round sphere, $\cA=\nabla^*\nabla+2$ is positive on TT tensors, so the spectral formula \eqref{eq:volume-TT-hessian} gives strict negativity.
The displayed low-order formulas follow by substituting \eqref{eq:qrecurrence}--\eqref{eq:thetarecurrence} into \eqref{eq:volume-polynomial}.
\end{proof}

\subsection{Comparison with total \texorpdfstring{$Q_{2N}$}{Q2N}-curvature}

\begin{proposition}\label{prop:Q-volume-transform}
For $n>2N$, the Hessian polynomials satisfy
\begin{equation}\label{eq:Q-volume-transform}
\mathscr P_N^{(n)}(b)
=\sum_{j=1}^N\beta_{N,j}(n)\mathscr R_j^{(n)}(b),
\end{equation}
where, with $\nu=N-n/2$ and $f_N$ as in \eqref{eq:scalarhypergeom},
\begin{equation}\label{eq:Q-volume-coefficients}
\beta_{N,j}(n)
=2^N((N-1)!)^2[\rho^{N-j}]
\left(j-\frac{n\rho}{2a}\right)
\left(\frac12f_N^2-\frac{\rho}{\nu}f_Nf_N'\right).
\end{equation}
These coefficients are polynomials in $n$ with rational coefficients,
\[
\beta_{N,N}=2^{N-1}N!(N-1)!,
\]
and $\beta_{N,j}(n)$ is divisible by $n-2N$ when $j<N$.
Thus \eqref{eq:Q-volume-transform} also relates the fixed-volume Hessians of $\mathcal Q$ and $\mathcal V_j$ evaluated on TT tensors.
\end{proposition}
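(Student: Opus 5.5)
The identity \eqref{eq:Q-volume-transform} should come out of the coefficient formula \eqref{eq:WronskianHessian} by rewriting the weighted derivative $a^n\Theta_b'$ in terms of $a^n\Theta_b$, whose coefficients are the $\mathscr R_j^{(n)}$ by \eqref{eq:volume-polynomial}. Fix a unit TT eigentensor with $-\cA h=bh$ and set $V(\rho)\coloneqq a^n\Theta_b$. Then $V(0)=0$ and $V=2\sum_{j=1}^N\mathscr R_j^{(n)}(b)\rho^j+O(\rho^{N+1})$. Since $(a^n)'=\tfrac{n}{2a}a^n$, we have
\[
a^n\Theta_b'=V'-\frac{n}{2a}V .
\]
Put $W\coloneqq\tfrac\nu2f_N^2-\rho f_Nf_N'$. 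Then \eqref{eq:WronskianHessian} reads $\mathscr P_N^{(n)}(b)=\frac{2^{N-1}((N-1)!)^2}{\nu}[\rho^{N-1}](V'-\tfrac{n}{2a}V)W$. Expanding termwise and using $V'=\sum 2j\mathscr R_j\rho^{j-1}$ gives $[\rho^{N-1}]V'W=\sum_j2j\mathscr R_j[\rho^{N-j}]W$. Similarly $[\rho^{N-1}]\tfrac{n}{2a}VW=\sum_j2\mathscr R_j[\rho^{N-j}]\tfrac{n\rho}{2a}W$. Collecting, and writing $W/\nu=\tfrac12f_N^2-\tfrac{\rho}{\nu}f_Nf_N'$, yields exactly \eqref{eq:Q-volume-transform} with the coefficients \eqref{eq:Q-volume-coefficients}. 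Only Taylor coefficients of $V$ through order $N$ enter, so the $O(\rho^{N+1})$ remainder is harmless. Since both sides are polynomials in $b$, and the Hessian forms on TT tensors are given by these polynomials applied to $-\cA$ (Theorem \ref{thm:ambientHessian} and Theorem \ref{prop:volume-TT-hessian}), the Hessian relation follows by the spectral argument already used there.

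Next come the structural properties of $\beta_{N,j}$. For $j=N$ only the $\rho^0$ coefficient is needed; it equals $N\cdot\tfrac12$, giving $\beta_{N,N}=2^{N-1}N!(N-1)!$. For polynomiality in $n$, use \eqref{eq:scalarhypergeom}: the coefficients of $f_N$ have denominators $(1-N)_r r!$ independent of $n$ and numerators $(\tfrac n2-N)_r(\tfrac n2)_r$. For $r\ge1$ the factor $(\tfrac n2-N)_r=(-\nu)_r$ is divisible by $\nu$, so $f_N'/\nu$ has polynomial coefficients in $n$. The factor $a^{-1}$ expands with rational constants, so every $\beta_{N,j}$ is a polynomial in $n$ with rational coefficients.

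The main step is divisibility of $\beta_{N,j}$ by $n-2N$ for $j<N$, i.e.\ vanishing of these coefficients at $\nu=0$. Here I would first note the identity $j-\tfrac{n\rho}{2a}=-\rho^{j+1}a^{-n}\partial_\rho(\rho^{-j}a^n)$. With it, $[\rho^{N-j}](j-\tfrac{n\rho}{2a})\widetilde W$, where $\widetilde W\coloneqq W/\nu$, becomes a formal residue. Integrating by parts (the residue of a derivative is zero) rewrites it as $[\rho^{-1}]\,\rho^{-j}a^n\,\partial_\rho(\rho^{-N}a^{-n}\widetilde W)$. Vanishing for all $0\le j\le N-1$ is therefore implied by the statement that, at $n=2N$, $a^{-2N}\widetilde W$ is constant modulo $\rho^N$.

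At $\nu=0$ one has $f_N\equiv1$ mod $\rho^N$, and $g\coloneqq\lim_{\nu\to0}f_N'/\nu$ has explicit coefficients $-(r-1)!\,r\,(N)_r(-\tfrac12)^r/((1-N)_rr!)$. The target identity is then $\tfrac12-\rho g=\tfrac12a^{2N}$ mod $\rho^N$. I would prove it either by differentiating the ODE \eqref{eq:Lw} in $\nu$ at $\nu=0$ to get a first-order equation for $g$ and checking that $(a^{2N}-1)/(2\rho)$ solves it, or directly via the Clausen identity \eqref{eq:Clausenhere}, differentiated in $\nu$ at $\nu=0$. I expect this coefficient identity to be the only genuinely delicate point; the rest is bookkeeping.
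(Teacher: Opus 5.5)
Your derivation of \eqref{eq:Q-volume-transform} from \eqref{eq:WronskianHessian} through $a^n\Theta_b'=V'-\frac{n}{2a}V$ is correct and coincides with the paper's proof. So are the value of $\beta_{N,N}$ and the polynomiality argument. The divisibility step, however, fails as written, for two separate reasons.

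First, the integration by parts is miscomputed. The identity $j-\frac{n\rho}{2a}=-\rho^{j+1}a^{-n}\partial_\rho(\rho^{-j}a^n)$ gives
\[
[\rho^{N-j}]\Bigl(j-\frac{n\rho}{2a}\Bigr)\widetilde W
=[\rho^{-1}]\,\rho^{-j}a^n\,\partial_\rho\bigl(\rho^{2j-N}a^{-n}\widetilde W\bigr).
\]
The power is the $j$-dependent $\rho^{2j-N}$, not $\rho^{-N}$: for $\widetilde W=1$, $N=2$, $j=1$ your version returns $-n/8$ instead of $-n/2$. With the correct formula, $a^{-2N}\widetilde W\equiv c\pmod{\rho^N}$ (necessarily $c=\frac12$) would produce the residue $c(2j-N)\binom{2N}{N-j}2^{j-N}$. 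This is nonzero unless $2j=N$, so constancy would contradict the vanishing you want rather than imply it.

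Second, the target identity itself is false. Put $\mathsf B\coloneqq\widetilde W|_{n=2N}=\frac12-\rho g$. Then $[\rho^1]\mathsf B=\frac{N}{2(N-1)}$, whereas $[\rho^1]\frac12a^{2N}=\frac N2$, and these differ for every $N\ge3$. Neither of your two proposed verifications can therefore succeed.

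The identity actually needed, with $d=N-j$ and $\mathsf B_d\coloneqq[\rho^d]\mathsf B$, is
\[
(N-d)\mathsf B_d=N[\rho^{d-1}](\mathsf B/a)\qquad(1\le d\le N-1).
\]
Equivalently, $\rho\mathsf B'=N(1-\frac{\rho}{a})\mathsf B-\frac N2 \pmod{\rho^N}$. The paper checks this directly from the explicit coefficients $\mathsf B_0=\frac12$ and $\mathsf B_d=(N)_d/(2^d(N-d)_d)$, together with the recursion $[\rho^{d-1}](\mathsf B/a)=\frac{N-d}{N}\mathsf B_d$.

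Your first idea also gives it, provided you drop the residue reformulation. Differentiate $L_\nu f_N=O(\rho^{N-1})$ in $\nu$ at $\nu=0$, with $n=2N-2\nu$. This yields
\[
-2\rho g'+\Bigl(2N-2-\frac{2N\rho}{a}\Bigr)g+\frac Na=O(\rho^{N-1}).
\]
Multiply by $\rho$ and substitute $\rho g=\frac12-\mathsf B$. The result is exactly the first-order identity above, and its coefficient of $\rho^{N-j}$ is $[\rho^{N-j}](j-\frac{N\rho}{a})\mathsf B$, which is a nonzero multiple of $\beta_{N,j}(2N)$. Hence $\beta_{N,j}(2N)=0$.
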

\begin{proof}
Differentiating \eqref{eq:volume-polynomial} as a finite generating series gives
\[
a^n\Theta_b'
=2\sum_{j=1}^N\mathscr R_j^{(n)}(b)
\left(j\rho^{j-1}-\frac n{2a}\rho^j\right)+O(\rho^N).
\]
Substitution into \eqref{eq:WronskianHessian} proves \eqref{eq:Q-volume-transform}--\eqref{eq:Q-volume-coefficients}.
The constant coefficient in the final parenthesis of \eqref{eq:Q-volume-coefficients} is $1/2$, giving $\beta_{N,N}$.
Every nonconstant coefficient of $f_N$ is a polynomial in $n$ divisible by $\nu$, as follows from \eqref{eq:scalarhypergeom}.
Consequently the apparent division by $\nu$ introduces no pole, and each $\beta_{N,j}$ is a polynomial in $n$.

To check the lower coefficients at $n=2N$, set
\[
\mathsf B(\rho)\coloneqq
\left.\left(\frac12f_N^2-\frac{\rho}{\nu}f_Nf_N'\right)\right|_{n=2N},
\qquad \mathsf B_d\coloneqq[\rho^d]\mathsf B.
\]
The finite coefficients in \eqref{eq:scalarhypergeom} give
\[
\mathsf B_0=\frac12,\qquad
\mathsf B_d=\frac{(N)_d}{2^d(N-d)_d},
\qquad 1\le d\le N-1.
\]
Comparing consecutive coefficients after multiplication by $a=1+\rho/2$ yields
\[
[\rho^{d-1}](\frac{\mathsf B}{a})=\frac{N-d}{N}\mathsf B_d.
\]
With $d=N-j$, the coefficient of $\rho^d$ in $(j-N\rho/a)\mathsf B$ is therefore zero.
This proves $\beta_{N,j}(2N)=0$ for $j<N$, and hence the divisibility.
For example, when $N=2$, the relation is
\begin{equation}\label{eq:Q-volume-low-transform}
\mathscr P_2^{(n)}=4\mathscr R_2^{(n)}
+\frac{n(n-4)}2\mathscr R_1^{(n)},
\end{equation}
in agreement with \eqref{eq:lowP} and \eqref{eq:volume-low-one}--\eqref{eq:volume-low-two}.
\end{proof}

\begin{corollary}\label{cor:volume-equal-dimension}
If $n=2N\ge4$, then on a closed manifold
\begin{equation}\label{eq:Q-volume-equal-dimension}
\mathcal Q(g)=2^{N-1}N!(N-1)!\,\mathcal V_N(g).
\end{equation}
At the Einstein reference, the polynomial representing the fixed-volume Hessian of $\mathcal V_N$ on TT tensors therefore has the product formula
\begin{equation}\label{eq:volume-equal-dimension-product}
\mathscr R_N^{(2N)}(b)
=\frac{(-1)^{N+1}}{2^{N+1}N!(N-1)!}
\prod_{j=0}^{N-1}[b-j(2N-j-1)].
\end{equation}
\end{corollary}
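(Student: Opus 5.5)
The plan has two parts: first prove the pointwise identity \eqref{eq:Q-volume-equal-dimension} from the holographic formula, then read off the product \eqref{eq:volume-equal-dimension-product} by comparing Hessians at $g_E$.

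For \eqref{eq:Q-volume-equal-dimension}, I would use the holographic characterization of $Q_{2N}$ at $n=2N$. Graham--Zworski and Fefferman--Graham \cite{FGJuhl} show that, in the critical dimension, $\int_M Q_{2N}\,dv_g$ equals a universal constant times $\int_M v_N\,dv_g$; this constant is the coefficient that makes the renormalized volume anomaly agree with the total $Q$-curvature. I would fix the constant in our conventions, $v_j=(-2)^jv^{(2j)}$ and $P_{2N}$ with leading term $(-\Delta)^N$, rather than quote it. The cleanest route is to take the limit $n\to2N$ of Proposition \ref{prop:Q-volume-transform}. For $n>2N$ one has $\mathcal Q=\frac{2}{n-2N}\int P_{2N,g}(1)\,dv_g$. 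Juhl's formula, in the form \eqref{eq:holographic-Q} of Proposition \ref{prop:holographic-Q}, writes $\int Q_{2N}$ as a combination $\sum_j\beta_{N,j}(n)\mathcal V_j(g)$ plus integrated divergences. Here the $\beta_{N,j}$ are the same polynomial coefficients, because the flat/Einstein computation fixes the universal combination. By Proposition \ref{prop:Q-volume-transform}, $\beta_{N,j}(2N)=0$ for $j<N$ and $\beta_{N,N}=2^{N-1}N!(N-1)!$. Continuing the natural formula for $Q_{2N}$ polynomially in $n$, which is how $Q_{2N}$ is defined at $n=2N$ through the ambient construction, then gives \eqref{eq:Q-volume-equal-dimension}.

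The main obstacle is justifying that the universal relation $\int Q_{2N}=\sum_j\beta_{N,j}(n)\mathcal V_j+\text{divergences}$ holds for general metrics, not just in TT directions at Einstein metrics. I would take the formula $(-1)^NQ_{2N}=-2[\prod\mathcal R]\partial_\rho v^{1/2}$ and integrate over $M$. In each $\mathcal R_k$, the spatial operator $\operatorname{div}_{g_0}(g_\rho^{-1}d)$ integrates to zero against $dv_{g_0}$, and the $U_\rho$ terms are handled as in Fefferman--Graham \cite{FGJuhl}. If this becomes too delicate, I would fall back on quoting the standard critical-dimension identity $\int Q_{2N}=c_N\int v^{(2N)}$ and fix $c_N$ by evaluating both sides on the round sphere. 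There $Q_{2N}=2c_{n,N}/(n-2N)$ continues to $(2N-1)!$ at $n=2N$, while $v_N(g_c)=2^{-N}\binom{2N}{N}$ by the Einstein value recorded above. One then checks that these give $2^{N-1}N!(N-1)!$.

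With \eqref{eq:Q-volume-equal-dimension} in hand, I would differentiate it twice along the fixed-volume path \eqref{eq:fixed-volume-path}. This gives $D^2\mathcal Q|_{g_E}=2^{N-1}N!(N-1)!\,D^2\mathcal V_N|_{g_E}$ on TT tensors. Substituting Proposition \ref{prop:Matsumoto-comparison}, whose polynomial is \eqref{eq:critical-polynomial}, and Theorem \ref{prop:volume-TT-hessian} for $n=2N$, the two polynomials in $b$ must agree on the Lichnerowicz spectrum. Since both are natural polynomials whose coefficients are fixed independently of $M$, they agree identically; alternatively, compare the explicit recurrences. Dividing by $2^{N-1}N!(N-1)!$ gives \eqref{eq:volume-equal-dimension-product}. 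As a consistency check, the leading coefficient should match the sign pattern of Corollary \ref{cor:volume-TT-sign}.
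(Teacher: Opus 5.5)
Your primary route to \eqref{eq:Q-volume-equal-dimension} fails. The problem is the step asserting that, for $n>2N$, $\mathcal Q(g)=\sum_j\beta_{N,j}(n)\mathcal V_j(g)$ holds for general metrics, with the coefficients of Proposition~\ref{prop:Q-volume-transform}. That proposition only relates Hessian polynomials in TT directions at an Einstein metric normalized by $\Ric=(n-1)g$, where the scale is frozen. It does not lift to an identity between functionals.

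Scaling already rules the lift out. Under $g\mapsto c^2g$ one has $\mathcal Q\mapsto c^{n-2N}\mathcal Q$ and $\mathcal V_j\mapsto c^{n-2j}\mathcal V_j$. Such an identity would therefore force $\beta_{N,j}(n)\mathcal V_j\equiv0$ for $j<N$, whereas, for example, $\beta_{2,1}(n)=n(n-4)/2\ne0$.

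Concretely, for $N=2$ the true identity is $\mathcal Q=4\mathcal V_2+\frac{n-4}{2}\int_MJ^2\,dv_g$ with $J=R_g/(2(n-1))$. The term $\int J^2$ agrees with $n\mathcal V_1$ only to second order at the normalized Einstein metric, which is where the factor $n$ in $\beta_{2,1}$ comes from. On the round $\Sph^5$, for instance, $\mathcal Q=\frac{105}{8}\Vol$ while $4\mathcal V_2+\frac52\mathcal V_1=\frac{130}{8}\Vol$.

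So there is no general identity for the continuation $n\to2N$ to act on. What you would actually need is $\mathcal Q=\beta_{N,N}\mathcal V_N+(n-2N)\mathcal X_n(g)$ with $\mathcal X_n$ a natural functional. That is the content of the holographic formula, not something the TT Hessian computation provides. The vague step about integrating the ordered product $\prod\mathcal R_k$ over $M$ cannot produce it either, since the $\rho$-derivative and $U_\rho$ terms do not integrate away.

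Your fallback is the correct argument and is essentially the paper's. The paper quotes the integrated holographic identity \cite[(0.2)]{GrahamJuhl} and converts conventions via $v_N=(-2)^Nv^{(2N)}$. You instead fix the universal constant on the round sphere, which is legitimate and avoids sign conventions. Your values $Q_{2N}(g_c)=(2N-1)!$ and $v_N(g_c)=2^{-N}\binom{2N}{N}$ do give the ratio $2^{N-1}N!(N-1)!$.

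The second half (differentiate along the fixed-volume path and compare with Proposition~\ref{prop:Matsumoto-comparison}) matches the paper. To pass from agreement on a TT spectrum to an identity of polynomials, say explicitly that you evaluate on a manifold with at least $N+1$ distinct TT eigenvalues, for example the round $\Sph^{2N}$. Naturality of the coefficients alone does not make that step.
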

\begin{proof}
The integrated holographic identity \cite[(0.2)]{GrahamJuhl} gives \eqref{eq:Q-volume-equal-dimension} after the conversion $v_N=(-2)^Nv^{(2N)}$ in \eqref{eq:volume-coefficients}.
Differentiating it and applying Proposition \ref{prop:Matsumoto-comparison} proves \eqref{eq:volume-equal-dimension-product}.
This also identifies the normalization with the obstruction-tensor calculation in Appendix \ref{app:obstruction-comparison}.
\end{proof}

\end{document}